\newtheorem*{rep@theorem}{\rep@title}
\newcommand{\newreptheorem}[2]{%
\newenvironment{rep#1}[1]{%
 \def\rep@title{#2 \ref{##1}}%
 \begin{rep@theorem}}%
 {\end{rep@theorem}}}
\newcommand{\norm}[1]{\left\lVert #1 \right\rVert}
\newcommand{\mor}{\text{Mor}}
\DeclareMathOperator{\onb}{onb}
\DeclareMathOperator{\mult}{\mathfrak{m}}
\DeclareMathOperator{\irr}{irr}
\DeclareMathOperator{\id}{Id}
\DeclareMathOperator{\op}{op}
\DeclareMathOperator{\tr}{Tr}
\DeclareMathOperator{\del}{\partial}
\DeclareMathOperator{\qaut}{QAut}
\DeclareMathOperator{\mtimes}{\otimes_{(M,\del)}}
\DeclareMathOperator{\End}{End}
\DeclareMathOperator{\bpi}{bpi}
\DeclareMathOperator{\adj}{Adj}
\DeclareMathOperator{\ovtimes}{\overline{\otimes}}
\newcommand{\vect}[1]{\mathfrak{v} \left( #1 \right)}
\newcommand{\Gelt}[3]{G_{#1}\left(\substack{ #2 \\ #3} \right)}
\newcommand{\Aelt}[3]{A_{#1}\left(\substack{ #2 \\ #3} \right)}
\newcommand{\Uelt}[2]{U\left(\substack{ #1 \\ #2} \right)}
\theoremstyle{definition}
\newtheorem{definition}[subsubsection]{Definition}
\newtheorem{definition/proposition}[subsubsection]{Definition/Proposition}
\newtheorem{theorem}[subsubsection]{Theorem}
\newtheorem{lemma}[subsubsection]{Lemma}
\newtheorem{proposition}[subsubsection]{Proposition}
\newtheorem{corollary}[subsubsection]{Corollary}
\newtheorem{remark}[subsubsection]{Remark}
\newtheorem{notation}[subsubsection]{Notation}
\newtheorem{example}[subsubsection]{Example}
\title{Equivariant Tannaka-Krein reconstruction and quantum automorphism groups of discrete structures}
\author[1,2]{Lukas Rollier \thanks{KU Leuven, Department of Mathematics, Leuven (Belgium), email: lukas.rollier@kuleuven.be} \thanks{Supported by FWO research project G090420N of the Research Foundation Flanders and by FWO-PAS
research project VS02619N}}
\date{}
\begin{document}

\maketitle

\begin{abstract}
    We define quantum automorphism groups of a wide range of discrete structures. The central tool for their construction is a generalisation of the Tannaka-Krein reconstruction theorem. For any direct sum of matrix algebras $M$, and any concrete unitary $2$-category of finite type Hilbert-$M$-bimodules $\mathcal{C}$, under reasonable conditions, we construct an algebraic quantum group $\mathbb{G}$ which acts on $M$ by $\alpha$, such that the category of $\alpha$-equivariant corepresentations of $\mathbb{G}$ on finite type Hilbert-$M$-bimodules is equivalent to $\mathcal{C}$. Moreover, we explicitly describe how to get such categories from connected locally finite discrete structures. As an example, we define the quantum automorphism group of a quantum Cayley graph.
\end{abstract}

\section{Introduction} \label{sc: introduction}
Given any mathematical structure $S$, one may hope to define the quantum automorphism group $\qaut(S)$ of $S$. If it exists, it is the unique quantum group which admits an action on $S$ such that any other action of any other quantum group factors through a homomorphism to $\qaut(S)$. If we are only looking at classical symmetries, it is very easy to define the classical automorphism group. We do not have such luxury in the quantum setting, since it is difficult to speak rigorously about individual quantum automorphisms, which are usually defined to be irreducible representations of some $C^*$- or von Neumann algebra with extra structure. The problem then shifts to finding the right algebra and the right extra structure.

In the theory of compact groups, the Tannaka-Krein duality is a reconstruction theorem which, in essence, states that the concrete $C^*$-tensor category of finite dimensional representations of a compact group remembers everything about that compact group. In \cite{Woronowicz88}, this duality was extended by S.L. Woronowicz to compact quantum groups. As a result, when one is in need of a compact quantum group, e.g. the quantum automorphism group of a finite graph, it suffices to describe a desired representation category, and by the magic of this duality, the desired quantum group pops out. When working in the compact setting, very often the general theory is strong enough such that the Tannaka-Krein duality need only work in the background. All one needs is a unital $C^*$-algebra with the right kind of comultiplication, and these are easy to define. One should note, however, that in very many examples of compact quantum groups\footnote{For example all easy quantum groups, or more generally graph theoretic quantum groups as in \cite{MancinskaRoberson19, MancinskaRoberson2020published}.}, this $C^*$-algebra and comultiplication are defined by specifying a generating corepresentation along with some intertwiners between its tensor powers. It is clear that the entire representation category is implicitly present. When one extends the theory to more general locally compact quantum groups, as defined by Kustermans and Vaes \cite{KustermansVaes99, KustermansVaes03}, the Tannaka-Krein duality fails, and hence defining these is a far more difficult task. Unlike in the classical setting, it no longer suffices to find a $C^*$-algebra with the right kind of comultiplication: the existence of Haar integrals is not automatic for locally compact quantum groups, but an axiom, and this is often the hurdle when defining new examples.

By losing a bit of generality, the problem of finding new quantum groups may be made a great deal easier. In \cite{VanDaele98}, Van Daele introduced the class of algebraic quantum groups. This class contains all compact and all discrete quantum groups, as well as all classical groups which contain a compact open subgroup by \cite{LandstadVanDaele2008}. Through work of Kustermans and Van Daele \cite{KustermansVanDaele97}, all algebraic quantum groups are special cases of locally compact quantum groups in the framework of Kustermans and Vaes \cite{KustermansVaes99, KustermansVaes03}. The advantage of algebraic quantum groups over the more general locally compact ones is that, as the name suggests, one may work entirely on the algebraic level, with $*$-algebras without taking a completion to make them either $C^*$- or von Neumann algebras, and with algebraic tensor products. The existence of Haar integrals is still an axiom, and it is through the GNS-construction that one gets either the $C^*$- or von Neumann algebraic picture.

Using this framework, Vaes and the author, in \cite{RollierVaes2024}, defined the quantum automorphism group of any connected locally finite graph $\Pi$ with vertex set $I$ and edge set $E \subset I \times I$ as an algebraic quantum group. In order to overcome the hurdle of the Haar integrals, first a category $\mathcal{C}(\Pi)$ was introduced. This is a unitary $2$-category of finite type Hilbert-$\ell^{\infty}(I)$-bimodules which contains a lot of combinatorial information about the graph $\Pi$. Finite type here means that the finitely supported elements of $\ell^{\infty}(I)$ act by finite rank operators.  From this category $\mathcal{C}(\Pi)$, a reconstruction very reminiscent of the Tannaka-Krein reconstruction yielded an algebraic quantum group, which was shown to be the universal quantum group acting on $I$ while preserving the graph structure. It is this reconstruction which is generalised and solidified in this article. This happens mainly through the lens of two questions.

On the one hand, a central part of the reconstruction of $\qaut(\Pi)$ from $\mathcal{C}(\Pi)$ passes through the construction of a $*$-algebra $\mathcal{B}$. In fact, this is the $*$-algebra underlying a partial compact quantum group as in \cite{DeCommerTimmermann2015}. In that article, De Commer and Timmerman defined for any set $I$, from any unitary $2$-category of Hilbert-$\ell^{\infty}(I)$-bimodules $\mathcal{C}$, a partial compact quantum group, which may be seen as a quantum groupoid whose base space is $I$. This then raises the question: under which circumstances can we cut down from the partial compact quantum group to an action of a locally compact quantum group? This article gives at least a sufficient condition for this to work.

On the other hand, through work of many hands (see e.g. \cite{DuanSeveriniWinter13, MustoReutterVerdon, Weaver2012}), a notion of quantum graph was introduced as a generalisation of classical graphs. The idea of these is to replace the classical set of vertices of a graph with a discrete quantum space, i.e. a direct sum of matrix algebras equipped with a delta-form (see definition \ref{def: discrete quantum space}). Such quantum graphs appear naturally in quantum information theory (see references above). Moreover, in \cite{Wasilewski2023}, the different frameworks regarding quantum graphs were shown to be equivalent, and quantum Cayley graphs of discrete quantum groups were introduced, opening up the strong theorems of geometric group theory to the realm of discrete quantum groups. For these reasons, it is a natural question to wonder whether the definition of quantum automorphism group of a (connected locally finite) graph may be generalised to the setting of quantum graphs. More generally, one need not restrict oneself to (quantum) graphs. Given any discrete quantum space and some additional structure, it is a natural question to ask whether there exists some universal quantum group acting on this discrete quantum space in such a way that the structure is preserved. This quantum group could then rightfully be called the quantum automorphism group of this discrete structure. This article will provide an explicit construction of the quantum automorphism group of any discrete structure which is, in a suitable sense to be made precise below, connected and locally finite.

The structure of the article is as follows. We start by recalling some preliminary notions in section \ref{sc: preliminaries}: discrete quantum spaces on the one hand, and some very mild equivariant representation theory on the other. Next we move on to the central results.

Section \ref{sc: reconstruction theorem} contains the bulk of the work: a simultaneous generalisation and solidification of the Tannaka-Krein like reconstruction found in \cite{RollierVaes2024}. This is the content of theorem \ref{thm: unique quantum group action for equivariant category}, stated below. We start from any von Neumann algebra $M$, which is a direct sum of matrix algebras, and a concrete unitary $2$-category of finite type Hilbert-$M$-bimodules, $\mathcal{C}$, which contains enough morphisms in a sense to be made precise below. From this category $\mathcal{C}$, we then construct concretely an algebraic quantum group $\mathbb{G}$ with an action $\alpha$ on $M$ such that $\mathcal{C}$ is equivalent to the category of $\alpha$-equivariant corepresentations of $\mathbb{G}$ on finite type Hilbert-$M$-bimodules. Two points are important to mention here. Firstly, the extra structure we impose on $\mathcal{C}$, which restricts us from working with arbitrary concrete unitary $2$-categories of finite type Hilbert-$M$-bimodules, is very natural, and appears automatically for many actions of locally compact quantum groups on $M$. The author intends to make this statement more precise in a following paper. Secondly, if one takes $M$ to be $\mathbb{C}$, the field of complex numbers, theorem \ref{thm: unique quantum group action for equivariant category} gives precisely the Tannaka-Krein reconstruction from \cite{Woronowicz88}.

Before being able to state the theorem, we should briefly explain some concepts. By $M_0$, we mean the finitely supported elements of the algebra $M$, i.e. those which are contained in the algebraic direct sum of matrix algebras. Again, finite type Hilbert-$M$-bimodules are those for which $M_0$ acts by finite rank operators. Also, we say a general (not necessarily finite type) Hilbert-$M$-bimodule $H$ is covered by some category $\mathcal{C}$ if it can be written as an (infinite) direct sum of Hilbert-$M$-subbimodules which are contained in the category. Given $H,K$ two Hilbert-$M$-bimodules which are covered by some category $\mathcal{C}$, we say a densely defined $M$-bimodular linear (possibly unbounded) map $T: H \to K$ is covered by $\mathcal{C}$ if it restricts nicely to subbimodules of $H,K$ which are contained in $\mathcal{C}$, and these restrictions are morphisms in $\mathcal{C}$. Let us now state the theorem.

\begin{reptheorem}{thm: unique quantum group action for equivariant category}
    Given a concrete unitary $2$-category $\mathcal{C}$ of finite type Hilbert-$M$-bimodules, in the sense of definition \ref{def: concrete unitary 2-category of M-bimodules}, which covers $L^2(M^2)$ in the sense of definition \ref{def: Category covers bimodule}, and covers the morphisms $\mult: L^2(M^2) \to L^2(M)$ and $(\id \otimes \del \otimes \id): L^2(M^3) \to L^2(M^2)$ in the sense of definition \ref{def: covering morphisms}. There exists an algebraic quantum group $\mathbb{G} := (\mathcal{A},\Delta)$ and a right coaction $\alpha: M_0 \to \mathcal{M}(M_0 \otimes \mathcal{A})$ such that the category of finite type $\alpha$-equivariant corepresentations $\mathcal{C}(M \curvearrowleft \mathbb{G})$, as defined in \ref{def: category of equivariant corepresentations}, is equivalent to $\mathcal{C}$.
\end{reptheorem}

In order to use theorem \ref{thm: unique quantum group action for equivariant category} as a tool to define new examples of algebraic quantum groups, the focus shifts to finding good categories of Hilbert-$M$-bimodules. This problem is tackled in section \ref{sc: qaut of CLF discrete structures}. In \cite{RollierVaes2024} such categories were constructed from connected locally finite graphs, and also this construction is generalised in the current article. Keeping still a direct sum of matrix algebras $M$ fixed, we denote by $M_0$ the $*$-algebra of finitely supported elements in $M$. We define what it means for $M$-bimodular linear maps between tensor powers of $M_0$ to be locally finite, as well as what it means for sets of locally finite maps to be connected. From any connected set of locally finite maps, we can then define a concrete unitary $2$-category of finite type Hilbert-$M$-bimodules, which abides by the conditions of theorem \ref{thm: unique quantum group action for equivariant category}. This is the content of theorem \ref{thm: CLF discrete structures give good categories}, stated below this paragraph, and proven in section \ref{sbsc: building categories from discrete structures}. A prime example of a connected set of locally finite morphisms is the singleton $\{P\}$, where $P: \ell^2(I \times I) \to \ell^2(I \times I)$ is the projection onto the edges of some connected locally finite graph $\Pi = (I,E)$. This then completely recovers the construction of $\qaut(\Pi)$ from \cite{RollierVaes2024}. The theorem in this paper is, however, far more widely applicable, generalising immediately to (connected locally finite) weighted graphs, coloured graphs, directed graphs, simplicial complexes etc... As an example, in section \ref{sbsc: example quantum cayley graph}, we show that theorem \ref{thm: CLF discrete structures give good categories} applies to connected locally finite quantum graphs, and in particular to quantum Cayley graphs as defined in \cite{Wasilewski2023}. This is the content of theorem \ref{thm: quantum cayley graphs give quantum automorphism groups}, also stated below this paragraph.

\begin{reptheorem}{thm: CLF discrete structures give good categories}
    Denote by $\mathcal{F}_n := M_0^{\otimes n+1}$. Let $\mathcal{T}_{n,m}$ be a set of $M$-bimodular linear maps from $\mathcal{F}_m$ to $\mathcal{F}_n$ for every $n,m \in \mathbb{N}$ which are all locally finite in the sense of definition \ref{def: locally finite map}, and such that $\mathcal{T} = \bigcup_{n,m \geq 0} \mathcal{T}_{n,m}$ is connected in the sense of definition \ref{def: connected set of maps}. Then there exists a smallest concrete unitary $2$-category of Hilbert-$M$-bimodules $\mathcal{C}$, as in definition \ref{def: concrete unitary 2-category of M-bimodules}, which covers $L^2(M^n)$ for every $n \in \mathbb{N}$ in the sense of definition \ref{def: Category covers bimodule}, and which covers every $T \in \mathcal{T}$, as well as $(\id \otimes \del \otimes \id)$ and $\mult$ in the sense of definition \ref{def: covering morphisms}. In particular, the category $\mathcal{C}$ satisfies the conditions of theorem \ref{thm: unique quantum group action for equivariant category}. The resulting algebraic quantum group acts faithfully on $M$.
\end{reptheorem}

\begin{reptheorem}{thm: quantum cayley graphs give quantum automorphism groups}
    Let $\Pi$ be any connected locally finite quantum graph in the sense of definition \ref{def: CLF quantum graph} with vertex space $M$ and adjacency matrix $\adj$. There is a universal quantum group acting on $M$ by $\alpha$ in such a way that
    \[
    \alpha \circ \adj = (\adj \otimes \id) \circ \alpha.
    \]
    In particular, one may take $\Pi$ to be the quantum Cayley graph associated by \cite[Definition 5.1]{Wasilewski2023} to any $\Gamma$, discrete quantum group, and $S \subset \irr(\widehat{\Gamma})$ a finite symmetric generating set of irreducible corepresentations of $\widehat{\Gamma}$.
\end{reptheorem}

\subsection*{Acknowledgement}
The author would like to thank Stefaan Vaes for many fruitful discussions, important input, and the proofreading of this paper. We also thank Kenny De Commer for asking some pertinent questions leading to the inception of this undertaking.

\section{Preliminaries} \label{sc: preliminaries}
The reader is expected to be familiar with algebraic quantum groups (see \cite{VanDaele94, VanDaele98}) and Tannaka-Krein duality (see \cite{Woronowicz88, NeshveyevTuset2013}). Some familiarity with actions of locally compact quantum groups (see \cite{DeCommer16, Vaes01}) is advisable, though it will not be necessary to understand the paper. 

\subsection{Discrete quantum spaces, bimodules, and categorical data}

For the entire paper, $M$ will always denote a direct sum of matrix algebras
\[
M = \ell^{\infty}\bigoplus_{i \in I} M_i
\]
where each $M_i = \mathbb{C}^{d_i \times d_i}$ for some $d_i \in \mathbb{N}$. We will denote by $M_0$ its algebraic part, i.e. the finitely supported elements of $M$. We denote by $E^i_{k,l}$ the minimal partial isometry at position $k,l$ in block $i$, and by $1_i$ the unit of $M_i$. Then $M_0 = \text{span} \{E^i_{k,l} | i \in I, \text{ and } 1\leq k,l \leq d_i \}$. Recall that the multiplier algebra $\mathcal{M}(M_0)$ consists of (unbounded) elements $T = \sum_{i \in I} T_i$, where each $T_i \in M_i$. 

\begin{definition} \label{def: delta-form}
    A functional $\del: M_0 \to \mathbb{C}$ is called a delta-form if there exist a positive invertible element $\sigma = \sum_{i \in I}\sigma_i \in \mathcal{M}(M_0)$, with $\sigma_i \in M_i$, such that $\tr((\sigma_i)^{-1}) = d_i$ and $\del(x) = \tr_M(x \sigma)$, where $\tr_M$ is the Markov trace $\tr_M(E^i_{k,l}) = d_i \tr(E^i_{k,l}) = \delta_{k,l} d_i$. 
\end{definition}

\begin{example} \label{ex: markov trace is delta-form}
    Given $M$ a direct sum of matrix algebras, the markov trace $\tr_M$ as defined in definition \ref{def: delta-form} is the unique tracial delta-form
\end{example}

\begin{definition} \label{def: discrete quantum space}
    A pair $(M,\del)$ consisting of a direct sum of matrix algebras $M$, and a delta-form $\del$ is called a discrete quantum space. For a discrete quantum space $(M,\del)$, we denote by $\mathcal{F}_n$ the space of finitely supported elements in $M_0^{\otimes (n+1)}$. To clarify that we mean these as elements in a vector space more than elements in an algebra, we write $\vect{x_0 \otimes \ldots x_n} \in \mathcal{F}_n$. The spaces $\mathcal{F}_n$ come equipped with an inner product given by
    \[
    \langle \vect{x_0 \otimes \cdots \otimes x_n}, \vect{y_0 \otimes \cdots \otimes y_n } \rangle = \del(y_0^*x_0) \cdots \del(y_n^*x_n)
    \]
    We denote the Hilbert space closure of $\mathcal{F}_n$ with respect to this inner product as\footnote{Note that these Hilbert spaces very much depend on the delta-form $\del$ as well. We have chosen to suppress this in the already heavy notation.} $L^2(M^{n+1})$.
\end{definition}

We fix now for once and for all a discrete quantum space $(M,\del)$. This comes equipped with the (unbounded) element $\sigma \in \mathcal{M}(M_0)$ as in definition \ref{def: delta-form}. We denote for any $t \in \mathbb{R}$ by $\mu^t: M_0 \to M_0$ the homomorphism $x \mapsto \sigma^{-t} x \sigma^t$, and note that then $\del(xy) = \del(\mu(y)x)$ for all $x,y \in M_0$, where $\mu = \mu^1$. Recall that $M_i = \mathbb{C}^{d_i \times d_i}$, and note that an orthonormal basis of $\mathcal{F}_0$ is given by 
\begin{equation}
    \left\{ \frac{1}{d_i^{1/2}}\vect{E^i_{k,l} \sigma^{-1/2}} | i \in I \text{ and } 1 \leq k,l \leq d_i \right\}. \label{eq: onb for F_0}
\end{equation}
In many calculations we will be summing over any orthonormal basis of $\mathcal{F}_0$, denoted $\onb(\mathcal{F}_0)$, and we will make sure that it never matters over which one we sum. When in doubt, however, one can always choose this one.

\begin{proposition} \label{prop: multiplication has isometric adjoint}
    Consider the map $\mult: \mathcal{F}_1 \to \mathcal{F}_0: \vect{x \otimes y} \mapsto \vect{xy}$. This is a bounded linear map satisfying $\mult \mult^* = 1$, and hence it extends to a bounded linear map $L^2(M^2) \to L^2(M)$.
\end{proposition}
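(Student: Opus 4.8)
The plan is to construct an explicit candidate for the adjoint $\mult^*$ on the dense subspace $\mathcal{F}_0$ and then verify two algebraic identities that together force $\mult$ to be bounded with $\mult\mult^* = 1$. Concretely, for $\vect{z} \in \mathcal{F}_0$ I would set
\[
\mult^* \vect{z} = \sum_{e \in \onb(\mathcal{F}_0)} \vect{z\, \mu^{-1}(e^*) \otimes e},
\]
where the sum is finite because $z$ is finitely supported and hence meets only finitely many blocks. The first step is to check the formal adjoint relation $\langle \mult \vect{x \otimes y}, \vect{z} \rangle = \langle \vect{x \otimes y}, \mult^* \vect{z} \rangle$ for all $x,y,z \in M_0$. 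This both justifies the name and shows the formula is independent of the chosen orthonormal basis, since it forces $\langle \vect{x \otimes y}, \mult^*\vect{z}\rangle = \del(z^* x y)$, and the elements $\vect{x \otimes y}$ are total in $\mathcal{F}_1$.

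To verify the adjoint relation I would expand the right-hand side as $\sum_{e} \del\big((z\mu^{-1}(e^*))^* x\big)\,\del(e^* y)$, rewrite $(z\mu^{-1}(e^*))^* = \mu(e)\, z^*$, and apply the twist identity $\del(ab) = \del(\mu(b)a)$ to turn $\del(\mu(e)\, z^* x)$ into $\del(z^* x e)$. The sum then collapses via the reconstruction $y = \sum_{e} \del(e^* y)\, e$ in $\mathcal{F}_0$ to give exactly $\del(z^* x y)$, which is $\langle \mult \vect{x \otimes y}, \vect{z}\rangle$. Next I would compute $\mult \mult^* \vect{z} = \vect{z \cdot \big(\sum_{e} \mu^{-1}(e^*)\, e\big)}$, so that the claim $\mult \mult^* = 1$ reduces to the single identity $\sum_{e \in \onb(\mathcal{F}_0)} \mu^{-1}(e^*)\, e = 1$ in $\mathcal{M}(M_0)$.

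The heart of the argument, and the only place where the normalisation of the delta-form enters, is this last identity; I expect it to be the crux. I would evaluate it in the explicit orthonormal basis $e = d_i^{-1/2} \vect{E^i_{k,l}\sigma^{-1/2}}$ of \eqref{eq: onb for F_0}: a short computation gives $\mu^{-1}(e^*)\, e = d_i^{-1}(\sigma_i^{-1})_{kk}\, \sigma_i^{1/2} E^i_{l,l}\,\sigma_i^{-1/2}$, so summing over $k$ produces the factor $\tr(\sigma_i^{-1}) = d_i$ --- precisely the defining condition of a delta-form --- which cancels the $d_i^{-1}$; summing over $l$ then yields $\sigma_i^{1/2}\, 1_i\, \sigma_i^{-1/2} = 1_i$, and summing over $i$ gives $1$. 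Finally, boundedness is a formal consequence. The adjoint relation together with $\mult\mult^* = 1$ makes $\mult^*$ isometric, since $\langle \mult^* \vect{z}, \mult^* \vect{w}\rangle = \langle \mult\mult^* \vect{z}, \vect{w}\rangle = \langle \vect{z}, \vect{w}\rangle$; hence for $\xi \in \mathcal{F}_1$ one has $\|\mult\xi\|^2 = \langle \xi, \mult^*\mult\xi\rangle \le \|\xi\|\,\|\mult\xi\|$, so that $\|\mult\| \le 1$. Both $\mult$ and $\mult^*$ therefore extend to bounded maps between $L^2(M^2)$ and $L^2(M)$, and $\mult\mult^* = 1$ persists on $L^2(M)$ by continuity.
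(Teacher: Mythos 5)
Your proposal is correct and follows essentially the same route as the paper: compute $\mult^*$ explicitly on finitely supported elements (you collapse the basis sum into the left tensor leg, the paper into the right, which is only cosmetic), then apply $\mult$ and invoke the delta-form normalisation $\tr(\sigma_i^{-1}) = d_i$ to obtain $\mult\mult^* = 1$. The only addition is your explicit isometry/Cauchy--Schwarz argument for boundedness, which the paper leaves implicit; it is a correct and welcome detail.
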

\begin{proof}
    Take $x \in M_0$ arbitrarily. Then 
    \begin{equation}
        \mult^*(x) = \sum_{y,z \in \onb(\mathcal{F}_0)} \langle \mult^*(x), \vect{y \otimes z} \rangle \vect{y \otimes z} = \sum_{y,z \in \onb(\mathcal{F}_0)} \del(z^*y^*x) \vect{y \otimes z} = \sum_{y \in \onb(\mathcal{F}_0)} \vect{y \otimes y^*x}. \label{eq: adjoint of multiplication}
    \end{equation}
    Applying $\mult$ to this expression, and filling in the orthonormal basis \eqref{eq: onb for F_0}, we find that the result equals
    \[
    \sum_{\substack{i \in I \\ 1 \leq  k,l \leq d_i }} \frac{1}{d_i} E^i_{k,l} \sigma^{-1} E^i_{l,k} x = \sum_{i \in I} \frac{\tr(\sigma_i^{-1})}{d_i} 1_i x = x
    \]
\end{proof}

\begin{definition} \label{def: Hilbert-M-bimodule + finite type}
    A Hilbert-$M$-bimodule $H$ is a Hilbert space equipped with $\lambda_H: M \to B(H)$ and $\rho_H: M^{\op} \to B(H)$, two normal unital $*$-homomorphisms whose ranges commute. For $\xi \in H$ and $x,y \in M$, we will often use the following notation.
    \[
    x \cdot \xi \cdot y :=  \lambda_H(x) \rho_H(y^{\op}) (\xi)
    \]
    We say $H$ is of finite type if $\lambda_H$ and $\rho_H$ map $M_0$ resp. $M_0^{\op}$ into the space of finite rank operators. Equivalently, for every $i,j \in I$, we must then have that $1_i \cdot H$ and $H \cdot 1_j$ are finite-dimensional Hilbert spaces. We will always denote $H_0 := M_0 \cdot H \cdot M_0$, which is a dense linear subspace of $H$.
\end{definition}

\begin{example} \label{ex: F_n as bimodules}
    Consider again the pre-Hilbert spaces $\mathcal{F}_n$ from definition \ref{def: discrete quantum space}. We equip these with a left- and right $M$-module structure by defining
    \begin{equation}
        a \cdot \vect{x_0 \otimes \cdots \otimes x_n} \cdot b := \vect{ax_0 \otimes \cdots \otimes x_n \mu^{-1/2}(b)}. \label{eq: bimodule structure on F_n}
    \end{equation}
    One checks that this gives a well-defined bimodule structure, which extends to $L^2(M^{n+1})$, thus making $L^2(M^{n+1})$ a Hilbert-$M$-bimodule in the sense of definition \ref{def: Hilbert-M-bimodule + finite type}.
\end{example}

\begin{definition} \label{def: dual Hilbert-M-bimodule}
    Given $H$, a Hilbert-$M$-bimodule, we can endow the dual Hilbert space $\overline{H}$ with a Hilbert-$M$-bimodule structure by defining
    \[
    x \cdot \overline{\xi} \cdot y := \overline{y^* \cdot \xi \cdot x^*}
    \]
    for any $\xi \in H$ and $x,y \in M$.
\end{definition}

\begin{example} \label{ex: F_n is its own dual}
    There is a natural isomorphism $\overline{\mathcal{F}_n} \cong \mathcal{F}_n$ given by
    \[
    \overline{\vect{x_0 \otimes \cdots \otimes x_n}} \mapsto \vect{\mu^{1/2}(x_n)^* \otimes \cdots \otimes \mu^{1/2}(x_0)^*}.
    \]
    As such we identify $L^2(M^{n+1}) \cong \overline{L^2(M^{n+1})}$.
\end{example}

\begin{definition} \label{def: M-bimodular map}
    Given $H,K$ two Hilbert-$M$-bimodules, we say a bounded linear map $T: H \to K$ is $M$-bimodular if $T \circ \lambda_H(x) \circ \rho_H(y^{\op}) = \lambda_K(x) \circ \rho_K(y^{\op}) \circ T$ for every $x,y \in M$. This may be written more concisely as
    \[
    T(x \cdot \xi \cdot y) = x \cdot T(\xi) \cdot y \text{ for all } x,y  \in M, \xi \in H.
    \]
    One may note that the $M$-bimodular morphisms in $B(L^2(M))$ are given precisely by multiplication with central elements of $M$. 
\end{definition}

For more general bimodules over von Neumann algebras, Alain Connes introduced the notion of their relative tensor product in \cite{Connes1980}. We recall this definition here, specified to our context, for later reference.

\begin{definition} \label{def: relative tensor product}
    Given $H,K$ two Hilbert-$M$-bimodules for some discrete quantum space $(M,\del)$, we define their $(M,\del)$-relative tensor product as follows.
    \begin{align*}
        H \mtimes K := \{\xi \in H \otimes K | \forall x \in M: (\rho_H(x^{\op}) \otimes 1) \xi = (1 \otimes \lambda_K(\mu^{1/2}(x))) \xi \}
    \end{align*}
    We explicitly view this as a closed subspace $H \mtimes K \subset H \otimes K$. This ordinary tensor product is naturally equipped with a Hilbert-$M$-bimodule structure by letting
    \[
    \lambda_{H \otimes K} = \lambda_H \otimes \id \text{ and } \rho_{H \mtimes K} = \id \otimes \rho_K
    \]
    and we define a Hilbert-$M$-bimodule structure on $H \mtimes K$ as the restriction of the one on $H \otimes K$. One easily checks that $H \mtimes K$ is finite type if $H$ and $K$ are.
    
    We denote by $P_{H \mtimes K} \in B(H \otimes K)$ the projection onto $H \mtimes K$, and for any $\xi \in H$ and $\eta \in K$, we denote $\xi \mtimes \eta := P_{H \mtimes K}(\xi \otimes \eta)$. For any Hilbert-$M$-bimodules $H_1,H_2,K_1,K_2$ and any two bounded linear maps $T: H_1 \to K_1$ and $S: H_2 \to K_2$, we define their relative tensor product as follows.
    \begin{equation}
        T \mtimes S := P_{K_1 \mtimes K_2} \circ (T \otimes S) \circ P_{H_1 \mtimes H_2} \label{eq: relative tensor product of linear maps}
    \end{equation}
    One may note that when $T,S$ happen to be $M$-bimodular, this is simply the restriction of $T \otimes S$ to $H_1 \mtimes H_2$.
\end{definition}

\begin{proposition} \label{prop: formula for projection onto relative tensor product}
    For $H,K$ two Hilbert-$M$-bimodules, the projection $P_{H \mtimes K}$ from definition \ref{def: relative tensor product} is given by
    \begin{equation}
    P_{H \mtimes K} := \sum_{y \in \onb(\mathcal{F}_0)} \rho_H(y^{\op}) \otimes \lambda_K(\mu^{1/2}(y^*)) \label{eq: projection onto relative tensor product}
    \end{equation}
    with convergence in the strong operator topology.
\end{proposition}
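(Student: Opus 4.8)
The plan is to verify directly that the operator on the right-hand side of \eqref{eq: projection onto relative tensor product}, call it $P$, is a well-defined bounded operator and coincides with the orthogonal projection onto $H \mtimes K$. I would establish four things in turn: that $P$ is independent of the chosen orthonormal basis and converges strongly, that $P = P^*$, that $\operatorname{ran}(P) \subseteq H \mtimes K$, and that $P$ acts as the identity on $H \mtimes K$. Together these force $P$ to be the orthogonal projection onto $H \mtimes K$, since self-adjointness together with $\operatorname{ran}(P) \subseteq H \mtimes K$ and $P|_{H \mtimes K} = \id$ gives $P^2 = P$ and $\operatorname{ran}(P) = H \mtimes K$.

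For independence of the basis I would observe that the summand $y \mapsto \rho_H(y^{\op}) \otimes \lambda_K(\mu^{1/2}(y^*))$ is linear in $y$ through the first tensor leg and conjugate-linear in $y$ through the second, because $y \mapsto y^*$ is conjugate-linear while $\rho_H$, $\lambda_K$ and $\mu^{1/2}$ are homomorphisms. Hence a unitary change of basis $z_j = \sum_k u_{kj} y_k$ produces the factor $\sum_j u_{kj}\overline{u_{lj}} = \delta_{kl}$, leaving the diagonal sum unchanged; this is the usual argument that a sesquilinear diagonal sum over an orthonormal basis is basis-independent. I may therefore work with the explicit orthonormal basis \eqref{eq: onb for F_0}. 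Each of its elements $\tfrac1{d_i^{1/2}}\vect{E^i_{k,l}\sigma^{-1/2}}$ has both left and right support equal to $1_i$, and $\mu^{1/2}$ preserves blocks, so the operators attached to distinct blocks $i$ have mutually orthogonal domains and ranges. A short single-matrix-algebra computation shows each block contributes a contraction, so the partial sums over increasing finite sets of blocks are uniformly bounded by one and converge strongly to a bounded operator $P$ with $\norm{P} \le 1$.

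For self-adjointness I would compute $P^* = \sum_y \rho_H((y^*)^{\op}) \otimes \lambda_K(\mu^{1/2}(y^*))^*$, use $\lambda_K(\mu^{1/2}(y^*))^* = \lambda_K(\mu^{-1/2}(y))$, and re-index the sum so that the modular twist is reabsorbed into a new orthonormal basis, recovering $P$; basis-independence from the previous step is what makes this reindexing legitimate. The two remaining points are the heart of the argument. To see $\operatorname{ran}(P) \subseteq H \mtimes K$ I would check the operator identity $(\rho_H(x^{\op}) \otimes 1)P = (1 \otimes \lambda_K(\mu^{1/2}(x)))P$ for every $x \in M$: on the left $\rho_H(x^{\op})\rho_H(y^{\op}) = \rho_H((yx)^{\op})$, on the right $\lambda_K(\mu^{1/2}(x))\lambda_K(\mu^{1/2}(y^*)) = \lambda_K(\mu^{1/2}(xy^*))$, and the two sums are matched by re-indexing, the passage between them being controlled precisely by the KMS relation $\del(xy) = \del(\mu(y)x)$. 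To see $P|_{H \mtimes K} = \id$ I would take $\xi \in H \mtimes K$ and use the defining relation of definition \ref{def: relative tensor product} to convert each factor $\lambda_K(\mu^{1/2}(y^*))$ acting on $\xi$ into the corresponding $\rho_H$-action, collapsing $P\xi$ to $(\rho_H(c^{\op}) \otimes 1)\xi$ for a single multiplier $c$; the normalisation $\tr(\sigma_i^{-1}) = d_i$ from definition \ref{def: delta-form} then gives $c = 1$, exactly as the same normalisation yields $\mult\mult^* = 1$ in proposition \ref{prop: multiplication has isometric adjoint}, so $P\xi = \xi$.

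The main obstacle is the modular bookkeeping in the last two steps: keeping track of how the $\mu^{1/2}$-twists interact with the homomorphism $x \mapsto x^{\op}$ and with taking adjoints, and verifying that the re-indexing of the basis sum is exactly compensated by the KMS relation and by the normalisation $\tr(\sigma_i^{-1}) = d_i$. Everything else is either formal, namely the homomorphism identities, or a finite-dimensional computation carried out block by block.
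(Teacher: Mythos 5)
Your four-step scaffolding --- well-definedness, self-adjointness, range contained in $H \mtimes K$, identity on $H \mtimes K$, hence equality with the orthogonal projection --- is sound, and it is in fact more complete than the paper's own proof, which only checks that the finite partial sums are projections and that the limit satisfies the intertwining relation $(\rho_H(x^{\op}) \otimes 1)P = (1 \otimes \lambda_K(\mu^{1/2}(x)))P$ (your step 3), leaving your step 4 implicit. The genuine gap is that the computations you assert will close steps 1, 2 and 4 do not close them for the formula as printed: the modular twist in the second leg sits on the wrong side. Execute your own step 4 honestly: for $\xi \in H \mtimes K$, the defining relation of definition \ref{def: relative tensor product} with $x = y^*$ turns $(1 \otimes \lambda_K(\mu^{1/2}(y^*)))\xi$ into $(\rho_H((y^*)^{\op}) \otimes 1)\xi$, so $P\xi = (\rho_H(c^{\op}) \otimes 1)\xi$ with $c = \sum_{y \in \onb(\mathcal{F}_0)} y^*y$. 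This sum is basis-independent, and on block $i$ it equals $\sum_{k,l} d_i^{-1} \sigma^{-1/2} E^i_{l,k} E^i_{k,l} \sigma^{-1/2} = \sigma_i^{-1}$: the normalisation $\tr(\sigma_i^{-1}) = d_i$ never enters, and $c = \sigma^{-1} \neq 1$ unless $\sigma = 1$. Self-adjointness fails as well: take $M = M_2(\mathbb{C})$, $\sigma = \mathrm{diag}(s_1,s_2)$ with $s_1 \neq s_2$ and $s_1^{-1} + s_2^{-1} = 2$, and $H = K = L^2(M)$. The printed sum is then
\[
\tfrac{1}{2} \sum_{k,l} s_l^{-3/2} s_k^{1/2}\, \rho_H(E_{k,l}^{\op}) \otimes \lambda_K(E_{l,k}),
\]
an idempotent which is not self-adjoint, hence not even a contraction (contrary to your step 1, since a nonzero idempotent has norm one only if it is self-adjoint); its range is spanned by the vectors $\sum_l s_l^{-2} \vect{E_{a,l}} \otimes \vect{E_{l,d}}$ for $a,d \in \{1,2\}$, whereas $H \mtimes K$ is spanned by $\sum_l s_l^{-1} \vect{E_{a,l}} \otimes \vect{E_{l,d}}$.

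What is true --- and what the paper's displayed calculation actually manipulates --- is the formula with second leg $\lambda_K(\mu^{1/2}(y)^*) = \lambda_K(\mu^{-1/2}(y^*))$. Indeed, for the basis vector $y = d_i^{-1/2}E^i_{k,l}\sigma^{-1/2}$ of \eqref{eq: onb for F_0} one has $\mu^{1/2}(y)^* = d_i^{-1/2}E^i_{l,k}\sigma^{-1/2}$, which is exactly the term $\lambda_K(E^i_{l,k}\sigma^{-1/2})$ appearing in the paper's proof, whereas $\mu^{1/2}(y^*) = d_i^{-1/2}\sigma^{-1}E^i_{l,k}\sigma^{1/2}$; the statement carries a typo which your blind attempt inherited (a further symptom: with the printed summand the finite partial sums are not projections at all). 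For the corrected summand all four of your steps go through for exactly the reasons you give: each block is an honest orthogonal projection, step 3 is the paper's computation, and in step 4 the defining relation now yields $c = \sum_y \mu^{-1}(y^*)\,y$, whose block-$i$ part is $\sum_{k,l} d_i^{-1}\sigma^{1/2}E^i_{l,k}\sigma^{-1}E^i_{k,l}\sigma^{-1/2} = d_i^{-1}\tr(\sigma_i^{-1})\,1_i = 1_i$, which is precisely where the normalisation $\tr(\sigma_i^{-1}) = d_i$ enters. The moral is that the ``modular bookkeeping'' you correctly flagged as the main obstacle, but then asserted would resolve itself by re-indexing, is exactly where the printed formula and the true projection diverge: those re-indexings had to be carried out, and carrying them out forces the replacement of $\mu^{1/2}(y^*)$ by $\mu^{1/2}(y)^*$.
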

\begin{proof}
    Restricting the sum in \eqref{eq: projection onto relative tensor product} to $y  \in \onb(1_X \cdot \mathcal{F}_0)$ for some finite $X \subset I$, one easily checks that the result gives a projection in $B(H \otimes K)$. As the SOT-limit of increasing projections, $P_{H \mtimes K}$ is therefore well-defined. Take now $x \in M$ arbitrarily, and calculate as follows, using again the explicit orthonormal basis \eqref{eq: onb for F_0}.
    \begin{align*}
        (\rho_H(x^{\op}) \otimes 1) \circ P_{H \mtimes K} =& \; \sum_{\substack{i \in I \\ 1 \leq k,l \leq d_i }} \frac{1}{d_i} \rho_H((E^i_{k,l} \sigma^{-1/2} x)) \otimes \lambda_K(E^i_{l,k} \sigma^{-1/2}) \\
        =& \; \sum_{\substack{i \in I \\ 1 \leq k,l \leq d_i }} \frac{1}{d_i} \rho_H((E^i_{k,l} \sigma^{-1/2} )) \otimes \lambda_K(\sigma^{-1/2}x \sigma^{1/2} E^i_{l,k} \sigma^{-1/2}) \\
        =& \; (1 \otimes \lambda_K(\mu^{1/2}(x))) \circ P_{H \mtimes K}\\
    \end{align*}
    Here, we have used the fact that
    \[
    \sum_{1 \leq l \leq d_i} E^i_{k,l}x \otimes E^i_{l,j} = \sum_{1 \leq l \leq d_i} E^i_{k,l} \otimes xE^i_{l,j} \text{ for any } i \in I, 1 \leq k,j \leq d_i, x \in M 
    \]
    which is very easy to check.
\end{proof}

\begin{proposition} \label{prop: F_0 is monoidal unit}
    For any Hilbert $M$-bimodule $H$, we get that there are unitary $M$-bimodular isomorphisms
    \begin{align*}
        H \otimes_{(M,\del)} L^2(M) \cong H \cong L^2(M) \otimes_{(M,\del)} H
    \end{align*}
    given by the linear continuous extension of the maps
    \begin{align*}
        \phi_R&: H \otimes \mathcal{F}_0 \to H: \xi \otimes \vect{x} \mapsto \xi \cdot \mu^{1/2}(x) \\
        \phi_L&: \mathcal{F}_0 \otimes H \to H: \vect{x} \otimes \xi \mapsto x \cdot \xi.
    \end{align*}
\end{proposition}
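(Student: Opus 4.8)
The plan is to show that each of $\phi_L$ and $\phi_R$ restricts to a unitary on the relative tensor product by computing its adjoint in closed form and then checking the two composites. I would spell this out for $\phi_L \colon \mathcal{F}_0 \otimes H \to H$, $\vect{x} \otimes \xi \mapsto x \cdot \xi$; the map $\phi_R$ is entirely symmetric (interchanging the left and right actions and $\mu^{1/2}$ with $\mu^{-1/2}$) and can alternatively be deduced from $\phi_L$ by conjugating with the self-duality $\overline{L^2(M)} \cong L^2(M)$ of Example \ref{ex: F_n is its own dual}. It is worth noting at the outset that $\phi_L$ is genuinely unbounded on all of $\mathcal{F}_0 \otimes H$, since $\norm{x \cdot \xi}$ is controlled by the operator norm of $x$ rather than by $\norm{\vect{x}}_{L^2(M)}$; the content of the proposition is precisely that it becomes isometric after restriction to $L^2(M) \mtimes H$. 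Bimodularity of $\phi_L$ is immediate: on $L^2(M) \mtimes H$ the left action sits on the $L^2(M)$-leg and the right action on $H$, so $\phi_L(a \cdot (\vect{x} \otimes \xi) \cdot b) = \phi_L(\vect{ax} \otimes \xi \cdot b) = (ax) \cdot \xi \cdot b = a \cdot \phi_L(\vect{x} \otimes \xi) \cdot b$; for $\phi_R$ one uses in addition that the right action on the $L^2(M)$-leg is $\mu$-twisted (Example \ref{ex: F_n as bimodules}), so that the two half-twists cancel.

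First I would compute the adjoint. Expanding $\phi_L^*\eta$ in the orthonormal basis of the $L^2(M)$-leg and pairing against simple tensors, everything reduces to the reproducing identity
\[
\sum_{y \in \onb(\mathcal{F}_0)} \del(wy)\, y^* = w \qquad \text{for all } w \in M_0,
\]
which follows directly from the explicit orthonormal basis \eqref{eq: onb for F_0} and $\del(x) = \tr_M(x\sigma)$. This yields the closed form $\phi_L^*(\eta) = \sum_{y \in \onb(\mathcal{F}_0)} \vect{y} \otimes (y^* \cdot \eta)$. Using the intertwining relation characterising $L^2(M) \mtimes H$ — the same ``moving scalars across the tensor'' identity established in Proposition \ref{prop: formula for projection onto relative tensor product} — one checks that this vector lies in $L^2(M) \mtimes H$, so $\phi_L^*$ maps $H$ into the relative tensor product.

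Next I would verify the composites. Applying $\phi_L$ to the formula above gives $\phi_L \phi_L^*(\eta) = \bigl( \sum_{y} y y^* \bigr) \cdot \eta$, and the crucial point is that $\sum_{y} y y^* = 1$: this is exactly the collapse carried out in Proposition \ref{prop: multiplication has isometric adjoint}, where the delta-form normalisation $\tr(\sigma_i^{-1}) = d_i$ of Definition \ref{def: delta-form} is what forces the sum to equal the unit. Hence $\phi_L \phi_L^* = \id_H$, so $\phi_L^*$ is an isometry and $\phi_L$ is surjective onto $H$. It then remains to identify $\phi_L^* \phi_L$ with the projection $P_{L^2(M) \mtimes H}$ of Proposition \ref{prop: formula for projection onto relative tensor product}, equivalently to show that the range of the isometry $\phi_L^*$ is all of $L^2(M) \mtimes H$ rather than a proper subspace; once this is known, $\phi_L$ restricted to the relative tensor product is the inverse of $\phi_L^*$ and hence unitary. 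I expect this surjectivity onto the full relative tensor product to be the main obstacle. I would establish it by feeding the spanning vectors $\vect{x} \mtimes \xi = P_{L^2(M) \mtimes H}(\vect{x} \otimes \xi)$ through $\phi_L^* \phi_L$ and using the explicit projection formula to recover them, giving $P_{L^2(M) \mtimes H} \leq \phi_L^* \phi_L$ and therefore equality; in the finite-type setting this can equally be read off block by block from the fact that $\phi_L^*$ is an isometry between spaces of matching size.

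Finally, the statement for $\phi_R$ follows by the same scheme, the only change being that its adjoint is $\phi_R^*(\eta) = \sum_{y} (\eta \cdot \mu^{-1/2}(y^*)) \otimes \vect{y}$ and that $\phi_R \phi_R^* = \id_H$ now rests on $\sum_{y} \mu^{-1/2}(y^*)\, \mu^{1/2}(y) = 1$, again a consequence of $\tr(\sigma_i^{-1}) = d_i$. As a consistency check, when $H = L^2(M)$ the map $\phi_R$ is exactly the multiplication $\mult$, and the identity $\phi_R \phi_R^* = \id$ specialises to the relation $\mult \mult^* = 1$ of Proposition \ref{prop: multiplication has isometric adjoint}.
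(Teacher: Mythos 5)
Your computational core is correct and follows exactly the paper's route: compute $\phi_L^*$ and $\phi_R^*$ in closed form, check $\phi\phi^* = \id_H$ via the collapse identity resting on $\tr(\sigma_i^{-1}) = d_i$, and identify $\phi^*\phi$ with the projection onto the relative tensor product. The adjoint formulas, the reproducing identity, and the consistency check against Proposition \ref{prop: multiplication has isometric adjoint} are all right.

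However, your framing claim that $\phi_L$ is ``genuinely unbounded on all of $\mathcal{F}_0 \otimes H$'' is false, and it contradicts both the statement you are proving (which speaks of \emph{the linear continuous extension} of $\phi_L$ on $\mathcal{F}_0 \otimes H$) and the paper's proof, whose first step is precisely that $\phi_L, \phi_R$ are bounded and hence extend to the full Hilbert space tensor products. In fact $\phi_L$ is a coisometry of norm $1$ on all of $L^2(M) \otimes H$. The map $V: H \to L^2(M) \otimes H$, $V\eta = \sum_{y \in \onb(\mathcal{F}_0)} \vect{y} \otimes y^* \cdot \eta$, is a well-defined isometry: using the basis \eqref{eq: onb for F_0},
\[
\sum_{y \in \onb(1_i \cdot \mathcal{F}_0)} \lambda_H(yy^*) = \sum_{k,l} d_i^{-1} \lambda_H \left( E^i_{k,l} \sigma^{-1} E^i_{l,k} \right) = \lambda_H(1_i),
\]
so $\norm{V\eta}^2 = \sum_{i \in I} \norm{1_i \cdot \eta}^2 = \norm{\eta}^2$ by normality and unitality of $\lambda_H$; and $V^*$ is then a bounded coisometry that agrees with $\phi_L$ on elementary tensors. (Your heuristic is also backwards: $\tr(\sigma_i^{-1}) = d_i$ forces $\sigma_i \geq d_i^{-1} 1_i$, hence $\norm{\vect{x}}^2 = d_i \tr(x^* x \sigma_i) \geq \tr(x^*x) \geq \norm{x}_{\op}^2$ for $x \in M_i$ — the $L^2$-norm dominates the operator norm, not the other way around.)

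This is not a cosmetic slip, because two of your later steps silently use the boundedness you deny. First, you evaluate $\phi_L \phi_L^*(\eta)$ by applying $\phi_L$ term by term to the infinite series $\sum_y \vect{y} \otimes y^* \cdot \eta$; that series does not lie in the algebraic tensor product, so for a genuinely unbounded $\phi_L$ the composite $\phi_L \phi_L^*$ is not even defined, and passing to the limit of partial sums requires exactly the continuity you reject. Second, your closing argument compares positive operators, ``$P_{L^2(M) \mtimes H} \leq \phi_L^* \phi_L$ and therefore equality''; this presupposes that $\phi_L^* \phi_L$ is a bounded self-adjoint idempotent, which again uses boundedness of $\phi_L$ together with $\phi_L \phi_L^* = \id_H$. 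Once the (true) boundedness statement is restored, both steps are justified and your proof coincides with the paper's. A minor further point: your fallback of reading surjectivity off ``block by block in the finite-type setting'' is not available, since the proposition concerns arbitrary Hilbert-$M$-bimodules, not finite type ones.
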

\begin{proof}
    By direct calculation, one may check that the maps $\phi_R$ and $\phi_L$    are bounded linear maps, and hence extend to $H \otimes L^2(M)$ and $L^2(M) \otimes H$ respectively, with respective adjoints
    \[
    \phi_R^*: H \to H \otimes L^2(M): \xi \mapsto \sum_{x \in \onb(\mathcal{F}_0)} \xi \cdot x \otimes \vect{\mu^{1/2}(x)^*}
    \]
    and 
    \[
    \phi_L^*: H \to L^2(M) \otimes H: \xi \mapsto \sum_{x \in \onb(\mathcal{F}_0)} \vect{x} \otimes x^* \cdot \xi.
    \]
    By direct computation, one checks that $\phi_R \phi_R^* = \id_H$ while $\phi_R^* \phi_R = P_{H \mtimes L^2(M)}$, the projection from definition \ref{def: relative tensor product}. Similarly, $\phi_L \phi_L^* = \id_H$ and $\phi_L^* \phi_L = P_{L^2(M) \mtimes H}$. This establishes the required isomorphisms.
\end{proof}
Since one sees straightforwardly that $L^2(M^n) \cong L^2(M)^{\otimes n}$, proposition \ref{prop: F_0 is monoidal unit} yields isomorphisms
\begin{equation}
    L^2(M^{n+1}) \mtimes L^2(M^{m+1}) \cong L^2(M^{n+m+1}). \label{eq: isomorphisms tensor powers of M}
\end{equation}
Moreover, one may note that for $H = L^2(M)$, the maps $\phi_L$ and $\phi_R$ from the proof of proposition \ref{prop: F_0 is monoidal unit} are both equal to $\mult$.

\begin{definition} \label{def: concrete unitary 2-category of M-bimodules}
    Recall that we have fixed a von Neumann algebra $M = \ell^{\infty} \bigoplus_{i \in I} M_i$, which is a direct sum of matrix algebras indexed by some set $I$, and a delta form $\del$ on $M$ as in definition \ref{def: delta-form}. A concrete unitary $2$-category of finite type Hilbert-$M$-bimodules is a $2$-category $\mathcal{C}$ whose 1-cells, which we call objects, are finite type Hilbert-$M$-bimodules, such that the following hold. 
    \begin{enumerate}
        \item For any $H,K$ objects in $\mathcal{C}$, the $2$-cells connecting $H$ to $K$, which we call morphisms, are given by the vector space $\mor_{\mathcal{C}}(H,K)$, which consists of bounded linear $M$-bimodular maps, and is closed in the norm topology.
        
        \item The $0$-cells of the category are given by a partition $\mathcal{E}$ of $I$, i.e. $I = \cup_{a \in \mathcal{E}} I_a$. We denote $M_a := \bigoplus_{i \in I_a} M_i$. For every $a \in \mathcal{E}$, there is an irreducible object $L^2(M_a)$. We denote by $z_a: L^2(M) \to L^2(M_a)$ the projection onto this subspace.

        \item For any $H,K$ objects in $\mathcal{C}$, we have $\mor_{\mathcal{C}}(H,K)^* = \mor_{\mathcal{C}}(K,H)$.

        \item $\mathcal{C}$ is closed under the relative tensor product $\mtimes$ from definition \ref{def: relative tensor product}, both on objects and morphisms.

        \item $\mathcal{C}$ is closed under finite direct sums.\footnote{One endows the direct sum of two Hilbert-$M$-bimodules with the obvious Hilbert-$M$-bimodule structure. If the summands are finite type, so is the sum.}

        \item For any object $H$ in $\mathcal{C}$, the morphisms $\phi_L$ and $\phi_R$ from the proof of proposition \ref{prop: F_0 is monoidal unit} are morphisms in $\mathcal{C}$, i.e. $H \mtimes L^2(M_a)$ and $L^2(M_b) \mtimes H$ are $\{0\}$ for all but finitely many $a,b \in \mathcal{E}$, and we view $\phi_L, \phi_R$ as the appropriate restriction.
        
        \item For any object $H$ in $\mathcal{C}$, the Hilbert-$M$-bimodule $\overline{H}$ is also an object in $\mathcal{C}$, and there are morphisms $s_H \in \mor_{\mathcal{C}}(L^2(M), H \mtimes \overline{H})$ and $t_H \in \mor_{\mathcal{C}}(L^2(M), \overline{H} \mtimes H)$ satisfying the conjugate equations
        \begin{align}
            \phi_R& \circ (s_H^* \mtimes \id) \circ (\id \mtimes t_H) \circ \phi_R^* = \id_H \text{ and } \nonumber \\
            \phi_L& \circ (t_H^* \mtimes \id) \circ (\id \mtimes s_H) \circ \phi_L^* = \id_{\overline{H}}. \label{eq: conjugate equations}
        \end{align}
    \end{enumerate}
\end{definition}
\begin{remark} \label{rem: alternative definition unitary 2-category}
    Unitary $2$-categories, as originally defined in \cite{LongoRoberts97}, and elsewhere in the literature, typically do not allow for direct sums to be taken of objects which are supported on different $0$-cells. This is of course only a convention, and for our purpose it seemed more natural to allow for arbitrary finite direct sums, as this is of course possible in the equivariant corepresentation category of a coaction $M \curvearrowleft \mathbb{G}$ as defined in definition \ref{def: category of equivariant corepresentations}.
\end{remark}
\begin{remark} \label{rem: category depends on delta form}
    We will be speaking about concrete unitary $2$-categories of Hilbert-$M$-bimodules in the sense of definition \ref{def: concrete unitary 2-category of M-bimodules}. One may note that this terminology bears no reference to the delta-form $\del$, though it is important since the relative tensor product $\mtimes$ and the maps $\phi_L,\phi_R$ depend on it. This choice was made in order not to overburden the already heavy notation.
\end{remark}

\begin{lemma} \label{lem: projections on 0-cells are morphisms in category}
    Let $H$ be any object in a concrete unitary $2$-category $\mathcal{C}$ of Hilbert-$M$-bimodules. Recall from definition \ref{def: concrete unitary 2-category of M-bimodules}.2 the definition of $0$-cells in our category. Take $a,b \in \mathcal{E}$ arbitrarily, and denote by $z_a, z_b$ the corresponding minimal projections in $\End_{\mathcal{C}}(L^2(M)) \subset M$. Then $\lambda_H(z_a) \rho_H(z_b^{op}) \in \End_{\mathcal{C}}(H)$.
\end{lemma}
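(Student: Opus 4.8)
The plan is to handle the two commuting projections $\lambda_H(z_a)$ and $\rho_H(z_b^{\op})$ separately, showing each lies in $\End_{\mathcal{C}}(H)$, and then to conclude that their product does too because $\End_{\mathcal{C}}(H)$ is an algebra closed under composition. Throughout I will use that, by the remark following Definition \ref{def: M-bimodular map}, the $M$-bimodular endomorphisms of $L^2(M)$ are exactly multiplications by central elements, so $z_a$ is the central projection $1_{I_a} = \sum_{i \in I_a} 1_i$, and $\lambda_H(z_a)\rho_H(z_b^{\op})$ is simply the orthogonal projection of $H$ onto the corner $1_{I_a}\cdot H \cdot 1_{I_b}$.

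First I would treat $\lambda_H(z_a)$ via the monoidal unit isomorphism $\phi_L : L^2(M) \mtimes H \to H$ of Proposition \ref{prop: F_0 is monoidal unit}, which is a unitary and, being a morphism, is $M$-bimodular. By axiom 6 of Definition \ref{def: concrete unitary 2-category of M-bimodules}, the summands $L^2(M_a) \mtimes H$ are nonzero for only finitely many $a \in \mathcal{E}$; each such summand is an object of $\mathcal{C}$ (a relative tensor product of the objects $L^2(M_a)$ and $H$, using axioms 2 and 4), and the restriction $\phi_L^a := \phi_L|_{L^2(M_a)\mtimes H}$ is a morphism in $\mor_{\mathcal{C}}(L^2(M_a)\mtimes H, H)$. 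Since $\phi_L^*\phi_L = P_{L^2(M)\mtimes H}$ by Proposition \ref{prop: F_0 is monoidal unit}, the map $\phi_L$ is isometric, so $\phi_L^a$ is a partial isometry; and because $\phi_L$ is left-$M$-modular it intertwines the projection onto $L^2(M_a)\mtimes H$ (which is left multiplication by the central projection $z_a$) with $\lambda_H(z_a)$, whence the range of $\phi_L^a$ is exactly $\lambda_H(z_a)H$. Therefore $\phi_L^a (\phi_L^a)^* = \lambda_H(z_a)$, and by axiom 3 the adjoint $(\phi_L^a)^*$ is again a morphism, so $\lambda_H(z_a) \in \End_{\mathcal{C}}(H)$.

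Then I would run the mirror-image argument for the right action, using instead $\phi_R : H \mtimes L^2(M) \to H$ and its restrictions $\phi_R^b := \phi_R|_{H \mtimes L^2(M_b)}$, which are morphisms by the same clause of axiom 6. The identical reasoning yields $\phi_R^b (\phi_R^b)^* = \rho_H(z_b^{\op})$, so $\rho_H(z_b^{\op}) \in \End_{\mathcal{C}}(H)$. Finally, since morphism spaces are closed under composition, the product $\lambda_H(z_a)\rho_H(z_b^{\op})$ lies in $\End_{\mathcal{C}}(H)$, which is what we want.

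I expect the only delicate point to be the bookkeeping in the middle step: verifying that under the unitary $\phi_L$ the projection onto the subobject $L^2(M_a)\mtimes H$ corresponds precisely to $\lambda_H(z_a)$, i.e. $\phi_L(L^2(M_a)\mtimes H) = \lambda_H(z_a)H$. This is where the left-module structure of the relative tensor product and the $M$-bimodularity of $\phi_L$ must be combined carefully; in explicit terms it reduces to the identity $\sum_{x \in \onb(\mathcal{F}_0)} z_a x x^* = z_a$, which follows from the computation $\sum_{x} x x^* = 1$ already performed in the proof of Proposition \ref{prop: multiplication has isometric adjoint}. A secondary point worth stating explicitly is that one should work corner-by-corner through axiom 6 rather than trying to treat $z_a$ as a single morphism $L^2(M) \to L^2(M_a)$: the space $L^2(M) = \bigoplus_{a} L^2(M_a)$ need not itself be an object of $\mathcal{C}$ when $\mathcal{E}$ is infinite, since only finite direct sums are objects by axiom 5.
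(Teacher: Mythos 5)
Your proof is correct and rests on essentially the same ingredients as the paper's: the unit morphisms $\phi_L,\phi_R$ guaranteed by axiom 6 of definition \ref{def: concrete unitary 2-category of M-bimodules}, the $0$-cell projections $z_a,z_b$ as morphisms, and closure of $\mathcal{C}$ under relative tensor products, adjoints and composition. The paper merely compresses your two-step argument into the single directly verified identity $\lambda_H(z_a)\rho_H(z_b^{\op}) = \phi_R \circ (\phi_L \mtimes 1)\circ(z_a \mtimes \id_H \mtimes z_b)\circ(\phi_L^* \mtimes 1)\circ \phi_R^*$, rather than establishing $\lambda_H(z_a)=\phi_L^a(\phi_L^a)^*$ and $\rho_H(z_b^{\op})=\phi_R^b(\phi_R^b)^*$ separately and composing.
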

\begin{proof}
    Consider the morphisms $\phi_L$ and $\phi_R$ from the proof of proposition \ref{prop: F_0 is monoidal unit}, which are morphisms in $\mathcal{C}$ by definition. One checks by direct computation that 
    \[
    \lambda_H(z_a) \rho_H(z_b^{\op}) = \phi_R \circ (\phi_L \mtimes 1) \circ (z_a \mtimes \id_H \mtimes z_b) \circ (\phi_L^* \mtimes 1) \circ \phi_r.
    \]
    This proves the claim.
\end{proof}
In light of lemma \ref{lem: projections on 0-cells are morphisms in category} and remark \ref{rem: alternative definition unitary 2-category}, one could define a smaller unitary $2$-category whose objects are Hilbert-$M$-bimodules in $\mathcal{C}$ for which there exist some $a,b \in \mathcal{E}$ such that $\lambda_H(z_a) = \rho_H(z_b^{\op}) = \id_H$. This would then really be a unitary $2$-category in the sense of \cite{LongoRoberts97}. The picture is, however, equivalent.

\begin{lemma} \label{lem: homomorphism to finite dimensional C*-algebra}
    Let $H,K$ be objects in a concrete unitary $2$-category $\mathcal{C}$ of finite type Hilbert-$M$-bimodules as in definition \ref{def: concrete unitary 2-category of M-bimodules}. Suppose there is a $0$-cell $a \in \mathcal{E}$ of $\mathcal{C}$ such that $\lambda_H(z_a) = \id_H$ and $\lambda_K(z_a) = \id_K$. Then for any $i \in I_a$, we get that
    \begin{equation}
        \mor_{\mathcal{C}}(H,K) \to B(1_i \cdot H,1_i \cdot K): T \mapsto \lambda_K(1_i) \circ T \label{eq: homomorphism to finite dimensional C*-algebra}
    \end{equation}
    is an injective linear map. Moreover, when $H=K$, it is a homomorphism of $C^*$-algebras. Likewise, if there is some $0$-cell $b \in \mathcal{E}$ with $\rho_H(z_b^{\op}) = \id_H$ and $\rho_K(z_b^{\op}) = \id_K$, then for any $j \in I_b$
    \begin{equation*}
        \mor_{\mathcal{C}}(H,K) \to B(H \cdot 1_j,K \cdot 1_j): T \mapsto \rho_K(1_j^{\op}) \circ T 
    \end{equation*}
    is an injective linear map, and when $H=K$, it is a homomorphism of $C^*$-algebras.
\end{lemma}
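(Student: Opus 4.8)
The plan is to dispatch the formal parts first and then isolate the one genuinely nontrivial point, which is injectivity. Since each $1_i$ is a central projection in $M$, any $M$-bimodular $T$ satisfies $\lambda_K(1_i)\circ T = T\circ\lambda_H(1_i)$, so on the finite-dimensional space $1_i\cdot H$ the operator $\lambda_K(1_i)\circ T$ is simply the restriction $T|_{1_i\cdot H}$, which indeed lands in $1_i\cdot K$; linearity is immediate. When $H=K$, the relation $\lambda_H(1_i)T=T\lambda_H(1_i)$ together with $\lambda_H(1_i)^2=\lambda_H(1_i)$ gives $\Phi_i(TS)=\Phi_i(T)\Phi_i(S)$ and $\Phi_i(T^*)=\Phi_i(T)^*$ for $\Phi_i(T):=\lambda_H(1_i)\circ T$, so $\Phi_i$ is a $*$-homomorphism of $C^*$-algebras. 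To reduce the injectivity of the general map $T\mapsto\lambda_K(1_i)T$ to the endomorphism case, I note that $\lambda_K(1_i)T=0$ forces $\lambda_H(1_i)T^*T=T^*\lambda_K(1_i)T=0$ with $T^*T\in\End_{\mathcal C}(H)$; hence it suffices to prove that $\Phi_i\colon\End_{\mathcal C}(H)\to B(1_i\cdot H)$ is faithful.

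The crux is that $\Phi_i$ only records the $i$-block, whereas a priori an endomorphism $S\in\End_{\mathcal C}(H)$ is merely block-diagonal for the left action (it commutes with every central $\lambda_H(1_j)$), so $\Phi_i$ could lose the blocks $j\neq i$. The whole point of the hypothesis that $a$ is a $0$-cell---i.e. that $L^2(M_a)$ is irreducible---is that it rigidly links these blocks. I would exploit this through the conjugate morphism $s_H\in\mor_{\mathcal C}(L^2(M),H\mtimes\overline H)$ supplied by the last axiom of Definition~\ref{def: concrete unitary 2-category of M-bimodules}. Because $H$ has left support $a$, the bimodule $H\mtimes\overline H$ has both left and right support $a$, so $s_H$ factors through $z_a$, i.e. $s_H=s_H z_a$. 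Consequently the categorical trace $c(X):=s_H^*(X\mtimes\id_{\overline H})s_H$ takes values in $\End_{\mathcal C}(L^2(M_a))=\mathbb C\,z_a$, by irreducibility a single scalar.

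The key computation is then the following. Writing $c(S^*S)=\gamma\,z_a$ with $\gamma\ge 0$, I compress by the (ordinary, not categorical) projection $z_i$ onto $L^2(M_i)$ to get $\gamma\,z_i=z_i c(S^*S)z_i=(s_Hz_i)^*(S^*S\mtimes\id_{\overline H})(s_Hz_i)$. By bimodularity $s_Hz_i$ has image in $(1_i\cdot H)\mtimes\overline{(1_i\cdot H)}$, on which $S\mtimes\id_{\overline H}$ acts as $(\lambda_H(1_i)S\lambda_H(1_i))\mtimes\id$. Thus if $\lambda_H(1_i)S=0$ then $(S\mtimes\id_{\overline H})(s_Hz_i)=0$, whence $\gamma z_i=0$ and, since $z_i\neq0$, also $c(S^*S)=0$. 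It remains to know that $c$ is faithful, i.e. that $c(S^*S)=0\Rightarrow S=0$; since $c(S^*S)=0$ is equivalent to $(S\mtimes\id_{\overline H})s_H=0$, this is exactly Frobenius reciprocity, which I would derive from the conjugate equations~\eqref{eq: conjugate equations} by the standard zig-zag: tensoring $(S\mtimes\id_{\overline H})s_H$ with $\id_H$, applying $\phi_R(\id_H\mtimes t_H^*)$, and using naturality of $\phi_R$ to recover $S=\phi_R(\id_H\mtimes t_H^*)\big(((S\mtimes\id_{\overline H})s_H)\mtimes\id_H\big)\phi_R^*$.

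The main obstacle is precisely this linking of distinct blocks: the naive restriction map is block-diagonal and would not be injective for a general $M$-bimodule, so the argument must feed the irreducibility of $L^2(M_a)$ into the problem, and the only clean channel for that is the conjugate pair $(s_H,t_H)$. Verifying that $c$ is faithful (the conjugate-equation zig-zag identity) and that $s_Hz_i$ is supported in the diagonal block $(1_i\cdot H)\mtimes\overline{(1_i\cdot H)}$ are the two technical checks on which the whole proof rests; everything else is bookkeeping with central projections.
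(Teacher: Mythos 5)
Your proof is correct and follows essentially the same route as the paper's: both hinge on the conjugate morphism $s_H$, the fact that the categorical trace $s_H^*\circ\left((T^*T)\mtimes \id\right)\circ s_H$ lands in $\End_{\mathcal{C}}(L^2(M_a)) \cong \mathbb{C}$ so that its vanishing on the single block $1_i$ forces it to vanish entirely, positivity to conclude $(T\mtimes\id_{\overline{H}})\circ s_H = 0$, and the conjugate-equation zig-zag to recover $T=0$. Your explicit reduction to the endomorphism case via $T^*T$ is only a cosmetic repackaging of the paper's identity $T^*\lambda_K(1_i)T = \lambda_H(1_i)T^*T$ used inside the trace.
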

\begin{proof}
    We show the first claim. The second may be handled analogously. Clearly, when $H=K$, the map in \eqref{eq: homomorphism to finite dimensional C*-algebra} is a homomorphism of $C^*$-algebras. We show that it is in general injective. Suppose therefore that we have $T \in \mor_{\mathcal{C}}(H,K)$ such that $\lambda_K(1_i) \circ T = 0$. Let $s_H, t_H$ be as in \eqref{eq: conjugate equations}, and note that we then have
    \[
    0 = s_H^* \circ (T^*\lambda_K(1_i)T \mtimes 1) s_H = \lambda_{L^2(M_a)}(1_i) s_H^* \circ (T^*T \mtimes 1) s_H.
    \]
    Since $s_H^* \circ (T^*T \mtimes 1) s_H \in \End_{\mathcal{C}}(L^2(M_a)) \cong \mathbb{C}$, we must then have that $s_H^* \circ (T^*T \mtimes 1) s_H = 0$. By positivity, this implies $(T \mtimes 1)s_H = 0$. Now, we can calculate, using \eqref{eq: conjugate equations}.
    \begin{align*}
        0 =& \; \phi^K_R \circ (T \otimes t_H^*) \circ (s_H \mtimes \id_H) \circ \phi^H_L = \phi^K_R (T \otimes 1) (\phi^H_R)^* = T.
    \end{align*}
    Here we write $\phi^K_R$ and $\phi^H_R$ to denote the morphisms from proposition \ref{prop: F_0 is monoidal unit} applied to $K$ and $H$ respectively. This proves the claim.
\end{proof}
\begin{corollary} \label{cor: finite dimensional intertwiner spaces}
    For any objects $H,K$ in a concrete unitary $2$-category of finite type Hilbert-$M$-bimodules, the morphism space $\mor_{\mathcal{C}}(H,K)$ is finite dimensional.
\end{corollary}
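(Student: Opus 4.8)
The plan is to reduce the statement to Lemma~\ref{lem: homomorphism to finite dimensional C*-algebra}, which already handles the case where both $H$ and $K$ are supported on a single $0$-cell on the left. The reduction proceeds by decomposing $H$ and $K$ according to the partition $\mathcal{E}$ of $I$ into $0$-cells, using the monoidal-unit isomorphism to realise each block as a genuine object of $\mathcal{C}$.

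First I would use Proposition~\ref{prop: F_0 is monoidal unit} together with $L^2(M) \cong \bigoplus_{b \in \mathcal{E}} L^2(M_b)$ to write $H \cong L^2(M) \mtimes H \cong \bigoplus_{b \in \mathcal{E}} H_b$, where $H_b := L^2(M_b) \mtimes H$. By property 6 of Definition~\ref{def: concrete unitary 2-category of M-bimodules}, only finitely many $H_b$ are nonzero, so this is a finite direct sum, and the isomorphism is realised by the morphism $\phi_L$, which lies in $\mathcal{C}$. Each $H_b$ is genuinely an object of $\mathcal{C}$, being a relative tensor product of the objects $L^2(M_b)$ and $H$; it is finite type, and it satisfies $\lambda_{H_b}(z_b) = \id_{H_b}$ since $\lambda_{H_b} = \lambda_{L^2(M_b)} \otimes \id$ and $\lambda_{L^2(M_b)}(z_b) = \id$. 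Doing the same for $K$ yields a finite decomposition $K \cong \bigoplus_{b \in \mathcal{E}} K_b$ into finite type objects with $\lambda_{K_b}(z_b) = \id_{K_b}$.

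Next, since $\mathcal{C}$ is closed under finite direct sums, morphism spaces split as $\mor_{\mathcal{C}}(H,K) \cong \bigoplus_{b,b'} \mor_{\mathcal{C}}(H_b, K_{b'})$, the components being obtained by composing with the structural inclusions and projections of the direct sums. I would then observe that the off-diagonal spaces vanish: any $T \in \mor_{\mathcal{C}}(H_b, K_{b'})$ is $M$-bimodular, and since $z_b$ is central in $M$ one gets $T = T\lambda_{H_b}(z_b) = \lambda_{K_{b'}}(z_b) T = \lambda_{K_{b'}}(z_b z_{b'}) T$, which is $0$ when $b \neq b'$ because $z_b z_{b'} = 0$. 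Hence $\mor_{\mathcal{C}}(H,K) \cong \bigoplus_{b} \mor_{\mathcal{C}}(H_b, K_b)$, a finite direct sum.

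Finally, each diagonal term is finite dimensional by Lemma~\ref{lem: homomorphism to finite dimensional C*-algebra}: since $\lambda_{H_b}(z_b) = \id$ and $\lambda_{K_b}(z_b) = \id$, picking any $i \in I_b$ gives an injective linear map $\mor_{\mathcal{C}}(H_b, K_b) \to B(1_i \cdot H_b, 1_i \cdot K_b)$, whose target is finite dimensional because $H_b, K_b$ are finite type, so $1_i \cdot H_b$ and $1_i \cdot K_b$ are finite dimensional Hilbert spaces. A finite direct sum of finite dimensional spaces is finite dimensional, which proves the claim. The only point requiring genuine care — and the reason I pass to the honest objects $L^2(M_b) \mtimes H$ rather than to the abstract corners $\lambda_H(z_b)H$ — is that Lemma~\ref{lem: homomorphism to finite dimensional C*-algebra} is stated for objects of $\mathcal{C}$, whereas $\mathcal{C}$ is not assumed to be closed under subobjects; the isomorphism of Proposition~\ref{prop: F_0 is monoidal unit} is exactly what lets one realise each corner as a bona fide object inside $\mathcal{C}$ and legitimately invoke the lemma.
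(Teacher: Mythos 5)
Your proof is correct and is essentially the paper's argument: decompose $H$ and $K$ over the finitely many $0$-cells that support them and apply Lemma \ref{lem: homomorphism to finite dimensional C*-algebra} blockwise to the diagonal pieces, the off-diagonal pieces vanishing by $M$-bimodularity. The only difference is how the reduction is justified --- the paper invokes Lemma \ref{lem: projections on 0-cells are morphisms in category} to treat the corners $z_a \cdot H \cdot z_b$ directly as if they were objects, while you realise them as the honest objects $L^2(M_b) \mtimes H$ via Proposition \ref{prop: F_0 is monoidal unit} and Definition \ref{def: concrete unitary 2-category of M-bimodules}.6, which is a slightly more careful way of handling the fact that $\mathcal{C}$ is not assumed closed under subobjects.
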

\begin{proof}
    By lemma \ref{lem: projections on 0-cells are morphisms in category}, we may assume that there are some $a,b \in \mathcal{E}$ such that $z_a \cdot H \cdot z_b = H$ and $z_a \cdot K \cdot z_b = K$. For any $i \in I_a$, the space $B(1_i \cdot H,1_i \cdot K)$ is finite dimensional, so by injectivity of the map \eqref{eq: homomorphism to finite dimensional C*-algebra}, we may conclude.
\end{proof}

\begin{definition} \label{def: Category covers bimodule}
    Given $\mathcal{C}$, a concrete unitary $2$-category of finite type Hilbert-$M$-bimodules in the sense of definition \ref{def: concrete unitary 2-category of M-bimodules}, and a Hilbert-$M$-bimodule $H$ which is not necessarily finite type. A covering of $H$ by $\mathcal{C}$ consists of a set $\{H_a | a \in A\}$ of objects in $\mathcal{C}$ and isometries $\iota_a: H_a \to H$ with mutually orthogonal ranges, for $a \in A$ some index set, such that $\bigoplus_{a \in A} \iota(H_a)$ is dense in $H$. We will very often suppress the isometries $\iota_a$ in the notation, and simply write $\bigoplus_{a \in A} H_a$ dense in $H$.
\end{definition}

\begin{remark} \label{rem: category covers L^2(M)}
    Note that by definition \ref{def: concrete unitary 2-category of M-bimodules}.2, every concrete unitary $2$-category of Hilbert-$M$-bimodules covers $L^2(M)$.
\end{remark}

\begin{proposition} \label{prop: covering bimodules closed under relative tensor product}
    Let $\mathcal{C}$ be a concrete unitary $2$-category of Hilbert-$M$-bimodules as defined in \ref{def: concrete unitary 2-category of M-bimodules}, and suppose we have $H,K$ two Hilbert-$M$-bimodules which are covered by $\mathcal{C}$, then also $H \mtimes K$ is covered by $\mathcal{C}$.
\end{proposition}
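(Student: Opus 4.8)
The plan is to fix coverings $\{H_a\}_{a \in A}$ of $H$ and $\{K_b\}_{b \in B}$ of $K$ as provided by definition \ref{def: Category covers bimodule}, with $M$-bimodular isometries $\iota_a \colon H_a \to H$ and $\jmath_b \colon K_b \to K$ having mutually orthogonal ranges and dense direct sums, and then to show that $\{H_a \mtimes K_b\}_{(a,b) \in A \times B}$ together with the maps $\iota_a \mtimes \jmath_b$ is a covering of $H \mtimes K$. Since $\mathcal{C}$ is closed under the relative tensor product on objects (definition \ref{def: concrete unitary 2-category of M-bimodules}.4), each $H_a \mtimes K_b$ is again an object of $\mathcal{C}$, so there are three things left to verify: that each $\iota_a \mtimes \jmath_b$ is an isometry into $H \mtimes K$, that the ranges are mutually orthogonal, and that their direct sum is dense.

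For the first point I would use that $\iota_a$ and $\jmath_b$ are $M$-bimodular, so by the remark closing definition \ref{def: relative tensor product} the map $\iota_a \mtimes \jmath_b$ is just the restriction of $\iota_a \otimes \jmath_b$ to $H_a \mtimes K_b$. As $\iota_a \otimes \jmath_b$ is an isometry of $H_a \otimes K_b$ into $H \otimes K$, this restriction is isometric, and transporting the defining relation of $H_a \mtimes K_b$ through $\iota_a \otimes \jmath_b$ (using bimodularity) shows the image satisfies the defining relation of $H \mtimes K$, hence lands there. Mutual orthogonality is then immediate: for $(a,b) \neq (a',b')$ the range $(\iota_a \mtimes \jmath_b)(H_a \mtimes K_b)$ sits inside $\iota_a(H_a) \otimes \jmath_b(K_b)$, and these tensor-leg subspaces are orthogonal in $H \otimes K$ because the ranges $\iota_a(H_a)$ are mutually orthogonal in $H$ and the $\jmath_b(K_b)$ in $K$.

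The real work is density, and here I would exploit the explicit formula for $P := P_{H \mtimes K}$ from proposition \ref{prop: formula for projection onto relative tensor product}. The key observation is that each summand $\rho_H(y^{\op}) \otimes \lambda_K(\mu^{1/2}(y^*))$ preserves the subspace $\iota_a(H_a) \otimes \jmath_b(K_b)$, since $\iota_a(H_a)$ is invariant under $\rho_H(M)$ and $\jmath_b(K_b)$ under $\lambda_K(M)$ by bimodularity of the isometries. Thus $P$ preserves $\iota_a(H_a) \otimes \jmath_b(K_b)$, and under the identification $\iota_a \otimes \jmath_b$ it restricts there to exactly $P_{H_a \mtimes K_b}$; consequently $P\bigl(\iota_a(H_a) \otimes \jmath_b(K_b)\bigr) = (\iota_a \mtimes \jmath_b)(H_a \mtimes K_b)$. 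Now the algebraic span of $\bigcup_{a,b} \iota_a(H_a) \otimes \jmath_b(K_b)$ is dense in $H \otimes K$ because $\bigoplus_a \iota_a(H_a)$ and $\bigoplus_b \jmath_b(K_b)$ are dense in $H$ and $K$; applying the continuous surjection $P \colon H \otimes K \to H \mtimes K$ sends this dense set to a dense subset of $H \mtimes K$, which by the previous sentence is precisely the span of the $(\iota_a \mtimes \jmath_b)(H_a \mtimes K_b)$. This gives density and completes the covering.

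I expect the only step needing genuine care to be the invariance of $\iota_a(H_a) \otimes \jmath_b(K_b)$ under $P$ via the summand-wise argument of proposition \ref{prop: formula for projection onto relative tensor product}, together with the continuity-and-surjectivity passage from a dense subset of $H \otimes K$ to a dense subset of $H \mtimes K$; the isometry and orthogonality claims are routine bookkeeping once bimodularity of the covering isometries is used.
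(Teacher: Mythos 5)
Your proposal is correct and follows essentially the same route as the paper: both hinge on the observation that, by the explicit formula of proposition \ref{prop: formula for projection onto relative tensor product}, the projection $P_{H \mtimes K}$ preserves each subspace $\iota_a(H_a) \otimes \jmath_b(K_b)$ and restricts there to $P_{H_a \mtimes K_b}$, after which density follows by pushing a dense subset of $H \otimes K$ through the continuous projection onto $H \mtimes K$. Your explicit verification of the isometry and orthogonality of the maps $\iota_a \mtimes \jmath_b$ is routine bookkeeping the paper leaves implicit, but it is accurate.
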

\begin{proof}
    Let $(H_a, \iota_a)_{a \in A}$ and $(K_b, \iota_b)_{b \in B}$ be the objects in $\mathcal{C}$ which realise the covering of $H$ and $K$ respectively. Since $\mathcal{C}$ is closed under the relative tensor product, we have that $(H_a \mtimes K_b)_{(a,b) \in A \times B}$ are objects in $\mathcal{C}$. Let now $\iota_{(a,b)} := \iota_a \mtimes \iota_b: H_a \mtimes K_b \to H \mtimes K$. We claim they realise the covering of $H \mtimes K$. Take $\xi \in H$ and $\eta \in K$ arbitrarily. Denote, for any finite $A_0 \subset A$, by $\xi_{A_0}$ the orthogonal projection of $\xi$ on $\bigoplus_{a \in A_0} \iota_a(H_a)$. Similarly denote $\eta_{B_0}$ for any finite $B_0 \subset B$. By the explicit description of the projection $H \mtimes K$ from proposition \ref{prop: formula for projection onto relative tensor product}, we find that $P_{\iota_a(H_a) \mtimes \iota_b(K_b)}$ is the restriction of $P_{H \mtimes K}$ to $\iota(H_a) \otimes \iota(K_b)$. It follows that $\xi_{A_0} \mtimes \eta_{B_0}$ converges to $\xi \mtimes \eta$ for $A_0$ and $B_0$ increasing finite sets in $A$ resp. $B$. It follows that indeed
    $\bigoplus_{(a,b) \in A \times B} \iota_{(a,b)}(H_a \mtimes K_b)$ is dense in $H \mtimes K$.
\end{proof}

\begin{definition} \label{def: basis of partial isometries}
    Given two objects $H,K$ in a concrete unitary $2$-category of finite type Hilbert-$M$-bimodules $\mathcal{C}$, with $K$ irreducible, we denote by $\bpi(H,K)$ any maximal set of partial isometries $V \in \mor_{\mathcal{C}}(H,K)$ such that
    \[
    VW^* = \left\{ \begin{array}{ll}
        \id_K & \text{ if } V=W \in \bpi(H,K) \\
        0 & \text{ otherwise.}
    \end{array} \right.
    \]
    When $H,K$ are not irreducible, we fix $\irr$, some maximal set of pairwise nonisomorphic irreducible objects of $\mathcal{C}$, and denote
    \[
    \bpi(H,K) = \bigcup_{L \in \irr} \{ V^*W | V \in \bpi(H,L), W \in \bpi(K,L) \}.
    \]
    When $H,K$ are not necessarily objects in $\mathcal{C}$, but are covered by it (definition \ref{def: Category covers bimodule}), we fix some $(H_a)_{a \in A}$ and $(K_b)_{b \in B}$ such that $\bigoplus_{a \in A} H_a$ is dense in $H$ and $\bigoplus_{b \in B} K_b$ is dense in $K$, and denote
    \[
    \bpi(H,K) = \bigcup_{(a,b) \in A \times B} \bpi(H_a, K_b).
    \]
    We read $\bpi$ as `basis of partial isometries'.
\end{definition}

\begin{definition} \label{def: covering morphisms}
    Let $\mathcal{C}$ be a concrete unitary $2$-category of finite type Hilbert-$M$-bimodules in the sense of definition \ref{def: concrete unitary 2-category of M-bimodules}, which covers two Hilbert-$M$-bimodules $H,K$ in the sense of definition \ref{def: Category covers bimodule}. Let $T: H \to K$ be any densely defined $M$-bimodular unbounded linear operator. We say $\mathcal{C}$ covers $T$ if there exist $(H_a,\iota_a)_{a \in A}$ and $(K_b,\iota_b)_{b \in B}$ objects in $\mathcal{C}$ which realise the covering such that for every $a \in A$, $\iota_a(H_a)$ lies in the domain of $T$ and for every $a \in A,b \in B$ we have $\iota_b^* \circ T \circ \iota_a \in \mor_{\mathcal{C}}(H_a,K_b)$.
\end{definition}

\begin{remark} \label{rem: category covers multiplication}
    Note that by definition \ref{def: concrete unitary 2-category of M-bimodules}.6, any concrete unitary $2$-category of Hilbert-$M$-bimodules covers $\mult: L^2(M) \mtimes L^2(M) \to L^2(M)$.
\end{remark}

\subsection{Equivariant corepresentation theory}

\begin{definition} \label{def: action on discrete quantum space}
    Let $\alpha: M \curvearrowleft \mathbb{G}$ be a right coaction of a locally compact quantum group on a direct sum of matrix algebras $M$. Let $\del$ be a delta-form on $M$ as in definition \ref{def: delta-form}, i.e. $(M,\del)$ is a discrete quantum space in the sense of definition \ref{def: discrete quantum space}. We say $\mathbb{G}$ acts by $\alpha$ on $(M,\del)$ if for every $x \in M_0$, and every $\omega \in L^1(\mathbb(G)) = L^{\infty}(\mathbb{G})_*$ we get that 
    \[
    (\del \otimes \omega) \alpha(x) = \del(x) \omega(1).
    \]
\end{definition}

\begin{definition} \label{def: equivariant bimodule corepresentation}
    Let $\mathbb{G}$ be a locally compact quantum group admitting a right coaction $\alpha: M \to M \ovtimes L^{\infty}(\mathbb{G})$. Let $H$ be a Hilbert-$M$-bimodule as in definition \ref{def: Hilbert-M-bimodule + finite type}. We say a corepresentation $U \in B(H) \ovtimes L^{\infty}(\mathbb{G})$ is equivariant with $\alpha$ if
    \begin{equation}
        U (\lambda_H(x) \otimes 1) = [(\lambda_H \otimes \id)\alpha(x)]U \text{ and } (\rho_H(x^{\op}) \otimes 1)U = [((\rho_H \circ \op) \otimes R)\alpha(x)]U \text{ for all } x \in M \label{eq: equivariance of corepresentation}
    \end{equation}
    where $\op: M \to M^{\op}$ denotes the obvious anti-$*$-homomorphism, and $R$ denotes the unitary antipode of $\mathbb{G}$.
\end{definition}

\begin{definition} \label{def: intertwiner of bimodule corepresentations}
    Given a locally compact quantum group $\mathbb{G}$ acting by $\alpha$ on $M$ as in definition \ref{def: equivariant bimodule corepresentation}, let $U,V$ be two $\alpha$-equivariant corepresentations of $\mathbb{G}$ on Hilbert-$M$-bimodules $H,K$ respectively. We say an $M$-bimodular linear map $T: H \to K$ intertwines the corepresentations if
    \[
    (T \otimes 1)U = V(T \otimes 1).
    \]
\end{definition}

\begin{definition} \label{def: category of equivariant corepresentations}
    Let $\alpha: M \curvearrowleft \mathbb{G}$ be a right coaction of a locally compact quantum group on a discrete quantum space $(M,\del)$, as in definition \ref{def: action on discrete quantum space}. We denote by $\mathcal{C}(M \curvearrowleft \mathbb{G})$ the category whose objects are all unitary $\alpha$-equivariant corepresentations of $\mathbb{G}$ on finite type Hilbert-$M$-bimodules (definition \ref{def: equivariant bimodule corepresentation}), and whose morphisms are all $M$-bimodular intertwiners of these corepresentations (definition \ref{def: intertwiner of bimodule corepresentations}).
\end{definition}

\section{The reconstruction theorem} \label{sc: reconstruction theorem}

This section will be concerned with the reconstruction theorem \ref{thm: unique quantum group action for equivariant category}. For completeness, we state it again in slightly more detail.
\begin{theorem} \label{thm: unique quantum group action for equivariant category}
    Given a concrete unitary $2$-category $\mathcal{C}$ of finite type Hilbert-$M$-bimodules, in the sense of definition \ref{def: concrete unitary 2-category of M-bimodules}, which covers $L^2(M^2)$ in the sense of definition \ref{def: Category covers bimodule}, and covers the morphisms $\mult: L^2(M^2) \to L^2(M)$ and $(\id \otimes \del \otimes \id): M_0^{ \otimes 3} \to M_0^{ \otimes 2}$, viewed as an unbounded linear map $L^2(M^3) \to L^2(M^2)$, in the sense of definition \ref{def: covering morphisms}. There exists an algebraic quantum group $(\mathcal{A},\Delta)$ and a right coaction $\alpha: M_0 \to \mathcal{M}(M_0 \otimes \mathcal{A})$ such that the following hold.
    \begin{enumerate}
        \item For every object $H$ in $\mathcal{C}$, there is a unitary $\alpha$-equivariant corepresentation $U_H$ of $(\mathcal{A},\Delta)$ on $H$. $U_H$ is irreducible as corepresentation when $H$ is irreducible in $\mathcal{C}$.
        \item For every $H,K$ objects in $\mathcal{C}$, the morphisms $\mor_{\mathcal{C}}(H,K)$ intertwine the corepresentations $U_H$ and $U_K$.
        \item For every equivariant corepresentation $U$ of $(\mathcal{A}, \Delta)$ on some finite type Hilbert-$M$-bimodule $H$, there exists an object $H'$ in $\mathcal{C}$ and a unitary $M$-bimodular linear map $T: H \to H'$ which intertwines $U$ and $U_{H'}$.
    \end{enumerate}
    That is, the category $\mathcal{C}$ is equivalent to the category of unitary $\alpha$-equivariant corepresentations of $(\mathcal{A},\Delta)$ on Hilbert-$M$-bimodules.
\end{theorem}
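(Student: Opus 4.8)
The plan is to adapt the Tannaka--Krein reconstruction of \cite{Woronowicz88}, in the equivariant form developed in \cite{RollierVaes2024}, and build $(\mathcal{A},\Delta)$ as an algebra of matrix coefficients of the objects of $\mathcal{C}$. Concretely, for each object $H$ and vectors $\xi,\eta \in H_0 = M_0\cdot H\cdot M_0$ I would introduce a symbol $u^H_{\eta,\xi}$, and define $\mathcal{A}$ as the linear span of these symbols modulo the relations forcing sesquilinearity in $(\eta,\xi)$ and naturality with respect to morphisms, i.e.\ $u^K_{T\eta,\xi}=u^H_{\eta,T^*\xi}$ for every $T\in\mor_{\mathcal{C}}(H,K)$. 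Corollary \ref{cor: finite dimensional intertwiner spaces} guarantees that all intertwiner spaces are finite dimensional, so after fixing representatives from $\irr$ the resulting space decomposes as a direct sum of finite blocks indexed by $\irr$. Because $M$, and hence a generic $H_0$, is infinite dimensional, $\mathcal{A}$ will be non-unital, which is precisely why the output is an algebraic rather than a compact quantum group.

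The product on $\mathcal{A}$ should send the pair $(u^H_{\eta,\xi},u^K_{\eta',\xi'})$ to the matrix coefficient of $H\mtimes K$ at the vectors $\eta\mtimes\eta'$ and $\xi\mtimes\xi'$ (notation of definition \ref{def: relative tensor product}), using closedness of $\mathcal{C}$ under $\mtimes$; associativity and the unit axioms follow from Proposition \ref{prop: F_0 is monoidal unit} and the role of $L^2(M)$ as monoidal unit. Dually, the comultiplication $\Delta(u^H_{\eta,\xi}) = \sum_{\zeta} u^H_{\eta,\zeta}\otimes u^H_{\zeta,\xi}$, with $\zeta$ running over an orthonormal basis of $H_0$ built from $\onb(\mathcal{F}_0)$, encodes composition of corepresentations and is the source of the $U_H$ of item (1). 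The coaction $\alpha$ and its compatibility with the algebra structure is exactly where the two distinguished morphisms enter: the covering of $\mult:L^2(M^2)\to L^2(M)$ makes the corepresentation carried by $L^2(M)$ multiplicative and thereby produces $\alpha$ on $M_0$, while the covering of $(\id\otimes\del\otimes\id):L^2(M^3)\to L^2(M^2)$ supplies the contraction needed to verify that $\alpha$ intertwines the left and right $M$-actions as in \eqref{eq: equivariance of corepresentation}. From these coverings I would check the coaction identity $(\alpha\otimes\id)\alpha=(\id\otimes\Delta)\alpha$ and the invariance $(\del\otimes\omega)\alpha(x)=\del(x)\omega(1)$ of definition \ref{def: action on discrete quantum space} directly.

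For the Hopf structure the $*$-operation is $\bigl(u^H_{\eta,\xi}\bigr)^* = u^{\overline H}_{\overline\eta,\overline\xi}$, using that $\overline H\in\mathcal{C}$, and the antipode together with the modular data comes from the conjugate morphisms $s_H,t_H$ and the conjugate equations \eqref{eq: conjugate equations}. The genuinely hard part is producing the left and right invariant functionals and proving they are positive and faithful, so that $(\mathcal{A},\Delta)$ is an algebraic quantum group in the sense of \cite{VanDaele98}: since $\mathcal{A}$ is non-unital there is no averaging shortcut as in the compact case, and the integral must be assembled block-by-block over $\irr$ with weights given by the categorical (quantum) dimensions extracted from $s_H,t_H$. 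Faithfulness and positivity I would deduce from the unitarity of the $2$-category --- positivity of the inner products on the $H_0$ together with $\mor_{\mathcal{C}}(H,K)^*=\mor_{\mathcal{C}}(K,H)$ --- via the orthogonality relations for matrix coefficients of irreducible corepresentations, the finiteness from Corollary \ref{cor: finite dimensional intertwiner spaces} keeping every relevant sum finite within each block.

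Items (1) and (2) then hold essentially by construction once $U_H$ is read off from $\Delta$ and the naturality relations. The substance is item (3): every finite type $\alpha$-equivariant corepresentation $U$ is unitarily equivalent to some $U_{H'}$ with $H'\in\mathcal{C}$. I would prove this by a Peter--Weyl/Frobenius-reciprocity argument, using faithfulness of the invariant functional to decompose $U$ into irreducibles and then matching these against $\irr(\mathcal{C})$ by showing that the regular corepresentation --- carried by the tower $L^2(M^{n+1})$, which $\mathcal{C}$ covers and whose relative tensor structure is governed by the projection formula \eqref{eq: projection onto relative tensor product} --- already exhausts all irreducible equivariant corepresentations, so that none can lie outside $\mathcal{C}$. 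I expect the construction of a faithful positive invariant functional and this final completeness argument to be the two main obstacles; the remaining coalgebra and $M$-bimodularity verifications, though numerous, should be routine given Propositions \ref{prop: multiplication has isometric adjoint}--\ref{prop: F_0 is monoidal unit}.
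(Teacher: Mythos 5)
Your construction breaks at the very first structural choice: you define the product of two matrix coefficients $u^H_{\eta,\xi}\,u^K_{\eta',\xi'}$ as a coefficient of the \emph{relative} tensor product $H \mtimes K$. The algebra this produces is not the quantum group algebra of the theorem; it is (up to notation) the $*$-algebra $\mathcal{B}$ of definition \ref{def: algebra B}, i.e.\ the algebra underlying De Commer--Timmermann's \emph{partial} compact quantum group --- a quantum groupoid over the blocks of $M$, which is exactly the object one has to ``cut down'' from. Two things then fail concretely. First, your comultiplication $\Delta(u^H_{\eta,\xi})=\sum_{\zeta}u^H_{\eta,\zeta}\otimes u^H_{\zeta,\xi}$ is not multiplicative for the $\mtimes$-product: multiplicativity would require the vectors $\zeta\mtimes\zeta'$, with $\zeta,\zeta'$ running over orthonormal bases of $H_0,K_0$, to form an orthonormal basis of $H\mtimes K$, and they do not --- they are the images of an orthonormal basis of $H\otimes K$ under the proper projection $P_{H\mtimes K}$ of proposition \ref{prop: formula for projection onto relative tensor product}; for the same reason your counit $\langle\eta,\xi\rangle$ fails to be multiplicative. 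Second, unitarity of $U_H$ amounts to the identity $\sum_{\zeta}u^H_{\xi,\zeta}\bigl(u^H_{\eta,\zeta}\bigr)^*=\langle\eta,\xi\rangle 1$, and in the $\mtimes$-algebra this sum instead produces coefficients of $L^2(M)$ of the form $\Gelt{L^2(M)}{\vect{x}}{\vect{1}}$, which are groupoid-like local units, not scalars. So the $U_H$ are only partial isometries, and no functional on this algebra can satisfy Van Daele's invariance axioms: the output is a quantum groupoid, not an algebraic quantum group.

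The paper's key move --- and the reason covering $L^2(M^2)$ is a hypothesis at all --- is to multiply coefficients along the \emph{ordinary} tensor product $H\otimes K$. This is not an object of $\mathcal{C}$ (it is not of finite type), but it is covered by $\mathcal{C}$ because $H\otimes K\cong H\mtimes L^2(M^2)\mtimes K$ (lemma \ref{lem: covered objects closed under ordinary tensor product}), so the product can be written as the finite sum \eqref{eq: multiplication in A} over $\bpi(H\otimes K,L)$; one must moreover quotient by the ideal $\mathcal{I}$ forcing $\Aelt{1}{1}{x}=\del(x)1$, a relation your naturality conditions do not imply. With that definition an orthonormal basis of $H_0$ tensored with one of $K_0$ \emph{is} an orthonormal basis of $(H\otimes K)_0$, so $\Delta$ is multiplicative and $U_H$ is honestly unitary. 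The invariant functionals are then obtained not by a direct block-by-block formula on $\mathcal{A}$, but by exhibiting $\mathcal{A}$ as a corner $Q_e\mathcal{B}Q_e$ of the groupoid algebra (proposition \ref{prop: A isomorphic to corner of B}) and transporting the positive faithful functional $\varphi$ of definition \ref{def: varphi on B}; left invariance is a genuine computation (theorem \ref{thm: invariant functionals on quantum group}), not a consequence of orthogonality relations alone. Finally, note that the stated equivalence of categories also needs \emph{fullness} of the functor (every intertwiner of $U_H,U_K$ lies in $\mor_{\mathcal{C}}(H,K)$), which you do not address; both fullness and your essential-surjectivity step rest on the integration lemma \ref{lem: integrating intertwiners} (the map $\Phi_e$ built from $\varphi_e$ and the weight $\upsilon$), i.e.\ precisely on the invariant functional your $\mtimes$-based algebra cannot carry.
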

Hence, fix for the entire section a concrete unitary $2$-category of finite type Hilbert-$M$-bimodules, $\mathcal{C}$ (definition \ref{def: concrete unitary 2-category of M-bimodules}), which covers $L^2(M^2)$ (definition \ref{def: Category covers bimodule}). Note that by a successive application of proposition \ref{prop: covering bimodules closed under relative tensor product} in combination with \eqref{eq: isomorphisms tensor powers of M}, we then have that, up to unitary $M$-bimodular isomorphism, $\mathcal{C}$ covers $L^2(M^n)$ for every $n \geq 1$. Hence it makes sense to also assume that $\mathcal{C}$ covers $(1 \otimes \del \otimes 1): \mathcal{F}_2 \to \mathcal{F}_1: \vect{x \otimes y \otimes z} \mapsto \del(y) \vect{x \otimes z}$, viewed as an unbounded operator $L^2(M^3) \to L^2(M^2)$ (definition \ref{def: covering morphisms}).

\subsection{Properties of the unitary $2$-category $\mathcal{C}$} \label{sbsc: properties of the category}

In this subsection, we start by maximally exploiting the important two facts that
\begin{enumerate}
    \item $\mathcal{C}$ covers $L^2(M^2)$, and
    \item $\mathcal{C}$ covers $(1 \otimes \del \otimes 1): M_0^{\otimes 3} \to M_0^{\otimes 2}$, viewed as an unbounded operator $L^2(M^3) \to L^2(M^2)$.
\end{enumerate}
Moreover, by definition \ref{def: concrete unitary 2-category of M-bimodules}.6, the map $\mult: L^2(M^2) \to L^2(M)$ is also covered by $\mathcal{C}$.

\begin{definition} \label{def: algebra N, endomorphisms of L^2(M^2)}
    Let $(H_a)_{a \in A}$ be the objects in $\mathcal{C}$ which realise the covering of $L^2(M^2)$ (definition \ref{def: Category covers bimodule}), and view, for every $a,b \in A$, $\mor_{\mathcal{C}}(H_a,H_b) \subset B(L^2(M^2))$. We define the von Neumann algebra $\mathcal{N}$ consisting of all bounded linear operators $T: L^2(M^2) \to L^2(M^2)$ such that for every $a,b \in A$, we have $P_{a} T P_{b} \in \mor_{\mathcal{C}}(H_b,H_a)$, where $P_a,P_b$ denote the projections onto $H_a$ and $H_b$ respectively. Alternatively, one could say that $\mathcal{N}$ is the von Neumann algebra generated by the morphisms in $\mor_{\mathcal{C}}(H_a,H_b)$.
    \begin{equation}
        \mathcal{N} := \left( \bigcup_{a,b \in A} \mor_{\mathcal{C}}(H_a,H_b) \right)'' \subset B(L^2(M^2)) \label{eq: defining algebra N}
    \end{equation}
    Note that the set of all bounded $M$-bimodular linear maps in $B(L^2(M^2))$ may be identified with $M^{\op} \otimes M$ by acting by $\rho_{L^2(M)} \otimes \lambda_{L^2(M)}$ on the isomorphic space $L^2(M) \otimes L^2(M) \cong L^2(M^2)$, so $\mathcal{N}$ is isomorphic to a subalgebra of $M^{\op} \otimes M$.
\end{definition}

It is clear that for any projection in $\mathcal{N}$, whenever its range is a finite type Hilbert-$M$-bimodule, it must be an object in $\mathcal{C}$ We denote by $\mathcal{P} \subset \mathcal{N}$ the set of these finite type projections. Additionally, when $P,Q \in \mathcal{P}$ are such projections in $\mathcal{N}$, and $H_P, H_Q$ are the corresponding objects in $\mathcal{C}$, the morphism space $\mor_{\mathcal{C}}(H_P, H_Q)$ is given by
\begin{equation}
\mor_{\mathcal{C}}(H_P, H_Q) = Q \mathcal{N} P. \label{eq: morphism spaces in terms of N}
\end{equation}
Moreover, from lemma \ref{lem: projections on 0-cells are morphisms in category}, one straightforwardly gets that $z_a \otimes z_b \in \mathcal{N}$ for any $a,b \in \mathcal{E}$, the $0$-cells of our category, as defined in \ref{def: concrete unitary 2-category of M-bimodules}.2.

For any $S,T \in \mathcal{N}$, we will often use the identification $(S \otimes 1)(1 \otimes T) = S \mtimes T \in B(L^2(M^3))$, which makes sense considering $L^2(M^3) \cong L^2(M^2) \mtimes L^2(M^2)$ as in \eqref{eq: isomorphisms tensor powers of M}.

Lemma \ref{lem: central projections in N} below gives a characterisation of all irreducible objects in $\mathcal{C}$ which are contained in $L^2(M^2)$ up to isomorphism.

\begin{lemma} \label{lem: central projections in N}
    We denote by $\mathcal{J} \subset \mathcal{N}$ the set of minimal central projections in the von Neumann algebra $N$. The following hold.
    \begin{enumerate}
        \item For any $P \in \mathcal{J}$, $P \mathcal{N}$ is a type I factor, and for any $Q,R \in \mathcal{P}$ minimal projections such that $PQ =Q$ and $PR = R$, we have $R \mathcal{N} Q \neq \{0\}$, i.e. the ranges of $R$ and $Q$ are isomorphic objects in $\mathcal{C}$. 
        
        \item $\mathcal{J} \subset \mathcal{P} \subset \mathcal{N}$, i.e. minimal central projections have finite type ranges.
    \end{enumerate}
\end{lemma}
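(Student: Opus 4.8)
The plan is to work inside the ambient von Neumann algebra $\mathcal{M} := M^{\op} \otimes M$, into which $\mathcal{N}$ embeds as a subalgebra via the identification of bounded $M$-bimodular maps on $L^2(M^2)$ with $\mathcal{M}$ recalled in definition \ref{def: algebra N, endomorphisms of L^2(M^2)}. The crucial structural feature I would exploit is that $\mathcal{M} = \ell^\infty\bigoplus_{i,j \in I} M_i^{\op}\otimes M_j$ is a direct sum of the \emph{finite-dimensional} factors $M_i^{\op}\otimes M_j$, whose central projections I denote $e_{ij}$; thus $e_{ij}$ is the projection onto the $(i,j)$-block $1_i \cdot L^2(M^2)\cdot 1_j$, and for any $T \in \mathcal{N}$ the compression $T e_{ij}$ lives in a finite-dimensional algebra. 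Before addressing either part I would record two facts: writing $(H_a)_{a\in A}$ for the objects covering $L^2(M^2)$, each $P_a := \id_{H_a}$ lies in $\mathcal{N}$, is of finite type, satisfies $\sum_a P_a = 1$ in the strong operator topology, and $P_a \mathcal{N} P_a = \End_{\mathcal{C}}(H_a)$ is finite-dimensional by corollary \ref{cor: finite dimensional intertwiner spaces}.

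For part 1, since $P \in \mathcal{J}$ is a minimal central projection, $P\mathcal{N}$ is a factor. To see it is type I, I would choose $a$ with $PP_a \neq 0$ (possible as $\sum_a PP_a = P$) and note that $(PP_a)\mathcal{N}(PP_a) \subseteq P_a\mathcal{N}P_a$ is a nonzero finite-dimensional $C^*$-algebra, hence contains a minimal projection $e$; one checks $e$ is minimal in $\mathcal{N}$ and $e \leq P$, so $P\mathcal{N}$ is a factor containing a minimal projection and is therefore type I. The isomorphism statement is then factor theory: any minimal $Q, R \in \mathcal{P}$ with $PQ = Q$, $PR = R$ are minimal in $P\mathcal{N}$, hence equivalent, so there is a partial isometry $v \in P\mathcal{N}\subseteq \mathcal{N}$ with $v^*v = Q$, $vv^* = R$; this witnesses $R\mathcal{N}Q \neq \{0\}$ and, being $M$-bimodular, is a bimodular unitary identifying the ranges of $Q$ and $R$ as objects of $\mathcal{C}$.

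For part 2 I would first upgrade part 1: since one minimal projection of the factor $P\mathcal{N}$ is of finite type (the projection $e$ above, being $\leq P_a$) and all minimal projections are equivalent through bimodular partial isometries, every minimal projection of $P\mathcal{N}$ has range isomorphic to a single irreducible object $H_0$. Hence, writing $P\mathcal{N} \cong B(K)$, the range decomposes as $P L^2(M^2) \cong H_0^{\oplus \dim K}$ as a Hilbert-$M$-bimodule, and it remains to prove $\dim K < \infty$. This is the main obstacle: a priori $L^2(M^2)$ is not of finite type, so there is no evident bound on the multiplicity of $H_0$. The resolution I would use is the finite-dimensionality of the blocks of $\mathcal{M}$. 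Concretely, pick $i \in I$ with $1_i \cdot H_0 \neq \{0\}$; as $H_0$ is of finite type, $1_i \cdot H_0$ is finite-dimensional, so there are only finitely many $j$ with $1_i \cdot H_0 \cdot 1_j \neq \{0\}$. For every other $j$ one has $P e_{ij} = 0$, while for the finitely many relevant $j$ the projection $P e_{ij}$ sits in the finite-dimensional factor $M_i^{\op}\otimes M_j$ and hence has finite-dimensional range. Therefore $1_i \cdot P L^2(M^2) = \bigoplus_j P e_{ij} L^2(M^2)$ is finite-dimensional; comparing with $1_i \cdot P L^2(M^2) \cong (1_i \cdot H_0)^{\oplus \dim K}$ and using $1_i \cdot H_0 \neq \{0\}$ forces $\dim K < \infty$. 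Consequently $P L^2(M^2) \cong H_0^{\oplus \dim K}$ is a finite direct sum of finite type bimodules, so $P$ is of finite type, i.e.\ $P \in \mathcal{P}$.
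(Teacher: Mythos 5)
Your proof is correct, and it follows the same basic route as the paper — viewing $\mathcal{N}$ inside $M^{\op}\otimes M$, factoriality of $P\mathcal{N}$ plus equivalence of minimal projections, and the finite-dimensionality of the blocks $\vect{M_i \otimes M_j}$ — but it is organized differently at exactly the two points where the paper is terse, and in both cases your version supplies details the paper omits. For part 1, the paper deduces that $P\mathcal{N}$ is type I solely from the assertion that $\mathcal{N}$ is a subalgebra of $M^{\op}\otimes M$ (true, but nontrivial for von Neumann subalgebras of atomic algebras); you instead exhibit a minimal projection $e \le PP_a$ inside the finite-dimensional algebra $P_a\mathcal{N}P_a = \End_{\mathcal{C}}(H_a)$, which is self-contained and rests only on corollary \ref{cor: finite dimensional intertwiner spaces}. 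For part 2, the paper argues by contradiction: if $P$ were not finite type, there would be infinitely many mutually orthogonal, mutually equivalent minimal projections under $P$, all finite type (an assertion the paper does not justify), and bimodularity forces each of them to meet one fixed finite-dimensional block $\vect{M_i \otimes M_j}$ nontrivially, which is absurd. You run the same computation contrapositively: your $e \le P_a$ shows the common irreducible fiber $H_0$ is finite type (thereby justifying the step the paper takes for granted), and comparing $1_i \cdot P L^2(M^2) \cong (1_i \cdot H_0)^{\oplus \dim K}$ with the finitely many nonzero finite-dimensional compressions $P e_{ij}$ bounds the multiplicity $\dim K$ directly. The two part-2 arguments are logically equivalent, and both hinge on the same finite-dimensionality of $\vect{M_i \otimes M_j}$; yours has the advantage of also producing the explicit decomposition $P L^2(M^2) \cong H_0^{\oplus \dim K}$, while the paper's contradiction argument is shorter on the page.
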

\begin{proof}
    \begin{enumerate}
        \item Since $P$ is a minimal central projection in the von Neumann algebra $\mathcal{N}$, we get that $P \mathcal{N}$ is a factor. Since $\mathcal{N}$ is isomorphic to a subalgebra of $M^{\op} \otimes M$, this means that $P \mathcal{N}$ is a type I factor. All minimal projections in a type I factor are equivalent, so there is a partial isometry $V \in P\mathcal{N}$ such that $VV^* = R$ and $V^*V = Q$. Hence $V \in R \mathcal{N} Q$.

        \item Take $P \in \mathcal{J}$ arbitrarily, and suppose by contradiction that $P$ is not finite type. Since all minimal projections in $\mathcal{N}$ are finite type, this means by the previous point that we can find an infinite family $\{Q_x | x \in X\} \subset \mathcal{N}$ which are all equivalent. Take now $i,j \in I$ such that $Q_x \vect{M_i \otimes M_j} \neq \{0\}$ for some $x \in X$. By bimodularity and equivalence of the projections $(Q_x)_{x \in X}$, we must then have that $Q_y(\vect{M_i \otimes M_j}) \neq \{0\}$ for all $y \in X$. However, this is in contradiction with the fact that $\vect{M_i \otimes M_j}$ is a finite dimensional space. It follows that $P$ had to have been finite type all along.
    \end{enumerate}
\end{proof}

\begin{notation} \label{not: morphism spaces of ind objects}
    We will slightly abuse notation, and define morphism spaces of bimodules which are not elements of our category. For $H,K$ Hilbert-$M$-bimodules which are covered by $\mathcal{C}$, let $(H_a)_{a \in A}$ and $(K_b)_{b \in B}$ realise the covering (definition \ref{def: Category covers bimodule}). We denote 
    \begin{equation}
        \mor_{\mathcal{C}}(H,K) := \text{span} \{ \mor_{\mathcal{C}}(H_a,K_b) | a \in A, b \in B\} \subset B(H,K). \label{eq: morphism space of ind object}
    \end{equation}
    Similarly, we take $\End_{\mathcal{C}}(H) := \mor_{\mathcal{C}}(H,H) $. Note that $\mathcal{N} = \End_{\mathcal{C}}(L^2(M^2))''$.
\end{notation}

Now, by assumption, the following are morphisms in $\mathcal{C}$ for any $P,Q \in \mathcal{P}$, $a \in \mathcal{E}$.
\begin{align*}
    \left\{ \begin{array}{c}
          \mult \circ (z_a \otimes 1), \\
          \mult \circ (1 \otimes z_a)
    \end{array} \right\}
    \subset& \; \mor_{\mathcal{C}}(L^2(M^2),L^2(M)) \\
    \left\{ \begin{array}{c}
         (1 \otimes \del \otimes 1) \circ  (P \mtimes Q),  \\
         P \circ (1 \otimes \del \otimes 1) \circ (Q \mtimes 1), \\
         P \circ (1 \otimes \del \otimes 1) \circ (1 \mtimes Q)
    \end{array} \right\}
    \subset& \; \mor_{\mathcal{C}}(L^2(M^3),L^2(M^2))
\end{align*}

\begin{proposition} \label{prop: frobenius reciprocity for finite type morphisms}
    For any $X,Y \in \End_{\mathcal{C}}(L^2(M^2))$, we also have
    \begin{equation}
        \left\{ \begin{array}{c}
            S_X := (X \mtimes 1) \circ (1 \otimes \vect{1} \otimes 1) \circ \mult^*,  \\
            T_Y := (1 \mtimes Y) \circ (1 \otimes \vect{1} \otimes 1) \circ \mult^* 
        \end{array} \right\}
        \subset \mor_{\mathcal{C}}(L^2(M),L^2(M^3))   \label{eq: frobenius reciprocity for finite type morphisms} 
    \end{equation}
    where $\vect{1}$ denotes $\sum_{i \in I} \vect{1_i}$. We claim therefore that $S_X$ and $T_Y$ become finite sums when applied to any $\vect{x}$, $x \in M_0$ making them well-defined linear maps $\mathcal{F}_0 \to \mathcal{F}_2$, and moreover, we claim that these linear maps are bounded.
\end{proposition}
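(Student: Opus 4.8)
The plan is to dispose of well-definedness and finiteness by a direct computation, and then to obtain both boundedness and membership in $\mor_{\mathcal{C}}(L^2(M),L^2(M^3))$ at once by passing to the adjoint. By linearity I may assume $X \in \mor_{\mathcal{C}}(H_a,H_b)$ for single covering pieces of $L^2(M^2)$, so that $X = P_b X P_a$ factors through finite type objects; the case of $T_Y$ is handled identically with $\id \mtimes Y$ and the roles of the two outer legs swapped.

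First I would record that $(\id \otimes \vect{1} \otimes \id)$, which inserts $\vect{1} = \sum_{i \in I} \vect{1_i}$ in the central leg, is the formal adjoint of the covered map $(\id \otimes \del \otimes \id)\colon L^2(M^3) \to L^2(M^2)$: expanding $(\id \otimes \del \otimes \id)^* \vect{a \otimes b}$ in the orthonormal basis and summing the two outer legs returns $\vect{a}$ and $\vect{b}$, while the central sum $\sum_{\eta} \overline{\del(\eta)} \vect{\eta}$ formally reproduces $\vect{1}$. For finiteness I would then feed $\mult^*(\vect{x}) = \sum_{y \in \onb(\mathcal{F}_0)} \vect{y \otimes y^* x}$ (Proposition \ref{prop: multiplication has isometric adjoint}) through the two remaining maps, giving the formal expression $S_X \vect{x} = (X \mtimes \id) \sum_{y} \sum_{i \in I} \vect{y \otimes 1_i \otimes y^* x}$. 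Two collapses make this finite. The sum over $y$ is already finite, since $\vect{y^* x} = 0$ unless the (single) block of $y$ meets the finite support of $x$. For each surviving $y$, say in block $j$, the sum over $i$ reduces to $i=j$: writing $(X \mtimes \id)\vect{y \otimes 1_i \otimes y^* x} = (X\vect{y \otimes 1_i}) \mtimes \vect{y^* x}$ under $L^2(M^3) \cong L^2(M^2) \mtimes L^2(M)$, the explicit projection of Proposition \ref{prop: formula for projection onto relative tensor product} pairs the right block $i$ of the first leg with the left block $j$ of $\vect{y^* x}$ and annihilates the term unless $i=j$. Since $X$ is $M$-bimodular into the finite type object $H_b$, each $X\vect{y \otimes 1_j}$ lies in the finite-dimensional space $1_j \cdot H_b \cdot 1_j \subset \mathcal{F}_1$ of finitely supported vectors, and its relative tensor with $\vect{y^* x}$ stays finitely supported. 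Hence $S_X\vect{x} \in \mathcal{F}_2$ and $S_X \colon \mathcal{F}_0 \to \mathcal{F}_2$ is well defined.

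For boundedness and the morphism property I would pass to the adjoint. Taking formal adjoints of the three constituents suggests setting
\[
R := \mult \circ (\id \otimes \del \otimes \id) \circ (X^* \mtimes \id).
\]
Now $X^* \mtimes \id$ is a bounded morphism whose range lies in $H_a \mtimes L^2(M)$, which by Definition \ref{def: concrete unitary 2-category of M-bimodules}.6 is the finite direct sum $\bigoplus_c H_a \mtimes L^2(M_c)$ of objects of $\mathcal{C}$, hence an object. On this finite type object the covered map $(\id \otimes \del \otimes \id)$ restricts to a bounded morphism into $L^2(M^2)$, and post-composing with the bounded morphism $\mult$ exhibits $R$ as a bounded morphism in $\mor_{\mathcal{C}}(L^2(M^3),L^2(M))$. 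The constituent adjoint relations give $\langle S_X \vect{x}, \zeta \rangle = \langle \vect{x}, R\zeta \rangle$ for all $\vect{x} \in \mathcal{F}_0$ and $\zeta \in \mathcal{F}_2$, so $S_X$ is the restriction to $\mathcal{F}_0$ of the bounded operator $R^*$; as $\mathcal{C}$ is closed under adjoints (Definition \ref{def: concrete unitary 2-category of M-bimodules}.3), this places $S_X = R^*$ in $\mor_{\mathcal{C}}(L^2(M),L^2(M^3))$.

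The step I expect to be the main obstacle is the claim that the covered map $(\id \otimes \del \otimes \id)$ restricts to a genuine bounded morphism on the particular object $H_a \mtimes L^2(M)$. Definition \ref{def: covering morphisms} only guarantees this for the covering pieces fixed there, which need not be compatible with $H_a \mtimes L^2(M)$. To close this I would prove, once and for all, that a map covered by $\mathcal{C}$ restricts to a morphism on every object of $\mathcal{C}$ contained in its domain. This should follow from the fact that objects of $\mathcal{C}$ are supported on only finitely many $0$-cells (a consequence of Definition \ref{def: concrete unitary 2-category of M-bimodules}.6), that intertwiner spaces between finite type objects are finite dimensional (Corollary \ref{cor: finite dimensional intertwiner spaces}), and the identification of morphism spaces with corners $Q\mathcal{N}P$ of the von Neumann algebra $\mathcal{N}$ in \eqref{eq: morphism spaces in terms of N}, which reduces the morphism property to a block-by-block check.
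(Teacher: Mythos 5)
Your overall strategy---checking finiteness by hand, then getting boundedness and the morphism property simultaneously by writing the adjoint as a composition of bounded morphisms and using closure of $\mathcal{C}$ under adjoints---is the same as the paper's, but both halves of your execution contain genuine errors. The finiteness mechanism is wrong: you claim that $(X \mtimes \id)\vect{y \otimes 1_i \otimes y^*x}$ vanishes unless $i$ equals the block $j$ of $y$, ``under $L^2(M^3)\cong L^2(M^2)\mtimes L^2(M)$''. But by proposition \ref{prop: F_0 is monoidal unit} one has $L^2(M^2)\mtimes L^2(M)\cong L^2(M^2)$: relative tensoring with $L^2(M)$ is the monoidal unit and does not add a leg. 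The relevant identification is $L^2(M^3)\cong L^2(M^2)\mtimes L^2(M^2)$, and on $L^2(M)^{\otimes 3}$ the operator $X\mtimes 1$ is simply $X$ acting bimodularly on the first two legs; no block pairing between the second and third legs is imposed. Concretely, take $M=\ell^{\infty}(I)$ with $\del$ the counting measure and $X$ the projection onto the edge space of the $3$-regular tree (an endomorphism in the category built from that graph as in section \ref{sbsc: building categories from discrete structures}); then $S_X\vect{e_u}=\sum_{i\sim u}\vect{e_u\otimes e_i\otimes e_u}$, in which \emph{every} surviving term has $i\neq u$, so your collapse claim would force $S_X=0$. The sum over $i$ is finite for the reason the paper gives: $X=P_bXP_a$ factors through finite type objects, so for $y$ in block $j$ the vector $P_a\vect{y\otimes 1_i}$ lies in $1_j\cdot H_a\cdot 1_i$, which vanishes for all but finitely many $i$ because $1_j\cdot H_a$ is finite dimensional.

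In the boundedness step the range of $X^*\mtimes\id$ is misidentified. On $L^2(M^3)=L^2(M^2)\otimes L^2(M)$ this operator is $X^*\otimes\id_{L^2(M)}$, so its range lies in $H_a\otimes L^2(M)\cong H_a\mtimes L^2(M^2)$, not in $H_a\mtimes L^2(M)$ (the latter is just $H_a$ again). When $I$ is infinite, $H_a\mtimes L^2(M^2)$ is not finite type (each $1_i\cdot(H_a\mtimes L^2(M^2))$ is infinite dimensional), hence it is not an object of $\mathcal{C}$ but only covered by it. Consequently neither definition \ref{def: concrete unitary 2-category of M-bimodules}.6 nor the patch lemma you propose (covered maps restrict to morphisms on every \emph{object} contained in the domain) can close the gap you flag: the intermediate bimodule is not an object, so the obstacle is real but your proposed fix cannot reach it. The paper circumvents exactly this point by reducing $X$, via linearity, the factorisation through projections in $\mathcal{P}$, and corollary \ref{cor: finite dimensional intertwiner spaces}, to a minimal projection in $\mathcal{P}$ satisfying $X=X\circ(z_a\otimes z_b)$ (lemma \ref{lem: projections on 0-cells are morphisms in category}), and then invoking the standing consequences of the covering hypotheses displayed just before the proposition: for such $X$, the map $(1\otimes\del\otimes 1)\circ(X\mtimes 1)$ is by assumption a bounded element of $\mor_{\mathcal{C}}(L^2(M^3),L^2(M^2))$, and composing with $\mult\circ(z_a\otimes 1)\in\mor_{\mathcal{C}}(L^2(M^2),L^2(M))$ exhibits $S_X^*$ as a bounded morphism, whence so is $S_X$. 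So your adjoint $R$ is the right object, but its boundedness and morphism property must be extracted from the projections in $\mathcal{P}$ and the listed covering assumptions, not from a restriction-to-objects argument.
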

\begin{proof}
    Take $x \in M_0$ arbitrarily. Then there are finitely many $i,j$ such that $1_i x 1_j$ is nonzero. Since $X$ and $Y$ are finite type, it follows directly that the sums defining $S_X(\vect{x})$ and $T_Y(\vect{x})$ have only finitely many nonzero terms. We now show boundedness of $S_X$, since $T_Y$ may be handled analogously.
    
    By considering $X$ as the composition of a bounded operator with a projection in $\mathcal{P}$, we may assume that $X$ is such a projection, since the composition of bounded operators is bounded. By considering finite linear combinations, and using corollary \ref{cor: finite dimensional intertwiner spaces}, we may assume that $X$ is a minimal projection. In particular, by lemma \ref{lem: projections on 0-cells are morphisms in category}, there are $a,b \in \mathcal{E}$ such that $X = X \circ (z_a \otimes z_b)$. Then, we note that
    \[
    S_X^* = \mult \circ (z_a \otimes \del \otimes 1) \circ (X \mtimes 1)
    \]
    Precomposing this with $\mult^*$, we obtain a map which is a bounded element of $\mor_{\mathcal{C}}(L^2(M^3),L^2(M^2))$ by assumption, so precomposing again with the bounded map $\mult \circ (z_a \otimes 1) \in \mor_{\mathcal{C}}(L^2(M^2),L^2(M))$, we get a bounded operator in $\mor_{\mathcal{C}}(L^2(M^3),L^2(M))$.
\end{proof}

Consider now for any $x,y \in M_0$, $i \in I$ and $1 \leq k,l \leq d_i$, the equality 
\begin{align}
    (1 \otimes \rho(x^{\op})& \otimes \lambda(y)) \circ (1 \otimes \vect{1} \otimes 1) \circ \mult^*(E^i_{k,l}) \nonumber \\
    =& \; (1 \otimes \vect{\mu^{-1/2}(x)} \otimes 1) \sum_{1 \leq t \leq d_i} d_i^{-1} E^i_{k,t} \sigma^{-1/2} \otimes y \sigma^{-1/2} E^i_{t,l} \nonumber \\
    =& \; (1 \otimes \vect{\mu^{-1/2}(x)} \otimes 1) \sum_{1 \leq t \leq d_i} d_i^{-1} E^i_{k,t} \sigma^{1/2} y \sigma^{-1} \otimes \sigma^{-1/2} E^i_{t,l}  \nonumber \\
    =& \; (\rho(\mu^{-1/2}(y)^{\op}) \otimes \lambda(\mu^{-1/2}(x)) \otimes 1) \circ (1 \otimes \vect{1} \otimes 1) \circ \mult^*(E^i_{k,l}) \nonumber
\end{align}
Where we used $\sigma$ and $\mu$ as defined in definition \ref{def: discrete quantum space} and the discussion below. Hence, defining
\begin{equation}
    \theta: M_0^{\op} \otimes M_0 \to M_0^{\op} \otimes M_0: x^{\op} \otimes y \mapsto \mu^{-1/2}(y)^{\op} \otimes \mu^{-1/2}(x) \label{eq: defining automorphism theta}
\end{equation}
yields an anti-automorphism\footnote{Note that this is not a $*$-anti-automorphism.} of $M_0^{\op} \otimes M_0$ which extends to the multiplier algebra. Identifying $\mathcal{N}$ with a subalgebra of $M^{\op} \otimes M$ as in definition \ref{def: algebra N, endomorphisms of L^2(M^2)}, we find that $T_X = S_{\theta(X)}$, as defined in proposition \ref{prop: frobenius reciprocity for finite type morphisms}.

Recall from lemma \ref{lem: central projections in N} the definition of $\mathcal{J} \subset \mathcal{P} \subset \mathcal{N}$. Since the elements of $\mathcal{J}$ are in one to one correspondence with the isomorphism classes of irreducible subobjects of $L^2(M^2)$ in $\mathcal{C}$, it follows that for any $P \in \mathcal{J}$, there is a unique $Q \in \mathcal{P}$ such that
\[
(P \mtimes Q) \circ (1 \otimes \vect{1} \otimes 1) \circ \mult^* \neq 0
\]
In particular, $\theta$ as defined in \eqref{eq: defining automorphism theta} permutes the central projections $P \in \mathcal{J}$. In particular, $\sigma^{-1} \otimes \sigma$ commutes with every $P \in \mathcal{J}$.

Take now $P \in \mathcal{J}$ arbitrarily, and let $H_P$ denote its range, which is an object in $\mathcal{C}$. We get that $(S_P,T_P)$ solve the conjugate equations \eqref{eq: conjugate equations} for $H_P$.

\begin{lemma} \label{lem: categorical trace}
    Formally denote $X = (1 \otimes \vect{1} \otimes 1) \circ \mult^*$ with adjoint $X^* = \mult \circ (1 \otimes \del \otimes 1)$, and $Y = (1 \otimes X \otimes 1) \circ X$. Then for $P \in \End_{\mathcal{C}}(L^2(M^2))$ and $Q \in \mor_{\mathcal{C}}(L^2(M^3))$, we get
    \begin{align}
        X^*(1 \otimes P)X(\vect{E^i_{k,l}}) =& \; d_i^{-2} (\tr \otimes \tr)(P \circ (\rho(\sigma^{\op}) \otimes \lambda(\sigma_i^{-1}))) \vect{E^i_{k,l}} \label{eq: categorical trace L^2(M^2)}\\
        Y^*(1 \otimes 1 \otimes  Q)Y(\vect{E^i_{k,l}}) =& \; d_i^{-2} (\tr \otimes \tr \otimes \tr) (Q \circ (\rho(\sigma^{\op}) \otimes \lambda(\sigma^{-1}) \rho(\sigma^{\op}) \otimes \lambda(\sigma_i^{-1})) \vect{E^i_{k,l}} \label{eq: categorical trace L^2(M^3)}
    \end{align}
    where $\tr$ denotes the operator trace.
\end{lemma}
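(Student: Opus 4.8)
The plan is to reduce each identity to the computation of a single scalar per block, and then to evaluate that scalar by a direct expansion. First I would note that $X$, $X^*$, $Y$, $Y^*$ are all $M$-bimodular: $\mult$ (hence $\mult^*$) is bimodular, inserting $\vect{1}$ into a middle leg is bimodular, and $(1\otimes\del\otimes 1)$ is bimodular; moreover $(1\otimes P)$ and $(1\otimes 1\otimes Q)$ are bimodular because $P,Q$ are. Consequently $X^*(1\otimes P)X$ and $Y^*(1\otimes 1\otimes Q)Y$ are $M$-bimodular maps $L^2(M)\to L^2(M)$, so by the remark after definition \ref{def: M-bimodular map} each acts on $\mathcal{F}_0$ as multiplication by a (possibly unbounded) central multiplier $\sum_{i\in I}c_i 1_i\in\mathcal{M}(M_0)$; in particular $\vect{E^i_{k,l}}$ is sent to $c_i\vect{E^i_{k,l}}$, and it remains only to identify $c_i$. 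Although $X\vect{z}$ is a priori only a formal expression, its middle leg being the non-square-integrable vector $\vect{1}$, the finite-typeness of $P$ (resp. $Q$) collapses the relevant sums to finite ones on $\mathcal{F}_0$, exactly as in the proof of proposition \ref{prop: frobenius reciprocity for finite type morphisms}; I would flag this at the outset so that all subsequent manipulations are legitimate.

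For \eqref{eq: categorical trace L^2(M^2)} I would expand $X\vect{E^i_{k,l}}$ using $\mult^*(\vect{z}) = \sum_{y\in\onb(\mathcal{F}_0)}\vect{y\otimes y^*z}$ from \eqref{eq: adjoint of multiplication} together with the orthonormal basis \eqref{eq: onb for F_0}, which yields $\sum_{q}d_i^{-1}\vect{E^i_{k,q}\sigma^{-1/2}\otimes 1\otimes\sigma^{-1/2}E^i_{q,l}}$. Writing $P$ as an element $\sum_\alpha\rho(w_\alpha^{\op})\otimes\lambda(v_\alpha)$ of $M^{\op}\otimes M$ in the sense of definition \ref{def: algebra N, endomorphisms of L^2(M^2)} and letting it act on the last two legs, then applying $X^*=\mult\circ(1\otimes\del\otimes 1)$, the middle leg produces a scalar $\del(\mu^{-1/2}(w_\alpha))=\del(w_\alpha)$ (using that $\del$ is invariant under the $\mu^t$), while the outer legs collapse through the matrix-unit identity $E^i_{k,q}A E^i_{q,l}=A_{qq}E^i_{k,l}$ for $A\in M_i$, which upon summing over $q$ produces the block-$i$ trace. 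I expect this to give $c_i = d_i^{-1}\sum_\alpha\del(w_\alpha)\,\tr(\sigma_i^{-1}(v_\alpha)_i)$.

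To recognise this as the claimed right-hand side, the key ingredient is the pair of operator-trace formulas $\tr(\lambda(x))=\tr(\rho(x^{\op}))=\tr_M(x)$ for $x\in M_0$, which I would verify directly on the basis \eqref{eq: onb for F_0} (the $\mu$-twist in $\rho$ cancelling against the twist in the inner product). These give $(\tr\otimes\tr)\bigl(P\circ(\rho(\sigma^{\op})\otimes\lambda(\sigma_i^{-1}))\bigr)=\sum_\alpha\tr_M(\sigma w_\alpha)\,\tr_M(v_\alpha\sigma_i^{-1})=d_i\sum_\alpha\del(w_\alpha)\,\tr((v_\alpha)_i\sigma_i^{-1})$, which is precisely $d_i^2 c_i$, establishing \eqref{eq: categorical trace L^2(M^2)}.

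Formula \eqref{eq: categorical trace L^2(M^3)} then follows by running the same computation one level deeper for $Y=(1\otimes X\otimes 1)\circ X$: the inner copy of $X$ acts on a fully summed middle leg (rather than one pinned to block $i$), so its collapse contributes the extra central factor $\lambda(\sigma^{-1})\rho(\sigma^{\op})$ in the middle tensor slot, while the two outer legs again contribute $\rho(\sigma^{\op})$ and $\lambda(\sigma_i^{-1})$; reusing the same trace identities produces the triple-trace expression. I expect the only real difficulty to be organisational: tracking the $\sigma^{\pm 1/2}$- and $\mu$-twists cleanly through the several adjoints and relative tensor products, and making sure that each formal occurrence of $\vect{1}$ always sits inside an expression that the finite-typeness of $P$ or $Q$ renders finite.
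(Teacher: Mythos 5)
Your proposal is correct and follows essentially the same route as the paper's proof: expand $X\vect{E^i_{k,l}}$ through $\mult^*$ and the explicit orthonormal basis \eqref{eq: onb for F_0}, reduce $P$ (resp.\ $Q$) to elementary tensors, compute the resulting per-block scalar directly, and identify it with the right-hand side via the identities $\tr(\lambda(x))=\tr(\rho(x^{\op}))=\tr_M(x)$. The one point your sketch under-states is the middle leg in \eqref{eq: categorical trace L^2(M^3)}: bimodularity of $Q$ constrains only its outer legs, so its middle component is an arbitrary operator on $L^2(M)$, and identifying the middle contraction with the trace against the twist $\lambda(\sigma^{-1})\rho(\sigma^{\op})$ requires, beyond ``the same trace identities'', the rank-one computation $\tr(E_{z,t})=\del(t^*z)$ (the paper's extra ingredient) --- though the central factor you predict for that slot is exactly right, so this is a routine supplement rather than a gap.
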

\begin{proof}
    For the first equation \eqref{eq: categorical trace L^2(M^2)}, we calculate, for any $x,y \in M_0$ and $E^i_{k,l} \in M_0$.
    \begin{align*}
        X^*(1 \otimes \rho(x^{\op}) \otimes \lambda(y))X(\vect{E^i_{k,l}}) =& \; X \sum_{1 \leq s \leq d_i} d_i^{-1} \left( \vect{E^i_{k,s} \sigma^{-1/2}} \otimes \vect{\mu^{-1/2}(x)} \otimes \vect{y \sigma^{-1/2} E^i_{s,l}} \right) \\
        =& \; \del(x) \sum_{1 \leq s \leq d_i} d_i^{-1} \vect{E^i_{k,s} \sigma^{-1/2} y \sigma^{-1/2} E^i_{s,l}} \\
        =& d_i^{-2} \tr_M(x \sigma) \tr_M(y \sigma_i^{-1}) \vect{E^i_{k,l}}
    \end{align*}
    Here, $\tr_M$ denotes the markov trace from definition \ref{def: delta-form}. One easily checks that $\tr(\lambda(x)) = \tr_M(x) = \tr(\rho(x^{\op}))$, proving the first equation \eqref{eq: categorical trace L^2(M^2)}.
    
    Similarly, we calculate as follows for the second equation \eqref{eq: categorical trace L^2(M^3)}, for any $x,y,z,t \in M_0$ and $E^i_{k,l} \in M_0$, where $E_{z,t} \in B(L^2(M))$ denotes the rank one operator $\vect{a} \mapsto \del(t^*a) \vect{z}$ for any $a \in M_0$.
    \begin{align*}
        Y^*(1& \otimes 1 \otimes  \rho(x^{\op}) \otimes E_{z,t} \otimes \lambda(y))Y(\vect{E^i_{k,l}}) \\
        =& \; Y^* \sum_{\substack{1 \leq s \leq d_i \\ j \in I, 1 \leq p,q \leq d_j}} d_i^{-1}d_j^{-1} \vect{E^i_{k,s} \sigma^{-1/2}} \otimes \vect{E^j_{p,q} \sigma^{-1/2}} \\
        &\otimes \vect{\mu^{-1/2}(x)} \otimes \del(t^* \sigma^{-1/2}E^j_{q,p}) \vect{z} \otimes \vect{y \sigma^{-1/2} E^i_{s,l}} \\
        =& \; \del(x)d_i^{-1} \sum_{\substack{1 \leq s \leq d_i \\ j \in I, 1 \leq p,q \leq d_j}} d_j^{-1} \del(t^* \sigma^{-1/2} E^j_{q,p}) \del(E^j_{p,q} \sigma^{-1/2} z) \vect{E^i_{k,s} \sigma^{-1/2} y \sigma^{-1/2} E^i_{s,l}} \\
        =& \; d_i^{-2} \tr_M(x \sigma) \tr_M(y \sigma_i^{-1}) \sum_{r \in \onb(\mathcal{F}_0)} \langle \vect{\mu^{-1}(t^*)}, \vect{r} \rangle \langle \vect{z}, \vect{r^*} \rangle \\
        =& \; d_i^{-2} \tr_M(x \sigma) \tr_M(y \sigma_i^{-1}) \langle \vect{z}, \vect{\mu(t)} \rangle \\
        =& \; d_i^{-2} \tr_M(x \sigma) \tr_M(y \sigma_i^{-1}) \del(\mu(t)^* z)
    \end{align*}
    Since one now also readily checks that $\tr(E_{z,t}) = \del(t^*z)$, the second formula is also proven.
\end{proof}

\begin{definition} \label{def: element gamma}
    One checks that for any $P \in \mathcal{J}$, as defined in lemma \ref{lem: central projections in N}, the maps $(S_P,T_P)$ from proposition \ref{prop: frobenius reciprocity for finite type morphisms} solve the conjugate equations for the range $H_P$ of $P$. Define the unique positive invertible $\gamma_P \in \End_{\mathcal{C}}(H_P)$ such that the pair
    \[
     \left( (\gamma_P^{-1/2} \otimes 1)S_P, (1 \otimes \gamma_P^{1/2})T_P \right)
    \]
    is a standard solution to the conjugate equations. Recall that $\mathcal{N}$, the von Neumann algebra generated by $\mor_{\mathcal{C}}(L^2(M^2))$, is a direct sum of matrix algebras. Hence, $\mathcal{M}(\mor_{\mathcal{C}}(L^2(M^2)))$ is the direct product of these matrix algebras. Then let $\gamma \in \mathcal{M}(\End_{\mathcal{C}}(L^2(M^2)))$ be the unique element such that $\gamma P = P \gamma = \gamma_P$ for any $P \in \mathcal{J}$. We can identify $\gamma$ with an unbounded element of $\mathcal{M}(M_0^{\op} \otimes M_0)$, and with an unbounded operator on $L^2(M^2)$.
\end{definition}

\begin{theorem} \label{thm: explicit description of gamma}
    Let $\gamma$ be as defined in definition \ref{def: element gamma}. Then there exists a positive invertible element $\delta \in \mathcal{M}(M_0)$, unique up to a scalar multiple, which commutes with $\sigma$ and satisfies
    \[
    \gamma = \rho(\delta^{\op}) \otimes \lambda(\delta^{-1}) \in \mathcal{M}(\End_{\mathcal{C}}(L^2(M^2))).
    \]
\end{theorem}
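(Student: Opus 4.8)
The plan is to recognize the abstractly defined $\gamma$ of Definition \ref{def: element gamma} as the restriction to $L^2(M^2)$ of the canonical \emph{pivotal} natural transformation attached to standard solutions, and then to use naturality together with multiplicativity to force the simple-tensor shape. First I would promote $\gamma$ to a whole family: since $\mathcal{C}$ covers every $L^2(M^n)$ and satisfies the conjugate-equation axiom, each object $H$ carries a canonical positive invertible $\gamma_H \in \End_{\mathcal{C}}(H)$ turning a chosen solution into a standard one, and the element of Definition \ref{def: element gamma} is $\gamma_{L^2(M^2)}$. The two structural facts I would establish about this family are (i) \emph{naturality}, $\gamma_K \circ T = T \circ \gamma_H$ for every $T \in \mor_{\mathcal{C}}(H,K)$, and (ii) \emph{multiplicativity}, $\gamma_{H \mtimes K} = \gamma_H \mtimes \gamma_K$; both come from the fact that standard solutions are functorial and tensorial, so that $\gamma$ is a quotient of two such natural transformations. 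Since $L^2(M)$ is the monoidal unit by Proposition \ref{prop: F_0 is monoidal unit}, this forces $\gamma_{L^2(M)} = \id$.

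Next I specialise (i)--(ii) to the covered morphisms. Multiplicativity applied to $L^2(M^3) \cong L^2(M^2) \mtimes L^2(M) \cong L^2(M) \mtimes L^2(M^2)$, combined with $\gamma_{L^2(M)} = \id$, yields the coassociativity relation $\gamma_{L^2(M^2)} \mtimes \id = \id \mtimes \gamma_{L^2(M^2)}$ on $L^2(M^3)$, while naturality along $\mult$ gives $\mult \circ \gamma_{L^2(M^2)} = \mult$. Writing $\gamma = \gamma_{L^2(M^2)}$ as a bimodular operator on $L^2(M) \otimes L^2(M)$, I would then show that the coassociativity relation forces $\gamma$ to be grouplike, i.e. a simple tensor $\rho(\delta^{\op}) \otimes \lambda(\delta^{-1})$ for a positive invertible $\delta \in \mathcal{M}(M_0)$ (the relation says exactly that the element read off from the action of $\gamma$ on its left leg agrees with the one read off from the right leg, after accounting for the coupling defining $\mtimes$; positivity and invertibility of $\gamma$ pass to $\delta$). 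Finally, evaluating $\mult \circ \gamma = \mult$ on $\vect{x \otimes y}$ gives $\vect{x\, \mu^{-1/2}(\delta)\, \delta^{-1} y} = \vect{xy}$ for all $x,y \in M_0$, hence $\mu^{-1/2}(\delta) = \delta$, which is precisely $[\delta,\sigma] = 0$; and uniqueness up to a positive scalar is immediate from the rigidity of simple tensors, since $\rho(\delta^{\op}) \otimes \lambda(\delta^{-1}) = \rho(\delta'^{\op}) \otimes \lambda(\delta'^{-1})$ forces $\delta' = c\,\delta$.

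The computational engine throughout is Lemma \ref{lem: categorical trace}: the two categorical weights $f \mapsto T_P^*(\id \otimes f)T_P$ and $f \mapsto S_P^*(f \otimes \id)S_P$ are computed explicitly in terms of $\sigma$, and the identity $T_X = S_{\theta(X)}$ shows they differ exactly by the $\sigma$-twist $\theta$, which is what ultimately makes the two legs of $\gamma$ mutually inverse. The main obstacle is the first step: rigorously establishing naturality and multiplicativity of the $\gamma_H$ — equivalently, that the block data $\gamma_P$ on the minimal central projections $\mathcal{J}$ patch into a tensor natural transformation — and then extracting the simple-tensor form from the grouplike relation. Here I expect to rely on the explicit formulas of Lemma \ref{lem: categorical trace} to verify multiplicativity across blocks, and on the facts recorded above that $\theta$ permutes $\mathcal{J}$ and that $\sigma^{-1} \otimes \sigma$ commutes with every $P \in \mathcal{J}$, which are what guarantee that $\gamma$ interacts consistently with $\mtimes$ on the different $0$-cells.
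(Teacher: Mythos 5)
There is a genuine gap, and it sits at the very first step: the claimed naturality (i) of the family $\gamma_H$ is false and cannot be established. Note first that for a general object $H$ there is no canonical ``chosen solution'', hence no canonical $\gamma_H$; only the subobjects $H_P$, $P \in \mathcal{J}$, carry the distinguished solutions $(S_P,T_P)$ of proposition \ref{prop: frobenius reciprocity for finite type morphisms}. More seriously, by lemma \ref{lem: central projections in N} the algebra $\End_{\mathcal{C}}(H_P) = P\mathcal{N}$ is a type I \emph{factor}, so naturality applied with $H = K = H_P$ would say that $\gamma_P$ commutes with this factor, i.e.\ $\gamma_P \in \mathbb{C}P$ for every $P \in \mathcal{J}$, equivalently that $\gamma$ is a \emph{central} element of $\mathcal{M}(\End_{\mathcal{C}}(L^2(M^2)))$. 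This is strictly stronger than the theorem you are trying to prove: the conclusion $\gamma = \rho(\delta^{\op}) \otimes \lambda(\delta^{-1})$ commutes with all of $\mathcal{N} \subset \rho(M^{\op}) \otimes \lambda(M)$ only when $\delta$ is central in $M$, and non-central $\delta$ genuinely occurs — $\delta^2$ implements the modular element of the resulting algebraic quantum group (proposition \ref{prop: modular element of A}), so any non-unimodular example (e.g.\ the quantum Cayley graph of a non-unimodular discrete quantum group) rules out your step (i). Whatever argument you gave for naturality would therefore have to be wrong.

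The second structural input (ii) is also incorrect as stated: the multiplicative (``grouplike'') element is not $\gamma$ but the twisted element $\gamma\sigma_1$, where $\sigma_1 = \rho(\sigma^{\op}) \otimes \lambda(\sigma^{-1})$. What the paper actually proves, in equation \eqref{eq: gamma sigma is tensor square}, is $(\gamma\sigma_1)_{(13)} = (\gamma \mtimes \gamma)\,\sigma_2$ blockwise, with $\sigma_2 = \sigma_1 \mtimes \sigma_1$; and it derives this not from any naturality of $\gamma$, but from a traciality argument: the categorical trace on $\End_{\mathcal{C}}((H_1 \mtimes H_2) \oplus H)$, computed explicitly through lemma \ref{lem: categorical trace}, is a faithful trace, which forces the weight $\Gamma$ of \eqref{eq: weight for categorical space in proof of form gamma} to commute with that endomorphism algebra. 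This yields the intertwining relation $V \circ \gamma_P \sigma_1 = (\gamma_{P_1} \mtimes \gamma_{P_2})\sigma_2 \circ V$ \emph{only} for morphisms $V \in \mor_{\mathcal{C}}(H, H_1 \mtimes H_2)$ — exactly the weak, correct substitute for the naturality you assume. The $\sigma$-twist you dropped is not cosmetic: in the paper it is precisely the mechanism that eventually produces $[\delta,\sigma]=0$ (via the analysis of $X_{i,j} = $ blocks of $\gamma\sigma_1$), whereas your derivation of that commutation from $\mult \circ \gamma = \mult$ presupposes the false naturality. Your end-game — solving a grouplike functional equation to obtain a simple tensor, then reading off uniqueness — does match the paper's strategy, but the relation you would be solving is the wrong one, and the bridge leading to it is unsound.
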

\begin{proof}
    Take $a,b,c \in \mathcal{E}$ arbitrarily, and $P_1,P_2,P \in \mathcal{J}$ such that $P_1 = P_1 \circ (z_a \otimes z_b)$, $P_2 = P_2(z_b \otimes z_c)$ and $P = P \circ (z_a \otimes z_c)$. Denote $H_{1}, H_{2}, H$ their ranges. Recall that the pair
    \[
    \left( (\gamma_P^{-1/2} \otimes 1)S_P, (1 \otimes \gamma_P^{1/2})T_P \right)
    \]
    defined in proposition \ref{prop: frobenius reciprocity for finite type morphisms} and definition \ref{def: element gamma}, is a standard solution to the conjugate equations for $H$, and similarly for $H_{1},H_{2}$. It follows that we can calculate the categorical trace on $\End_{\mathcal{C}}((H_1 \mtimes H_2) \oplus H)$ as
    \begin{align}
        \begin{pmatrix}
            A & V \\
            W^* & B
        \end{pmatrix} \mapsto \begin{array}{rl}
             &Y^*(1 \otimes 1 \otimes  (\gamma_{P_1} \mtimes \gamma_{P_2})^{1/2}A(\gamma_{P_1} \mtimes \gamma_{P_2})^{1/2})Y  \\
             +& X^*(1 \otimes \gamma_P^{1/2}B\gamma_P^{1/2})X
        \end{array} \in \End_{\mathcal{C}}(L^2(M_c)) \label{eq: categorical trace involving gamma 1}
    \end{align}
    where $X,Y$ denote the maps from lemma \ref{lem: categorical trace}. Denote $\sigma_1 := \rho(\sigma^{\op}) \otimes \lambda(\sigma^{-1})$ and $\sigma_2 := \rho(\sigma^{\op}) \otimes \lambda(\sigma^{-1}) \rho(\sigma^{\op}) \otimes \lambda(\sigma^{-1})$. Then, using lemma \ref{lem: categorical trace}, we may compute \eqref{eq: categorical trace involving gamma 1} more explicitly, and find that the functional 
    \begin{align*}
        \begin{pmatrix}
            A & V \\
            W^* & B
        \end{pmatrix} \mapsto \begin{array}{rl}
             &d_i^{-2} (\tr \otimes \tr \otimes \tr) (A (\gamma_{P_1} \mtimes \gamma_{P_2})^{1/2}\sigma_2(\gamma_{P_1} \mtimes \gamma_{P_2} \rho(1_i^{\op}))^{1/2})   \\
             +& d_i^{-2} (\tr \otimes \tr)(B \circ \gamma_P^{1/2} \sigma_1 \gamma_P^{1/2} \rho(1_i^{\op}))
        \end{array} \text{ for any } i \in I_c 
    \end{align*}
    is tracial, faithful, and does not depend on the choice of $i \in I_c$. Consider this weight
    \begin{equation}
        \Gamma := \begin{pmatrix}
            (\gamma_{P_1} \mtimes \gamma_{P_2})^{1/2} \sigma_2 (\gamma_{P_1} \mtimes \gamma_{P_2})^{1/2} & 0 \\
            0 & \gamma_P^{1/2} \sigma_1 \gamma_P^{1/2}
        \end{pmatrix} \label{eq: weight for categorical space in proof of form gamma}
    \end{equation}
    which we will call $\Gamma$ for the time being. Recall the definition of $\theta$ from \eqref{eq: defining automorphism theta}, and note that since $\theta^2$ is an automorphism of $\End_{\mathcal{C}}(L^2(M^2))$, we must have that $\sigma_1 \End_{\mathcal{C}}(L^2(M^2)) \sigma_1^{-1} = \End_{\mathcal{C}}(L^2(M^2))$. Similarly, we get that $\sigma_2 \End_{\mathcal{C}}(L^2(M^3)) \sigma_2^{-1} = \End_{\mathcal{C}}(L^2(M^3))$. Now, since $(1 \otimes \del \otimes 1): L^2(M^3) \to L^2(M^2)$ is a surjective unbounded operator which is covered by $\mathcal{C}$ (definition \ref{def: covering morphisms}), we get that
    \[
    \mor_{\mathcal{C}}(L^2(M^3),L^2(M^2)) = (1 \otimes \del \otimes 1) \circ \End_{\mathcal{C}}(L^2(M^3))
    \]
    Since $\sigma_1 \circ (1 \otimes \del \otimes 1) \sigma_2^{-1} = (1 \otimes \del \otimes 1)$, it follows that we also get
    \[
    \sigma_1 \mor_{\mathcal{C}}(L^2(M^3),L^2(M^2)) \sigma_2^{-1} = \mor_{\mathcal{C}}(L^2(M^3),L^2(M^2)).
    \]
    Therefore, the weight $\Gamma$ from \eqref{eq: weight for categorical space in proof of form gamma} normalises $\End_{\mathcal{C}}((H_1 \mtimes H_2) \oplus H)$. Since the functional in \eqref{eq: categorical trace involving gamma 1} must be tracial and faithful, we get that $\Gamma$ commutes with $\End_{\mathcal{C}}((H_1 \mtimes H_2) \oplus H)$. In particular, since $\gamma_P \in \End_{\mathcal{C}}(H)$, we get that $\gamma_P^{-1/2}$ commutes with $\gamma_P^{1/2} \sigma_1 \gamma_P^{1/2}$, so $\sigma_1$ commutes with all powers of $\gamma_P$. Likewise, $\sigma_2$ commutes with all powers of $\gamma_{P_1} \mtimes \gamma_{P_2}$.

    Since $\Gamma$ commutes with $\End_{\mathcal{C}}((H_1 \mtimes H_2) \oplus H)$, we get as well that for any $V \in \mor_{\mathcal{C}}(H,H_1 \mtimes H_2)$, 
    \[
    V \circ \gamma_P \circ \sigma_1  = (\gamma_{P_1} \mtimes \gamma_{P_2}) \circ \sigma_2 \circ V
    \]
    In particular, this holds for $V = (P_1 \mtimes P_2) \circ (1 \otimes \vect{1} \otimes 1) \circ P $. Summing the resulting equality over all $P_1,P_2,P \in \mathcal{J}$, we get that for any $i,j,k \in I$, there is an equality of matrices
    \begin{align}
        (1_i \otimes 1_j \otimes 1_k) \circ (\gamma \sigma_1)_{(13)} = (1_i \otimes 1_j \otimes 1_k) \circ (\gamma \mtimes \gamma) \circ \sigma_2. \label{eq: gamma sigma is tensor square}
    \end{align}
    Here, we denote $\gamma = \sum_{P \in \mathcal{J}} \gamma_P \in \mathcal{M}(\End(L^2(M^2)))$ as in definition \ref{def: element gamma}. Moreover, we use leg numbering notation $(a \otimes b)_{(13)} = a \otimes 1 \otimes b$.

    Write now, for any $i,j \in I$, $(1_i \otimes 1_j) \circ \gamma \sigma_1 = \sum_{s \in S_{i,j}} 1_i A_s^{\op} \otimes 1_j B_s$, for some $A_s,B_s \in \mathcal{M}(M_0)$, and $S_{i,j}$ some finite index set. Note that we have once more identified $\End_{\mathcal{C}}(L^2(M^2))$ with a subalgebra of $M^{\op} \ovtimes M$. Then we get by \eqref{eq: gamma sigma is tensor square}, remembering that $\sigma_2 = \sigma_1 \mtimes \sigma_1$, that for any $i,j,k \in I$
    \begin{align}
        \sum_{r \in S_{i,k}} 1_i \mu^{-1/2}(A_r) \otimes \vect{1_j} \otimes  1_k B_r = \sum_{s \in S_{i,j}, t \in S_{j,k}} 1_i \mu^{-1/2}(A_s) \otimes B_s \mu^{-1/2}(A_t) 1_j \otimes B_t 1_k \label{eq: formula for decomposition of gamma sigma}
    \end{align}
    Denote now, for any $i,j \in I$, $X_{i,j} := \sum_{s \in S_{i,j}} 1_i A_s \otimes 1_j B_s = (\op \otimes \id) [(1_i \otimes 1_j) \circ \gamma \sigma_1]$. By invertibility of $\gamma \sigma_1$, $X_{i,j} \neq 0$ for any $i,j \in I$. The formula \eqref{eq: formula for decomposition of gamma sigma} now says that
    \begin{equation}
    (X_{i,j} \otimes 1)(1 \otimes \sigma_j^{1/2} \otimes 1)(1 \otimes X_{j,k}) (1 \otimes \sigma_j^{-1/2} \otimes 1) = (1 \otimes 1_j \otimes 1) \circ (X_{i,k})_{1,3} \text{ for all } i,j,k \in I. \label{eq: formula for X_i,j in proof of form gamma}
    \end{equation}
    Fix $i_0,k_0 \in I$ and $\omega_2 \in M_{k_0}^*$ such that $(\id \otimes \omega_2) X_{i_0,k_0} \neq 0$. Then fix $\omega_1 \in M_{i_0}^*$ such that $(\omega_1 \otimes \omega_2) X_{i_0,k_0} = 1$. It then follows from \eqref{eq: formula for X_i,j in proof of form gamma} that 
    \[
    ((\omega_1 \otimes \id)X_{i_0,j}) \sigma_j^{1/2} ((\id \otimes \omega_2)X_{j,k_0}) \sigma^{-1/2} = 1_j \text{ for any } j \in I,
    \]
    so that $(\omega_1 \otimes \id)X_{i_0,j}$ and $(\id \otimes \omega_2)X_{j,k_0}$ are invertible for every $j \in I$. Then \eqref{eq: formula for X_i,j in proof of form gamma} says that
    \[
    X_{i,j} (1 \otimes \sigma_j^{1/2}) (1 \otimes (1 \otimes \omega_2)(X_{j,k_0})) (1 \otimes \sigma_j^{-1/2}) = (\id \otimes \omega_2)(X_{i,k_0}) \otimes 1 \text{ for any } i,j \in I.
    \]
    Define $\alpha \in \mathcal{M}(M_0)$ such that $\alpha 1_i = (\id \otimes \omega_2)(X_{i,k_0})$ for any $i \in I$, and remember that $\alpha$ is invertible. We have now shown that
    \[
    X_{i,j} = \alpha 1_i \otimes \sigma^{1/2} \alpha^{-1} \sigma^{-1/2} 1_j \text{ for any } i,j \in I.
    \]
    This then means that
    \[
    \gamma \sigma_1 = \alpha^{\op} \otimes \sigma^{1/2} \alpha^{-1} \sigma^{-1/2}.
    \]
    Now write $\gamma = \beta_1^{\op} \otimes \beta_2$ for some $\beta_1,\beta_2 \in \mathcal{M}(M_0)$. We know that $\gamma$ is positive and invertible. Hence, for every $i,j  \in I$, we get
    \[
    0 < (\tr \otimes \tr) (\gamma(1_i \otimes 1_j)) = \tr(\beta_1 1_i) \tr(\beta_2 1_j)
    \]
    so $\tr(\beta_1 1_i) \neq 0$ and $\tr(\beta_2 1_j) \neq 0$ for every $i,j \in I$. Fix $j_0 \in I$, and define
    \[
    \gamma_1 := \tr(\beta_2 1_{j_0}) \beta_1 \text{ and } \gamma_2 := \tr(\beta_2 1_{j_0})^{-1} \beta_2 
    \]
    Then $\gamma = \gamma_1^{\op} \otimes \gamma_2$. We will show that $\gamma_1$ and $\gamma_2$ are positive and invertible.

    For every $i \in I$, $(\id \otimes \tr)(\gamma(1_i \otimes 1_{j_0}))$ is positive and invertible. Since $\tr(\gamma_2 1_{j_0}) = 1$, it follows that $\gamma_1 1_i$ is positive and invertible for all $i \in I$. Hence, $\gamma_1$ is positive invertible. Then it follows that $\tr(\gamma_1 1_i) > 0$ for all $i \in I$. Then since $(\tr \otimes \id)(\gamma(1_i \otimes 1_j))$ is positive invertible for every $i,j \in I$, we must also get that $\gamma_2 1_j$ is positive invertible for every $j \in I$.  It follows that $\gamma_2$ is positive invertible.

    Since $\gamma$ commutes with $\sigma_1$, we must get that $\gamma_1^{\op} \otimes \gamma_2$ commutes with $\sigma^{\op} \otimes \sigma^{-1}$.  Therefore, there exists some nonzero scalar $c \in \mathbb{C}$ such that
    \begin{align*}
        \sigma \gamma_1 =& \; c \gamma_1 \sigma \\
        \sigma \gamma_2 =& \; c^{-1} \gamma_2 \sigma.
    \end{align*}
    Now, $\tr(\sigma \gamma_1 1_i) = \tr(\sigma^{1/2} \gamma_1 \sigma^{1/2} 1_i) > 0$, and also $\tr(\sigma \gamma_1 1_i) = \tr(\gamma_1 \sigma 1_i) = c^{-1} \tr(\sigma \gamma_1 1_i)$. It follows that $c = 1$, so that $\gamma_1$ and $\gamma_2$ both commute with $\sigma$.

    Now recall that $X_{i,j} = \sigma \gamma_1 1_i \otimes \sigma^{-1} \gamma_2 1_j$, and then use \eqref{eq: formula for X_i,j in proof of form gamma} to find that
    \[
    \sigma \gamma_1 1_i \otimes \sigma^{-1} \gamma_2 \sigma_j^{1/2} \sigma \gamma_1 \sigma_j^{-1/2} \otimes \sigma^{-1} \gamma_2 = \sigma \gamma_1 1_i \otimes 1_j \otimes \sigma^{-1} \gamma_2 1_k \text{ for any } i,j,k \in I.
    \]
    It follows that $\gamma_2 = \gamma_1^{-1}$. Take now $\delta = \gamma_1$.
\end{proof}

\begin{definition} \label{def: Omega}
    Let $H$ be any object in $\mathcal{C}$, and let $(s_H,t_H)$ be a standard solution to the conjugate equations. Recall from definition \ref{def: Hilbert-M-bimodule + finite type} that we write $H_0 := M_0 \cdot H \cdot M_0$. We define the (possibly unbounded) operator $\Omega_H: H_0 \to H_0$ given by
    \begin{equation}
    \Omega_H(\xi) = (\del \circ t_H^* \otimes \id)(\id \otimes \xi \otimes \id) t_H(\vect{1}) \label{eq: defining Omega_H}
    \end{equation}
    where we interpret $\vect{1}$ as an infinite sum over $(\vect{1_i})_{i \in I}$, and filling this sum into the expression \eqref{eq: defining Omega_H}, only finitely many nonzero terms remain for each fixed $\xi \in H_0$.
\end{definition}
\begin{proposition} \label{prop: properties of the operators Omega}
    The operators $\Omega_H$ from definition \ref{def: Omega} are well-defined, do not depend on the choice of standard solution to the conjugate equations, and satisfy the following for any objects $H,K$ in $\mathcal{C}$.
    \begin{enumerate}
        \item For any $x,y \in M_0$ and $\xi \in H_0$, we get that $\Omega_H(x \cdot \xi \cdot y) = \mu(x) \cdot \Omega_H(\xi) \cdot \mu(y)$.
        
        \item $\Omega_H$ is invertible with inverse
        \[
        \Omega_H^{-1}: H_0 \to H_0: \xi \mapsto (\id \otimes \del \circ s_H^*)(\id \otimes \xi \otimes \id)s_H(\vect{1})
        \]
        which is also well-defined.

        \item $\Omega_H \otimes \Omega_K$ restricts to $H \mtimes K$, and this restriction equals $\Omega_{H \mtimes K}$.
        
        \item For any morphism $V \in \mor_{\mathcal{C}}(H,K)$, we have $V \circ \Omega_H = \Omega_K \circ V$.

        \item Recall the definition of $\mathcal{J}$ from lemma \ref{lem: central projections in N}, and $\gamma$ from definition \ref{def: element gamma}. Take $P \in \mathcal{J}$ arbitrarily, and denote $H_P$ its range. Then for any $\vect{x \otimes y} \in (H_P)_0$, we get $\Omega_{H_P}(\vect{x \otimes y}) = \gamma_P \vect{\mu(x) \otimes \mu(y)}$.
    \end{enumerate}
\end{proposition}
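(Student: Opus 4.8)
The plan is to prove the structural claims (1)--(4) for a general object $H$ by the abstract conjugate-equation calculus, and to reserve explicit computation for the base case (5). I would first settle well-definedness. Writing $t_H(\vect 1) = \sum_{i \in I} t_H(\vect{1_i})$, the $M$-bimodularity of $t_H \in \mor_{\mathcal C}(L^2(M), \overline H \mtimes H)$ forces each summand to be supported, in its two legs, on the components cut out by $1_i$. Since any $\xi \in H_0 = M_0 \cdot H \cdot M_0$ is supported on finitely many blocks, i.e. $\xi = (\sum_{i \in X} 1_i) \cdot \xi \cdot (\sum_{j \in Y} 1_j)$ for finite $X, Y \subset I$, the middle insertion $(\id \otimes \xi \otimes \id)$ annihilates all but finitely many terms, so the defining sum is finite; the same argument handles the candidate inverse in (2). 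For independence of the standard solution I would use that any two standard solutions of \eqref{eq: conjugate equations} differ by a unitary $u \in \End_{\mathcal C}(H)$ paired with its conjugate $\overline u$; substituting into \eqref{eq: defining Omega_H} and using that $u$ is unitary and $M$-bimodular, the two copies cancel and $\Omega_H$ is unchanged. This is the step where \emph{standardness}, rather than mere solvability, of $(s_H, t_H)$ is essential.

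For property (1) I would push $x, y \in M_0$ through the defining expression: bimodularity of $t_H$ transfers them onto $\vect 1$, and the identity $\del(ab) = \del(\mu(b)a)$ recalled in the preliminaries converts the resulting one-sided multiplications into the twisted actions, giving $\Omega_H(x \cdot \xi \cdot y) = \mu(x) \cdot \Omega_H(\xi) \cdot \mu(y)$. Property (4) then follows from naturality of conjugation: for $V \in \mor_{\mathcal C}(H,K)$ I would pick standard solutions for $H$ and $K$ intertwined by $V$ and $\overline V$, so that $V$ slides through \eqref{eq: defining Omega_H}; independence from the previous paragraph guarantees the identity $V \circ \Omega_H = \Omega_K \circ V$ is unaffected by this choice.

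Property (2) is a direct verification that $\Omega_H \Omega_H^{-1}$ and $\Omega_H^{-1} \Omega_H$ collapse to the identity: substituting the definitions, the two zig-zag identities \eqref{eq: conjugate equations} cancel the nested $s_H, t_H$ pairs, and the leftover $\del$-contractions recombine through $\mult \mult^* = 1$ (proposition \ref{prop: multiplication has isometric adjoint}) to return $\xi$. For property (3) I would take the standard solution for $H \mtimes K$ built from those of $H$ and $K$ via $\overline{H \mtimes K} \cong \overline K \mtimes \overline H$; inserting $\xi \mtimes \eta$ into \eqref{eq: defining Omega_H}, the factors separate and the nested $\del \circ t^*$ contractions telescope to $\Omega_H \xi \mtimes \Omega_K \eta$. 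That $\Omega_H \otimes \Omega_K$ actually restricts to the subspace $H \mtimes K$ follows from property (1): the twisted bimodularity makes $\Omega_H \otimes \Omega_K$ commute with the projection $P_{H \mtimes K}$ of proposition \ref{prop: formula for projection onto relative tensor product}, whose summands carry precisely the compensating $\mu^{1/2}$-twists.

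Finally, for property (5) I would compute directly on the range $H_P$ of $P \in \mathcal J$. Using the explicit maps $S_P, T_P$ from proposition \ref{prop: frobenius reciprocity for finite type morphisms}, standardised by $\gamma_P$ as in definition \ref{def: element gamma}, I would insert $\vect{x \otimes y}$ into \eqref{eq: defining Omega_H} and evaluate via the categorical-trace formulas of lemma \ref{lem: categorical trace}. The $\sigma^{\pm 1/2}$ factors built into $S_P, T_P$ recombine, through $\mu(a) = \sigma^{-1} a \sigma$, into the shift $\vect{x \otimes y} \mapsto \vect{\mu(x) \otimes \mu(y)}$, while the normalisation contributes exactly $\gamma_P$, consistent with the form $\gamma = \rho(\delta^{\op}) \otimes \lambda(\delta^{-1})$ of theorem \ref{thm: explicit description of gamma}. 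I expect the main obstacle throughout to be the consistent bookkeeping of these $\mu^{1/2}$-twists coming from the relative-tensor-product projections: it is exactly the placement of the $\sigma^{\pm 1/2}$ factors that manufactures the Nakayama automorphism $\mu$ in (1) and (5), so any side or exponent slip there propagates. A close second is making the independence argument fully rigorous in this $M$-bimodular rather than scalar setting, where one must check that the relating unitary and its conjugate cancel without leaving a residual central factor.
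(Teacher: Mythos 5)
Your proposal follows essentially the same route as the paper: finiteness of the defining sum for well-definedness, bimodularity of $t_H$ plus $\del(ab)=\del(\mu(b)a)$ for (1), the zig-zag identities for (2), a standard solution for $H\mtimes K$ assembled from those of $H$ and $K$ for (3), naturality of conjugation (the paper's map $\theta_{H,K}$) for (4), and a direct computation with the standardised pair $\left((\gamma_P^{-1/2}\otimes 1)S_P,(1\otimes\gamma_P^{1/2})T_P\right)$ for (5). Your observation that $\Omega_H\otimes\Omega_K$ preserves the defining relation of $H\mtimes K$ because of the compensating $\mu^{1/2}$-twists is correct, and is a point the paper leaves implicit.

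The one genuine problem is the logical ordering of ``independence of the standard solution'' and property (4). You parametrise two standard solutions by a unitary $u\in\End_{\mathcal{C}}(H)$ \emph{paired with its conjugate} $\overline{u}$, and claim the two copies cancel inside \eqref{eq: defining Omega_H}. With that parametrisation they do not: the copies acting on the $\overline{H}$ leg cancel, but the copies on the $H$ legs survive, and one gets $\Omega_H' = u\circ\Omega_H\circ u^*$. This is precisely the situation the paper resolves by first proving property (4) and then applying it to the relating unitary; but since your proof of (4) cites independence, your argument as written is circular. Two repairs are available. Either invoke the correct classification: two standard solutions with the same conjugate object differ by a \emph{single} unitary $v\in\End_{\mathcal{C}}(\overline{H})$, via $s_H'=(\id\otimes v)s_H$ and $t_H'=(v\otimes\id)t_H$; then both occurrences of $v$ land on the $\overline{H}$ leg of \eqref{eq: defining Omega_H} and cancel outright, so independence really does come first and your ordering stands. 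Or follow the paper's ordering: the naturality relations $(V\otimes\id)\circ s_H=(\id\otimes\theta_{H,K}(V))\circ s_K$ and $(\id\otimes V)\circ t_H=(\theta_{H,K}(V)\otimes\id)\circ t_K$ hold for \emph{any} fixed standard solutions (no special ``picking'' is needed, and this is also where $*$-preservation of $\theta_{H,K}$ — hence standardness — genuinely enters, not only in the independence step), so prove (4) for fixed choices and then deduce independence from it.
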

\begin{proof}
    The well-definedness of $\Omega_H$ will follow from $1.$ since for any central projections $p,q \in M_0$, $\Omega_H$ restricts to a linear map on the finite dimensional Hilbert space $p \cdot H \cdot q$. For now, fix for every $H$ a solution to the conjugate equations $(s_H,t_H)$. From point 4. it will follow that the choice is of no consequence, since the operator $\Omega_H'$, defined by any other choice $(s_H',t_H')$, would satisfy $\Omega_H' = U^* \circ \Omega_H \circ U$ for some unitary $U \in \End_{\mathcal{C}}(H)$.
    \begin{enumerate}
        \item Recall definition \ref{def: relative tensor product}. Since $t_H$ maps $L^2(M)$ into $\overline{H} \mtimes H$, we get that
        \[
        t_H^* \circ (1 \otimes \lambda_H(x)) = [(1 \otimes \lambda_H(x^*))t_H]^* = [(\rho_{\overline{H}}(\mu^{-1/2}(x^*)) \otimes 1)t_H]^* = t_H^* (\rho_{\overline{H}}(\mu^{1/2}(x)) \otimes 1)
        \]
        Hence, calculating similarly on the other side, we do indeed find that $\Omega_H(x \cdot \xi) = \mu(x) \cdot \Omega_H(\xi)$. On the other hand, by bimodularity of $t_H$, we can calculate
        \begin{align*}
            \Omega_H(\xi \cdot y) =& \; (\del \circ \rho(y^{\op}) \circ t_H^* \otimes \id) \circ (\id \otimes \xi \otimes \id) \circ t_H(\vect{1}) \\
            =& \; (\del \circ \lambda(\mu^{1/2}(y)) \circ t_H^* \otimes \id) \circ (\id \otimes \xi \otimes \id) \circ t_H(\vect{1}) \\
            =& \; (\del \circ t_H^* \otimes \id) \circ (\id \otimes \xi \otimes \id) \circ t_H \left(\vect{\mu^{1/2}(y)} \right) \\
            =& \; \Omega_H(\xi) \cdot \mu(y)
        \end{align*}

        \item The well-definedness of $\Omega_H^{-1}$ can be handled completely analogously to that of $\Omega_H$. They are each others inverse since $(s_H,t_H)$ are a solution to the conjugate equations.

        \item Let $\phi_L: L^2(M) \mtimes H \to H$ and $\phi_R: K \mtimes L^2(M) \to K$ denote the maps as in the proof of proposition \ref{prop: F_0 is monoidal unit}. Then the pair
        \[
        \left( (\id \otimes s_K \otimes \id) \circ (\id \otimes \phi_L^*) \circ s_H , (\id \otimes t_H \otimes \id) \circ (\phi_R^* \otimes \id) \circ t_H  \right)
        \]
        is a standard solution to the conjugate equations for $H \mtimes K$. Using these to define $\Omega_{H \mtimes K}$, we find that indeed $\Omega_{H \mtimes K} = \Omega_H \otimes \Omega_K$.

        \item By the general theory of unitary tensor categories, and more generally unitary $2$-categories, there is a $*$-preserving linear isomorphism $\theta_{H,K}: \mor_{\mathcal{C}}(H,K) \to \mor_{\mathcal{C}}(\overline{K}, \overline{H})$ which satisfies
        \[
        (V \otimes \id) \circ s_H = (\id \otimes \theta_{H,K}) \circ s_K \text{ and } (\id \otimes V) \circ t_H = (\theta_{H,K}(V) \otimes \id) t_K.
        \]
        Using this, the result is immediate.

        \item This follows from direct computation, using the fact that the pair
        \[
        \left( (\gamma_P^{-1/2} \otimes 1)S_P, (1 \otimes \gamma_P^{1/2})T_P \right)
        \]
        is a standard solution to the conjugate equations for $H_P$.
    \end{enumerate}
\end{proof}

\subsection{A $*$-algebra obtained from the unitary 2-category $\mathcal{C}$} \label{sbsc: *-algebra B obtained from category}

In \cite{DeCommerTimmermann2015}, De Commer and Timmermann defined partial compact quantum groups, and showed a version of Tannaka-Krein-Woronowicz duality for these objects. The reconstruction in that setting starts from a unitary $2$-category of finite type Hilbert-$\ell^{\infty}(I)$-bimodules for some set $I$. When we restrict our attention in this paper to the setting where $M$ is abelian, i.e. $M = \bigoplus_{i \in I}M_i \cong \ell^{\infty}(I)$, and $\del$ is simply the functional implemented by the counting measure, the following section recovers part of the construction of \cite[section 3]{DeCommerTimmermann2015}. We will however not show that the $*$-algebra $\mathcal{B}$ obtained here admits a bialgebra structure, as we will move directly to the quantum group setting in the next section \ref{sbsc: the algebraic quantum group}.

\begin{definition} \label{def: algebra B}
    Fix some maximal set $\irr$ of mutually inequivalent irreducible objects $H$ in $\mathcal{C}$. We define a $*$-algebra $\mathcal{B}$, which as a vector space is isomorphic to $\bigoplus_{H \in \irr} \overline{H}_0 \otimes_{\text{alg}} H_0$, where we use the notation $H_0 = M_0 \cdot H \cdot M_0$ introduced in definition \ref{def: Hilbert-M-bimodule + finite type}. Denote $\Gelt{H}{\xi}{\eta}$ for the element $\overline{\xi} \otimes \eta \in \overline{H}_0 \otimes_{\text{alg}} H_0$. Recall the notion of a basis of partial isometries from definition \ref{def: basis of partial isometries}. The multiplication is defined as follows.
    \begin{equation}
        \Gelt{H}{\xi}{\eta} \Gelt{K}{\xi'}{\eta'} = \sum_{\substack{L \in \irr \\ V \in \bpi(H \mtimes K, L)}} \Gelt{L}{V(\xi \mtimes \xi')}{V(\eta \mtimes \eta')} \label{eq: multiplication in B}
    \end{equation}
    For any object $H$ in $\mathcal{C}$, let $s_H,t_H$ denote a standard solution to the conjugate equations. Then the adjoint is given by
    \begin{equation}
        \Gelt{H}{\xi}{\eta}^* = \Gelt{\overline{H}}{(\id \otimes \overline{\xi})t_H(\vect{1})}{(\overline{\eta} \otimes \id)s_H(\vect{1})}. \label{eq: adjoint in B}
    \end{equation}
    Here, as in proposition \ref{prop: frobenius reciprocity for finite type morphisms}, we write $\vect{1}$, where we mean that we are taking a sum $\vect{1} = \sum_{i \in I} \vect{1_i}$, and the resulting sum in \eqref{eq: adjoint in B} has only finitely many nonzero terms.
\end{definition}
\begin{proof}[Proof that definition \ref{def: algebra B} yields a well-defined $*$-algebra]
    Firstly note that all but finitely many of the terms in \eqref{eq: multiplication in B} are zero, since $H \mtimes K$ has finitely many subobjects each with finite multiplicity. Also, the sum in \eqref{eq: multiplication in B} does not depend on the choice of $\bpi$, since any two such bases are unitarily conjugate to one another. The antilinearity-linearity of $G_L$ then ensures this non-dependence.

    To show associativity of the multiplication, it suffices to note that for any $H_1,H_2,H_3,L$ irreducible objects in $\mathcal{C}$, the following are both $\bpi(H_1 \mtimes H_2 \mtimes H_3, L)$
    \[
    \bigcup_{K \in \irr} \{ V(W \mtimes 1) | V \in \bpi(K \mtimes H_3,L), W \in \bpi(H_1 \mtimes H_2,K) \}
    \]
    and
    \[
    \bigcup_{K \in \irr} \{ V(1 \mtimes W) | V \in \bpi(H_1 \mtimes K, L), W \in \bpi(H_2 \mtimes H_3, K) \}.
    \]

    The adjoint is well-defined since for any $\xi \in H_0$, we can find some central projection $p \in M_0$ such that $p \cdot \xi = \xi$. Then for any central projection $q \in M_0$ with $q \leq 1-p$, we get that $(\id \otimes \overline{\xi})t_H(\vect{q}) = 0$ by $M$-bimodularity of $t_H$. Hence, $(\id \otimes \overline{\xi})t_H(\vect{1}) = (\id \otimes \overline{\xi})t_H(\vect{p})$. One finds a similar statement for $\eta$.
    
    The adjoint is involutive by definition, and the observation that $(t_H,s_H)$ form a standard solution to the conjugate equations for $\overline{H}$ when $(s_H,t_H)$ do for $H$. To show antimultiplicativity, we first make the following observations for some fixed $H_1,H_2,K \in \irr$, with respective standard solutions to the conjugate equations $(s_{H_i},t_{H_i})_{i \in \{1,2\}}$ and $(s_K,t_K)$. All three of these observations follow from the general theory of unitary $2$-categories, and the fact that we take the solutions to the conjugate equations to be standard.
    \begin{itemize}
        \item Denote $\phi_L: L^2(M) \mtimes H_2 \to H_2$ the morphism from the proof of proposition \ref{prop: F_0 is monoidal unit}. For every $W \in \mor_{\mathcal{C}}(\overline{H_2} \mtimes \overline{H_1}, \overline{K})$, we have that
        \[
        \norm{t_{K}}^{-1} t_{H_2}^* (\id_{\overline{H_2}} \otimes \phi_L )(\id_{\overline{H_2}} \otimes t_{H_1}^* \otimes \id_{H_2})(W^* \otimes \id_{H_1 \mtimes H_2})
        \]
        is an element of $\mor_{\mathcal{C}}(\overline{K} \mtimes H_1 \mtimes H_2, L^2(M))$, and taking the set of these elements for $W$ ranging over some $\bpi(\overline{H_2} \mtimes \overline{H_1}, \overline{K})$, we obtain a $\bpi(\overline{K} \mtimes H_1 \mtimes H_2, L^2(M))$.

        \item For any $V \in \mor_{\mathcal{C}}(H_1 \mtimes H_2, K)$, we have that
        \[
        \norm{s_{K}}^{-1} s_{K}^*(V \otimes \id_{\overline{K}})
        \]
        is an element of $\mor_{\mathcal{C}}(H_1 \mtimes H_2 \mtimes \overline{K}, L^2(M))$, and taking the set of these elements for $V$ ranging over some $\bpi(H_1 \mtimes H_2, K)$ yields a $\bpi(H_1 \mtimes H_2 \mtimes \overline{K}, L^2(M))$.

        \item Denote again $\phi_L: L^2(M) \mtimes H_2 \to H_2$ the morphism from the proof of proposition \ref{prop: F_0 is monoidal unit}, as well as $\phi_R: H_1 \mtimes L^2(M) \to H_1$. Note that the ranges of these morphisms are not the same, in contrast to what was the case in proposition \ref{prop: F_0 is monoidal unit}. For any $V \in \mor_{\mathcal{C}}(H_1 \mtimes H_2, K)$ and $W \in \mor_{\mathcal{C}}(\overline{H_2} \mtimes \overline{H_1}, \overline{K})$, we have the following equality.
        \begin{align*}
            &\norm{t_{K}}^{-2} \overline{\norm{ t_{H_2}^*(\id_{\overline{H_2}} \otimes \phi_L)(\id_{\overline{H_2}} \otimes t_{H_1}^* \otimes \id_{H_2})(W^* \otimes V^*) t_{K} }} \\
            &= \norm{s_{K}}^{-2} \norm{s_{H_1}^*(\phi_R \otimes \id_{\overline{H_1}}) (\id_{H_1} \otimes s_{H_2}^* \otimes \id_{\overline{H_1}}) (V^* \otimes W^*) s_{K} }
        \end{align*}
    \end{itemize}

    Now take $H_1,H_2 \in \irr$ arbitrarily, as well as $\xi,\eta \in (H_1)_0$ and $\xi',\eta' \in (H_2)_0$. We calculate as follows.
    \begin{align*}
        &\left( \Gelt{H_1}{\xi}{\eta} \Gelt{H_2}{\xi'}{\eta'} \right)^* \\        
        =& \; \left( \sum_{\substack{K \in \irr \\ V \in \bpi(H_1 \mtimes H_2,K)}} \Gelt{K}{V(\xi \mtimes \xi')}{V(\eta \mtimes \eta')} \right)^* \\
        =& \; \sum_{\substack{K \in \irr \\ V \in \bpi(H_1 \mtimes H_2,K)}} \Gelt{\overline{K}}{(\id \otimes \overline{(\xi \mtimes \xi')}V^*) t_{K}(\vect{1})}{(\overline{(\eta \mtimes \eta')}V^* \otimes \id) s_K(\vect{1})} \\        
        =& \; \sum_{\substack{K \in \irr \\ V \in \bpi(H_1 \mtimes H_2,K) \\ W \in \bpi(\overline{H_2} \mtimes \overline{H_1}, \overline{K})}} \norm{t_{K}}^{-2} \overline{\norm{ t_{H_2}^*(\id_{\overline{H_2}} \otimes \phi_L)(\id_{\overline{H_2}} \otimes t_{H_1}^* \otimes \id_{H_2})(W^* \otimes V^*) t_{K} }} \\
        & \Gelt{\overline{K}}{ (\id \otimes \overline{(\xi \mtimes \xi')})(W \otimes \id_{H_1 \otimes H_2}) (\id_{\overline{H_2}} \otimes t_{H_1} \otimes \id_{H_2})(\id_{\overline{H_2}} \otimes \phi_L^*)t_{H_2}(\vect{1}) }{ (\overline{(\eta \mtimes \eta')}V^* \otimes \id) s_K(\vect{1})} \\        
        =& \; \sum_{\substack{K \in \irr \\ V \in \bpi(H_1 \mtimes H_2,K) \\ W \in \bpi(\overline{H_2} \mtimes \overline{H_1}, \overline{K})}} \norm{s_{K}}^{-2} \norm{s_{H_1}^*(\phi_R \otimes \id_{\overline{H_1}}) (\id_{H_1} \otimes s_{H_2}^* \otimes \id_{\overline{H_1}}) (V^* \otimes W^*) s_{K} } \\
        & \Gelt{\overline{K}}{ (\id \otimes \overline{(\xi \mtimes \xi')})(W \otimes \id_{H_1 \otimes H_2}) (\id_{\overline{H_2}} \otimes t_{H_1} \otimes \id_{H_2})(\id_{\overline{H_2}} \otimes \phi_L^*)t_{H_2}(\vect{1}) }{ (\overline{(\eta \mtimes \eta')}V^* \otimes \id) s_K(\vect{1})} \\        
        =& \; \sum_{\substack{K \in \irr \\ W \in \bpi(\overline{H_2} \mtimes \overline{H_1}, \overline{K})}} \Gelt{\overline{K}}{ (\id \otimes \overline{(\xi \mtimes \xi')})(W \otimes \id_{H_1 \otimes H_2}) (\id_{\overline{H_2}} \otimes t_{H_1} \otimes \id_{H_2})(\id_{\overline{H_2}} \otimes \phi_L^*)t_{H_2}(\vect{1}) }{ (\overline{(\eta \mtimes \eta')} \otimes \id) (\id_{H_1 \mtimes H_2} \otimes W) (\id_{H_1} \otimes s_{H_2} \otimes \id_{\overline{H_1}})(\phi_R^* \otimes \id_{\overline{H_1}})s_{H_1}(\vect{1}) } \\        
        =& \; \sum_{\substack{K \in \irr \\ W \in \bpi(\overline{H_2} \mtimes \overline{H_1}, \overline{K})}} \Gelt{\overline{K}}{W \left[ (\id \otimes \overline{\xi'})t_{H_2}(\vect{1}) \mtimes (\id \otimes \overline{xi})t_{H_1}(\vect{1}) \right]}{W \left[ (\overline{\eta'} \otimes \id)s_{H_2}(\vect{1}) \mtimes (\overline{\eta} \otimes \id)s_{H_1}(\vect{1}) \right]} \\
        =& \; \Gelt{H_2}{\xi'}{\eta'}^* \Gelt{H_1}{\xi}{\eta}^*
    \end{align*}
\end{proof}

To alleviate some of the notation, we introduce the following.
\begin{notation} \label{not: G_H for reducibles}
    For any object $H$ in $\mathcal{C}$, and any $\xi,\eta \in H_0$, we denote
    \[
    \Gelt{H}{\xi}{\eta} := \sum_{\substack{K \in \irr \\ V \in \bpi(H,K)}} \Gelt{K}{V(\xi)}{V(\eta)}.
    \]
    When $H$ is a Hilbert-$M$-bimodule which is not necessarily an object of $\mathcal{C}$, but it is covered by $\mathcal{C}$ (definition \ref{def: Category covers bimodule}), we will also write
    \[
    \Gelt{H}{\xi}{\eta} := \sum_{\substack{K \in \irr \\ V \in \bpi(H,K)}} \Gelt{K}{V(\xi)}{V(\eta)}
    \]
    but only when we make sure that $\xi,\eta$ are contained in some algebraic direct sum $\bigoplus_{a \in A} (H_a)_0$, where $(H_a)_{a \in A}$ realises the covering of $H$ by $\mathcal{C}$. In particular, for $\vect{x_0 \otimes \cdots \otimes x_n}, \vect{y_0 \otimes \cdots \otimes y_n} \in \mathcal{F}_n$, we will write
    \begin{equation}
    \Gelt{n}{x_0 \otimes \cdots \otimes x_n}{y_0 \otimes \cdots \otimes y_n} := \Gelt{L^2(M^{n+1})}{\vect{x_0 \otimes \cdots \otimes x_n}}{\vect{y_0 \otimes \cdots \otimes y_n}} = \sum_{\substack{K \in \irr \\ V \in \bpi(L^2(M^{n+1}),K)}} \Gelt{K}{V(\vect{x_0 \otimes \cdots \otimes x_n})}{V(\vect{y_0 \otimes \cdots \otimes y_n})}. \label{eq: defining G_n}
    \end{equation}
\end{notation}

\begin{remark} \label{rem: calculation in B for reducibles}
    With the notation \ref{not: G_H for reducibles}, we get that for all objects $H,K$ in $\mathcal{C}$
    \[
    \Gelt{H}{\xi}{\eta} \Gelt{K}{\xi'}{\eta'} = \Gelt{H \mtimes K}{\xi \mtimes \xi'}{\eta \mtimes \eta'} \text{ for any } \xi,\eta \in H_0, \xi',\eta' \in K_0
    \]
    and
    \[
    \Gelt{H}{\xi}{\eta}^* = \Gelt{\overline{H}}{(\id \otimes \overline{\xi})t_H(\vect{1})}{(\overline{\eta} \otimes \id)s_H(\vect{1})} \text{ for any } \xi,\eta \in H_0.
    \]
    Note that in particular,
    \[
    \Gelt{n}{x_0 \otimes \cdots \otimes x_n}{y_0 \otimes \cdots \otimes y_n} \Gelt{m}{z_0 \otimes \cdots \otimes z_m}{t_0 \otimes \cdots \otimes t_m} = \Gelt{n+m}{x_0 \otimes \cdots \otimes x_nz_0 \otimes \cdots \otimes z_m}{y_0 \otimes \cdots \otimes y_nt_0 \otimes \cdots \otimes t_m}.
    \]
\end{remark}

\begin{lemma} \label{lem: calculation in B}
    The following rules hold in the $*$-algebra $\mathcal{B}$, for any objects $K,H$ in $\mathcal{C}$, any $\xi,\eta \in H_0$, $\xi',\eta' \in K_0$, $V \in \mor_{\mathcal{C}}(H,K)$, and $n \in \mathbb{N}$.
    \begin{enumerate}
        \item for any $x,y \in M_0$, we have $\Gelt{H}{\xi \cdot x}{\eta \cdot y} \Gelt{K}{\xi'}{\eta'} = \Gelt{H}{\xi}{\eta} \Gelt{K}{\mu^{-1/2}(x) \cdot \xi'}{\mu^{-1/2}(y) \cdot \eta'}$

        \item We have an equality $\Gelt{H}{V^*(\xi')}{\eta} = \Gelt{K}{\xi'}{V(\eta)}$, and this continues to hold when $H,K$ are Hilbert-$M$-bimodules which are covered by $\mathcal{C}$, and $V$ is a morphism which is covered by $\mathcal{C}$. (see definitions \ref{def: Category covers bimodule} and \ref{def: covering morphisms}.)

        \item Let $(s_H',t_H')$ be any (not necessarily standard) solution to the conjugate equations for $H$. Then we have
        \[
        \Gelt{H}{\xi}{\eta}^* = \Gelt{\overline{H}}{(\id \otimes \overline{\xi})t_H'(\vect{1})}{(\overline{\eta} \otimes \id)s_H'(\vect{1})}.
        \]
        In other words, we could relaxed the condition that the adjoint is defined using standard solutions to the conjugate equations.

        \item For any $\vect{x_0 \otimes \cdots \otimes x_n}, \vect{y_0 \otimes \cdots \otimes y_n} \in \mathcal{F}_n$, we get that
        \[
        \Gelt{n}{x_0 \otimes \cdots \otimes x_n}{y_0 \otimes \cdots \otimes y_n}^* = \Gelt{n}{\mu(x_n)^* \otimes \cdots \otimes \mu(x_0)^*}{y_n^* \otimes \cdots \otimes y_0^*}.
        \]

        \item For any $x_0, x_1, x_2, y_0,y_1,y_2 \in M_0$, we have
        \begin{align*}
            \Gelt{2}{x_0 \otimes 1 \otimes x_2}{y_0 \otimes y_1 \otimes y_2} =& \; \del(y_1) \Gelt{1}{x_0 \otimes x_2}{y_0 \otimes y_2} \\
            \Gelt{2}{x_0 \otimes x_1 \otimes x_2}{y_0 \otimes 1 \otimes y_2} =& \; \del(x_1^*) \Gelt{1}{x_0 \otimes x_2}{y_0 \otimes y_2}
        \end{align*}
        where we should again interpret this $1$ as taking a sum over $(1_i)_{i \in I}$, and retaining only the finitely many nonzero terms.
    \end{enumerate}
\end{lemma}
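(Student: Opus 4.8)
The plan is to prove rule 2 first, as it is the technical workhorse, and to deduce rules 3--5 from it together with the multiplicative structure of $\mathcal{B}$ recorded in Remark \ref{rem: calculation in B for reducibles}; rule 1 is independent and fastest. For rule 1, Remark \ref{rem: calculation in B for reducibles} rewrites both sides as values of $\Gelt{H \mtimes K}{\cdot}{\cdot}$, so it suffices to prove the relative-tensor identity $(\xi \cdot x) \mtimes \zeta = \xi \mtimes (\mu^{-1/2}(x) \cdot \zeta)$ in $H \mtimes K$. This comes from taking adjoints in the relation $(\rho_H(x^{\op}) \otimes 1) P_{H \mtimes K} = (1 \otimes \lambda_K(\mu^{1/2}(x))) P_{H \mtimes K}$ proved inside Proposition \ref{prop: formula for projection onto relative tensor product}: using $\mu^{1/2}(x)^* = \mu^{-1/2}(x^*)$ and replacing $x$ by $x^*$ gives $P_{H \mtimes K}(\rho_H(x^{\op}) \otimes 1) = P_{H \mtimes K}(1 \otimes \lambda_K(\mu^{-1/2}(x)))$, and applying this to $\xi \otimes \zeta$ is exactly the needed identity.

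For rule 2, I expand both sides into irreducible generators via Notation \ref{not: G_H for reducibles}, writing $\Gelt{H}{V^*\xi'}{\eta} = \sum_{L,\, W \in \bpi(H,L)} \Gelt{L}{WV^*\xi'}{W\eta}$ and $\Gelt{K}{\xi'}{V\eta} = \sum_{L,\, W' \in \bpi(K,L)} \Gelt{L}{W'\xi'}{W'V\eta}$. Inserting the isotypic completeness relations $\id_H = \sum_{L,W} W^*W$ and $\id_K = \sum_{L,W'} (W')^*W'$ turns every copy of $V$ into a scalar $W'VW^* \in \End_{\mathcal{C}}(L) \cong \mathbb{C}$, every term mixing non-isomorphic irreducibles vanishing. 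Since $\Gelt{L}{\cdot}{\cdot}$ is antilinear in the first and linear in the second entry, both sides collapse to the same double sum $\sum_{L,\, W \in \bpi(H,L),\, W' \in \bpi(K,L)} (W'VW^*)\,\Gelt{L}{W'\xi'}{W\eta}$. Running the identical bookkeeping with $V^*$ moved in the second entry yields the mirror identity $\Gelt{H}{\xi}{V^*\eta'} = \Gelt{K}{V\xi}{\eta'}$, which I record since rule 5 needs it. The extension to bimodules merely covered by $\mathcal{C}$ follows by decomposing along covering families and quoting the object-level case, using that $V$ covered means $\iota_b^* V \iota_a \in \mor_{\mathcal{C}}(H_a, K_b)$ (definition \ref{def: covering morphisms}).

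Rules 3--5 are then short. For rule 3, two solutions of the conjugate equations for $H$ differ by an invertible $Z \in \End_{\mathcal{C}}(\overline H)$, say $t_H' = (Z \otimes \id)t_H$ and $s_H' = (\id \otimes (Z^*)^{-1})s_H$; substituting into the asserted formula pulls $Z$ out of the first slot and $(Z^*)^{-1}$ out of the second, and rule 2 applied to $Z^*$ absorbs them against each other, recovering the standard-solution expression defining $\Gelt{H}{\xi}{\eta}^*$ in \eqref{eq: adjoint in B}. For rule 4, I iterate multiplicativity to write $\Gelt{n}{x_0 \otimes \cdots \otimes x_n}{y_0 \otimes \cdots \otimes y_n} = \Gelt{0}{x_0}{y_0} \cdots \Gelt{0}{x_n}{y_n}$; the anti-multiplicative adjoint reverses the order, and the $n=0$ rule $\Gelt{0}{x}{y}^* = \Gelt{0}{\mu(x)^*}{y^*}$ delivers the reversed, $\mu$-twisted right-hand side. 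Finally rule 5 drops out of rule 2 and its mirror applied to the covered morphism $V = (\id \otimes \del \otimes \id)$, whose adjoint sends $\vect{a \otimes b} \mapsto \vect{a \otimes 1 \otimes b}$: the first identity keeps $\del(y_1)$ in the linear second slot, while in the second identity the surviving scalar $\del(x_1)$ sits in the antilinear first slot and becomes $\overline{\del(x_1)} = \del(x_1^*)$, matching the stated form.

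The main obstacle is the $n=0$ base case of rule 4. Everything above is formal once rule 2 is available, but this base case forces one to commit to an explicit solution of the conjugate equations for $L^2(M)$ and to the self-duality $\overline{L^2(M)} \cong L^2(M)$ of Example \ref{ex: F_n is its own dual}, and then to verify by direct computation that the abstract adjoint \eqref{eq: adjoint in B} sends $\vect x$ and $\vect y$ to $\vect{\mu(x)^*}$ and $\vect{y^*}$. The delicate point is carrying the $\sigma^{\pm 1/2}$ factors correctly through the identification; rule 3 is precisely what makes this tractable, since it lets me use the computationally convenient solution built from $\mult^*$ rather than the standard one.
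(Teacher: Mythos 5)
Your treatments of rules 1, 2, 3 and 5 are correct and essentially identical to the paper's: rule 1 via $(\xi \cdot x) \mtimes \xi' = \xi \mtimes (\mu^{-1/2}(x)\cdot \xi')$, rule 2 by expanding along bases of partial isometries and turning $V$ into scalars $W'VW^* \in \End_{\mathcal{C}}(L) \cong \mathbb{C}$, rule 3 by absorbing the invertible comparison morphism via rule 2, and rule 5 by applying rule 2 (and its mirror, which as you note follows by replacing $V$ with $V^*$) to the covered morphism $(\id \otimes \del \otimes \id)$. The paper is slightly more careful in rule 5 about the implicit infinite sum in $\vect{1}$ (it first inserts a finite-type projection $P$ fixing $\vect{y_0 \otimes y_1 \otimes y_2}$), but your argument captures the same idea.

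Rule 4, however, contains a genuine error. Your factorization
\[
\Gelt{n}{x_0 \otimes \cdots \otimes x_n}{y_0 \otimes \cdots \otimes y_n} = \Gelt{0}{x_0}{y_0} \cdots \Gelt{0}{x_n}{y_n}
\]
is incompatible with the multiplication law of $\mathcal{B}$. By remark \ref{rem: calculation in B for reducibles}, multiplication concatenates tensors \emph{while merging the adjacent entries}: $\Gelt{n}{\cdots \otimes x_n}{\cdots} \Gelt{m}{z_0 \otimes \cdots}{\cdots} = \Gelt{n+m}{\cdots \otimes x_n z_0 \otimes \cdots}{\cdots}$. This reflects $L^2(M) \mtimes L^2(M) \cong L^2(M)$ ($\mathcal{F}_0$ is the monoidal unit, proposition \ref{prop: F_0 is monoidal unit}), so the product of $G_0$'s stays a $G_0$: your right-hand side equals $\Gelt{0}{x_0 x_1 \cdots x_n}{y_0 y_1 \cdots y_n}$, not $\Gelt{n}{x_0 \otimes \cdots \otimes x_n}{y_0 \otimes \cdots \otimes y_n}$. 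Consequently no amount of multiplying degree-zero generators can reach degree $n \geq 1$, and your induction never starts. The correct decomposition is into $G_1$'s,
\[
\Gelt{n}{x_0 \otimes \cdots \otimes x_n}{y_0 \otimes \cdots \otimes y_n} = \Gelt{1}{x_0 \otimes x_1}{y_0 \otimes y_1}\,\Gelt{1}{1 \otimes x_2}{1 \otimes y_2} \cdots \Gelt{1}{1 \otimes x_n}{1 \otimes y_n},
\]
which makes $n=1$ the true base case. That base case is where the real work lies, and it cannot be reduced to $L^2(M)$ and its self-duality as your final paragraph suggests: one must decompose $\vect{x_0 \otimes x_1}$, $\vect{y_0 \otimes y_1}$ along the minimal central projections $P \in \mathcal{J}$ of lemma \ref{lem: central projections in N} and compute the adjoint using the explicit (non-standard) solutions $(S_P, T_P)$ of proposition \ref{prop: frobenius reciprocity for finite type morphisms} for the subobjects $H_P \subset L^2(M^2)$ — rule 3 is indeed what legitimizes using these non-standard solutions, so your instinct about its role was right, but it must be applied one tensor degree higher than you propose.
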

\begin{proof}
    \begin{enumerate}
        \item This follows directly from remark \ref{rem: calculation in B for reducibles} and the easy to check fact that $(\xi \cdot x) \mtimes \xi' = \xi \mtimes (\mu^{-1/2}(x) \cdot \xi')$, and similarly for $\eta,\eta'$ and $y$. 

        \item Fix for any $L \in \irr$ some $\bpi(H,L) := \{X^L_s | s \in S_L \}$ and a $\bpi(K,L) := \{Y^L_t | t \in T_L\}$ where $T_L$ and $S_L$ are finite index sets for every $L \in \irr$. Then denote by $V^L_{t,s} \in \mathbb{C}$ the scalar such that $Y^L_t V (X^L_s)^* = V^L_{t,s} \id_L$. We then get that
        \[
        V (X^L_s)^* = \sum_{t \in T_L} V^L_{t,s} \cdot (Y^L_t)^* \text{ and } Y^L_t V = \sum_{s \in S_L} V^L_{t,s} \cdot X^L_s.
        \]
        Using this, we calculate as follows.
        \begin{align*}
            \Gelt{K}{\xi'}{V(\eta)} &= \sum_{\substack{L \in \irr, t \in T_L}} \Gelt{L}{Y^L_t(\xi')}{ Y^L_t V(\eta)} = \sum_{\substack{L \in \irr \\ t \in T_L, s \in S_L}} V^L_{t,s} \Gelt{L}{Y^L_t(\xi')}{X^L_s (\eta)} \\
            &= \sum_{L \in \irr, s \in S_L} \Gelt{L}{X^L_sV^*(\xi')}{X^L_s(\eta)} = \Gelt{H}{V^*(\xi')}{\eta}
        \end{align*}

        In the more general case when $H,K$ are Hilbert-$M$-bimodules which are covered by $\mathcal{C}$, recall from notation \ref{not: G_H for reducibles} that by assumption $\xi',\eta$ are contained in some subobjects of $K$ and $H$ respectively, which are actual objects in $\mathcal{C}$. Call these objects $L_1,L_2$, and denote $P_1 \in \mor_{\mathcal{C}}(H,L_1)$ and $P_2 \in \mor_{\mathcal{C}}(K,L_2)$ the projections whose ranges are $L_1$ and $L_2$ respectively. Then $(P_2 \circ V \circ P_1) \in \mor_{\mathcal{C}}(L_1,L_2)$, and we get that
        \[
        \Gelt{K}{\xi'}{V(\eta)} = \Gelt{L_2}{P_2(\xi')}{(P_2 \circ V \circ P_1)(\eta))} = \Gelt{L_1}{(P_1 \circ V^* \circ P_2)(\xi')}{ P_1(\eta)} = \Gelt{H}{V^*(\xi')}{\eta}.
        \]

        \item Let $(s_H,t_H)$ be a standard solution to the conjugate equations for $H$. Then there is an invertible morphism $T \in \End_{\mathcal{C}}(\overline{H})$ such that 
        \begin{align*}
            s_H' = (1 \otimes T^*)s_H \text{ and } t_H' = (T^{-1} \otimes 1)t_H.
        \end{align*}
        Using the previous point, we then get that
        \begin{align*}
            \Gelt{\overline{H}}{(\id \otimes \overline{\xi})t_H'(\vect{1})}{(\overline{\eta} \otimes \id)s_H'(\vect{1})} &= \Gelt{\overline{H}}{T^{-1}(\id \otimes \overline{\xi})t_H(\vect{1})}{T^*(\overline{\eta} \otimes \id)s_H(\vect{1})} = \Gelt{\overline{H}}{(\id \otimes \overline{\xi})t_H(\vect{1})}{(\overline{\eta} \otimes \id)s_H(\vect{1})} = \Gelt{H}{\xi}{\eta}^*.
        \end{align*}
        
        \item Using again the remark \ref{rem: calculation in B for reducibles}, it suffices by induction to work with $n = 1$. Recall the definition of $\mathcal{J}$ from lemma \ref{lem: central projections in N}. By taking linear combinations, we may then assume that there is some $P \in \mathcal{J}$ such that $\vect{x_0 \otimes x_1}$ and $\vect{y_0 \otimes y_1}$ lie in the range of $P$. Recall from proposition \ref{prop: frobenius reciprocity for finite type morphisms} and the discussion below that $(S_P,T_P)$ solve the conjugate equations for the range $H_P$ of $P$ when identifying $H_P \mtimes \overline{H_P}$ and $\overline{H_P} \mtimes H_P$ with a subspace of $L^2(M^3)$ by the map $1 \otimes \mult \otimes 1$. By the previous point, we can calculate as follows, remembering this identification.
        \begin{align*}
            (1 \otimes 1 \otimes \overline{\vect{x_0 \otimes x_1}})T_P(\vect{1}) &= \sum_{s,t \in \onb(\mathcal{F}_0)}  \vect{s \otimes t} \left\langle P \vect{t^* \otimes s^*} , \vect{x_0 \otimes x_1} \right\rangle \\
            &= \sum_{s,t \in \onb(\mathcal{F}_0)} \vect{s \otimes t} \del(x_0^* t^*) \del(x_1^* s^*) \\
            &= \vect{\left( \sum_{s \in \onb(\mathcal{F}_0)} \del(s^* \mu^{-1}(x_1^*)) s \right) \otimes \left( \sum_{t \in \onb(\mathcal{F}_0)} \del(t^* \mu^{-1}(x_0^*)) t \right)} \\
            &= \vect{\mu(x_1)^* \otimes \mu(x_0)^*}
        \end{align*}
        A similar calculation yields $(\overline{\vect{y_0 \otimes y_1}} \otimes 1 \otimes 1)S_P = \vect{y_1^* \otimes y_0^*}$, proving the result.

        \item We will only show the first claim, as the second is handled analogously. Take some projection $P \in \End_{\mathcal{C}}(L^2(M^3))$ such that $P \vect{y_0 \otimes y_1 \otimes y_2} = \vect{y_0 \otimes y_1 \otimes y_2} $. Then we already get that
        \[
        \Gelt{2}{x_0 \otimes 1 \otimes x_2}{y_0 \otimes y_1 \otimes y_2} = \Gelt{2}{P \vect{x_0 \otimes 1 \otimes x_2)}}{y_0 \otimes y_1 \otimes y_2}.
        \]
        so that we need not worry about the implicit infinite sum contained in this $1$. Now, since $(1 \otimes \del \otimes 1)$ is covered by $\mathcal{C}$ (definition \ref{def: covering morphisms}), we get that $(1 \otimes \del \otimes 1) \circ P$ is a morphism in $\mathcal{C}$. We claim that $[(1 \otimes \del \otimes 1) \circ P]^* \vect{x_0 \otimes x_2} = P \vect{x_0 \otimes 1 \otimes x_2}$, which will end the proof by point 2 of this lemma. Take to this end $s,t,r \in M_0$ with $P(\vect{s \otimes t \otimes r}) = \vect{s \otimes t \otimes r}$ arbitrarily, and calculate.
        \begin{align*}
            \langle [(1 \otimes \del \otimes 1) \circ P]^* \vect{x_0 \otimes x_2} , \vect{s \otimes t \otimes r} \rangle =& \; \langle \vect{x_0 \otimes x_2}, [(1 \otimes \del \otimes 1) \circ P] \vect{s \otimes t \otimes r} \rangle \\
            = \del(s^* x_0) \del(t^*) \del(r^* x_2) =& \; \langle P \vect{x_0 \otimes 1 \otimes x_2}, \vect{s \otimes t \otimes r} \rangle
        \end{align*}
    \end{enumerate}
\end{proof}

\begin{definition} \label{def: varphi on B}
    We define a functional $\varphi: \mathcal{B} \to \mathbb{C}$ by
    \[
    \varphi \left( \Gelt{H}{\xi}{\eta} \right) := \sum_{V \in \bpi(H,L^2(M))} \overline{(\del \circ V)(\xi)} (\del \circ V)(\eta).
    \]
\end{definition}
\begin{proposition} \label{prop: varphi on B is positive faithful, modular data}
    The functional $\varphi$ from definition \ref{def: varphi on B} is well-defined. Recall from definition \ref{def: Omega} the definition of $\Omega_H$ for $H$ an object in $\mathcal{C}$. Recall from definition \ref{def: algebra B} that we fixed $\irr$, a maximal set of pairwise nonisomorphic irreducible objects in $\mathcal{C}$. Then for any $H,K \in \irr$, and $\xi,\eta \in H_0$, $\xi',\eta' \in K_0$ we get that
    \begin{equation}
    \varphi \left( \Gelt{H}{\xi}{\eta}^* \Gelt{K}{\xi'}{\eta'} \right) = \frac{\delta_{H,K}}{\norm{t_H}^2} \langle \Omega_H(\xi), \xi' \rangle \langle \eta' , \eta \rangle \label{eq: GNS inner product for varphi on B}
    \end{equation}
    and
    \begin{equation}        
        \varphi \left( \Gelt{K}{\xi'}{\eta'} \Gelt{H}{\xi}{\eta}^* \right) = \frac{\delta_{H,K}}{ \norm{s_H}^2} \langle \xi , \xi' \rangle \langle \eta' , \Omega_H^{-1}(\eta) \rangle \label{eq: skew GNS inner product for varphi on B}
    \end{equation}    
    In particular, $\varphi$ is positive and faithful. Moreover, the bijective linear map
    \[
    \varsigma : \mathcal{B} \to \mathcal{B}: \Gelt{H}{\xi}{\eta} \mapsto \Gelt{H}{\Omega_H(\xi)}{\Omega_H(\eta)}
    \]
    is well-defined, and satisfies $\varsigma(xy) = \varsigma(x)\varsigma(y)$ as well as $\varphi(xy) = \varphi(\varsigma(y) x)$ for all $x,y \in \mathcal{B}$.
\end{proposition}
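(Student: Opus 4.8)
The plan is to reduce everything to the two Gelfand--Naimark--Segal identities \eqref{eq: GNS inner product for varphi on B} and \eqref{eq: skew GNS inner product for varphi on B}, which I regard as the heart of the proposition: well-definedness of $\varphi$, positivity, faithfulness, and the twisted trace property will all follow from these together with the structural facts about $\Omega$ in proposition \ref{prop: properties of the operators Omega}. For well-definedness of $\varphi$ I would first note that any two choices of $\bpi(H,L^2(M))$ differ, blockwise over the $L^2(M_a)$-isotypic components, by a unitary change of basis $V' = \sum_V u_{V'V}V$; substituting into $\sum_V \overline{(\del\circ V)(\xi)}(\del\circ V)(\eta)$ and using $\sum_{V'}\overline{u_{V'V}}u_{V'W}=\delta_{V,W}$ shows the sum is independent of the choice, exactly as in the well-definedness proof of the multiplication \eqref{eq: multiplication in B}. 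I would also record that $\varphi$ vanishes on $\Gelt{H}{\xi}{\eta}$ unless $H\cong L^2(M_a)$ for some $a$, and that for any object $N$ one has $\varphi(\Gelt{N}{u}{v}) = \sum_{W\in\bpi(N,L^2(M))}\overline{(\del\circ W)(u)}(\del\circ W)(v)$ by composition of bases of partial isometries.

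For the first identity I would start from the adjoint \eqref{eq: adjoint in B}, writing $\Gelt{H}{\xi}{\eta}^* = \Gelt{\overline H}{a}{b}$ with $a = (\id\otimes\overline\xi)t_H(\vect{1})$ and $b = (\overline\eta\otimes\id)s_H(\vect{1})$, and then use remark \ref{rem: calculation in B for reducibles} to get $\Gelt{H}{\xi}{\eta}^*\Gelt{K}{\xi'}{\eta'} = \Gelt{\overline H\mtimes K}{a\mtimes\xi'}{b\mtimes\eta'}$. Applying $\varphi$ sums over $\bpi(\overline H\mtimes K, L^2(M))$. Since $H,K$ are irreducible, Frobenius reciprocity via the conjugate equations (definition \ref{def: concrete unitary 2-category of M-bimodules}.7) gives $\mor_{\mathcal{C}}(\overline H\mtimes K, L^2(M))\cong\mor_{\mathcal{C}}(K,H)$, which is $\{0\}$ unless $H=K$, producing the factor $\delta_{H,K}$. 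When $H=K$ this space is one-dimensional, spanned by $t_H^*$, and since $t_H^* t_H = \norm{t_H}^2 z_b$ on the relevant $0$-cell $b$, the normalised partial isometry is $\norm{t_H}^{-1}t_H^*$, which accounts for the factor $\norm{t_H}^{-2}$. The remaining task is to evaluate the two scalars $\del\bigl(t_H^*(a\mtimes\xi')\bigr)$ and $\del\bigl(t_H^*(b\mtimes\eta')\bigr)$: the first is, by the very definition \eqref{eq: defining Omega_H} of $\Omega_H$, the pairing $\langle\Omega_H(\xi),\xi'\rangle$, while the second reduces by a zig-zag using the conjugate equation for $\phi_L$ to the plain pairing $\langle\eta',\eta\rangle$. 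The second identity \eqref{eq: skew GNS inner product for varphi on B} is proved in the same way with the roles of $s_H$ and $t_H$ interchanged, the normalisation now being $\norm{s_H}^{-2}$ and the zig-zag producing $\Omega_H^{-1}$ through proposition \ref{prop: properties of the operators Omega}.2. I expect this bookkeeping --- correctly tracking the relative tensor products, the conjugations inherent in the bar construction, and the identification of the normalised basis element --- to be the main obstacle.

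Granting the two identities, positivity and faithfulness are formal. Using $\mathcal{B}\cong\bigoplus_{H\in\irr}\overline H_0\otimes_{\text{alg}}H_0$ and the factor $\delta_{H,K}$, it suffices to treat one summand: for $b_H = \sum_i\Gelt{H}{\xi_i}{\eta_i}$ one obtains $\varphi(b_H^* b_H) = \norm{t_H}^{-2}\sum_{i,j}\langle\Omega_H(\xi_i),\xi_j\rangle\langle\eta_j,\eta_i\rangle$. Since $\Omega_H$ is positive and invertible --- invertibility by proposition \ref{prop: properties of the operators Omega}.2, and positivity from part 5 together with the explicit form of $\gamma$ in theorem \ref{thm: explicit description of gamma} and the self-adjointness of $\mu$ on $L^2(M)$, carried to general irreducibles by restriction along an embedding into some $L^2(M^n)$ using parts 3 and 4 --- the matrices $\bigl(\langle\Omega_H(\xi_i),\xi_j\rangle\bigr)=\bigl(\langle\Omega_H^{1/2}\xi_i,\Omega_H^{1/2}\xi_j\rangle\bigr)$ and $\bigl(\langle\eta_i,\eta_j\rangle\bigr)$ are Gram matrices, hence positive semidefinite. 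Then $\varphi(b_H^* b_H)$ is a positive multiple of the total sum of entries of their Hadamard product, nonnegative by the Schur product theorem and zero only when the summand vanishes; thus $\varphi$ is positive and faithful.

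Finally, $\varsigma$ is well-defined because on each summand it is $\overline{\Omega_H}\otimes\Omega_H$, a genuine linear map of $\overline H_0\otimes_{\text{alg}}H_0$. Multiplicativity follows by applying $\varsigma$ to \eqref{eq: multiplication in B}: proposition \ref{prop: properties of the operators Omega}.4 lets me pull $\Omega_L$ through each $V\in\bpi(H\mtimes K, L)$, and part 3 factors $\Omega_{H\mtimes K}(\xi\mtimes\xi') = \Omega_H(\xi)\mtimes\Omega_K(\xi')$, recovering $\varsigma(\Gelt{H}{\xi}{\eta})\,\varsigma(\Gelt{K}{\xi'}{\eta'})$. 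For the twisted trace property it is enough, by (anti)linearity and $\ast\ast=\id$, to verify it on $a = \Gelt{H}{\xi}{\eta}^*$ and $b = \Gelt{K}{\xi'}{\eta'}$. Then \eqref{eq: GNS inner product for varphi on B} gives $\varphi(ab) = \norm{t_H}^{-2}\delta_{H,K}\langle\Omega_H(\xi),\xi'\rangle\langle\eta',\eta\rangle$, while applying \eqref{eq: skew GNS inner product for varphi on B} to $\varsigma(b)a = \Gelt{K}{\Omega_K\xi'}{\Omega_K\eta'}\,\Gelt{H}{\xi}{\eta}^*$ gives $\norm{s_H}^{-2}\delta_{H,K}\langle\xi,\Omega_H\xi'\rangle\langle\Omega_H\eta',\Omega_H^{-1}\eta\rangle$; using self-adjointness of $\Omega_H$ and $\Omega_H^{-1}$ this collapses to $\norm{s_H}^{-2}\delta_{H,K}\langle\Omega_H(\xi),\xi'\rangle\langle\eta',\eta\rangle$, which equals $\varphi(ab)$ since $\norm{s_H}=\norm{t_H}$ for a standard solution. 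This yields $\varphi(xy)=\varphi(\varsigma(y)x)$ for all $x,y\in\mathcal{B}$.
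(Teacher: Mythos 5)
Your proposal reproduces the paper's own proof on all the central claims: you rewrite $\Gelt{H}{\xi}{\eta}^*\Gelt{K}{\xi'}{\eta'}$ via remark \ref{rem: calculation in B for reducibles} as a single symbol on $\overline{H}\mtimes K$, observe that $\bpi(\overline{H}\mtimes K,L^2(M))$ collapses to the normalised $\norm{t_H}^{-1}t_H^*$ when $H=K$ and is empty otherwise (this is exactly where the paper gets $\delta_{H,K}$ and the factor $\norm{t_H}^{-2}$), identify the two scalars with $\langle\Omega_H(\xi),\xi'\rangle$ and $\langle\eta',\eta\rangle$ via the definition of $\Omega_H$ and the conjugate equations, and verify the twisted trace property on the same spanning elements using self-adjointness of $\Omega_H$ and $\norm{s_H}=\norm{t_H}$. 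The well-definedness checks are also in the same spirit as the paper's.

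There are two genuine divergences. First, you prove multiplicativity of $\varsigma$ directly, pulling $\Omega_L$ through the partial isometries (proposition \ref{prop: properties of the operators Omega}.4) and factoring $\Omega_{H\mtimes K}=\Omega_H\otimes\Omega_K$ (proposition \ref{prop: properties of the operators Omega}.3); the paper instead deduces it formally from faithfulness and the already-proved identity, via $\varphi(\varsigma(xy)z)=\varphi(zxy)=\varphi(\varsigma(y)zx)=\varphi(\varsigma(x)\varsigma(y)z)$. Both are valid; yours is self-contained at the categorical level, the paper's is shorter once faithfulness is available. Second, you spell out positivity and faithfulness (Gram matrices, Schur product), which the paper treats as immediate from \eqref{eq: GNS inner product for varphi on B}. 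Here lies the one step of yours that would fail as written: you justify positivity of $\Omega_H$ for a general irreducible $H$ ``by restriction along an embedding into some $L^2(M^n)$'', but the hypotheses only guarantee that $\mathcal{C}$ \emph{covers} the bimodules $L^2(M^n)$ in the sense of definition \ref{def: Category covers bimodule}; they do not guarantee that every irreducible object of $\mathcal{C}$ is a subobject of some $L^2(M^n)$, so such an embedding need not exist. Note that, by \eqref{eq: GNS inner product for varphi on B} applied to a single term $\Gelt{H}{\xi}{\eta}$, positivity of $\varphi$ is genuinely equivalent to positivity of all the $\Omega_H$, so this point cannot be waved away; the correct source of positivity is the standardness of the chosen solutions to the conjugate equations (this is what the paper silently invokes here and asserts explicitly in proposition \ref{prop: scaling group and unitary antipode}), and your embedding argument should be replaced by that general fact.
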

\begin{proof}
    Take any object $H$ in $\mathcal{C}$, and $\xi,\eta \in H_0$ arbitrarily. To show well-definedness, we pass through the definition of our notation \ref{not: G_H for reducibles}. We calculate as follows.
    \begin{align*}
        \sum_{\substack{K \in \irr \\ V \in \mor_{\mathcal{C}}(H,K)}} \varphi \left( \Gelt{K}{V(\xi)}{V(\eta)} \right) =& \; \sum_{\substack{K \in \irr \\ V \in \bpi(H,K)}} \sum_{\substack{W \in \bpi(K,L^2(M))}} \overline{(\del \circ W \circ V)(\xi)} (\del \circ W \circ V)(\eta) \\
        =& \; \sum_{Y \in \bpi(H,L^2(M))} \overline{(\del \circ Y)(\xi)} (\del \circ Y)(\eta) = \varphi \left( \Gelt{H}{\xi}{\eta} \right)
    \end{align*}
    Hence, $\varphi$ is indeed a well-defined linear functional. Take now $H,K \in \irr$ arbitrarily, as well as $\xi, \eta \in H_0$ and $\xi',\eta' \in K_0$, and calculate.
    \begin{align*}
        \varphi & \left( \Gelt{H}{\xi}{\eta}^* \Gelt{K}{\xi'}{\eta'} \right) \\
        =& \; \varphi \left( \Gelt{\overline{H} \mtimes K}{(\id \otimes \overline{\xi})t_H(\vect{1}) \mtimes \xi'}{(\overline{\eta} \otimes \id)s_H(\vect{1}) \mtimes \eta'} \right) \\
        =& \; \sum_{V \in \bpi(\overline{H} \mtimes K, L^2(M))} \overline{(\del \circ V)[(\id \otimes \overline{\xi})t_H(\vect{1}) \mtimes \xi']} (\del \circ V)[(\overline{\eta} \otimes \id)s_H(\vect{1}) \mtimes \eta'] \\
        =& \; \frac{\delta_{H,K}}{\norm{t_H}^2} \overline{ (\del \circ t_H^*) \circ (\id \otimes \overline{\xi} \mtimes \xi') \circ t_H(\vect{1})} (\del \circ t_H) \circ ((\overline{\eta} \otimes \id)s_H(\vect{1}) \mtimes \eta') \\
        =& \; \frac{\delta_{H,K}}{\norm{t_H}^2} \langle \Omega_H(\xi) , \xi' \rangle \langle \eta' , \eta \rangle
    \end{align*}
    Here, we have used the fact that $\bpi(\overline{H} \mtimes K, L^2(M))$ is given by $\{t_H\}$ when $H = K$, and is empty otherwise. To take the final step, we have used the conjugate equation \ref{eq: conjugate equations}. Showing that
    \[
    \varphi \left( \Gelt{K}{\xi'}{\eta'} \Gelt{H}{\xi}{\eta}^* \right) = \frac{\delta_{H,K}}{ \norm{s_H}^2} \langle \xi , \xi' \rangle \langle \eta' , \Omega_H^{-1}(\eta) \rangle
    \]
    is completely analogous.

    Using the properties of the operators $\Omega_H$ given in proposition \ref{prop: properties of the operators Omega}, we show well-definedness of $\varsigma$. Fix an object $H$ in $\mathcal{C}$, and $\xi,\eta \in H_0$.
    \begin{align*}
        \sum_{\substack{K \in \irr \\ V \in \bpi(H,K)}} \varsigma \left( \Gelt{K}{V(\xi)}{V(\eta)} \right) =& \; \sum_{\substack{K \in \irr \\ V \in \bpi(H,K)}} \Gelt{K}{(\Omega_K \circ V)(\xi)}{(\Omega_K \circ V)(\eta)} \\
        =& \; \sum_{\substack{K \in \irr \\ V \in \bpi(H,K)}} \Gelt{K}{(V \circ \Omega_H)(\xi)}{(V \circ \Omega_H)(\eta)} \\
        =& \; \varsigma \left( \sum_{\substack{K \in \irr \\ V \in \bpi(H,K)}} \Gelt{K}{V(\xi)}{V(\eta)} \right) \\
        =& \; \varsigma \left( \Gelt{H}{\xi}{\eta} \right)
    \end{align*}
    Finally, we are ready to calculate as follows, using \eqref{eq: GNS inner product for varphi on B} and \eqref{eq: skew GNS inner product for varphi on B}, for any $H,K \in \irr$ and $\xi,\eta \in H_0$, $\xi',\eta' \in K_0$.
    \begin{align*}
        \varphi \left( \varsigma \left( \Gelt{K}{\xi'}{\eta'} \right) \Gelt{H}{\xi}{\eta}^* \right) =& \; \varphi \left( \Gelt{K}{\Omega_K(\xi')}{\Omega_K(\eta')} \Gelt{H}{\xi}{\eta}^* \right)  \\
        =& \; \frac{\delta_{H,K}}{\norm{s_H}^2} \langle \xi, \Omega_H(\xi') \rangle \langle \Omega_H(\eta') , \Omega_H^{-1}(\eta) \rangle \\
        =& \; \frac{\delta_{H,K}}{\norm{t_H}^2} \langle \Omega_H(\xi) , \xi' \rangle \langle \eta' , \eta \rangle \\
        =& \; \varphi \left( \Gelt{H}{\xi}{\eta}^* \Gelt{K}{\xi'}{\eta'} \right)
    \end{align*}
    Since $\varsigma$ is linear, and these elements span $\mathcal{B}$, this shows $\varphi(\varsigma(x)y) = \varphi(yx)$ for all $x,y \in \mathcal{B}$. Now, for any $x,y,z \in \mathcal{B}$, we must have
    \[
    \varphi(\varsigma(xy)z) = \varphi(zxy) = \varphi(\varsigma(y)zx) = \varphi(\varsigma(x)\varsigma(y)z).
    \]
    Hence, since $\varphi$ is faithful, we get that $\varsigma(xy) = \varsigma(x)\varsigma(y)$ for any $x,y \in \mathcal{B}$.
\end{proof}

\begin{remark} \label{rem: omega on G_n}
    Let $\varsigma$ be as in proposition \ref{prop: varphi on B is positive faithful, modular data}, and $\delta$ as in theorem \ref{thm: explicit description of gamma}. Then by proposition \ref{prop: properties of the operators Omega}.5, and an induction argument, we get that
    \[
    \varsigma \left( \Gelt{n}{x_0 \otimes \cdots \otimes x_n}{y_0 \otimes \cdots \otimes y_n} \right) = \Gelt{n}{\mu(x_0) \delta \otimes \delta^{-1} \mu(x_1) \delta \otimes \cdots \otimes \delta^{-1} \mu(x_n)}{\mu(y_0) \delta \otimes \delta^{-1} \mu(y_1) \delta \otimes \cdots \otimes \delta^{-1} \mu(y_n)}
    \]
    for any $n \in \mathbb{N}$ and $x_0,y_0, \ldots, x_n,y_n \in M_0$.
\end{remark}

\subsection{The algebraic quantum group $(\mathcal{A},\Delta)$} \label{sbsc: the algebraic quantum group}

This section will be concerned with defining the algebraic quantum group $(\mathcal{A},\Delta)$, i.e. a multiplier Hopf-$*$-algebra with invariant functionals as in \cite[Theorem 3.7]{VanDaele98}. We still keep the same category $\mathcal{C}$ fixed, and recall that it covers $L^2(M^2)$ (definition \ref{def: Category covers bimodule}), and covers $\mult$ and $(1 \otimes \del \otimes 1)$ (definition \ref{def: covering morphisms}). In order to define the algebra $\mathcal{A}$, we first need the following lemma.
\begin{lemma} \label{lem: covered objects closed under ordinary tensor product}
    Let $H,K$ be two Hilbert-$M$-bimodules as in definition \ref{def: Hilbert-M-bimodule + finite type} which are covered by $\mathcal{C}$ as in definition \ref{def: Category covers bimodule}. Then $H \otimes K$ admits a canonical covering by $\mathcal{C}$. In particular, this works for $H,K$ objects in $\mathcal{C}$
\end{lemma}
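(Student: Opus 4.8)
The plan is to reduce this to the already-established proposition \ref{prop: covering bimodules closed under relative tensor product} by recognising the \emph{ordinary} tensor product $H \otimes K$ as a \emph{relative} tensor product with an extra copy of $L^2(M^2)$ inserted in the middle. The key observation is that the bimodule $L^2(M^2)$ from example \ref{ex: F_n as bimodules} is nothing but the coarse Hilbert-$M$-bimodule: its underlying Hilbert space is the ordinary tensor product $L^2(M) \otimes L^2(M)$, its left action is carried by the first leg only and its right action by the second leg only, the two legs being otherwise independent. I will therefore establish a unitary $M$-bimodular isomorphism
\[
H \otimes K \cong H \mtimes L^2(M^2) \mtimes K,
\]
where $H \otimes K$ carries the bimodule structure from definition \ref{def: relative tensor product} ($\lambda_H \otimes \id$ and $\id \otimes \rho_K$). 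After this the lemma follows at once: since $\mathcal{C}$ covers $L^2(M^2)$ (a standing assumption of the section) and covers $H$ and $K$ by hypothesis, two applications of proposition \ref{prop: covering bimodules closed under relative tensor product} show that $H \mtimes L^2(M^2) \mtimes K$ is covered, whence so is $H \otimes K$.

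To produce the isomorphism I will use the unit isomorphisms $\phi_R \colon H \mtimes L^2(M) \to H$ and $\phi_L \colon L^2(M) \mtimes K \to K$ from proposition \ref{prop: F_0 is monoidal unit}, together with the explicit adjoints computed there. Concretely, I send $\xi \otimes \eta \mapsto \phi_R^*(\xi) \otimes \phi_L^*(\eta)$; because $\phi_R^*(\xi)$ lies in $H \mtimes L^2(M)$ and $\phi_L^*(\eta)$ in $L^2(M) \mtimes K$, and because the two middle copies of $L^2(M)$ assemble into the single coarse bimodule $L^2(M^2)$, this vector lands in the subspace $H \mtimes L^2(M^2) \mtimes K$ of $H \otimes L^2(M^2) \otimes K$. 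Since $L^2(M^2)$ is coarse, its two legs never interact under the two relative-tensor constraints, so that
\[
H \mtimes L^2(M^2) \mtimes K \cong (H \mtimes L^2(M)) \otimes (L^2(M) \mtimes K),
\]
and applying $\phi_R$ to the left factor and $\phi_L$ to the right returns us to $H \otimes K$, giving the inverse map. That the composite is $M$-bimodular is inherited from the bimodularity of $\phi_R$ and $\phi_L$, the left action coming from $H$ and the right action from $K$ on both sides.

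For the covering itself I will fix coverings $H = \bigoplus_a H_a$, $K = \bigoplus_b K_b$ and a covering $L^2(M^2) = \bigoplus_c R_c$ by objects of $\mathcal{C}$. Since $\mathcal{C}$ is closed under $\mtimes$, each $H_a \mtimes R_c \mtimes K_b$ is again an object of $\mathcal{C}$, and by proposition \ref{prop: covering bimodules closed under relative tensor product} (applied twice, using the description of the relative-tensor projection in proposition \ref{prop: formula for projection onto relative tensor product}) these realise a covering of $H \mtimes L^2(M^2) \mtimes K$. Transporting along the isomorphism above yields a covering of $H \otimes K$, canonical once the coverings of $H$, $K$ and the fixed covering of $L^2(M^2)$ are chosen.

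I expect the only genuine work to be the verification of the displayed isomorphism, and the delicate points there are essentially bookkeeping: tracking the $\mu^{1/2}$-twists appearing in the formulas for $\phi_R^*$ and $\phi_L^*$, checking that the resulting vector is fixed by both projections $P_{H \mtimes L^2(M^2)}$ and $P_{L^2(M^2) \mtimes K}$ of proposition \ref{prop: formula for projection onto relative tensor product}, and confirming unitarity via $\phi_R \phi_R^* = \id_H$ and $\phi_L \phi_L^* = \id_K$. The conceptual crux is the identification of $L^2(M^2)$ with the coarse bimodule, and hence the decoupling of its two legs, which is precisely what makes the ordinary tensor product expressible through the relative one.
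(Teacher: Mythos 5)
Your proposal is correct and follows exactly the paper's own route: the paper likewise writes $H \otimes K \cong (H \mtimes L^2(M)) \otimes (L^2(M) \mtimes K) \cong H \mtimes L^2(M^2) \mtimes K$ and then invokes proposition \ref{prop: covering bimodules closed under relative tensor product} together with the standing assumption that $\mathcal{C}$ covers $L^2(M^2)$. The only difference is that you spell out the verification of the isomorphism (via $\phi_R$, $\phi_L$ and the decoupling of the two legs of the coarse bimodule $L^2(M^2)$), which the paper leaves implicit.
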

\begin{proof}
    Since $\mathcal{C}$ covers $L^2(M^2)$, we can use proposition \ref{prop: covering bimodules closed under relative tensor product} to find that also $H \mtimes L^2(M^2) \mtimes K$ is covered by $\mathcal{C}$. We can push this through the following isomorphisms.
    \begin{equation}
    H \mtimes L^2(M^2) \mtimes K \cong (H \mtimes L^2(M)) \otimes (L^2(M) \mtimes K) \cong H \otimes K \label{eq: ordinary tensor product as relative with L^2(M^2)}
    \end{equation}
\end{proof}

\begin{definition} \label{def: *-algebra A}
    Fix again a maximal set $\irr$ of pairwise nonisomorphic irreducible elements $H$ in $\mathcal{C}$. We define a $*$-algebra $\mathcal{UA}$ which as a vector space is isomorphic to the algebraic direct sum\footnote{The reader may note that this is the same vector space underlying the algebra $\mathcal{B}$, but we will endow it with a different $*$-algebra structure.}
    \[
    \bigoplus_{H \in \irr} \overline{H}_0 \otimes_{\text{alg}} H_0
    \]
    where we denote $\Aelt{H}{\xi}{\eta}$ for the element corresponding to $\overline{\xi} \otimes \eta$ with $\xi,\eta \in H_0$. The multiplication is defined as follows, for any $H,K \in \irr$, $\xi,\eta \in H_0$, $\xi',\eta' \in K_0$, and any $\bpi(H \otimes K,L)$ as in definition \ref{def: basis of partial isometries}.
    \begin{equation}
        \Aelt{H}{\xi}{\eta} \Aelt{K}{\xi'}{\eta'} := \sum_{\substack{L \in \irr \\ V \in \bpi(H \otimes K,L)}} \Aelt{L}{V(\xi \otimes \xi')}{V(\eta \otimes \eta')} \label{eq: multiplication in A}
    \end{equation}
    The adjoint is defined as
    \begin{equation}
        \Aelt{H}{\xi}{\eta}^* = \Aelt{\overline{H}}{(\id \otimes \overline{\xi})t_H(\vect{1})}{(\overline{\eta} \otimes \id)s_H(\vect{1})} \label{eq: adjoint in A}
    \end{equation}
    where we again view $\vect{1}$ as a sum over $(\vect{1_i})_{i \in I}$, and the resulting sum in \eqref{eq: adjoint in A} has finitely many nonzero terms.
\end{definition}
\begin{proof}[Proof that definition \ref{def: *-algebra A} yields a well-defined $*$-algebra.]
Everything may be done completely analogously to the definition of the $*$-algebra $\mathcal{B}$ (definition \ref{def: algebra B}), except that we should make sure the sum in \eqref{eq: multiplication in A} has only finitely many nonzero terms. Fix therefore $H,K \in \irr$, $\xi,\eta \in H_0$ and $\xi',\eta' \in K_0$. Then there exist central projections $p,q,r,s \in M_0$ such that $\xi \cdot p = \xi$, $\eta \cdot q = \eta$, $r \cdot \xi' = \xi'$ and $s \cdot \eta' = \eta$. Then recall the definition of $\mathcal{P}$ from definition \ref{def: algebra N, endomorphisms of L^2(M^2)} and the discussion below. We can find some $P \in \mathcal{P}$ such that $\vect{p \otimes r}$ and $\vect{q \otimes s}$ lie in the range $H_P$ of $P$. Now, viewing everything through the isomorphisms \ref{eq: ordinary tensor product as relative with L^2(M^2)}, it is clear that we can restrict our attention to $V \in \bpi(H \mtimes H_P \mtimes K,L)$, and this set is always finite for every $L \in \irr$ by corollary \ref{cor: finite dimensional intertwiner spaces}, and empty for all but finitely many $L \in \irr$.
\end{proof}

\begin{notation} \label{not: A_H for reducibles}
    As we did for the $*$-algebra $\mathcal{B}$ in notation \ref{not: G_H for reducibles}, we will also denote 
    \[
    \Aelt{H}{\xi}{\eta} := \sum_{\substack{K \in \irr \\ V \in \bpi(H,K)}} \Aelt{H}{V(\xi)}{V(\eta)}
    \]
    for any Hilbert-$M$-bimodule $H$ which is covered by $\mathcal{C}$ as in definition \ref{def: Category covers bimodule}, but only when $\xi,\eta$ are contained in some algebraic direct sum $\bigoplus_{a \in A} (H_a)_0$, where $(H_a)_{a \in A}$ realises the covering of $H$ by $\mathcal{C}$. In particular, we do this for reducible objects $H$ of $\mathcal{C}$.

    We will use this notation to denote for any $x_1,y_1, \ldots, x_n,y_n \in M_0$
    \[
    \Aelt{n}{x_1 \otimes \cdots \otimes x_n}{y_1 \otimes \cdots \otimes y_n} := \Aelt{L^2(M^n)}{\vect{x_1 \otimes \cdots \otimes x_n}}{\vect{y_1 \otimes \cdots \otimes y_n}} = \sum_{\substack{K \in \irr \\ V \in \bpi(L^2(M^{n+1}),K)}} \Aelt{K}{V(\vect{x_1 \otimes \cdots \otimes x_n})}{V(\vect{y_1 \otimes \cdots \otimes y_n})}.
    \]
    One should note that this is in contrast to the notation for the $*$-algebra $\mathcal{B}$, where $G_n$ was reserved for the covered object $L^2(M^{n+1})$.

    Moreover, we will use the same notation $\Aelt{n}{x_1 \otimes \cdots \otimes x_n}{y_1 \otimes \cdots \otimes y_n} \in \mathcal{M}(\mathcal{UA})$ when either $x_1, \ldots ,x_n$ may be elements of the multiplier algebra $\mathcal{M}(M_0)$. This makes sense since for any object $H$ in $\mathcal{C}$, and any $\xi,\eta \in H_0$, we can find finitely supported central projections $p,q \in M_0$ such that $p \cdot \xi = \xi$ and $q \cdot \eta = \eta$. Then take a projection $P \in \mor_{\mathcal{C}}(L^2(M^{n+1}))$ onto a finite type Hilbert-$M$-subbimodule such that $\vect{y_1 \otimes \cdots \otimes y_n \otimes q}$ lies in the range of $P$. Then we have that $P \circ \rho_{L^2(M^{n+1})}(p^{\op})$ is a finite rank operator, and it makes sense to define
    \[
    \Aelt{n}{x_1 \otimes \cdots \otimes x_n}{y_1 \otimes \cdots \otimes y_n} \Aelt{H}{\xi}{\eta} := \Aelt{L^2(M^{n+1}) \mtimes H}{P(\vect{x_1 \otimes \cdots \otimes x_n \otimes p}) \mtimes \xi}{\vect{y_1 \otimes \cdots \otimes y_n \otimes q} \mtimes \eta}.
    \]
    Similarly $\Aelt{n}{x_1 \otimes \cdots \otimes x_n}{y_1 \otimes \cdots \otimes y_n}$ is a right multiplier, and we can do something similar when $x_1, \ldots, x_n \in M_0$ and $y_1, \ldots, y_n \in \mathcal{M}(M_0)$.
\end{notation}

Now it makes sense to define the two-sided $*$-ideal
\[
\mathcal{I} := \left\langle \del(x) \Aelt{H}{\xi}{\eta} - \Aelt{1}{1}{x}\Aelt{H}{\xi}{\eta} , \del(x^*) \Aelt{H}{\xi}{\eta} - \Aelt{1}{x}{1}\Aelt{H}{\xi}{\eta} | H \in \irr , \xi,\eta \in H_0, x \in M_0 \right\rangle
\]
Then we let $\mathcal{A} := \mathcal{UA}/\mathcal{I}$, and by slight abuse of notation, we continue to denote
$\Aelt{H}{\xi}{\eta}$ when we really mean $\Aelt{H}{\xi}{\eta} + \mathcal{I} \in \mathcal{A}$.

\begin{lemma} \label{lem: calculation in A}
    The following are valid rules for calculation in the $*$-algebra $\mathcal{A}$, for any objects $H,K$ in $\mathcal{C}$, and any $\xi,\eta \in H_0$, $\xi',\eta' \in K_0$.
    \begin{enumerate}
        \item We have an equality $\Aelt{H}{V^*(\xi')}{\eta} = \Aelt{K}{\xi'}{V(\eta)}$, which continues to hold when $H,K$ are Hilbert-$M$-bimodules covered by $\mathcal{C}$, and $V$ is a linear map which is covered by $\mathcal{C}$. (see definitions \ref{def: Category covers bimodule} and \ref{def: covering morphisms}.)

        \item Let $(s_H',t_H')$ be any (not necessarily standard) solution to the conjugate equations for $H$. Then we have
        \[
        \Aelt{H}{\xi}{\eta}^* = \Aelt{\overline{H}}{(\id \otimes \overline{\xi})t_H'(\vect{1})}{(\overline{\eta} \otimes \id)s_H'(\vect{1})}.
        \]
        In other words, we could relaxed the condition that the adjoint is defined using standard solutions to the conjugate equations.

        \item For any $\vect{x_1 \otimes \cdots \otimes x_n}, \vect{y_1 \otimes \cdots \otimes y_n} \in \mathcal{F}_{n-1}$, we get that
        \[
        \Aelt{n}{x_1 \otimes \cdots \otimes x_n}{y_1 \otimes \cdots \otimes y_n}^* = \Aelt{n}{\mu(x_n)^* \otimes \cdots \otimes \mu(x_1)^*}{y_n^* \otimes \cdots \otimes y_1^*}.
        \]

        \item For any $x \in M_0$, we have
        \begin{align*}
            \Aelt{1}{1}{x} = \del(x) 1 \in \mathcal{M}(\mathcal{A}) \text{ and } \Aelt{1}{x}{1} =  \del(x^*) 1 \in \mathcal{M}(\mathcal{A}) \text{ strictly.}
        \end{align*}
    \end{enumerate}
\end{lemma}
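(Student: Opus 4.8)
The plan is to split the four rules into two groups according to how much genuinely new input they require. Rules~1--3 are the verbatim analogues of rules~2--4 of Lemma~\ref{lem: calculation in B}, with the relative tensor product $\mtimes$ replaced by the ordinary tensor product $\otimes$, the symbol $G$ replaced by $A$, and remark~\ref{rem: calculation in B for reducibles} replaced by its evident $\mathcal{A}$-counterpart: from the multiplication~\eqref{eq: multiplication in A}, the adjoint~\eqref{eq: adjoint in A} and notation~\ref{not: A_H for reducibles} one reads off
\begin{align*}
\Aelt{H}{\xi}{\eta}\Aelt{K}{\xi'}{\eta'} &= \Aelt{H \otimes K}{\xi \otimes \xi'}{\eta \otimes \eta'}, \\
\Aelt{H}{\xi}{\eta}^* &= \Aelt{\overline{H}}{(\id \otimes \overline{\xi})t_H(\vect{1})}{(\overline{\eta} \otimes \id)s_H(\vect{1})}.
\end{align*}
I would then transport the corresponding proofs directly: rule~1 uses only the $\bpi$-bookkeeping of notation~\ref{not: A_H for reducibles} together with the relation $V(X^L_s)^* = \sum_t V^L_{t,s}(Y^L_t)^*$ between bases of partial isometries, none of which sees the tensor product; and rule~2 follows from rule~1 and the passage between standard and arbitrary solutions of the conjugate equations, exactly as before.

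For rule~3 the only computational point is the base case. Since the product of $\mathcal{A}$ is built on the ordinary tensor product and $L^2(M^n) \cong L^2(M)^{\otimes n}$, the $\mathcal{A}$-analogue of remark~\ref{rem: calculation in B for reducibles} factors $\Aelt{n}{x_1 \otimes \cdots \otimes x_n}{y_1 \otimes \cdots \otimes y_n}$ as the product $\Aelt{1}{x_1}{y_1} \cdots \Aelt{1}{x_n}{y_n}$; antimultiplicativity of $*$ reverses the order, so it remains to verify the single-letter identity $\Aelt{1}{x}{y}^* = \Aelt{1}{\mu(x)^*}{y^*}$. This is a short computation from~\eqref{eq: adjoint in A} applied to the single object $L^2(M)$, using its self-duality from example~\ref{ex: F_n is its own dual} (the case $n=0$); it is the single-factor version of the computation performed in the proof of rule~4 of Lemma~\ref{lem: calculation in B}.

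Rule~4 is where I expect the real work, since it is the first assertion that lives in the quotient $\mathcal{A} = \mathcal{UA}/\mathcal{I}$. By the very definition of $\mathcal{I}$, left multiplication by $\Aelt{1}{1}{x}$ (resp.\ by $\Aelt{1}{x}{1}$) acts on every spanning element $\Aelt{H}{\xi}{\eta}$ as the scalar $\del(x)$ (resp.\ $\del(x^*)$), so the left-multiplier component is settled immediately. The obstacle is the right-multiplier component, for which the ideal relations give nothing directly; my plan is to transfer the left relations across the involution. Using the multiplier form of rule~3, namely $\Aelt{1}{1}{x}^* = \Aelt{1}{1}{x^*}$ and $\Aelt{1}{x}{1}^* = \Aelt{1}{\mu(x)^*}{1}$ (both read off~\eqref{eq: adjoint in A}), antimultiplicativity of $*$ rewrites a right multiplication as a left multiplication on the adjointed element; two elementary facts about the delta-form then finish the computation, namely $\overline{\del(x)} = \del(x^*)$ (from $\sigma = \sigma^*$ and the trace property of $\tr_M$) and $\del \circ \mu = \del$ (put $x=1$ in $\del(xy) = \del(\mu(y)x)$). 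This yields that right multiplication by $\Aelt{1}{1}{x}$ is again $\del(x)$ and by $\Aelt{1}{x}{1}$ again $\del(x^*)$; since both multiplier components are scalar multiplication they are automatically compatible, giving the strict equalities $\Aelt{1}{1}{x} = \del(x)1$ and $\Aelt{1}{x}{1} = \del(x^*)1$ in $\mathcal{M}(\mathcal{A})$.
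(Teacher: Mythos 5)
Your proposal is correct and takes essentially the same route as the paper: the paper's proof states that rules 1--3 are proved identically to the corresponding points of Lemma \ref{lem: calculation in B} (which is exactly your transport argument, with $\mtimes$ replaced by $\otimes$ and rule 3 reduced to the single-factor identity $\Aelt{1}{x}{y}^* = \Aelt{1}{\mu(x)^*}{y^*}$), and settles rule 4 ``by definition of the ideal $\mathcal{I}$ and the $*$-algebra $\mathcal{A}$''. Your adjoint-transfer argument for the right-multiplier component is precisely what that phrase compresses: since $\mathcal{I}$ is by definition a two-sided $*$-ideal, the right-handed relations are the adjoints of the generating left-handed ones, which is the step you spell out.
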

\begin{proof}
    The proof for 1-3 is identical to the proof of the analogous properties of the $*$-algebra $\mathcal{B}$ from lemma \ref{lem: calculation in B}. Point 4 holds by definition of the ideal $\mathcal{I}$ and the $*$-algebra $\mathcal{A}$ above.
\end{proof}

\begin{definition} \label{def: hopf structure on A}
    We define the following maps on $\mathcal{A}$. The fact that they are well-defined is the content of proposition \ref{prop: hopf structure on A}.
    \begin{itemize}
        \item $\Delta: \mathcal{A} \to \mathcal{M}(\mathcal{A} \otimes \mathcal{A}): \Aelt{H}{\xi}{\eta} \mapsto \sum_{\zeta \in \onb(H_0)} \Aelt{H}{\xi}{\zeta} \otimes \Aelt{H}{\zeta}{\eta}$ for any object $H$ in $\mathcal{C}$ and $\xi,\eta \in H_0$., where $\onb(H_0)$ denotes any orthonormal basis of $H$ all of whose elements are contained in $H_0$.

        \item $S: \mathcal{A} \to \mathcal{A}: \Aelt{H}{\xi}{\eta} \mapsto \Aelt{H}{\eta}{\xi}^*$ for any object $H$ in $\mathcal{C}$ and $\xi,\eta \in H_0$.

        \item $\epsilon: \mathcal{A} \to \mathbb{C}: \Aelt{H}{\xi}{\eta} \mapsto \langle \eta, \xi \rangle$ for any object $H$ in $\mathcal{C}$ and $\xi,\eta \in H_0$.
    \end{itemize}
\end{definition}

\begin{proposition} \label{prop: hopf structure on A}
    The map $\Delta$ from definition \ref{def: hopf structure on A} is well defined and nondegenerate, and the pair $(\mathcal{A},\Delta)$ is a multiplier Hopf-$*$-algebra as in \cite[Definition 2.4]{VanDaele94}. Its antipode is given by $S$, and its co-unit by $\epsilon$.
\end{proposition}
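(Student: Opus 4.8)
The plan is to regard $\Aelt{H}{\xi}{\eta}$ as a matrix coefficient of a universal corepresentation and to verify the axioms of a multiplier Hopf-$*$-algebra (\cite[Definition 2.4]{VanDaele94}) directly on generators, with two tools doing almost all the work: the resolution of identity $\xi=\sum_{\zeta\in\onb(H_0)}\langle\xi,\zeta\rangle\zeta$ on each finite-type block, and the finite-support truncation of lemma \ref{lem: calculation in A}.1. First I would settle well-definedness of $\Delta$. Since $\overline\xi\otimes\eta$ is antilinear in $\xi$ and linear in $\eta$, the summand $\Aelt{H}{\xi}{\zeta}\otimes\Aelt{H}{\zeta}{\eta}$ is linear-times-antilinear in $\zeta$, so the defining sum is unchanged under a unitary change of $\onb(H_0)$ and is thus basis-independent. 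To see that it defines a two-sided multiplier landing, after multiplication, in $\mathcal A\otimes\mathcal A$, I would fix $b=\Aelt{K}{\xi'}{\eta'}$ and a finite central projection $q\in M_0$ with $q\cdot\eta=\eta$; then lemma \ref{lem: calculation in A}.1 gives $\Aelt{H}{\zeta}{\eta}=\Aelt{H}{q\cdot\zeta}{\eta}$, which vanishes for all but the finitely many $\zeta$ meeting the finite-dimensional space $q\cdot H$. Hence $\Delta(a)(1\otimes b)$ is a finite sum in $\mathcal A\otimes\mathcal A$, and symmetrically for $(b\otimes1)\Delta(a)$, establishing the multiplier property and the nondegeneracy of $\Delta$ at once.

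Next I would check that $\Delta$ is a coassociative $*$-homomorphism descending to the quotient $\mathcal A=\mathcal{UA}/\mathcal I$. Coassociativity is immediate, as both $(\Delta\otimes\id)\Delta$ and $(\id\otimes\Delta)\Delta$ send $\Aelt{H}{\xi}{\eta}$ to $\sum_{\zeta,\zeta'}\Aelt{H}{\xi}{\zeta}\otimes\Aelt{H}{\zeta}{\zeta'}\otimes\Aelt{H}{\zeta'}{\eta}$. Multiplicativity reduces, via notation \ref{not: A_H for reducibles}, the multiplication formula \eqref{eq: multiplication in A}, and the resulting identity $\Aelt{H}{\xi}{\zeta}\Aelt{K}{\xi'}{\zeta'}=\Aelt{H\otimes K}{\xi\otimes\xi'}{\zeta\otimes\zeta'}$, to the fact that $\{\zeta\otimes\zeta'\}$ is an orthonormal basis of $(H\otimes K)_0$, combined again with basis-independence; $*$-preservation follows from the adjoint formula \eqref{eq: adjoint in A} together with the conjugate equations. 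Descending to $\mathcal A$ is the one genuinely relation-dependent point: I would compute, in $\mathcal A\otimes\mathcal A$ and applying the ideal relation $\Aelt{1}{1}{z}=\del(z)1$ leg-wise, that $\Delta\bigl(\Aelt{1}{1}{x}\bigr)=\del(x)(1\otimes1)$ (the resolution of identity once more collapsing the sum), and likewise for the second family of generators, so that $\Delta(\mathcal I)$ maps to $0$ in $\mathcal M(\mathcal A\otimes\mathcal A)$.

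The counit and antipode are then checked on generators. For $\epsilon\bigl(\Aelt{H}{\xi}{\eta}\bigr)=\langle\eta,\xi\rangle$, multiplicativity uses $\sum_{L,V}V^*V=\id_{H\otimes K}$ with $V$ ranging over the bases $\bpi(H\otimes K,L)$, $*$-compatibility uses the conjugate equations, vanishing on $\mathcal I$ is the computation $\epsilon\bigl(\Aelt{1}{1}{x}\bigr)=\del(x)$, and the counit identities $(\epsilon\otimes\id)\Delta=\id=(\id\otimes\epsilon)\Delta$ are again the resolution of identity. That $S$ is a well-defined $*$-anti-automorphism descending to $\mathcal A$ (with $S(\mathcal I)\subseteq\mathcal I$) follows from \eqref{eq: adjoint in A} and lemma \ref{lem: calculation in A}; here $S$ is manifestly bijective, its square being implemented by the operators $\Omega_H$ of proposition \ref{prop: properties of the operators Omega}. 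Once $\Delta$, $\epsilon$ and a bijective $S$ satisfying the antipode relations are in hand, together with the nondegeneracy of the product on $\mathcal A$, the bijectivity of the canonical maps $a\otimes b\mapsto\Delta(a)(1\otimes b)$ and $a\otimes b\mapsto(a\otimes1)\Delta(b)$ from $\mathcal A\otimes\mathcal A$ to itself follows from Van Daele's theory, yielding the multiplier Hopf-$*$-algebra.

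The main obstacle is the antipode identity $m(S\otimes\id)\Delta(a)=\epsilon(a)1$ (and its mirror), the only place where the unitarity of the corepresentations genuinely enters. For $a=\Aelt{H}{\xi}{\eta}$ this reads $\sum_{\zeta}\Aelt{H}{\zeta}{\xi}^*\,\Aelt{H}{\zeta}{\eta}=\langle\eta,\xi\rangle1$. I would expand $\Aelt{H}{\zeta}{\xi}^*$ using \eqref{eq: adjoint in A}, rewrite each product as a single coefficient on $\overline H\otimes H$ via notation \ref{not: A_H for reducibles}, and then sum over $\zeta$: the resolution of identity contracts the two inserted copies of $\zeta$, and the conjugate equations \eqref{eq: conjugate equations} collapse the resulting pairing of $t_H$ (resp.\ $s_H$) with itself onto the coefficient of $L^2(M)$, which by lemma \ref{lem: calculation in A}.4 is exactly the unit multiplier, the scalar $\langle\eta,\xi\rangle$ emerging from the contraction. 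Lining up the bookkeeping of the standard solutions $(s_H,t_H)$, the unit isomorphisms $\phi_L,\phi_R$, and the two sides of the conjugate equations is where the real care is required.
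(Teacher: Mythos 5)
Your outline follows the paper's architecture closely (same formulas for $\Delta$, $\epsilon$, $S$ on generators, multiplicativity via $\onb((H\otimes K)_0)$, the antipode identity via the conjugate equations, and the appeal to Van Daele's theory at the end), but the step carrying all the weight --- that the infinite sum $\sum_{\zeta\in\onb(H_0)}\Aelt{H}{\xi}{\zeta}\otimes\Aelt{H}{\zeta}{\eta}$ becomes a \emph{finite} sum in $\mathcal{A}\otimes\mathcal{A}$ after multiplying one leg by an element of $\mathcal{A}$ --- rests on a false identity. You claim that lemma \ref{lem: calculation in A}.1 gives $\Aelt{H}{\zeta}{\eta}=\Aelt{H}{q\cdot\zeta}{\eta}$ whenever $q\in M_0$ is a finite central projection with $q\cdot\eta=\eta$. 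That lemma moves a map from one entry to the other only when the map is a morphism of $\mathcal{C}$ (or covered by $\mathcal{C}$), so your claim needs $\lambda_H(q)\in\End_{\mathcal{C}}(H)$; lemma \ref{lem: projections on 0-cells are morphisms in category} guarantees this only for the $0$-cell projections $z_a$, which in the interesting cases are infinite and lie outside $M_0$. For finer central projections it simply fails. Concretely, take $M=\ell^{\infty}(I)$ with $I$ finite and $\mathcal{C}$ the category built from the complete graph on $I$, so that $(\mathcal{A},\Delta)$ is the quantum permutation group: the coefficients $u_{ij}=\Aelt{1}{\delta_i}{\delta_j}$ are nonzero projections for all $i,j$, whereas your identity with $q=\delta_j$ would force $u_{ij}=\delta_{ij}\,u_{jj}$. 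Note also that your claim proves far too much: since every $\eta\in H_0$ has finite central support, it would make $\Delta(a)$ itself a finite sum, i.e. $\Delta(\mathcal{A})\subset\mathcal{A}\otimes\mathcal{A}$, which is false for the non-compact quantum groups this theorem is designed to produce (for an infinite connected structure, $\Delta(u_{xy})=\sum_z u_{xz}\otimes u_{zy}$ has infinitely many nonzero, linearly independent terms).

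The point you are missing is that the truncation over $\zeta$ is a property of the \emph{product}, not of the individual coefficients $\Aelt{H}{\zeta}{\eta}$. The paper gets it by multiplying in the first leg: for $(b\otimes 1)\Delta(a)$ with $b=\Aelt{K}{\xi'}{\eta'}$, the product $\Aelt{K}{\xi'}{\eta'}\Aelt{H}{\xi}{\zeta}$ is computed through $K\otimes H\cong K\mtimes L^2(M^2)\mtimes H$; the finite central supports of $\xi'$ and $\xi$ pin down a finite type projection $P\in\mathcal{P}$ in the covering of the middle leg (bimodularity of $P$ makes the single condition $P\vect{q\otimes p}=\vect{q\otimes p}$ suffice), and finite-typeness of the range of $P$ converts the finite right support $r$ of $\eta'$ into a finite central projection $s$ with $(\lambda(r)\otimes 1)\circ P=(\lambda(r)\otimes\rho(s^{\op}))\circ P$, so that every term with $\zeta\in(1-s)\cdot H$ dies. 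A right-leg variant of this argument (the one you would need for $\Delta(a)(1\otimes b)$) does exist --- one can use that $(1\otimes\lambda(1_i))L^2(M^2)\subset L^2(M^2)\cdot 1_i$, so cutting the middle leg by the support of $\xi'$ again lands in a finite-dimensional subspace of a finite type object --- but it requires exactly this support-propagation machinery through $\mathcal{P}$, not the coefficient identity you invoke. As it stands, the multiplier property (and with it well-definedness and nondegeneracy of $\Delta$) is not established, so the proposal does not prove the proposition.
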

\begin{proof}
    Let $H,K$ be irreducible objects in $\mathcal{A}$, and $\xi,\eta \in H_0$, $\xi',\eta' \in K_0$. Then let $p,q,r \in M_0$ be central projections such that $p \cdot \xi = \xi$, $\xi' \cdot q = \xi'$ and $\eta' \cdot r = \eta'$. Recall the definition of $\mathcal{P}$ from definition \ref{def: algebra N, endomorphisms of L^2(M^2)}, and the discussion below it. Let $P \in \mathcal{P}$ be such that $P \vect{q \otimes p} = \vect{q \otimes p}$. Since the range of $P$ is finite type, we can find a central projection $s \in M_0$ such that $(\lambda(r) \otimes 1) \circ P = (\lambda(r) \otimes \rho(s^{\op})) \circ P$. It follows that 
    \[
    \Aelt{K}{\xi'}{\eta'} \Aelt{H}{\xi}{\zeta} \otimes \Aelt{H}{\zeta}{\eta} = 0 \text{ for any } \zeta \in (1-s) \cdot H.
    \]
    Hence, it follows that $(\mathcal{A} \otimes 1)\Delta \left( \mathcal{A} \right) \subset \mathcal{A} \otimes \mathcal{A}$, and in particular $\Delta$ is well-defined on $\Aelt{H}{\xi}{\eta}$ for irreducible objects $H$, and $\xi,\eta \in H_0$. Similarly, one shows that $(1 \otimes \mathcal{A}) \Delta(\mathcal{A}) \subset \mathcal{A} \otimes \mathcal{A}$.

    Now, to show that $\Delta$ is a well-defined algebra homomorphism, we calculate for some $H$ which is covered by $\mathcal{C}$ and $\xi,\eta$ in some algebraic direct sum $\bigoplus_{a \in A} H_a$ where $(H_a)_{a \in A}$ realises the covering (definition \ref{def: Category covers bimodule}).
    \begin{align*}
        \sum_{\substack{K \in \irr \\ V \in \bpi(H,K)}} \Delta \left( \Aelt{K}{V(\xi)}{V(\eta)} \right) =& \; \sum_{\substack{K \in \irr \\ V \in \bpi(H,K) \\ \zeta \in \onb(K_0)}} \Aelt{K}{V(\xi)}{\zeta} \otimes \Aelt{K}{\zeta}{V(\eta)} \\
        =& \; \sum_{\substack{K \in \irr \\ V \in \bpi(H,K) \\ \zeta \in \onb(K_0)}} \Aelt{H}{\xi}{V^*(\zeta)} \otimes \Aelt{H}{V^*(\zeta)}{\eta} \\
        =& \;  \Delta \left( \Aelt{H}{\xi}{\eta} \right)
    \end{align*}
    This last step holds because $\{V^*(\zeta) | K \in \irr, V \in \bpi(H,K), \zeta \in \onb(K_0)\}$ is an $\onb(H_0)$. Now, take $H,K$ covered by $\mathcal{C}$, and note that $\{\xi \otimes \eta | \xi \in \onb(H_0), \eta \in \onb(K_0)\}$ is an $\onb((H \otimes K)_0)$. This shows multiplicativity of $\Delta$.

    To show that $\Delta$ respects adjoints, we calculate as follows for some object $H$ in $\mathcal{C}$ and $\xi,\eta \in H_0$.
    \begin{align*}
        \Delta \left( \Aelt{H}{\xi}{\eta} \right)^* =& \; \sum_{\zeta \in \onb(H_0)} \Aelt{H}{\xi}{\zeta}^* \otimes \Aelt{H}{\zeta}{\eta}^* \\
        =& \; \sum_{\zeta \in \onb(H_0)} \Aelt{\overline{H}}{(\id \otimes \overline{\xi})t_H(\vect{1})}{(\overline{\zeta} \otimes \id)s_H(\vect{1})} \otimes \Aelt{\overline{H}}{(\id \otimes \overline{\zeta})t_H(\vect{1})}{(\overline{\eta} \otimes \id)s_H(\vect{1})} \\
        =& \; \sum_{\zeta \in \onb(H_0)} \Aelt{\overline{H}}{(\id \otimes \overline{\xi})t_H(\vect{1})}{ \sum_{\theta \in \onb(\overline{H}_0)} \langle (\overline{\zeta} \otimes \id)s_H(\vect{1}) , \theta \rangle \theta} \otimes \Aelt{\overline{H}}{(\id \otimes \overline{\zeta})t_H(\vect{1})}{(\overline{\eta} \otimes \id)s_H(\vect{1})} \\
        =& \; \sum_{\theta \in \onb(\overline{H}_0)} \Aelt{\overline{H}}{ (\id \otimes \overline{\xi}) t_H(\vect{1})}{ \theta} \otimes \Aelt{\overline{H}}{ \sum_{\zeta \in \onb(H_0)} \langle \zeta , (\id \otimes \overline{\theta})s_H(\vect{1}) \rangle (\id \otimes \overline{\zeta})t_H(\vect{1})}{(\overline{\eta} \otimes \id)s_H(\vect{1})} \\
        =& \; \sum_{\theta \in \onb(\overline{H}_0)} \Aelt{\overline{H}}{ (\id \otimes \overline{\xi}) t_H(\vect{1})}{ \theta} \otimes \Aelt{\overline{H}}{ (\id \otimes \del \circ s_H^*) \circ(t_H(\vect{1}) \otimes \theta) }{(\overline{\eta} \otimes \id)s_H(\vect{1})} \\
        =& \; \sum_{\theta \in \onb(\overline{H}_0)} \Aelt{\overline{H}}{ (\id \otimes \overline{\xi}) t_H(\vect{1})}{ \theta} \otimes \Aelt{\overline{H}}{ \theta }{(\overline{\eta} \otimes \id)s_H(\vect{1})} \\
        =& \; \Delta \left( \Aelt{H}{\xi}{\eta}^* \right)
    \end{align*}
    One also easily checks that $\Delta$ maps approximate units to approximate units, and is therefore nondegenerate. Now, since $\Delta$ is coassociative by definition, it is a well-defined comultiplication on $\mathcal{A}$.

    We turn our attention to the map $\epsilon$. It is clearly multiplicative since $\langle \eta \otimes \eta', \xi \otimes \xi' \rangle = \langle \eta, \xi \rangle \langle \eta', \xi' \rangle$. It must also respect the adjoint simply by definition of the conjugate equations \eqref{eq: conjugate equations}, and therefore it is a character. One may completely straightforwardly check that
    \[
    (\id \otimes \epsilon) \circ \Delta = \id = (\epsilon \otimes \id) \circ \Delta.
    \]
    Hence, $\epsilon$ is a co-unit for the pair $(\mathcal{A}, \Delta)$.

    We finish by turning our attention to the map $S$. Firstly, it is antimultiplicative since the adjoint is antimultiplicative. Finally, we show that
    \[
    \mult_{\mathcal{A}} \circ (\id \otimes S) \circ \Delta(\cdot) = \epsilon(\cdot) 1 \in \mathcal{M}(\mathcal{A})
    \]
    where $\mult_{\mathcal{A}}: \mathcal{M}(\mathcal{A} \otimes \mathcal{A}) \to \mathcal{M}(\mathcal{A})$ denotes the multiplication. To this end, take some object $H$ in $\mathcal{C}$, and $\xi,\eta \in H_0$, and calculate as follows.
    \begin{align*}
        \mult_{\mathcal{A}} \circ (\id \otimes S) \circ \Delta \left( \Aelt{H}{\xi}{\eta} \right) =& \; \sum_{\zeta \in \onb(H_0)} \Aelt{H}{\xi}{\zeta} \Aelt{H}{\eta}{\zeta}^* \\
        =& \; \sum_{\zeta \in \onb(H_0)} \Aelt{H \otimes H}{\xi \otimes (\id \otimes \overline{\eta})t_H(\vect{1})}{\zeta \otimes (\overline{\zeta} \otimes \id)s_H(\vect{1})} \\
        =& \; \Aelt{H \otimes H}{\xi \otimes (\id \otimes \overline{\eta})t_H(\vect{1})}{s_H(\vect{1})} \\
        =& \; \Aelt{1}{(s_H^* \otimes \overline{\eta}) \circ (\xi \otimes t_H(\vect{1}))}{1} \\
        =& \; \sum_{x \in \onb(\mathcal{F}_0)} \Aelt{1}{x}{1} \langle \eta, x^* \cdot \xi \rangle \\
        =& \; \langle \eta, \xi \rangle 1 \in \mathcal{M}(\mathcal{A})
    \end{align*}
    Similarly, one shows that
    \[
    \mult_{\mathcal{A}} \circ (S \otimes \id) \circ \Delta = \epsilon
    \]
    which finishes the proof.
\end{proof}

We can explicitly describe the polar decomposition of the antipode.

\begin{proposition} \label{prop: scaling group and unitary antipode}
    Recall from definition \ref{def: Omega} the positive, invertible, possibly unbounded operators $\Omega_H$. There is a one parameter group of $*$-automorphisms $(\tau_t)_{t \in \mathbb{R}}$ of $\mathcal{A}$, which is analytic on all of $\mathcal{A}$, and given by
    \begin{equation}
        \tau_t \left( \Aelt{H}{\xi}{\eta} \right) := \Aelt{H}{\Omega_H^{-it}(\xi)}{ \Omega_H^{-it}(\eta)} \text{ for any } t \in \mathbb{R}. \label{eq: define scaling group}
    \end{equation}
    There is a $*$-anti-automorphism $R$ of $\mathcal{A}$ given by
    \begin{equation}
        R \left( \Aelt{H}{\xi}{\eta} \right) := \Aelt{H}{\Omega_H^{-1/2}(\eta)}{\Omega_H^{1/2}(\xi)}^*. \label{eq: define unitary antipode}
    \end{equation}
    Moreover, these satisfy
    \begin{enumerate}
        \item $\Delta \circ \tau_t = (\tau_t \otimes \tau_t) \circ \Delta$,

        \item $\Delta \circ R = (R \otimes R) \circ \chi \circ \Delta$, where $\chi$ denotes the flip map, 

        \item and $S = R \circ \tau_{-i/2}$.
    \end{enumerate}
\end{proposition}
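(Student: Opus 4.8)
The plan is to treat the three displayed formulas as definitions on the generators $\Aelt{H}{\xi}{\eta}$, check that each extends to a well-defined map of the stated algebraic type, and then verify the three identities directly on generators. The recurring tool is Proposition \ref{prop: properties of the operators Omega}: its point 4 ($V\circ\Omega_H=\Omega_K\circ V$ for morphisms $V$) shows that replacing $H$ by a reducible or covered bimodule is consistent with Notation \ref{not: A_H for reducibles}, so all three formulas are well defined on $\mathcal{UA}$, while points 1--3 govern how $\Omega$ interacts with the bimodule actions, conjugation and tensoring. First I would record the elementary facts: because each $\xi,\eta\in H_0$ is finitely supported, $\Omega_H$ restricts to a positive invertible operator on the finite-dimensional corner $p\cdot H\cdot q$ containing them, so $z\mapsto\Omega_H^{-iz}\xi$ is an entire $H_0$-valued function, $\Omega_H^{-it}$ is unitary for real $t$, and $\Omega_H^{-is}\Omega_H^{-it}=\Omega_H^{-i(s+t)}$. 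This yields at once the group law $\tau_s\tau_t=\tau_{s+t}$, analyticity of $t\mapsto\tau_t(a)$ on all of $\mathcal A$ (hence a meaning for $\tau_{-i/2}$), and the fact that $\{\Omega_H^{-it}\zeta\}$ is again an orthonormal basis of $H_0$.

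Next I would show $\tau_t$ is a $*$-homomorphism and $R$ a $*$-anti-homomorphism. Writing the product rule \eqref{eq: multiplication in A} as $\Aelt{H}{\xi}{\eta}\Aelt{K}{\xi'}{\eta'}=\Aelt{H\otimes K}{\xi\otimes\xi'}{\eta\otimes\eta'}$, multiplicativity of $\tau_t$ reduces to comparing $\Omega_{H\otimes K}^{-it}$ with $\Omega_H^{-it}\otimes\Omega_K^{-it}$. Via the covering $H\otimes K\cong H\mtimes L^2(M^2)\mtimes K$ of Lemma \ref{lem: covered objects closed under ordinary tensor product}, Proposition \ref{prop: properties of the operators Omega}.3 and the explicit form $\Omega_{L^2(M^2)}=\gamma\circ(\mu\otimes\mu)$ with $\gamma=\rho(\delta^{\op})\otimes\lambda(\delta^{-1})$ (Theorem \ref{thm: explicit description of gamma}), the two agree up to the bridging factors carried by the middle $L^2(M^2)$; these are exactly what the defining ideal $\mathcal I$ neutralises, since $\Aelt{1}{1}{x}=\del(x)1$ and $\Aelt{1}{x}{1}=\del(x^*)1$ in $\mathcal A$. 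That $\tau_t$ preserves $\mathcal I$ follows separately from $\del\circ\mu^t=\del$, a consequence of traciality of the Markov trace. That $\tau_t$ respects the adjoint \eqref{eq: adjoint in A} uses the conjugation identity $\Omega_{\overline H}(\overline\zeta)=\overline{\Omega_H^{-1}(\zeta)}$, which I would derive from Proposition \ref{prop: properties of the operators Omega}.1--2 as the only sign compatible with the bimodule structure of $\overline H$. For $R$, antimultiplicativity combines the same tensor bookkeeping with antimultiplicativity of $*$, and involutivity is immediate: applying the defining formula twice and using $R(x^*)=R(x)^*$ collapses the $\Omega^{\pm1/2}$ and gives $R^2=\id$, so $R$ is an involutive $*$-anti-automorphism.

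For the compatibility identities I would compute on generators. Relation (1), $\Delta\circ\tau_t=(\tau_t\otimes\tau_t)\circ\Delta$, follows by applying $\tau_t\otimes\tau_t$ to $\Delta(\Aelt{H}{\xi}{\eta})=\sum_\zeta\Aelt{H}{\xi}{\zeta}\otimes\Aelt{H}{\zeta}{\eta}$ and reindexing the summation basis by the unitary $\Omega_H^{-it}$. Relation (2), $\Delta\circ R=(R\otimes R)\circ\chi\circ\Delta$, is the same manipulation fed through $\Delta(x^*)=\Delta(x)^*$ (Proposition \ref{prop: hopf structure on A}): expanding both sides, the difference is that the summation vector enters the first (linear) leg through $\Omega^{1/2}$ and the second (antilinear) leg through $\Omega^{-1/2}$, and the computation $\langle\Omega^{-1/2}\zeta'',\Omega^{1/2}\zeta'\rangle=\langle\zeta'',\zeta'\rangle$ makes these cancel, recovering the plain resolution of the identity.

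The last and most delicate point, which I expect to be the main obstacle, is relation (3), $S=R\circ\tau_{-i/2}$. Substituting the definitions gives $R\tau_{-i/2}(\Aelt{H}{\xi}{\eta})=\Aelt{H}{\Omega_H^{-1}(\eta)}{\xi}^*$, whereas $S(\Aelt{H}{\xi}{\eta})=\Aelt{H}{\eta}{\xi}^*$, so the whole identity rests on showing these two elements coincide after passing through the adjoint \eqref{eq: adjoint in A}. This is precisely where the bookkeeping bites: one must carry the factor $\Omega_H^{-1}$ through the conjugation $\zeta\mapsto\overline\zeta$ using $\Omega_{\overline H}=\overline{\Omega_H^{-1}}$ and through the map $\eta\mapsto(\id\otimes\overline\eta)t_H(\vect 1)$, transporting it onto the $t_H(\vect 1)$ appearing in the adjoint by the naturality relation $t_H\circ\Omega_{L^2(M)}=(\Omega_{\overline H}\mtimes\Omega_H)\circ t_H$, and checking that the residual factors cancel. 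Getting all of these $\Omega$-factors to line up — rather than any conceptual difficulty — is the crux; once the self-contraction identities coming from the conjugate equations \eqref{eq: conjugate equations} are arranged cleanly, relation (3) follows, and together with $R^2=\id$ and $R\tau_t=\tau_t R$ it also furnishes the expected $S^2=\tau_{-i}$ as a consistency check.
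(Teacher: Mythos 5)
Most of your proposal runs along the same lines as the paper's proof: restricting $\Omega_H$ to the finite-dimensional corners $p\cdot H\cdot q$ to get an eigenbasis and analyticity, using Proposition \ref{prop: properties of the operators Omega} for compatibility with morphisms and tensor products, reindexing orthonormal bases by the unitaries $\Omega_H^{-it}$ for relation (1), and cancelling the $\Omega_H^{\pm1/2}$-factors through the inner product for relation (2); the identity $\Omega_{\overline H}(\overline\zeta)=\overline{\Omega_H^{-1}(\zeta)}$ you invoke is precisely what the paper's explicit computation of $R(x^*)=R(x)^*$ via $s_H,t_H$ rests on. Up to that point the proposal is sound, if schematic about the $H\otimes K$ versus $H \mtimes L^2(M^2) \mtimes K$ bookkeeping.

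Relation (3), however, contains a genuine error, and it is not the ``bookkeeping crux'' you describe. You evaluated $\tau_{-i/2}$ by substituting $t=-i/2$ into $\Omega_H^{-it}$ in \emph{both} arguments, i.e. $\tau_{-i/2}\left(\Aelt{H}{\xi}{\eta}\right)=\Aelt{H}{\Omega_H^{-1/2}(\xi)}{\Omega_H^{-1/2}(\eta)}$, which is where your formula $R\tau_{-i/2}\left(\Aelt{H}{\xi}{\eta}\right)=\Aelt{H}{\Omega_H^{-1}(\eta)}{\xi}^*$ comes from. But the first argument of $\Aelt{H}{\cdot}{\cdot}$ is conjugate-linear: for eigenvectors $\xi,\eta$ of $\Omega_H$ with eigenvalues $\lambda,\nu$, the scalar function being continued is $t\mapsto\overline{\lambda^{-it}}\,\nu^{-it}=(\lambda/\nu)^{it}$, so any analytic continuation to imaginary time puts \emph{opposite} powers of $\Omega_H$ in the two slots, $\Aelt{H}{\Omega_H^{\mp1/2}(\xi)}{\Omega_H^{\pm1/2}(\eta)}$, never equal powers. (As a side remark, with \eqref{eq: define scaling group} as written the continuation at $-i/2$ is $\Aelt{H}{\Omega_H^{1/2}(\xi)}{\Omega_H^{-1/2}(\eta)}$, so the value displayed in the paper's proof is really $\tau_{+i/2}$; that is a harmless sign-convention slip, because what the argument actually uses is only the opposite-powers structure.) With the opposite-powers value, relation (3) is a one-line cancellation, exactly as in the paper: $R\left(\Aelt{H}{\Omega_H^{-1/2}(\xi)}{\Omega_H^{1/2}(\eta)}\right)=\Aelt{H}{\Omega_H^{-1/2}\Omega_H^{1/2}(\eta)}{\Omega_H^{1/2}\Omega_H^{-1/2}(\xi)}^*=\Aelt{H}{\eta}{\xi}^*=S\left(\Aelt{H}{\xi}{\eta}\right)$. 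By contrast, the identity your route requires, $\Aelt{H}{\Omega_H^{-1}(\eta)}{\xi}^*=\Aelt{H}{\eta}{\xi}^*$, is simply false whenever $\Omega_H\neq\id$: for an eigenvector $\eta$ with eigenvalue $\nu\neq1$ the two sides differ by the factor $\nu^{-1}$, and $\Aelt{H}{\eta}{\xi}\neq0$ by faithfulness of $\varphi_e$ (Propositions \ref{prop: varphi on B is positive faithful, modular data} and \ref{prop: A isomorphic to corner of B}); since the adjoint is injective, no transport of $\Omega$-factors through $s_H$ and $t_H$ can close this gap. The fix is to redo the continuation respecting the conjugate-linearity of the first leg, after which (3) is immediate.
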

\begin{proof}
    For any object $H$ in $\mathcal{C}$, and any central projections $p,q \in M_0$, $\Omega_H$ restricts to a positive invertible operator on $p \cdot H \cdot q$. It follows that $H$ admits a basis of eigenvectors of $\Omega_H$. Clearly, when $\xi,\eta$ are elements of this basis, $\tau$ is analytic on $\Aelt{H}{\xi}{\eta}$. Also multiplicativity and respecting the adjoint are immediate on such elements (when using proposition \ref{prop: properties of the operators Omega}), and since they span $\mathcal{A}$, $\tau$ is indeed an analytic one parameter group of automorphisms of $\mathcal{A}$. 

    Completely analogously, and also using that the adjoint is antimultiplicative, we get that $R$ is antimultiplicative. To show that it preserves adjoints, calculate as follows for some object $H$ in $\mathcal{C}$, and $\xi,\eta \in H_0$.
    \begin{align*}
        R \left( \Aelt{H}{\xi}{\eta}^* \right) =& \; R \left( \Aelt{\overline{H}}{(\id \otimes \overline{\xi})t_H(\vect{1})}{(\overline{\eta} \otimes \id)s_H(\vect{1})} \right) \\
        =& \; \Aelt{\overline{H}}{(\overline{\eta} \otimes \Omega_{\overline{H}}^{-1/2})s_H(\vect{1})}{(\Omega_{\overline{H}}^{1/2} \otimes \overline{\xi})t_H(\vect{1})}^* \\
        =& \; \Aelt{\overline{H}}{(\overline{\Omega_H^{1/2}(\eta)} \otimes \id)s_H(\vect{1})}{(\id \otimes \overline{\Omega_H^{-1/2}(\xi)})t_H(\vect{1})}^* \\
        =& \; \Aelt{H}{(\id \otimes (\del \circ s_H^*))(\id \otimes \Omega_H^{1/2}(\eta) \otimes \id)s_H(\vect{1})}{((\del \circ t_H^*) \otimes \id)(\id \otimes \Omega_H^{-1/2}(\xi) \otimes \id)(t_H(\vect{1}))} \\
        =& \; \Aelt{H}{\Omega_H^{-1/2}(\eta)}{\Omega_H^{1/2}(\xi)} \\
        =& \; R \left( \Aelt{H}{\xi}{\eta} \right)^*
    \end{align*}
    Now, we can prove the three final claims.
    \begin{enumerate}
        \item Since $\Omega_H$ is positive, $\Omega_H^{-it}$ is unitary for every $t \in \mathbb{R}$. Hence, $\{\Omega_H^{-it}(\zeta) | \zeta \in \onb(H_0)\}$ is an $\onb(H_0)$. The formula ensues.

        \item We start by calculating from the right hand side for some object $H$ in $\mathcal{C}$, and $\xi,\eta \in H_0$.
        \begin{align*}
            (R \otimes R)\circ \chi \circ \Delta \left( \Aelt{H}{\xi}{\eta} \right) =& \; \sum_{\zeta \in \onb(H_0)} \Aelt{H}{\Omega_H^{-1/2}(\eta)}{\Omega_H^{1/2}(\zeta)}^* \otimes \Aelt{H}{\Omega_H^{-1/2}(\zeta)}{\Omega_H^{1/2}(\xi)}^* \\
            =& \; \sum_{\zeta,\theta \in \onb(H_0)} \Aelt{H}{\Omega_H^{-1/2}(\eta)}{ \langle \Omega_H^{1/2}(\zeta), \theta \rangle \theta}^* \otimes \Aelt{H}{\Omega_H^{-1/2}(\zeta)}{\Omega_H^{1/2}(\xi)}^* \\
            =& \; \sum_{\zeta,\theta \in \onb(H_0)} \Aelt{H}{\Omega_H^{-1/2}(\eta)}{\Omega_H^{1/2}(\zeta)}^* \otimes \Aelt{H}{\langle \Omega_H^{1/2} \theta, \zeta \rangle \Omega_H^{-1/2}(\zeta)}{\Omega_H^{1/2}(\xi)}^* \\
            =& \; \sum_{\zeta,\theta \in \onb(H_0)} \Aelt{H}{\Omega_H^{-1/2}(\eta)}{ \theta}^* \otimes \Aelt{H}{\theta}{\Omega_H^{1/2}(\xi)}^* \\
            =& \; \Delta \circ R \left( \Aelt{H}{\xi}{\eta} \right)
        \end{align*}

        \item Again it suffices to prove the claim on $\Aelt{H}{\xi}{\eta}$ when $\xi,\eta$ are eigenvectors of $\Omega_H$. For such elements, one immediately gets that $\tau_{-i/2} \left( \Aelt{H}{\xi}{\eta} \right) = \Aelt{H}{\Omega_H^{-1/2}(\xi)}{\Omega_H^{1/2}(\eta)}$. Hence, indeed $R \circ \tau_{-i/2} = S$.
    \end{enumerate}
\end{proof}

In order to endow the multiplier Hopf-$*$-algebra with invariant functionals, we show that $\mathcal{A}$ is isomorphic to a corner of $\mathcal{B}$. Then we will be able to push the functional $\varphi$ from definition \ref{def: varphi on B} through this isomorphism and show that the result is indeed invariant. This is the content of the next proposition.

\begin{proposition} \label{prop: A isomorphic to corner of B}
    Fix $e \in M_0$ with $\del(e^*e) = 1$. Then there is a projection $Q_e :=\Gelt{1}{1 \otimes 1}{e \otimes e^*} \in \mathcal{M}(\mathcal{B})$. Denote by $\mathcal{B}_e$ the corner $Q_e \mathcal{B} Q_e$. There is a $*$-isomorphism
    \[
    \Theta_e: \mathcal{A} \to \mathcal{B}_e: \Aelt{H}{\xi}{\eta} \mapsto \Gelt{L^2(M) \otimes H \otimes L^2(M)}{\vect{1} \otimes \xi \otimes \vect{1}}{\vect{e} \otimes \eta \otimes \vect{e^*}} \text{ for any object } H \text{ and } \xi,\eta \in H_0
    \]
    with inverse
    \[
    \Theta_e^{-1}: \mathcal{B}_e \to \mathcal{A}: \Gelt{H}{\xi}{\eta} \mapsto \Aelt{H}{\xi}{e^* \cdot \eta \cdot \mu^{1/2}(e)} \text{ for any object } H \text{ and } \xi,\eta \in H_0 \text{ such that } \Gelt{H}{\xi}{\eta} \in \mathcal{B}_e.
    \]
\end{proposition}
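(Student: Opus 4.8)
The plan is to carry out every computation inside $\mathcal{B}$, using the calculus of Remark \ref{rem: calculation in B for reducibles} and Lemma \ref{lem: calculation in B}, and to pass between the ordinary tensor product (which governs multiplication in $\mathcal{A}$) and the relative tensor product (which governs multiplication in $\mathcal{B}$) by the isomorphism $H \otimes K \cong H \mtimes L^2(M^2) \mtimes K$ of Lemma \ref{lem: covered objects closed under ordinary tensor product}. Conceptually, $\mathcal{A}$ and $\mathcal{B}$ share the same underlying space and differ only in whether bimodules are glued by $\otimes$ or by $\mtimes$; the extra internal copy of $L^2(M^2)$ created whenever one converts an $\otimes$ into an $\mtimes$ is exactly the datum that the projection $Q_e$ is designed to absorb, and the normalisation $\del(e^{*}e)=1$ is what turns this absorption into an idempotent operation. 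Thus cutting the ``partial'' algebra $\mathcal{B}$ down to the corner $Q_e\mathcal{B}Q_e$ collapses the internal boundary data and produces the genuine quantum-group algebra $\mathcal{A}$, with $e$ playing the role of a normalised vector linking the $0$-cells.

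First I would verify that $Q_e$ is a projection in $\mathcal{M}(\mathcal{B})$: self-adjointness is immediate from the adjoint rule Lemma \ref{lem: calculation in B}.4, and idempotency follows by multiplying with Remark \ref{rem: calculation in B for reducibles} and then applying the collapse $\Gelt{2}{1 \otimes 1 \otimes 1}{e \otimes e^{*}e \otimes e^{*}} = \del(e^{*}e)\,\Gelt{1}{1 \otimes 1}{e \otimes e^{*}}$ of Lemma \ref{lem: calculation in B}.5, using $\del(e^{*}e)=1$. Next I would define $\Theta_e$ by the stated formula on the generators of $\mathcal{UA}$; this makes sense because each $L^2(M) \otimes H \otimes L^2(M)$ is covered by $\mathcal{C}$ (Lemma \ref{lem: covered objects closed under ordinary tensor product}) and the infinite sums implicit in $\vect{1}$ are harmless once $\xi,\eta \in H_0$ pin down finitely many surviving components. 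I would then show the image lies in the corner by checking $Q_e\,\Theta_e(\cdot)\,Q_e = \Theta_e(\cdot)$, and compute $\Theta_e(\Aelt{1}{1}{x}) = \del(x)Q_e$ and $\Theta_e(\Aelt{1}{x}{1}) = \del(x^{*})Q_e$. Granting multiplicativity, these two identities together with the corner-unit property of $Q_e$ force $\Theta_e$ to annihilate the ideal $\mathcal{I}$, so that $\Theta_e$ descends to $\mathcal{A}$; this is precisely how the relations $\Aelt{1}{1}{x}=\del(x)1$, $\Aelt{1}{x}{1}=\del(x^{*})1$ of Lemma \ref{lem: calculation in A}.4 get encoded.

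The heart of the argument is multiplicativity. Computing $\Theta_e(\Aelt{H}{\xi}{\eta})\,\Theta_e(\Aelt{K}{\xi'}{\eta'})$ with the $\mathcal{B}$-product glues the trailing $L^2(M)$ of the first factor to the leading $L^2(M)$ of the second; via $\mult$ this gluing carries $\vect{1}\mtimes\vect{1}\mapsto\vect{1}$ in the first vector and $\vect{e^{*}}\mtimes\vect{e}\mapsto\vect{e^{*}e}$ in the second, leaving an internal $L^2(M)$-slot sitting between $H$ and $K$ that holds $\vect{1}$ on one side and $\vect{e^{*}e}$ on the other. Collapsing this slot by $\del$ (the analogue of Lemma \ref{lem: calculation in B}.5, after converting the ordinary middle factor into a relative one through Lemma \ref{lem: covered objects closed under ordinary tensor product} and relocating the resulting morphism by Lemma \ref{lem: calculation in B}.2) produces the scalar $\del(e^{*}e)=1$ and lands exactly on $L^2(M)\otimes (H\otimes K)\otimes L^2(M)$ with vectors $\vect{1}\otimes(\xi\otimes\xi')\otimes\vect{1}$ and $\vect{e}\otimes(\eta\otimes\eta')\otimes\vect{e^{*}}$, i.e. on $\Theta_e(\Aelt{H}{\xi}{\eta}\,\Aelt{K}{\xi'}{\eta'})$. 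For $*$-preservation I would compare $\Theta_e(\Aelt{H}{\xi}{\eta}^{*})$, computed via the $\mathcal{A}$-adjoint \eqref{eq: adjoint in A}, with $\Theta_e(\Aelt{H}{\xi}{\eta})^{*}$, computed via the $\mathcal{B}$-adjoint on $\overline{L^2(M)\otimes H\otimes L^2(M)}\cong L^2(M)\otimes\overline{H}\otimes L^2(M)$, using Lemma \ref{lem: calculation in B}.3 to avoid insisting on a standard solution; the self-duality of $L^2(M)$ from Example \ref{ex: F_n is its own dual} is what makes the outer factors transform correctly and accounts for the twist $\mu^{1/2}(e)$ appearing in the formula for $\Theta_e^{-1}$.

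Finally I would check that $\Theta_e$ is a bijection onto $\mathcal{B}_e$ by verifying that the stated $\Theta_e^{-1}$ is a two-sided inverse. The essential point is that any $\Gelt{H}{\xi}{\eta}$ lying in the corner must have its outer bimodule structure of the form dictated by $Q_e$, so that $\xi$ and $\eta$ genuinely have the shape $\vect{1}\otimes(\cdot)\otimes\vect{1}$ and $\vect{e}\otimes(\cdot)\otimes\vect{e^{*}}$; peeling off the outer $L^2(M)$-factors then absorbs $e^{*}$ on the left and $\mu^{1/2}(e)$ on the right of $\eta$, which is exactly the formula given. I expect the main obstacle to be neither conceptual nor any single identity but the sustained bookkeeping of the ordinary-versus-relative tensor conversion in the presence of the multiplier-type vectors $\vect{1}$: one must repeatedly invoke Lemma \ref{lem: covered objects closed under ordinary tensor product} and Lemma \ref{lem: calculation in B}.2 to relocate covered morphisms, keep track of which $L^2(M)$-legs are collapsed by $\mult$ versus pinched by $\del$, and confirm at each stage that only finitely many terms of the hidden sums survive, so that all manipulations take place in the finite-dimensional morphism spaces guaranteed by Corollary \ref{cor: finite dimensional intertwiner spaces}.
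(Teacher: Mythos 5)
Your proposal is correct and follows essentially the same route as the paper: both hinge on the collapse identity $\Gelt{2}{p \otimes 1 \otimes r}{q \otimes e^*e \otimes s} = \del(e^*e)\Gelt{1}{p \otimes r}{q \otimes s}$ (Lemma \ref{lem: calculation in B}.5) with the normalisation $\del(e^*e)=1$ to absorb the internal $L^2(M)$-slot created when converting the ordinary tensor product of $\mathcal{A}$ into the relative one of $\mathcal{B}$, together with the covering/relocation rules of Lemma \ref{lem: calculation in B}.2 and the finiteness bookkeeping for the multiplier vectors $\vect{1}$. The only differences are organisational — the paper inserts central projections to split $\Theta_e(\cdot)$ into three $G$-factors and uses antimultiplicativity of the adjoint where you compute the adjoint and the collapse directly via covered morphisms, and it verifies $\Theta_e(\mathcal{I})=0$ directly where you derive it from $\Theta_e(\Aelt{1}{1}{x})=\del(x)Q_e$ plus multiplicativity — but these are the same tools in a different order.
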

\begin{proof}
    Recall $\mathcal{P}$ from definition \ref{def: algebra N, endomorphisms of L^2(M^2)} and the discussion below. Take $P \in \mathcal{P}$ such that $P (\vect{e \otimes e^*}) = \vect{e \otimes e^*}$, and denote its range by $H_P$. Take any object $H$ in $\mathcal{C}$, and $\xi,\eta \in H_0$. There exists a central projection $p \in M_0$ such that $p \cdot \xi = \xi$. Then since $P$ is of finite type, we must get that $P(\vect{1_i \otimes p}) = 0$ for all but finitely many $i \in I$, making $P(\vect{1 \otimes p})$ a well-defined vector in $H_P$. Then by definition, we get that
    \[
    Q_e \Gelt{H}{\xi}{\eta} = \Gelt{H_P}{P(\vect{1 \otimes p})}{\vect{e \otimes e^*}} \Gelt{H}{\xi}{\eta}
    \]
    is a well-defined element of $\mathcal{B}$. Similarly, $\Gelt{H}{\xi}{\eta} Q_e \in \mathcal{B}$, so $Q_e$ is a well-defined element of the multiplier algebra $\mathcal{M}(\mathcal{B})$. One readily checks that $Q_e = Q_e^2 = Q_e^*$, using the calculation rules from lemma \ref{lem: calculation in B} and the fact that $\del(e^*e) = 1$.

    Then, to show that $\Theta_e$ is indeed an isomorphism, we start by noting that it is well-defined. Indeed, using the same trick as before, $\Gelt{L^2(M) \otimes H \otimes L^2(M)}{\vect{1} \otimes \xi \otimes \vect{1}}{\vect{e} \otimes \eta \otimes \vect{e^*}}$ is always an element of $\mathcal{B}$. Clearly, multiplying on the left or right by $Q_e$, nothing happens, and hence $\Theta_e$ is a well-defined map on $\mathcal{UA}$. One sees directly that $\Theta_e(\mathcal{I}) = \{0\}$, where $\mathcal{I}$ is the $*$-ideal used to define $\mathcal{A}$. It follows that $\Theta_e$ is well-defined.
    
    To show multiplicativity, take $H,K$ objects in $\mathcal{C}$ and $\xi,\eta \in H_0$, $\xi',\eta' \in K_0$. Then take central projections $p,q,r,s \in M_0$ such that $\xi \cdot p = \xi$, $\eta \cdot q = \eta$, $r \cdot \xi' = \xi'$, and $s \cdot \eta' = \eta'$. Then we get the following.
    \begin{align*}
        \Theta_e \left( \Aelt{H}{\xi}{\eta} \right)\Theta_e \left( \Aelt{K}{\xi'}{\eta'} \right) =& \; \Gelt{L^2(M) \otimes H}{\vect{1} \otimes \xi}{\vect{e} \otimes \eta} \Gelt{2}{p \otimes 1 \otimes r}{q \otimes e^*e \otimes s} \Gelt{K \otimes L^2(M)}{\xi' \otimes \vect{1}}{\eta' \otimes \vect{e^*}} \\
        =& \; \Gelt{L^2(M) \otimes H}{\vect{1} \otimes \xi}{\vect{e} \otimes \eta} \Gelt{1}{p \otimes r}{q \otimes s} \Gelt{K \otimes L^2(M)}{\xi' \otimes \vect{1}}{\eta' \otimes \vect{e^*}} \\
        =& \Gelt{L^2(M) \otimes H \otimes K \otimes L^2(M)}{\vect{1} \otimes \xi \otimes \xi' \otimes \vect{1}}{\vect{e} \otimes \eta \otimes \eta' \otimes \vect{e^*}} \\
        =& \; \Theta_e \left( \Aelt{H}{\xi}{\eta} \Aelt{K}{\xi'}{\eta'} \right)
    \end{align*}

    To show that $\Theta_e$ preserves the adjoint, take again an object $H$ in $\mathcal{C}$, and $\xi,\eta \in H_0$. Let $p,q,r,s \in M_0$ be central projections such that $p \cdot \xi \cdot q = \xi$ and $r \cdot \eta \cdot s = \eta$. We calculate as follows.
    \begin{align*}
        \Theta_e \left( \Aelt{H}{\xi}{\eta} \right)^* =& \; \left( \Gelt{1}{1 \otimes p}{e \otimes r} \Gelt{H}{\xi}{\eta} \Gelt{1}{q \otimes 1}{s \otimes e^*} \right)^* \\
        =& \; \Gelt{1}{1 \otimes q}{e \otimes s} \Gelt{\overline{H}}{(\id \otimes \overline{\xi})t_H(\vect{1})}{(\overline{\eta} \otimes \id)s_H(\vect{1})} \Gelt{1}{p \otimes 1}{r \otimes e^*} \\
        =& \; \Theta_e \left( \Aelt{H}{\xi}{\eta}^* \right)
    \end{align*}
    
    Finally, it is clear that $\Theta_e^{-1}$ is well-defined. For any object $H$ in $\mathcal{C}$ and $\xi,\eta \in H_0$, we get that
    \[
    \Theta_e^{-1} \circ \Theta_e \left( \Aelt{H}{\xi}{\eta} \right) = \Aelt{1}{1}{e^*e} \Aelt{H}{\xi}{\eta} \Aelt{1}{1}{e^*e} = \Aelt{H}{\xi}{\eta}.
    \]
    On the other hand, we get that
    \[
    \Theta_e \circ \Theta_e^{-1} \left( \Gelt{H}{\xi}{\eta} \right) = Q_e \Gelt{H}{\xi}{\eta} Q_e.
    \]
    This ends the proof.
\end{proof}

\begin{definition} \label{def: invariant functionals on quantum group}
    Consider the functional $\varphi$ from definition \ref{def: varphi on B}, and the isomorphisms $\Theta_e$ from proposition \ref{prop: A isomorphic to corner of B}. We define the functional $\varphi_e$ on $\mathcal{A}$ by $\varphi_e := \varphi \circ \Theta_e$. We also define $\psi_e := \varphi_e \circ R$, where $R$ denotes the unitary antipode from proposition \ref{prop: scaling group and unitary antipode}.
\end{definition}

\begin{theorem} \label{thm: invariant functionals on quantum group}
    The functionals $\varphi_e$ and $\psi_e$ form definition \ref{def: invariant functionals on quantum group} are left- resp. right invariant for the multiplier Hopf-$*$-algebra $(\mathcal{A},\Delta)$. Hence, it is an algebraic quantum group in the sense of \cite{VanDaele98} and \cite[Definition 1.2]{KustermansVanDaele97}.
\end{theorem}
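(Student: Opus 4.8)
The plan is to reduce everything to the left-invariance of $\varphi_e$, i.e. to the identity $(\id\otimes\varphi_e)\Delta(a)=\varphi_e(a)\,1$ for all $a\in\mathcal{A}$. Granting this, the right-invariance of $\psi_e=\varphi_e\circ R$ is formal: using proposition \ref{prop: scaling group and unitary antipode}.2, namely $\Delta\circ R=(R\otimes R)\circ\chi\circ\Delta$, together with $R(1)=1$, one checks for $a\in\mathcal{A}$ that $R\big[(\psi_e\otimes\id)\Delta(a)\big]=(\id\otimes\varphi_e)\Delta(R(a))=\varphi_e(R(a))\,1=\psi_e(a)\,1$, whence applying $R^{-1}$ gives $(\psi_e\otimes\id)\Delta(a)=\psi_e(a)\,1$. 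Since $\Theta_e$ is a $*$-isomorphism (proposition \ref{prop: A isomorphic to corner of B}) and $\varphi$ is positive and faithful (proposition \ref{prop: varphi on B is positive faithful, modular data}), $\varphi_e=\varphi\circ\Theta_e$ is positive, faithful and nonzero; hence once invariance is shown, $(\mathcal{A},\Delta)$ is an algebraic quantum group.

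For the left-invariance it suffices to treat generators $\Aelt{H}{\xi}{\eta}$ with $H\in\irr$. Expanding $\Delta$ from definition \ref{def: hopf structure on A} and applying $\id\otimes\varphi_e$ gives $\sum_{\zeta\in\onb(H_0)}\varphi_e(\Aelt{H}{\zeta}{\eta})\,\Aelt{H}{\xi}{\zeta}$. Since $\Aelt{H}{\zeta}{\eta}$ is antilinear in $\zeta$, there is a unique $w_\eta\in H_0$ with $\langle w_\eta,\zeta\rangle=\varphi_e(\Aelt{H}{\zeta}{\eta})$ for every $\zeta$; expanding $w_\eta=\sum_\zeta\langle w_\eta,\zeta\rangle\zeta$ over the orthonormal basis and using linearity of $\Aelt{H}{\xi}{\cdot}$ in the second slot, the sum collapses to $(\id\otimes\varphi_e)\Delta(\Aelt{H}{\xi}{\eta})=\Aelt{H}{\xi}{w_\eta}$. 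As $\langle w_\eta,\xi\rangle=\varphi_e(\Aelt{H}{\xi}{\eta})$, it remains to prove $\Aelt{H}{\xi}{w_\eta}=\langle w_\eta,\xi\rangle\,1$.

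The key structural fact is that this holds whenever $w_\eta$ is an invariant vector, i.e. $w_\eta=W(\vect{1})$ for some $W\in\mor_{\mathcal{C}}(L^2(M),H)$ covered by $\mathcal{C}$. Indeed, lemma \ref{lem: calculation in A}.1 lets one slide $W$ out of the second leg, giving $\Aelt{H}{\xi}{W(\vect{1})}=\Aelt{L^2(M)}{W^*(\xi)}{\vect{1}}$, and lemma \ref{lem: calculation in A}.4 then yields $\Aelt{L^2(M)}{W^*(\xi)}{\vect{1}}=\del\big((W^*\xi)^*\big)\,1=\langle W(\vect{1}),\xi\rangle\,1$, since $\del(v^*)=\langle\vect{1},\vect{v}\rangle$. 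So everything comes down to showing that the vector $w_\eta$ determined by $\varphi_e$ is invariant. This is the main obstacle, and it is where the genuine computation lives: one must unwind $\varphi_e(\Aelt{H}{\zeta}{\eta})=\varphi\big(\Gelt{L^2(M)\otimes H\otimes L^2(M)}{\vect{1}\otimes\zeta\otimes\vect{1}}{\vect{e}\otimes\eta\otimes\vect{e^*}}\big)$ through the definition \ref{def: varphi on B} of $\varphi$ as a sum over $\bpi(L^2(M)\otimes H\otimes L^2(M),L^2(M))$. Each such morphism, paired with the trivial-representation vectors $\vect{1}$ sitting in the two outer copies of $L^2(M)$ and post-composed with $\del=\langle\cdot,\vect{1}\rangle$, produces, via Frobenius reciprocity (the conjugate equations and lemma \ref{lem: calculation in B}.2), an element of $\mor_{\mathcal{C}}(L^2(M),H)$; summing these against the $\eta$- and $e$-data exhibits $w_\eta$ as $W_\eta(\vect{1})$ for a genuine morphism $W_\eta\in\mor_{\mathcal{C}}(L^2(M),H)$.

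I expect the hard part to be exactly this last identification: carefully tracking how the ordinary tensor product used in $\mathcal{A}$ relates, through the isomorphism of lemma \ref{lem: covered objects closed under ordinary tensor product} and the cutting vector $e$ (with $\del(e^*e)=1$), to the relative tensor product and the morphisms into the monoidal unit $L^2(M)$, while keeping the modular data $\mu$, $\sigma$ and the operators $\Omega_H$ in their correct places. Once $w_\eta=W_\eta(\vect{1})$ is established, the two reductions above close the argument, and the right-invariance of $\psi_e$ follows as in the first paragraph.
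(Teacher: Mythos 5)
Your overall plan reproduces the paper's proof structure faithfully: right invariance of $\psi_e$ is deduced from left invariance of $\varphi_e$ via $\Delta\circ R=(R\otimes R)\circ\chi\circ\Delta$ (this formal step is correct, and is exactly how the paper disposes of $\psi_e$); left invariance is reduced, after expanding $\Delta$ and collapsing the sum over $\onb(H_0)$, to showing that the representing vector $w_\eta$ of the antilinear functional $\zeta\mapsto\varphi_e\left(\Aelt{H}{\zeta}{\eta}\right)$ has the form $W(\vect{1})$ for a morphism $W$ into $H$ covered by $\mathcal{C}$; and your "key structural fact" computation, using lemma \ref{lem: calculation in A}.1 to slide $W$ across and lemma \ref{lem: calculation in A}.4 together with $\del(v^*)=\langle\vect{1},\vect{v}\rangle$ to produce $\langle w_\eta,\xi\rangle\,1$, is correct and is precisely the mechanism the paper uses.

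The gap is that the identification $w_\eta=W_\eta(\vect{1})$ is not a deferrable technicality: it is the entire content of the theorem, and your proposal only asserts it ("summing these \dots exhibits $w_\eta$ as $W_\eta(\vect{1})$") without proof. Note first that even the existence of $w_\eta$ as an element of $H_0$ (rather than as a bounded antilinear functional on $H$) needs an argument, since $H_0$ is in general infinite dimensional. The paper avoids assembling any single global morphism $W_\eta$: it expands $\varphi_e\left(\Aelt{H}{\zeta}{\eta}\right)$ as a sum over $V\in\bpi(L^2(M)\otimes H\otimes L^2(M),L^2(M))$, checks that only finitely many $V$ contribute (because $\eta$ is supported under finite central projections of $M_0$), and then works per $V$: the $\zeta$-sum collapses to the vector $(\del\otimes\id\otimes\del)V^*(\vect{1})$, and the map $\zeta\mapsto V(\vect{1}\otimes\zeta\otimes\vect{1})$ is shown to be a well-defined unbounded operator $H_0\to\mathcal{F}_0$ which is \emph{covered} by $\mathcal{C}$ in the sense of definition \ref{def: covering morphisms}; this is where the support estimates (for fixed $\zeta$, $V(\vect{1_i}\otimes\zeta\otimes\vect{1_j})=0$ for all but finitely many $i,j\in I$, by factoring $V$ through finite type projections in $\mathcal{P}$) are indispensable. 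Only after this verification does lemma \ref{lem: calculation in A}.1 apply, per $V$, giving $\Aelt{H}{\xi}{(\del\otimes\id\otimes\del)V^*(\vect{1})}=\Aelt{1}{V(\vect{1}\otimes\xi\otimes\vect{1})}{\vect{1}}=\overline{(\del\circ V)(\vect{1}\otimes\xi\otimes\vect{1})}\,1$, and summing against the weights $(\del\circ V)(\vect{e}\otimes\eta\otimes\vect{e^*})$ returns $\varphi_e\left(\Aelt{H}{\xi}{\eta}\right)1$. So your skeleton is the right one and coincides with the paper's, but as written the proposal stops exactly where the proof begins; completing it requires supplying the coverage and finiteness verifications above.
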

\begin{proof}
    By proposition \ref{prop: scaling group and unitary antipode}.2, it is sufficient to prove that $\varphi_e$ is left invariant, since right invariance of $\psi_e$ will then follow automatically. To this end, we fix some object $H$ in $\mathcal{C}$ and $\xi,\eta \in H_0$, and algebraically make the following calculation. Below, we will comment on why some of the steps are justified.
    \begin{align}
        (\id \otimes& \varphi_e) \circ \Delta \left( \Aelt{H}{\xi}{\eta} \right) \nonumber \\
        =& \; \sum_{\zeta \in \onb(H_0)} \Aelt{H}{\xi}{\zeta} \varphi_e \left( \Aelt{H}{\zeta}{\eta} \right) \nonumber \\
        =& \; \sum_{\substack{\zeta \in \onb(H_0) \\ V \in \bpi(L^2(M) \otimes H \otimes L^2(M), L^2(M))}} \Aelt{H}{\xi}{\zeta} \overline{(\del \circ V)(\vect{1} \otimes \zeta \otimes \vect{1})} (\del \circ V)(\vect{e} \otimes \eta \otimes \vect{e^*}) \label{eq: invariant functional step 1}\\
        =& \; \sum_{ V \in \bpi(L^2(M) \otimes H \otimes L^2(M), L^2(M))} \Aelt{H}{\xi}{\sum_{\zeta \in \onb(H_0)} \langle \vect{1} , V (\vect{1} \otimes \zeta \otimes \vect{1}) \rangle \zeta } (\del \circ V)(\vect{e} \otimes \eta \otimes \vect{e^*}) \nonumber \\
        =& \; \sum_{ V \in \bpi(L^2(M) \otimes H \otimes L^2(M), L^2(M))} \Aelt{H}{\xi}{\sum_{\zeta \in \onb(H_0)} (\del \otimes 1 \otimes \del) V^*(\vect{1}) } (\del \circ V)(\vect{e} \otimes \eta \otimes \vect{e^*}) \nonumber \\
        =& \; \sum_{ V \in \bpi(L^2(M) \otimes H \otimes L^2(M), L^2(M))} \Aelt{1}{V(\vect{1} \otimes \xi \otimes \vect{1})}{\vect{1} } (\del \circ V)(\vect{e} \otimes \eta \otimes \vect{e^*}) \label{eq: invariant functional step 2}\\
        =& \; \sum_{ V \in \bpi(L^2(M) \otimes H \otimes L^2(M), L^2(M))} \overline{(\del \circ V)(\vect{1} \otimes \xi \otimes \vect{1})} (\del \circ V)(\vect{e} \otimes \eta \otimes \vect{e^*}) \nonumber \\
        =& \; \varphi_e \left( \Aelt{H}{\xi}{\eta} \right) \nonumber
    \end{align}
    Now some comments on why this is all allowed. Note that $\eta \in H_0$ is left- and right supported on some central projection in $M_0$. This makes it so that $V(\vect{e} \otimes \eta \otimes \vect{e^*}) = 0$ for all but finitely many $V \in \bpi(L^2(M) \otimes H \otimes L^2(M), L^2(M))$. Moreover, recall the definition of $\mathcal{P}$ from definition \ref{def: algebra N, endomorphisms of L^2(M^2)} and the discussion below. Identifying $L^2(M) \otimes H \otimes L^2(M)$ with $L^2(M^2) \mtimes H \mtimes L^2(M^2)$, we can find $P_V,Q_V \in \mathcal{P}$ for each of these $V$, such that $V$ factors through the range of $P_V \mtimes \id_H \mtimes Q_V$. Now keep $\zeta$ fixed. Since the projections $P_V,Q_V$ are of finite type, and $\zeta$ is left and right supported on some finite central projection in $M_0$, we get that $V(\vect{1_i} \otimes \zeta \otimes \vect{1_j}) = 0$ for all but finitely many $i,j \in I$. Hence, by bimodularity of $V$, also $\langle \vect{1_k}, V(\vect{1} \otimes \zeta \otimes \vect{1}) \rangle = 0$ for all but finitely many $k \in I$. Now note that
    \[
    \langle \vect{1_k} , V(\vect{1_i} \otimes \zeta \otimes \vect{1_j}) \rangle = \langle (\del(1_i \cdot) \otimes \id \otimes \del(1_j \cdot)) \circ V^*(\vect{1_k}) , \zeta \rangle \text{ for any }i,j,k \in I.
    \]
    Hence, summing all of this over $i,j,k \in I$, we get that $\zeta \mapsto V(\vect{1} \otimes \zeta \otimes \vect{1})$ is a well-defined (unbounded) linear operator from $H_0$ to $\mathcal{F}_0$. Moreover, this operator is covered by $\mathcal{C}$ as in definition \ref{def: covering morphisms} and hence by lemma \ref{lem: calculation in A}, we can make the steps from \eqref{eq: invariant functional step 1} to \eqref{eq: invariant functional step 2}.
\end{proof}

We are now ready to describe the modular theory of the quantum group $(\mathcal{A},\Delta)$.

\begin{proposition} \label{prop: ratio of invariant functionals}
    Let $\delta$ be as in theorem \ref{thm: explicit description of gamma}. For any $e,f \in M_0$ with $\del(e^*e) = 1 = \del(f^*f)$, we have
    \[
    \del(e \delta^{-2} e^*) \varphi_f = \del(f \delta^{-2} f^*) \varphi_e.
    \]
\end{proposition}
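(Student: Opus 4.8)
The plan is to first reduce the statement to the evaluation of a single scalar, and then to pin that scalar down using the modular data of $\varphi$.

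First I would note that for every admissible $e$ (i.e. $\del(e^*e)=1$) the functional $\varphi_e$ is positive, faithful and left-invariant on $(\mathcal{A},\Delta)$: left-invariance is Theorem \ref{thm: invariant functionals on quantum group}, while positivity and faithfulness follow from Proposition \ref{prop: varphi on B is positive faithful, modular data} together with the fact that $\Theta_e$ is a $*$-isomorphism of $\mathcal{A}$ onto the corner $\mathcal{B}_e=Q_e\mathcal{B}Q_e$. By uniqueness, up to a positive scalar, of the left Haar functional on an algebraic quantum group \cite{VanDaele98}, there is a scalar $c(e,f)>0$ with $\varphi_e=c(e,f)\,\varphi_f$, so it suffices to prove $c(e,f)=\del(e\delta^{-2}e^*)/\del(f\delta^{-2}f^*)$.

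Next I would localise the $e$-dependence onto the projections $Q_e$. Unwinding $\varphi_e=\varphi\circ\Theta_e$ through the defining sum of $\varphi$ and using that every $V\in\bpi(L^2(M)\otimes H\otimes L^2(M),L^2(M))$ is $M$-bimodular, one rewrites $V(\vect e\otimes\eta\otimes\vect{e^*})=\vect{e\,w_V\,e^*}$, where $\vect{w_V}=V(\vect1\otimes\eta\otimes\vect1)$ does not involve $e$; this yields $\varphi_e(\Aelt H\xi\eta)=\del(e\,A_{H,\xi,\eta}\,e^*)$ for an element $A_{H,\xi,\eta}\in M_0$ independent of $e$. In particular $\varphi(Q_e)=\del(e\,A_0\,e^*)$ with $A_0=\sum_{V\in\bpi(L^2(M^2),L^2(M))}\overline{\del(u_V)}\,u_V$ and $\vect{u_V}=V\vect{1\otimes1}$, and comparison with $\varphi_e=c(e,f)\varphi_f$ then forces $c(e,f)=\varphi(Q_e)/\varphi(Q_f)$; equivalently it reduces the whole statement to $A_0=(\text{const})\cdot\delta^{-2}$. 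A clean bookkeeping device is the equivalence $Q_e\sim Q_f$: the multiplier $w_{ef}:=\Gelt1{1\otimes1}{e\otimes f^*}$ satisfies $w_{ef}w_{ef}^*=Q_e$ and $w_{ef}^*w_{ef}=Q_f$, as one checks from Lemma \ref{lem: calculation in B}.4--5 and $\del(e^*e)=\del(f^*f)=1$.

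Finally I would compute the ratio $\varphi(Q_e)/\varphi(Q_f)$ using the twisted traciality $\varphi(xy)=\varphi(\varsigma(y)x)$ of Proposition \ref{prop: varphi on B is positive faithful, modular data} and the explicit form of $\varsigma$ in Remark \ref{rem: omega on G_n}. Writing $\varphi(Q_e)=\varphi(w_{ef}w_{ef}^*)=\varphi(\varsigma(w_{ef}^*)\,w_{ef})$ and inserting $\varsigma(w_{ef}^*)=\Gelt1{\delta\otimes\delta^{-1}}{\mu(f)\delta\otimes\delta^{-1}\mu(e^*)}$, the two modular factors $\delta^{\pm1}$ produced by $\varsigma$ meet the two letters $e,e^*$ coming from $Q_e$; after reducing the resulting $\Gelt2{\cdots}{\cdots}$ term via the module-move identities of Lemma \ref{lem: calculation in B} one extracts exactly $\del(e\delta^{-2}e^*)$. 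Running the same computation with $e$ and $f$ interchanged gives $\del(f\delta^{-2}f^*)$, and dividing produces the asserted scalar.

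The main obstacle is precisely this last computation. One must pass carefully between the ordinary and the relative tensor products (the implicit $\mult^*$-insertions in $L^2(M)\otimes H\otimes L^2(M)\cong L^2(M)\mtimes L^2(M^2)\mtimes L^2(M)$), and one must account for the fact that $\varsigma$ does \emph{not} fix $Q_e$, so the twisted trace genuinely shifts the first tensor leg by $\delta$ and the last by $\delta^{-1}$; it is the interaction of these two opposite shifts with the pair $e,e^*$ that is responsible for the exponent $-2$ in $\delta^{-2}$, and verifying that the middle entries of the emerging degree-two elements collapse correctly under Lemma \ref{lem: calculation in B}.5 is the delicate point.
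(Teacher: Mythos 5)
Your computational engine---the twisted traciality $\varphi(xy)=\varphi(\varsigma(y)x)$ of Proposition \ref{prop: varphi on B is positive faithful, modular data}, the explicit form of $\varsigma$ from Remark \ref{rem: omega on G_n}, and the collapse rules of Lemma \ref{lem: calculation in B}---is exactly the engine of the paper's proof, and your observation that $w_{ef}=\Gelt{1}{1 \otimes 1}{e \otimes f^*}$ implements an equivalence $Q_e\sim Q_f$ is correct. The gap is in the reduction you build around it: you propose to identify the proportionality constant $c(e,f)$ (obtained from uniqueness of the left invariant functional) as the ratio $\varphi(Q_e)/\varphi(Q_f)$, equivalently to prove ``$A_0=(\mathrm{const})\cdot\delta^{-2}$''. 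But $\varphi$ is only a weight, and $\varphi(Q_e)=+\infty$ in precisely the situations this machinery is built for. Concretely, by the proof of Lemma \ref{lem: varphi on A_1} one may take $\bpi(L^2(M^2),L^2(M))=\{\lambda(z_a)\circ\mult \mid a\in\mathcal{E}\}$, and then the defining sum gives $\varphi(Q_e)=\sum_{a\in\mathcal{E}}\del(z_a)\,\del(z_a e e^*)$, where $\del(z_a)=\sum_{i\in I_a}\del(1_i)=\infty$ whenever $I_a$ is infinite (e.g.\ for any connected structure on infinitely many blocks, such as a quantum Cayley graph of an infinite discrete quantum group). For the same reason your $A_0=\sum_V \overline{\del(u_V)}\,u_V$ is not a well-defined multiplier: its coefficients $\del(u_V)=\del\bigl(V(\vect{1\otimes 1})\bigr)$ are themselves divergent sums. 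Consequently the identity $\varphi_e=c(e,f)\varphi_f$ evaluated at the unit (equivalently, $\varphi$ evaluated at $Q_e,Q_f$) reads $\infty=c(e,f)\cdot\infty$ and cannot pin down $c(e,f)$; the equivalence $Q_e\sim Q_f$ plus twisted traciality yields no information at this level. (A lesser inaccuracy: $A_{H,\xi,\eta}$ never lies in $M_0$; by the proposition itself it is the unbounded multiplier $\varphi(\cdots)\,\delta^{-2}$. Your blockwise extraction $V(\vect{e}\otimes\eta\otimes\vect{e^*})=\vect{e\,w_V\,e^*}$ is fine, but the sum over $V$ converges only strictly, because the finiteness of the set of contributing $V$'s in the definition of $\varphi_e$ genuinely uses the presence of $e$.)

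The repair is to run your $\varsigma$-cycling against an arbitrary spanning element instead of against $Q_f$, and this is what the paper does: writing $\Theta_e\left(\Aelt{H}{\xi}{\eta}\right)=\Gelt{1}{1 \otimes p}{e \otimes r}\Gelt{H}{\xi}{\eta}\Gelt{1}{q \otimes 1}{s \otimes e^*}$ with $p,q,r,s$ the (finite) support projections of $\xi,\eta$, one cycles the rightmost factor---which, unlike $Q_e$, has finitely supported entries in its second slot---to the front via $\varphi(xy)=\varphi(\varsigma(y)x)$, merges the two degree-one factors into a $G_2$-element, and collapses the middle leg with Lemma \ref{lem: calculation in B}.5 to obtain
\[
\varphi_e\left(\Aelt{H}{\xi}{\eta}\right)=\del(e\delta^{-2}e^*)\,\varphi\left(\Gelt{1}{q \otimes p}{s\delta^2 \otimes r}\Gelt{H}{\xi}{\eta}\right),
\]
in which the second factor is finite and independent of $e$. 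Comparing with the same identity for $f$ proves the proposition directly, and makes the appeal to uniqueness of Haar functionals unnecessary. So your mechanism (the two opposite $\delta^{\pm1}$ shifts produced by $\varsigma$ meeting the pair $e,e^*$ to create the exponent $-2$) is the right one, but it must be executed on general elements of $\mathcal{A}$, where all quantities are finite, rather than on the projections $Q_e,Q_f$, where they are not.
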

\begin{proof}
    Take any object $H$ in $\mathcal{C}$, and take $\xi,\eta \in H_0$ with central projections $p,q,r,s \in M_0$ such that $p \cdot \xi \cdot q = \xi$ and $r \cdot \eta \cdot s = \eta$. Then we can calculate as follows, using the map $\varsigma$ from proposition \ref{prop: varphi on B is positive faithful, modular data}.
    \begin{align*}
        \varphi_e \left( \Aelt{H}{\xi}{\eta} \right) =& \; \varphi \left( \Theta_e \left( \Aelt{H}{\xi}{\eta} \right) \right) = \varphi \left( \Gelt{1}{1 \otimes p}{e \otimes r} \Gelt{H}{\xi}{\eta} \Gelt{1}{q \otimes 1}{s \otimes e^*} \right) = \varphi \left( \varsigma \left( \Gelt{1}{q \otimes 1}{s \otimes e^*} \right) \Gelt{1}{1 \otimes p}{e \otimes r} \Gelt{H}{\xi}{\eta} \right) \\
        =& \; \varphi \left( \Gelt{2}{q \delta \otimes \delta^{-1} \otimes p}{s \delta \otimes \delta^{-1} \mu(e^*) e \otimes r} \Gelt{H}{\xi}{\eta} \right) = \varphi \left( \Gelt{2}{q \otimes 1 \otimes p}{s \delta^2 \otimes \delta^{-2} \mu(e^*)e \otimes r} \Gelt{H}{\xi}{\eta} \right) \\
        =& \; \del(\delta^{-2} \mu(e^*)e) \varphi \left( \Gelt{1}{q \otimes p}{s \delta^2 \otimes r} \Gelt{H}{\xi}{\eta} \right) = \del(e \delta^{-2}e^*) \varphi \left( \Gelt{1}{q \otimes p}{s \delta^2 \otimes r} \Gelt{H}{\xi}{\eta} \right) \\
    \end{align*}
    Similarly, we find
    \begin{equation}
    \varphi_f \left( \Aelt{H}{\xi}{\eta} \right) = \del(f \delta^{-2} f^*) \varphi \left( \Gelt{1}{q \otimes p}{s \delta^{-2} \otimes r} \Gelt{H}{\xi}{\eta} \right) \label{eq: explicit formula varphi_f}
    \end{equation}
    which proves the proposition.
\end{proof}

\begin{lemma} \label{lem: varphi on A_1}
    Recall from definition \ref{def: concrete unitary 2-category of M-bimodules} the set $\mathcal{E}$ of $0$-cells of the category $\mathcal{C}$, and the central projections $\{z_a | a \in \mathcal{E}\} \subset M$. For any $x,y \in M_0$, we have
    \[
    \varphi_e \left( \Aelt{1}{x}{y} \right) = \del(e \delta^{-2} e^*) \sum_{a \in \mathcal{E}} \del(z_a x^*) \del(z_a \delta^2 y).
    \]
\end{lemma}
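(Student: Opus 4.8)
The plan is to reduce everything to a single evaluation of $\varphi$ and then compute that from its definition. First I would invoke the intermediate formula established inside the proof of Proposition \ref{prop: ratio of invariant functionals}, namely
\[
\varphi_e\!\left(\Aelt{H}{\xi}{\eta}\right) = \del(e\delta^{-2}e^*)\,\varphi\!\left(\Gelt{1}{q \otimes p}{s\delta^2 \otimes r}\Gelt{H}{\xi}{\eta}\right),
\]
specialised to $H = L^2(M)$, $\xi = \vect{x}$, $\eta = \vect{y}$, with central projections $p,q,r,s \in M_0$ chosen so that $px = x = xq$ and $ry = y = ys$. By the merging rule for products in $\mathcal{B}$ (Remark \ref{rem: calculation in B for reducibles}), the product $\Gelt{1}{q \otimes p}{s\delta^2 \otimes r}\Gelt{0}{x}{y}$ collapses to $\Gelt{1}{q \otimes px}{s\delta^2 \otimes ry} = \Gelt{1}{q \otimes x}{s\delta^2 \otimes y}$, so that
\[
\varphi_e\!\left(\Aelt{1}{x}{y}\right) = \del(e\delta^{-2}e^*)\,\varphi\!\left(\Gelt{1}{q \otimes x}{s\delta^2 \otimes y}\right).
\]

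It then remains to prove $\varphi(\Gelt{1}{q \otimes x}{s\delta^2 \otimes y}) = \sum_{a \in \mathcal{E}}\del(z_a x^*)\del(z_a\delta^2 y)$. I would first observe that the right-hand side is exactly $\varphi(\Gelt{0}{x}{\delta^2 y})$: since each $L^2(M_a)$ is irreducible and occurs in $L^2(M)$ with multiplicity one, we have $\bpi(L^2(M),L^2(M)) = \{z_a : a \in \mathcal{E}\}$, whence Definition \ref{def: varphi on B} gives $\varphi(\Gelt{0}{x}{\delta^2 y}) = \sum_a \overline{\del(z_a x)}\,\del(z_a \delta^2 y) = \sum_a \del(z_a x^*)\del(z_a\delta^2 y)$, using that $\del$ is a $*$-functional. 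Thus the lemma reduces to the identity
\[
\varphi\!\left(\Gelt{L^2(M^2)}{\vect{q \otimes x}}{\vect{s\delta^2 \otimes y}}\right) = \varphi\!\left(\Gelt{L^2(M)}{\vect{x}}{\vect{\delta^2 y}}\right),
\]
which says that applying the covered coisometry $\mult\colon L^2(M^2)\to L^2(M)$ to both arguments of the $G$-symbol leaves $\varphi$ unchanged; here one uses $\mult\vect{q \otimes x} = \vect{x}$ and $\mult\vect{s\delta^2 \otimes y} = \vect{\delta^2 y}$, valid because $q,s$ are central with $qx = x$, $sy = y$ and $\delta$ commutes with the $z_a$.

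The step I expect to be hardest is precisely this last identity. Unwinding Definition \ref{def: varphi on B}, it amounts to showing that the functionals $\del \circ V$, as $V$ ranges over $\bpi(L^2(M^2),L^2(M)) = \bigcup_{a}\bpi(L^2(M^2),L^2(M_a))$, collapse onto the multiplication maps $\mult\circ(z_a \otimes 1)$, i.e. that $\sum_{V}\overline{\del(V\xi)}\,\del(V\eta)$ retains only the contribution factoring through $\mult$. I would attack this with Lemma \ref{lem: calculation in B}.2, sliding the covered morphism $\mult$ across the partial isometries, together with the fact that $\del = \langle\,\cdot\,,\vect{1}\rangle$ is implemented by the coaction-invariant vector $\vect{1}$, whose behaviour under the relevant morphisms is controlled by $\mult^*$ via Proposition \ref{prop: multiplication has isometric adjoint}. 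The delicate point requiring real care is that $L^2(M_a)$ may a priori appear in $L^2(M^2)$ with multiplicity larger than one, so one genuinely has to exploit the special role of $\vect{1}$ (and not merely the coisometry property of $\mult$) in order to discard the partial isometries transverse to the multiplication map; this is where the bulk of the argument lies, after which the computation in the second paragraph closes the proof.
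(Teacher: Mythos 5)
Your first two paragraphs are correct and coincide with the paper's own route: the paper likewise starts from the formula established in the proof of Proposition \ref{prop: ratio of invariant functionals} (yielding $\varphi_e\left(\Aelt{1}{x}{y}\right) = \del(e\delta^{-2}e^*)\,\varphi\left(\Gelt{1}{1\otimes x}{\delta^2\otimes y}\right)$), and your identification of the right-hand side of the lemma as $\sum_{V\in\bpi(L^2(M),L^2(M))}\overline{\del(V\vect{x})}\del(V\vect{\delta^2 y})$ with $\bpi(L^2(M),L^2(M)) = \{z_a \mid a\in\mathcal{E}\}$ is also fine. The problem is your third paragraph: what you call ``the delicate point'' is not a final verification but the entire content of the lemma, and you neither prove it nor aim at it in a way that can succeed. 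Writing $\varphi\left(\Gelt{1}{q\otimes x}{s\delta^2\otimes y}\right) = \sum_{V}\overline{\langle \vect{q\otimes x},V^*\vect{1}\rangle}\,\langle \vect{s\delta^2\otimes y},V^*\vect{1}\rangle$, the contribution of a partial isometry $V$ transverse to $\mult$ (i.e.\ with $V\mult^*=0$) is governed by the vector $V^*\vect{1}$, and bimodularity together with the ``special role of $\vect{1}$'' gives no mechanism forcing it to vanish. Concretely, for $M = M_2(\mathbb{C})$ with its tracial delta-form and $w \in M_2(\mathbb{C})$ a trace-zero self-adjoint unitary, the map $V: \vect{u\otimes v}\mapsto \vect{uwv}$ is an $M$-bimodular coisometry satisfying $V\mult^* = 0$, yet $\del(V\vect{1\otimes x}) = \del(wx)$ is not identically zero. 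So transverse partial isometries, if present in $\mathcal{C}$, genuinely contribute; they cannot be discarded term by term.

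The paper's proof therefore does something quite different from what you sketch: it shows that such transverse partial isometries do not exist in $\mathcal{C}$ at all, i.e.\ each $L^2(M_a)$ occurs in $L^2(M^2)$ with multiplicity exactly one, realised by $\mult^*z_a$, so that $\{\lambda(z_a)\circ\mult \mid a\in\mathcal{E}\}$ is already a full $\bpi(L^2(M^2),L^2(M))$; once this is known, the lemma is a one-line application of Definition \ref{def: varphi on B}. This multiplicity statement is where all the work lies, and the paper extracts it from the von Neumann algebra $\mathcal{N}$ of Definition \ref{def: algebra N, endomorphisms of L^2(M^2)}: consider $\mult^*\mult \in \mathcal{N}$ and a minimal central projection $P \in \mathcal{J}$ (Lemma \ref{lem: central projections in N}) with $\mult P \neq 0$; the paper argues that then $P \le \mult^*\mult$, so $\mult P \mult^*$ is a projection in $\End_{\mathcal{C}}(L^2(M)) = \bigoplus_a \mathbb{C}z_a$, hence equals some $z_a$; and for any minimal projection $Q \in \mathcal{P}$ with $Q \le P$ one gets $\mult Q \mult^* \le z_a$, whence $\mult Q\mult^* = z_a$ by minimality and positivity, forcing $Q = \mult^* z_a \mult = P$. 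Nothing playing the role of this structural argument appears in your proposal, and since the identity you reduced the lemma to is equivalent to the vanishing of the transverse contributions --- which, as shown above, does not happen pointwise --- your plan of ``exploiting the invariant vector $\vect{1}$'' cannot close the gap: the only way the sum collapses is that the transverse isometries are absent, which is precisely the statement you left unproven.
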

\begin{proof}
    Recall the definition of $\mathcal{J} \subset \mathcal{P} \subset \mathcal{N}$ from definition \ref{def: algebra N, endomorphisms of L^2(M^2)} and the discussion below it. Consider the projection $\mult^* \circ \mult \in \mathcal{N}$, and note that by centrality, for any $P \in \mathcal{J}$, we get 
    $\mult \circ P \neq 0$ if and only if $P \leq \mult^* \circ \mult$. In this case, $\mult \circ P \circ \mult^*$ is a minimal projection in $\End_{\mathcal{C}}(L^2(M))$, and hence, $\mult \circ P \circ \mult^* = z_a$ for some $a \in \mathcal{E}$, the $0$-cells of $\mathcal{C}$. Now suppose there is some (nonzero) minimal projection $Q \in \mathcal{P}$ with $Q \leq P$. Then also $Q \leq \mult^* \circ \mult$, and hence $\mult \circ Q \circ \mult^* \leq z_a$. Now, as $z_a$ is minimal and $\mult \circ Q \circ \mult^*$ is positive and nonzero, we must get $\mult \circ Q \circ \mult^* = z_a$. Hence, $P = \mult^* \circ z_a \circ \mult = Q$. All of this goes to show that $\{\lambda(z_a) \circ \mult | a \in \mathcal{E}\}$ is a $\bpi(L^2(M^2),L^2(M))$. Now we are ready to calculate, using the formula \eqref{eq: explicit formula varphi_f}.
    \begin{align*}
        \varphi_e \left( \Aelt{1}{x}{y} \right) =& \; \del(e \delta^{-2} e^*) \varphi \left( \Gelt{1}{1 \otimes x}{\delta^2 \otimes y} \right) = \del(e \delta^{-2} e^*) \sum_{V \in \bpi(L^2(M^2), L^2(M))} \overline{(\del \circ V) (\vect{1 \otimes x})} (\del \circ V)(\vect{\delta^2 \otimes y}) \\
        =& \; \del(e \delta^{-2} e^*) \sum_{a \in \mathcal{E}} \overline{\del(z_a x)} \del(z_a \delta^2 y) = \del(e \delta^{-2} e^*) \sum_{a \in \mathcal{E}} \del(z_a x^*) \del(z_a \delta^2 y)
    \end{align*}
\end{proof}

\begin{proposition} \label{prop: modular element of A}
    Take $a \in \mathcal{E}$ arbitrarily, and some positive nonzero $y \in M_0$ with $z_a y = y$. The element
    \begin{equation}
        \nu := \del(\delta^2 y)^{-1} \Aelt{1}{\delta^2}{y} \in \mathcal{M}(\mathcal{A}) 
    \end{equation}
    does not depend on the choice of $a \in \mathcal{E}$ or $y \in M_0 \cap M_a$. Moreover, $\nu$ is the modular element of the algebraic quantum group $(\mathcal{A},\Delta)$, as defined in \cite[Proposition 3.8]{VanDaele98}.
\end{proposition}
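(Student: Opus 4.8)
The plan is to identify $\nu$ through the characterizing property of Van Daele's modular element rather than by wrestling with the defining formula directly. Recall from \cite[Proposition 3.8]{VanDaele98} that, for the algebraic quantum group $(\mathcal{A},\Delta)$ with left invariant functional $\varphi_e$ (Theorem \ref{thm: invariant functionals on quantum group}), there is a unique invertible $\delta_{\mathbb{G}}\in\mathcal{M}(\mathcal{A})$, independent of the chosen left integral, satisfying
\[
(\varphi_e\otimes\id)\Delta(a)=\varphi_e(a)\,\delta_{\mathbb{G}}\quad\text{for all }a\in\mathcal{A}.
\]
This is the complementary statement, on the non-integrated leg, of the left invariance $(\id\otimes\varphi_e)\Delta(a)=\varphi_e(a)1$ already established; by Proposition \ref{prop: ratio of invariant functionals} the various $\varphi_e$ are mutually proportional, so the scalar $\del(e\delta^{-2}e^*)$ will cancel and does not affect $\delta_{\mathbb{G}}$. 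Since $\delta_{\mathbb{G}}$ exists, it is pinned down by evaluating the left-hand side on a single element with nonzero $\varphi_e$-value, and I will do this on a convenient $\Aelt{1}{x}{y}$.

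Fix the given $a\in\mathcal{E}$ and $y$, and choose an auxiliary positive nonzero $x\in M_0$ with $z_ax=x$. Then $\del(x^{*})=\del(x)>0$ by faithfulness of the Markov trace, and likewise $\del(\delta^2 y)=\tr_M(\sigma\delta^2 y)>0$, using that $\delta$ commutes with $\sigma$ (Theorem \ref{thm: explicit description of gamma}) so that $\sigma\delta^2$ is positive invertible. Using the comultiplication from Definition \ref{def: hopf structure on A} and then Lemma \ref{lem: varphi on A_1}, I compute
\[
(\varphi_e\otimes\id)\Delta\!\left(\Aelt{1}{x}{y}\right)=\sum_{\zeta\in\onb(\mathcal{F}_0)}\varphi_e\!\left(\Aelt{1}{x}{\zeta}\right)\Aelt{1}{\zeta}{y}=\del(e\delta^{-2}e^*)\,\del(x^{*})\sum_{\zeta\in\onb(\mathcal{F}_0)}\del(z_a\delta^2\zeta)\,\Aelt{1}{\zeta}{y},
\]
where the reduction uses $z_b x^{*}=0$ for $b\neq a$. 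The key step is the resolution of identity $\sum_{\zeta}\del(z_a\delta^2\zeta)\Aelt{1}{\zeta}{y}=\Aelt{1}{\delta^2 z_a}{y}$, which follows by writing $\del(z_a\delta^2\zeta)=\langle\vect{\zeta},\vect{\delta^2 z_a}\rangle$ and using antilinearity of $\Aelt{1}{\cdot}{y}$ in its first slot together with $\sum_{\zeta}\langle\vect{\zeta},v\rangle\,\overline{\vect{\zeta}}=\overline{v}$. Since $z_a y=y$ one has $\Aelt{1}{\delta^2 z_a}{y}=\Aelt{1}{\delta^2}{y}$, so the whole expression equals $\del(e\delta^{-2}e^*)\,\del(x^{*})\,\Aelt{1}{\delta^2}{y}$.

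On the other hand Lemma \ref{lem: varphi on A_1} gives $\varphi_e(\Aelt{1}{x}{y})=\del(e\delta^{-2}e^*)\,\del(x^{*})\,\del(\delta^2 y)$, which is nonzero by the above. Dividing, the $e$- and $x$-dependent scalars cancel and I obtain
\[
\delta_{\mathbb{G}}=\varphi_e\!\left(\Aelt{1}{x}{y}\right)^{-1}(\varphi_e\otimes\id)\Delta\!\left(\Aelt{1}{x}{y}\right)=\del(\delta^2 y)^{-1}\,\Aelt{1}{\delta^2}{y}=\nu.
\]
Because $\delta_{\mathbb{G}}$ is canonical — independent of $e$ and of the element chosen to compute it — this identity holds verbatim for every admissible pair $(a,y)$, which simultaneously yields the asserted independence of $\nu$ from the choices of $a\in\mathcal{E}$ and $y$ and identifies $\nu$ with the modular element of $(\mathcal{A},\Delta)$; its invertibility and group-like character are then automatic from \cite[Proposition 3.8]{VanDaele98}. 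I expect the main obstacle to be the middle computation: executing the comultiplication-then-integrate step and the resolution of identity correctly (keeping track of linear versus antilinear slots), verifying via the multiplier conventions of Notation \ref{not: A_H for reducibles} that $\Aelt{1}{\delta^2}{y}$, with $\delta^2 z_a$ possibly unbounded, is a bona fide element of $\mathcal{M}(\mathcal{A})$, and ensuring the orientation convention for the modular element matches \cite{VanDaele98}, namely that it is produced by the non-integrated leg of $(\varphi_e\otimes\id)\Delta$.
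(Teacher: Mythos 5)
Your proof is correct and takes essentially the same route as the paper: both invoke Van Daele's characterization $(\varphi_e \otimes \id)\Delta(x) = \varphi_e(x)\,D$ of the modular element, evaluate it on a single element $\Aelt{1}{x}{y}$ with $x$ positive and supported under $z_a$, and use Lemma \ref{lem: varphi on A_1} together with the resolution of identity $\sum_{\zeta \in \onb(\mathcal{F}_0)} \del(z_a \delta^2 \zeta)\, \Aelt{1}{\zeta}{y} = \Aelt{1}{\delta^2 z_a}{y} = \Aelt{1}{\delta^2}{y}$ to conclude $D = \nu$, with independence of the choices following from canonicity of $D$. Your extra verifications (positivity of $\del(\delta^2 y)$ via commutation of $\delta$ with $\sigma$, independence of $e$) are sound refinements of what the paper leaves implicit.
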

\begin{proof}
    By \cite[Proposition 3.8]{VanDaele98}, there is a multiplier $D \in \mathcal{M}(\mathcal{A})$ such that
    \[
    (\varphi_e \otimes \id) \Delta(x) = \varphi_e(x) D \text{ for any } x \in \mathcal{A},
    \]
    and this is the modular element of $(\mathcal{A},\Delta)$.
    Now take some $a \in \mathcal{E}$ and positive nonzero $x,y \in M_0 \cap M_a$ arbitrarily. We calculate, using lemma \ref{lem: varphi on A_1}.
    \begin{align*}
        D &= \varphi_e \left( \Aelt{1}{x}{y} \right)^{-1}  (\varphi_e \otimes \id)\Delta \left( \Aelt{1}{x}{y} \right) = \del(e \delta^{-2} e^*)^{-1} \del(x)^{-1} \del(\delta^2 y)^{-1} \sum_{t \in \onb(\mathcal{F}_0)} \varphi_e \left( \Aelt{1}{x}{t} \right) \Aelt{1}{t}{y} \\
        &= \del(\delta^2 y)^{-1} \sum_{t \in \onb(\mathcal{F}_0)} \del(z_a \delta^2 t) \Aelt{1}{t}{y} = \del(\delta^2 y)^{-1} \Aelt{1}{z_a \delta^2}{y} = \del(\delta^2 y)^{-1} \Aelt{1}{\delta^2}{y} = \nu
    \end{align*}
\end{proof}

\begin{proposition} \label{prop: modular automorphism on A}
    There is an automorphism\footnote{though not necessarily a $*$-automorphism.} $\kappa: \mathcal{A} \to \mathcal{A}$ given by 
    \begin{equation}
        \kappa \left( \Aelt{H}{\xi}{\eta} \right) = \Aelt{H}{\Omega_H(\xi)}{\delta^{-2} \cdot \Omega_K(\eta) \cdot \delta^2} \label{eq: defining automorphism kappa}
    \end{equation}
    which satisfies
    \[
    \varphi_e(xy) = \varphi_e(\kappa(y) x) \text{ for all } x,y \in \mathcal{A}.
    \]
\end{proposition}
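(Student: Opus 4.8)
The plan is to show first that $\kappa$ is a well-defined algebra automorphism of $\mathcal{A}$, and then to verify the identity $\varphi_e(xy)=\varphi_e(\kappa(y)x)$ by transporting everything to $\mathcal{B}$ through the isomorphism $\Theta_e$ and the trace property of $\varphi$.

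\textbf{Well-definedness.} On the free algebra $\mathcal{UA}$ the assignment respects the summation convention of notation \ref{not: A_H for reducibles} because $\Omega_H$ intertwines morphisms ($V\Omega_H=\Omega_KV$, proposition \ref{prop: properties of the operators Omega}.4) and, morphisms being $M$-bimodular, so does $\eta\mapsto\delta^{-2}\cdot\Omega_H(\eta)\cdot\delta^2$; this is the same check as for $\varsigma$. To descend to $\mathcal{A}=\mathcal{UA}/\mathcal{I}$ I would evaluate $\kappa$ on the unit object: since $\Omega_{L^2(M)}(\vect{x})=\vect{\mu(x)}$ (the $n=0$ case of remark \ref{rem: omega on G_n}) and $\delta$ commutes with $\sigma$ (theorem \ref{thm: explicit description of gamma}), one gets $\kappa(\Aelt{1}{1}{x})=\Aelt{1}{1}{\delta^{-2}\mu(x)\delta^2}$ and $\kappa(\Aelt{1}{x}{1})=\Aelt{1}{\mu(x)}{1}$; traciality of the Markov trace gives $\del(\delta^{-2}\mu(x)\delta^2)=\del(x)=\del(\mu(x))$, so $\kappa$ fixes the scalar multipliers $\Aelt{1}{1}{x}$ and $\Aelt{1}{x}{1}$, whence $\kappa(\mathcal{I})\subseteq\mathcal{I}$ by lemma \ref{lem: calculation in A}.4. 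The inverse is the analogous formula with $\Omega_H^{-1}$.

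\textbf{Multiplicativity.} This is the one genuinely nontrivial point. Over the relative tensor product $\Omega$ is multiplicative (proposition \ref{prop: properties of the operators Omega}.3), but the product of $\mathcal{A}$ uses the ordinary tensor product. Using $H\otimes K\cong H\mtimes L^2(M^2)\mtimes K$ (lemma \ref{lem: covered objects closed under ordinary tensor product}) together with $\Omega_{L^2(M^2)}(\vect{a\otimes b})=\vect{\mu(a)\delta\otimes\delta^{-1}\mu(b)}$ (proposition \ref{prop: properties of the operators Omega}.5 and theorem \ref{thm: explicit description of gamma}), I obtain the boundary formula
\[
\Omega_{H\otimes K}=\Gamma\circ(\Omega_H\otimes\Omega_K),\qquad \Gamma:=\rho_H(\delta^{\op})\otimes\lambda_K(\delta^{-1}),
\]
where $\Gamma$ corresponds to $\id_H\mtimes\gamma\mtimes\id_K$ and is therefore a (self-adjoint) morphism in $\mathcal{C}$. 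Collapsing the $\bpi$-sum via notation \ref{not: A_H for reducibles} and using $\Omega_LV=V\Omega_{H\otimes K}$, the product $\kappa(\Aelt{H}{\xi}{\eta}\Aelt{K}{\xi'}{\eta'})$ becomes $\Aelt{H\otimes K}{\Gamma(\Omega_H\xi\otimes\Omega_K\xi')}{\Gamma((\delta^{-2}\cdot\Omega_H\eta)\otimes(\Omega_K\eta'\cdot\delta^2))}$. The Frobenius move of lemma \ref{lem: calculation in A}.1 applied to $\Gamma$ transfers one factor $\Gamma$ from the left entry to the right, where $\Gamma^2$ supplies precisely the missing $\cdot\delta^2$ on the $H$-leg and $\delta^{-2}\cdot$ on the $K$-leg, turning the expression into $\Aelt{H\otimes K}{\Omega_H\xi\otimes\Omega_K\xi'}{(\delta^{-2}\cdot\Omega_H\eta\cdot\delta^2)\otimes(\delta^{-2}\cdot\Omega_K\eta'\cdot\delta^2)}=\kappa(\Aelt{H}{\xi}{\eta})\kappa(\Aelt{K}{\xi'}{\eta'})$.

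\textbf{The KMS identity.} Recall $\varphi_e=\varphi\circ\Theta_e$, that $\Theta_e$ is an algebra isomorphism onto $\mathcal{B}_e=Q_e\mathcal{B}Q_e$ (proposition \ref{prop: A isomorphic to corner of B}), and that $\varphi(ab)=\varphi(\varsigma(b)a)$ on $\mathcal{B}$ (proposition \ref{prop: varphi on B is positive faithful, modular data}). Cycling the projection $Q_e$ through $\varphi$ with this trace property, the identity $\varphi_e(xy)=\varphi_e(\kappa(y)x)$ reduces to the single algebraic identity in $\mathcal{B}$,
\[
\varsigma(Q_e)\,\Theta_e(\kappa(y))=\varsigma(\Theta_e(y))\,Q_e .
\]
Indeed, from $\varphi_e(\kappa(y)x)=\varphi(\Theta_e(\kappa(y))\Theta_e(x))$, writing $\Theta_e(x)=Q_e\Theta_e(x)Q_e$ and moving the trailing $Q_e$ produces $\varsigma(Q_e)\Theta_e(\kappa(y))$ to the left of $\Theta_e(x)$; replacing it by $\varsigma(\Theta_e(y))Q_e$ via the display and applying the trace property once more yields $\varphi(\Theta_e(x)\Theta_e(y))=\varphi_e(xy)$. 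The displayed identity I would check on a generator $\Aelt{H}{\xi}{\eta}$ using the explicit form of $\varsigma$ on $L^2(M^{n+1})$ (remark \ref{rem: omega on G_n}), the product rule (remark \ref{rem: calculation in B for reducibles}), and the $\del$-contraction rules (lemma \ref{lem: calculation in B}.5), exactly as in the proof of proposition \ref{prop: ratio of invariant functionals}: applying $\varsigma$ loads $\delta$-weights onto the four boundary $L^2(M)$-legs carrying $1,1,e,e^{*}$, and contracting these against the legs of $Q_e$ and $\varsigma(Q_e)$ leaves precisely $\Omega_H(\xi)$ on the left and $\delta^{-2}\cdot\Omega_H(\eta)\cdot\delta^2$ on the right.

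\textbf{Main obstacle.} The principal difficulty is the multiplicativity: recognising that $\Omega$ fails to be multiplicative over the ordinary tensor product, only up to the boundary morphism $\Gamma$, and that this defect is repaired by the asymmetric $\delta^{\mp2}$-twist together with the Frobenius move, so that no correction is needed on the left leg. Once this is in place, the $\delta$-bookkeeping in the final identity is routine given remark \ref{rem: omega on G_n}.
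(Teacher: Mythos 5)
Your proposal is correct, but it distributes the work differently from the paper, essentially in the opposite order. The paper's proof is shorter on the algebraic side: it performs one chain computation, $\varphi_e(xy)=\varphi(\Theta_e(x)\Theta_e(y))$, cycles once with the modular automorphism $\varsigma$ of $\varphi$ (proposition \ref{prop: varphi on B is positive faithful, modular data}) while tracking the $\delta$-weights exactly as you indicate, and lands directly on $\varphi_e(\kappa(y)x)$; it then gets the automorphism property of $\kappa$ for free from faithfulness of $\varphi_e$, by the same trick used for $\varsigma$ (namely $\varphi_e(\kappa(xy)z)=\varphi_e(zxy)=\varphi_e(\kappa(y)zx)=\varphi_e(\kappa(x)\kappa(y)z)$), so it never proves multiplicativity directly. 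You do the reverse: your direct multiplicativity argument via $\Omega_{H\otimes K}=\Gamma\circ(\Omega_H\otimes\Omega_K)$ with $\Gamma=\rho_H(\delta^{\op})\otimes\lambda_K(\delta^{-1})$, followed by the Frobenius move of lemma \ref{lem: calculation in A}.1 applied to the covered self-adjoint map $\Gamma$, is correct (I checked the boundary formula; it is also a genuinely illuminating explanation of why the asymmetric twist $\delta^{-2}\cdot(\,\cdot\,)\cdot\delta^{2}$ is forced on the right leg only), but it is work the paper avoids entirely. Conversely, your KMS step proves a stronger statement than needed — the multiplier identity $\varsigma(Q_e)\Theta_e(\kappa(y))=\varsigma(\Theta_e(y))Q_e$ in $\mathcal{B}$, rather than equality of the two sides after pairing with $\varphi$ — and its generator-level verification, the only part you leave as a sketch, is precisely the paper's five-line $\delta$-bookkeeping computation in disguise (the identity is true: it also follows from the paper's weak KMS statement together with faithfulness of $\varphi$). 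Both routes are valid; note, however, that once your displayed identity (or even its weak form) is in hand, your entire multiplicativity section becomes redundant, since faithfulness of $\varphi_e$ then yields the automorphism property exactly as in the paper.
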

\begin{proof}
    Let $H,K$ be objects in $\mathcal{C}$, and $\xi,\eta \in H_0$, $\xi',\eta' \in K_0$ with central projections $p,q,r,s,p',q',r',s' \in M_0$ such that $p \cdot \xi \cdot q = \xi$, $r \cdot \eta \cdot s = \eta$, $p' \cdot \xi' \cdot q' = \xi'$ and $r' \cdot \eta' \cdot s' = \eta'$. We calculate as follows, using again the automorphism $\varsigma$ from proposition \ref{prop: varphi on B is positive faithful, modular data}, as well as the formula \eqref{eq: explicit formula varphi_f}.
    \begin{align*}
        \varphi_e \left( \Aelt{H}{\xi}{\eta} \Aelt{K}{\xi'}{\eta'} \right) =& \; \varphi \left( \Theta_e \left( \Aelt{H}{\xi}{\eta} \right) \Theta_e \left( \Aelt{K}{\xi'}{\eta'} \right) \right) \\
        =& \; \varphi \left( \Gelt{1}{1 \otimes p}{e \otimes r} \Gelt{H}{\xi}{\eta} \Gelt{1}{q \otimes p'}{s \otimes r'} \Gelt{K}{\xi'}{\eta'} \Gelt{1}{q' \otimes 1}{s' \otimes e^*} \right) \\
        =& \; \varphi \left( \Gelt{1}{q \delta \otimes \delta^{-1}p'}{s \delta \otimes \delta^{-1}r'} \Gelt{K}{\Omega_K(\xi')}{\Omega_K(\eta')} \Gelt{2}{q' \delta \otimes \delta^{-1} \otimes p}{s' \delta \otimes \delta^{-1} \mu(e^*)e \otimes r} \Gelt{H}{\xi}{\eta} \right) \\
        =& \; \del(e \delta^{-2} e^*) \varphi \left( \Gelt{1}{q \otimes p'}{s \delta^2 \otimes r'} \Gelt{K}{\Omega_K(\xi')}{\delta^{-2} \cdot \Omega_K(\eta') \cdot \delta^2} \Gelt{1}{q' \otimes p}{s' \otimes r} \Gelt{H}{\xi}{\eta} \right) \\
        =& \; \varphi_e \left( \Aelt{K}{\Omega_K(\xi')}{\delta^{-2} \cdot \Omega_K(\eta') \cdot \delta^2} \Aelt{H}{\xi}{\eta} \right) \\
        =& \; \varphi_e \left( \kappa \left( \Aelt{K}{\xi'}{\eta'} \right) \Aelt{H}{\xi}{\eta} \right) 
    \end{align*}
    Using faithfulness of $\varphi_e$, one reasons similarly to the proof of proposition \ref{prop: varphi on B is positive faithful, modular data} to find that $\kappa$ is an automorphism of $\mathcal{A}$. 
\end{proof}

To finish this section, we show that the algebraic quantum group $(\mathcal{A},\Delta)$ admits a canonical $\del$-preserving right coaction on $M$.

\begin{proposition} \label{prop: action of quantum group on M}
    There is a right coaction $\alpha: M \curvearrowleft (\mathcal{A},\Delta)$ given by
    \begin{equation}
        \alpha: M_0 \to \mathcal{M}(M_0 \otimes \mathcal{A}): x \mapsto \sum_{y \in \onb(\mathcal{F}_0)} y \otimes \Aelt{1}{y}{x}.
    \end{equation}
    Moreover, this coaction preserves $\del$, i.e.
    \[
    (\del \otimes \id) \alpha(x) = \del(x) 1 \text{ for any } x \in M_0.
    \]
\end{proposition}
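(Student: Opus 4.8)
The plan is to check, in order, that each $\alpha(x)$ is a genuine two-sided multiplier, that $\alpha$ is a nondegenerate $*$-homomorphism, that it satisfies the coaction identity, and finally the explicit $\del$-invariance. Throughout I would use that $\Aelt{1}{\xi}{\eta}$ is the matrix coefficient of the covered bimodule $L^2(M)$, so that the calculation rules of lemma \ref{lem: calculation in A} apply, in particular the relation $\Aelt{H}{V^*(\xi')}{\eta} = \Aelt{K}{\xi'}{V(\eta)}$ for morphisms $V$ covered by $\mathcal{C}$. For well-definedness, fix $x,z \in M_0$ and $c \in \mathcal{A}$: using the explicit orthonormal basis \eqref{eq: onb for F_0}, the products $zy$ and $yz$ are nonzero for only finitely many basis vectors $y$, since $z$ is finitely supported. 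Hence $(z \otimes c)\alpha(x)$ and $\alpha(x)(z \otimes c)$ are finite sums lying in $M_0 \otimes \mathcal{A}$, so $\alpha(x) \in \mathcal{M}(M_0 \otimes \mathcal{A})$.

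The $*$-homomorphism property is the technical core. For multiplicativity I would write, using that the product of $\mathcal{A}$ is carried by the ordinary tensor product $L^2(M) \otimes L^2(M) \cong L^2(M^2)$,
\[
\alpha(x)\alpha(x') = \sum_{y,y'} yy' \otimes \Aelt{2}{y \otimes y'}{x \otimes x'},
\]
and compare with $\alpha(xx')$ by pairing the first leg against $\del(c^* \, \cdot)$ for $c \in \onb(\mathcal{F}_0)$. Since $\mult(\vect{y \otimes y'}) = \vect{yy'}$ and $\sum_b \langle \mult^*\vect{c}, b\rangle\, b = \mult^*\vect{c}$, the antilinearity of the first slot collapses the paired left-hand side to $\Aelt{2}{\mult^*\vect{c}}{x \otimes x'}$, which equals $\Aelt{1}{c}{xx'}$ by lemma \ref{lem: calculation in A}.1 applied to the covered morphism $\mult$. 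Expanding $\Aelt{1}{z}{xx'} = \Aelt{2}{\mult^*\vect{z}}{x \otimes x'}$ with $\mult^*\vect{z} = \sum_w \vect{w \otimes w^*z}$ (proposition \ref{prop: multiplication has isometric adjoint}) and pairing $\alpha(xx')$ the same way yields the identical element $\Aelt{1}{c}{xx'}$; as $c$ is arbitrary the two multipliers coincide. For $*$-preservation I would again pair with $\del(c^*\,\cdot)$: combining lemma \ref{lem: calculation in A}.4 in the form $\Aelt{1}{y}{x}^* = \Aelt{1}{\mu(y)^*}{x^*}$ with the modular identity $\sum_y \del(c^*y^*)\,\vect{y} = \vect{\sigma c^* \sigma^{-1}} = \vect{\mu^{-1}(c^*)}$ shows that both $\alpha(x)^*$ and $\alpha(x^*)$ pair to $\Aelt{1}{c}{x^*}$.

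The coaction identity is then immediate from definition \ref{def: hopf structure on A}: both $(\id \otimes \Delta)\alpha(x)$ and $(\alpha \otimes \id)\alpha(x)$ reduce to $\sum_{y,\zeta} y \otimes \Aelt{1}{y}{\zeta} \otimes \Aelt{1}{\zeta}{x}$ with $\zeta$ ranging over $\onb(\mathcal{F}_0)$. Nondegeneracy, i.e.\ density of the spans of $\alpha(M_0)(1 \otimes \mathcal{A})$ and $\alpha(M_0)(M_0 \otimes \mathcal{A})$, I would deduce from the $\del$-invariance together with the coisometry property $\mult\,\mult^* = 1$, which certifies that $\alpha$ is a genuine right coaction. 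The $\del$-invariance itself is a one-line computation: since $\del(y) = \langle \vect{y}, \vect{1}\rangle$, antilinearity of $\Aelt{1}{\cdot}{x}$ gives $(\del \otimes \id)\alpha(x) = \sum_y \del(y) \Aelt{1}{y}{x} = \Aelt{1}{1}{x} = \del(x)\,1$ by lemma \ref{lem: calculation in A}.4.

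The step I expect to be the main obstacle is multiplicativity. Unlike the coaction identity and the $\del$-invariance, it genuinely intertwines the two distinct tensor structures at play—the ordinary tensor product underlying the product of $\mathcal{A}$ on the one hand, and the multiplication morphism $\mult$ of $M$ on the other—and the argument must be carried out at the level of multipliers, taming the infinite orthonormal-basis sums through the pairing against $\del(c^*\,\cdot)$ and relying essentially on the covering of $\mult$ by $\mathcal{C}$.
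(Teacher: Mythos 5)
Your proposal is correct and follows essentially the same route as the paper: multiplicativity by expanding in the orthonormal basis and recognizing $\mult^*\vect{c}$ so that lemma \ref{lem: calculation in A}.1 applied to the covered morphism $\mult$ collapses $\Aelt{2}{\mult^*\vect{c}}{x \otimes x'}$ to $\Aelt{1}{c}{xx'}$; $*$-preservation from the adjoint formula $\Aelt{1}{y}{x}^* = \Aelt{1}{\mu(y)^*}{x^*}$ (note this is lemma \ref{lem: calculation in A}.3, not .4) together with the modular identity; and the coaction identity and $\del$-invariance exactly as in the paper, your pairing against $\del(c^*\,\cdot)$ being just a careful way of identifying the strictly convergent sums term by term. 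The one point where you diverge is nondegeneracy: the paper obtains it by observing that $\alpha$ sends an approximate unit to an approximate unit, whereas your appeal to $\del$-invariance plus $\mult\mult^*=1$ is not by itself a derivation and would need to be replaced by such an approximate-unit argument.
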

\begin{proof}
    We start by showing that $\alpha$ is multiplicative. Take $x,y \in M_0$ arbitrarily.
    \begin{align*}
        \alpha(x) \alpha(y) =& \; \sum_{s,t \in \onb(\mathcal{F}_0)} st \otimes \Aelt{1}{s}{x} \Aelt{1}{t}{y} = \sum_{s,t,r \in \onb(\mathcal{F}_0)} \del(r^*st) r \otimes \Aelt{2}{s \otimes t}{x \otimes y} \\
        =& \; \sum_{s,t,r \in \onb(\mathcal{F}_0)} r \otimes \Aelt{2}{s \otimes \del(t^*s^*r)t}{x \otimes y} = \sum_{s,r \in \onb(\mathcal{F}_0)} r \otimes \Aelt{2}{s \otimes s^*r}{x \otimes y} \\
        =& \; \sum_{r \in \onb(\mathcal{F}_0)} r \otimes \Aelt{2}{\mult^*(r)}{x \otimes y} = \sum_{r \in \onb(\mathcal{F}_0)} r \otimes \Aelt{1}{r}{xy} = \alpha(xy)
    \end{align*}
    Next, we show it preserves the $*$-structure. Take $x \in M_0$ arbitrarily.
    \begin{align*}
        \alpha(x)^* &= \sum_{y \in \onb(\mathcal{F}_0)} y^* \otimes \Aelt{1}{y}{x}^* = \sum_{y,z \in \onb(\mathcal{F}_0)} \del(z^*y^*)z \otimes \Aelt{1}{\mu(y)^*}{x^*} \\
        &= \sum_{z \in \onb(\mathcal{F}_0)} z \otimes \Aelt{1}{ \sum_{y \in \onb(\mathcal{F}_0)} \mu(\del(y^* \mu^{-1}(z^*)) y)^*}{x^*} = \sum_{z \in \onb(\mathcal{F}_0)} z \otimes \Aelt{1}{z}{x^*} = \alpha(x^*)
    \end{align*}
    It is straightforward to see that $(\alpha \otimes \id) \circ \alpha = (\id \otimes \Delta) \circ \alpha$. Moreover, one sees that $\alpha$ maps the approximate unit $\sum_{i \in I} 1_i$ to an approximate unit, and is therefore nondegenerate. Finally, the following calculation shows that $\alpha$ preserves $\del$. Take again $x \in M_0$ arbitrarily.
    \begin{align*}
        (\del \otimes \id) \alpha(x) = \sum_{y \in \onb(\mathcal{F}_0)} \del(y) \Aelt{1}{y}{x} = \Aelt{1}{ \sum_{y \in \onb(\mathcal{F}_0)} \del(y^*)y}{x} = \Aelt{1}{1}{x} = \del(x) 1
    \end{align*}
\end{proof}

Henceforth, we will denote the algebraic quantum group $\mathbb{G} := (\mathcal{A},\Delta)$.

\subsection{Equivalence of the unitary 2-categories $\mathcal{C}$ and $\mathcal{C}( M \curvearrowleft \mathbb{G} )$} \label{sbsc: equivalence of categories}

This section will conclude the proof of theorem \ref{thm: unique quantum group action for equivariant category} by showing that the categories $\mathcal{C}$ and $\mathcal{C}(M \curvearrowleft \mathbb{G})$ are equivalent to each other. We still keep the category $\mathcal{C}$ fixed, and will use the notation $\mathbb{G} = (\mathcal{A},\Delta)$ as before.

\begin{proposition} \label{prop: functor from C to equivariant reps }
    Recall definition \ref{def: equivariant bimodule corepresentation}. For every object $H$ in $\mathcal{C}$, there is an equivariant corepresentation $U_H \in \mathcal{M}(\mathcal{K}(H) \otimes \mathcal{A})$ given by
    \[
    U_H = \sum_{\xi, \eta \in \onb(H_0)} E_{\xi,\eta} \otimes \Aelt{H}{\xi}{\eta},
    \]
    where $E_{\xi,\eta}$ denotes the rank one operator $\zeta \mapsto \langle \zeta,\eta \rangle \xi$. Moreover, for any $T \in \mor_{\mathcal{C}}(H,K)$, we get that
    \[
    (T \otimes 1)U_H = U_K(T \otimes 1).
    \]
    In other words, there is a faithful functor $\mathcal{C} \to \mathcal{C}(M \curvearrowleft \mathbb{G})$.
\end{proposition}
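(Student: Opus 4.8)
The plan is to reduce every assertion to a single ``matrix coefficient'' identity. Writing $\omega_{\zeta,\theta}(T) = \langle T\theta,\zeta\rangle$ for $\zeta,\theta\in H_0$, a short computation using the (conjugate-)linearity of $\Aelt{H}{\cdot}{\cdot}$ and the completeness relations $\sum_{\eta}\langle\theta,\eta\rangle\eta = \theta$, $\sum_{\xi}\langle\zeta,\xi\rangle\xi = \zeta$ over $\onb(H_0)$ gives $(\omega_{\zeta,\theta}\otimes\id)(U_H) = \Aelt{H}{\zeta}{\theta}$. Before using this I would record well-definedness of $U_H$ in $\mathcal{M}(\mathcal{K}(H)\otimes\mathcal{A})$: since $H$ is finite type, cutting by the central projections $\lambda_H(1_i)\rho_H(1_j^{\op})$ turns the defining sum into a finite sum of rank-one operators tensored with elements of $\mathcal{A}$, and strict convergence follows from the finite-dimensionality of each $1_i\cdot H\cdot 1_j$. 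All subsequent identities between elements of $\mathcal{M}(\mathcal{K}(H)\otimes\mathcal{A})$ will be checked by comparing their matrix coefficients $\Aelt{H}{\zeta}{\theta}$.

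I would then verify the corepresentation property $(\id\otimes\Delta)(U_H) = (U_H)_{12}(U_H)_{13}$: expanding the right-hand side and collapsing the middle sum via $\sum_{\xi'}\langle\xi',\zeta\rangle\xi' = \zeta$ shows it is literally the comultiplication formula of Definition \ref{def: hopf structure on A}. For unitarity I would compute $(\id\otimes S)(U_H) = \sum_{\xi,\eta}E_{\xi,\eta}\otimes\Aelt{H}{\eta}{\xi}^{*} = U_H^{*}$ and $(\id\otimes\epsilon)(U_H) = \id_H$. The antipode axioms of Proposition \ref{prop: hopf structure on A}, read off on the coefficients as $\sum_{\zeta}\Aelt{H}{\xi}{\zeta}\,S(\Aelt{H}{\zeta}{\eta}) = \langle\eta,\xi\rangle\,1$ together with its mirror image, say precisely that $(\id\otimes S)(U_H)$ is a two-sided inverse of $U_H$; combined with $(\id\otimes S)(U_H) = U_H^{*}$ this yields $U_H^{*} = U_H^{-1}$, i.e.\ $U_H$ is unitary.

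For the intertwining relation and the left equivariance I would pass to matrix coefficients throughout. Given $T\in\mor_{\mathcal{C}}(H,K)$, both $(\omega_{\zeta,\theta}\otimes\id)((T\otimes 1)U_H)$ and $(\omega_{\zeta,\theta}\otimes\id)(U_K(T\otimes 1))$ equal $\Aelt{H}{T^{*}(\zeta)}{\theta} = \Aelt{K}{\zeta}{T(\eta)}$ by Lemma \ref{lem: calculation in A}.1, proving $(T\otimes 1)U_H = U_K(T\otimes 1)$ at once; since the functor acts as the identity on morphism spaces, faithfulness is then immediate. Inserting the definition of $\alpha$ and summing over $\onb(\mathcal{F}_0)$, the left equivariance equation reduces to $\Aelt{H}{\zeta}{x\cdot\theta} = \Aelt{L^2(M)\otimes H}{\phi_L^{*}(\zeta)}{\vect{x}\otimes\theta}$, where $\phi_L^{*}(\zeta) = \sum_{y}\vect{y}\otimes y^{*}\cdot\zeta$; after projecting onto the subbimodule $L^2(M)\mtimes H$ and applying Lemma \ref{lem: calculation in A}.1 to the covered morphism $\phi_L$, both sides agree.

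The main obstacle I expect is the second (right) equivariance equation, which involves both the opposite homomorphism $\op$ and the unitary antipode $R$. Here I would need the explicit action of $R$ on the coefficients $\Aelt{1}{y}{x}$ from Proposition \ref{prop: scaling group and unitary antipode}, together with the precise form of $\Omega_{L^2(M)}$, and I must track how the $\mu^{1/2}$-twist built into the right $M$-action on the bimodules $\mathcal{F}_n$ (Example \ref{ex: F_n as bimodules}) conspires with $R$ and the relation $\del(xy) = \del(\mu(y)x)$ so that the two sides match. This bookkeeping of modular twists, rather than any conceptual difficulty, is where the computation is most delicate; the remaining strictness and convergence issues in the multiplier algebra are routine given the finite type hypothesis, and the final statement that $H\mapsto (H,U_H)$ is a faithful functor follows formally once the above points are in place.
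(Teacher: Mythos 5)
Most of your outline does coincide with the paper's proof: the corepresentation property and unitarity are read off from the definitions of $\Delta$ and $S$ exactly as you describe, the intertwining property (and hence faithfulness) is precisely an application of lemma \ref{lem: calculation in A}.1 (modulo the typo $\Aelt{K}{\zeta}{T(\eta)}$, which should be $\Aelt{K}{\zeta}{T(\theta)}$), and your verification of the \emph{left} equivariance via $\phi_L$ and lemma \ref{lem: calculation in A}.1 is sound, granted the (checkable) fact that $\phi_L$, viewed on all of $L^2(M)\otimes H$, is covered by $\mathcal{C}$.

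However, there is a genuine gap: the \emph{right} equivariance equation
\[
(\rho_H(x^{\op}) \otimes 1)U_H = [((\rho_H \circ \op) \otimes R)\alpha(x)]U_H
\]
is never proved; you only list the ingredients ($R$, $\Omega_{L^2(M)}$, modular twists) and assert that what remains is bookkeeping. This equation is not a peripheral check — it is the bulk of the paper's proof, and the mechanism that makes it work is absent from your sketch. In the paper one computes $U_H^*(\rho_H(x^{\op})\otimes 1)U_H$, expands each $\Aelt{H}{\eta}{\xi}^*$ by the adjoint formula \eqref{eq: adjoint in A} in terms of a standard solution $(s_H,t_H)$ of the conjugate equations, multiplies via \eqref{eq: multiplication in A} to land in coefficients of $\overline{H}\otimes H$, and then carries out the sum over $\eta \in \onb(H_0)$, which by $M$-bimodularity of $t_H$ collapses to the key identity
\[
\sum_{\eta \in \onb(H_0)} (\id \otimes \overline{\eta})t_H(\vect{1}) \otimes \eta \cdot x^* = t_H\left(\vect{\mu^{1/2}(x)^*}\right);
\]
this single step is where the right-module twist is absorbed. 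A Frobenius-type application of lemma \ref{lem: calculation in A}.1 to $t_H$ then reduces everything to coefficients $\Aelt{1}{\mu^{1/2}(x)^*}{\cdot}$, which are finally recognized as $R\left(\Aelt{1}{y}{x}\right)$, summing to $((\rho_H\circ\op)\otimes R)\alpha(x)$. Your plan never invokes the conjugate equations for the general object $H$ at this point (they are needed no matter which side of the identity you expand, since the adjoints $\Aelt{H}{\eta}{\xi}^*$ of coefficients of $H$ cannot be handled otherwise), and the right equivariance cannot be derived formally from unitarity and the left equivariance — it is an independent condition involving $R$. Until this computation is carried out, the claim that only modular bookkeeping remains is unsubstantiated, and the proposition is not proved.
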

\begin{proof}
    It is immediate that $U_H$ is a corepresentation of $\mathbb{G}$ by the definition of $\Delta$, and it is unitary by definition of $S$, see definition \ref{def: hopf structure on A} and proposition \ref{prop: hopf structure on A}. It remains to show that $U_H$ is equivariant. To this end, take $x \in M$ arbitrarily, and calculate as follows.
    \begin{align*}
        U_H^* (\rho_H(x^{\op}) \otimes 1) U_H =& \; \sum_{\xi,\eta,\zeta,\theta \in \onb(H_0)} E_{\xi,\eta} \rho_H(x^{\op}) E_{\zeta,\theta} \otimes \Aelt{H}{\eta}{\xi}^* \Aelt{H}{\zeta}{\theta} \\
        =& \; \sum_{\xi, \eta, \zeta, \theta \in \onb(H_0)} \langle \zeta \cdot x, \eta \rangle E_{\xi,\theta} \otimes \Aelt{H}{\eta}{\xi}^* \Aelt{H}{\zeta}{\theta} \\
        =& \; \sum_{\xi, \eta, \theta \in \onb(H_0)} E_{\xi,\theta} \otimes \Aelt{H}{\eta}{\xi}^* \Aelt{H}{\eta \cdot x^*}{\theta} \\
        =& \; \sum_{\xi,\eta,\theta \in \onb(H_0)} E_{\xi,\theta} \otimes \Aelt{\overline{H} \otimes H}{(\id \otimes \overline{\eta})t_H(\vect{1}) \otimes \eta \cdot x^*}{(\overline{\xi} \otimes \id)s_H(\vect{1}) \otimes \theta} \\
        =& \; \sum_{\xi,\theta \in \onb(H_0)} E_{\xi,\theta} \otimes \Aelt{\overline{H} \otimes H}{t_H(\vect{\mu^{1/2}(x)^*})}{(\overline{\xi} \otimes \id)s_H(\vect{1}) \otimes \theta} \\
        =& \sum_{\xi,\theta \in \onb(H_0)} E_{\xi,\theta} \otimes \Aelt{1}{\mu^{1/2}(x)^*}{(\overline{\xi} \otimes t_H^*)(s_H(\vect{1}) \otimes \theta)} \\
        =& \; \sum_{\substack{\xi,\theta \in \onb(H_0) \\ y \in \onb(\mathcal{F}_0)}} E_{\xi,\theta} \otimes \Aelt{1}{\mu^{1/2}(x)^*}{\langle \theta \cdot y, \xi \rangle \mu^{1/2}(y)^*} \\
        =& \; \sum_{y \in \onb(\mathcal{F}_0)} \rho_H(y^{\op}) \otimes R \left( \Aelt{1}{y}{x} \right) \\
        =& \; (\rho \otimes \id) \alpha^{\op}(x^{\op})
    \end{align*}
    Similarly, one finds that $U_H (\lambda_H(x) \otimes 1)U_H^* = (\lambda_H \otimes \id)\alpha(x)$.

    Now take objects $H,K$ in $\mathcal{C}$, and $T \in \mor_{\mathcal{C}}(H,K)$. Using lemma \ref{lem: calculation in A}.1, we find that $T$ intertwines $U_H$ and $U_K$.
\end{proof}

\begin{lemma} \label{lem: integrating intertwiners}
    Let $\sigma_i := 1_i \sigma$ and $\delta_i := 1_i \delta$. Consider the following multiplier in $\mathcal{M}(M_0)$. Define $\upsilon_i := \del(\delta_i^2) d_i^{-2} \sigma_i^{-2} \delta_i^{-2}$, and then let
    \[
    \upsilon := \sum_{i \in I} \upsilon_i \in \mathcal{M}(M_0)
    \]
    Then let $X,Y$ be two unitary $\alpha$-equivariant corepresentations of $\mathbb{G}$ on finite type Hilbert-$M$-bimodules $H,K$ respectively. Let $T: K \to H$ be an $M$-bimodular linear map such that $T \lambda_K(p) = T$ for some central projection $p \in M_0$. Then the operator
    \[
    \sum_{y \in \onb(1_i \cdot \mathcal{F}_0)} (\lambda_H(y) \otimes 1)X(T \otimes 1)Y^*(\lambda_K(y^*) \otimes 1)
    \]
    lies in the domain of $(\id \otimes \varphi_e)$. Applying $(\id \otimes \varphi_e)$, and summing over all $i \in I$ yields 
    \[
    \Phi_e(T) := (\id \otimes \varphi_e) [X (\lambda_H(\upsilon) T \otimes 1)Y^*],
    \]
    a well-defined bounded $M$-bimodular intertwiner of the corepresentations $X$ and $Y$.
\end{lemma}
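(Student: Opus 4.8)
The goal is to show that a certain formal "averaging over the quantum group" of the map $T$ produces a genuine bounded $M$-bimodular intertwiner. The strategy has three movements: first establish that the inner operators land in the domain of the left-invariant functional $\varphi_e$ (a convergence/finiteness statement), second verify that applying $(\id \otimes \varphi_e)$ and summing over $i \in I$ reassembles into the compact closed-form expression involving $\lambda_H(\upsilon)$, and third check that the resulting operator $\Phi_e(T)$ genuinely intertwines $X$ and $Y$ and is bounded and $M$-bimodular. I expect the bulk of the genuine work to be in the first and third movements; the middle is essentially bookkeeping once the modular data is pinned down.

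\medskip

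First I would address well-definedness and the domain claim. Fix $i \in I$. Because $Y$ is a corepresentation on a \emph{finite type} Hilbert-$M$-bimodule $K$, the space $1_i \cdot K$ is finite dimensional, so $\onb(1_i \cdot \mathcal{F}_0)$ is finite and each term $(\lambda_H(y) \otimes 1)X(T \otimes 1)Y^*(\lambda_K(y^*) \otimes 1)$ is a well-defined element of $\mathcal{M}(\mathcal{K}(H,K) \otimes \mathcal{A})$. The key point is that, writing $X$ and $Y$ in terms of the matrix coefficients $A_H(\cdots)$ from Proposition~\ref{prop: functor from C to equivariant reps }, the operator expands into a sum of elementary tensors whose second legs are products of the form $A_H(\xi\,|\,\eta)^*\,A_K(\xi'\,|\,\eta')$-type terms living in $\mathcal{A}$, and $\varphi_e$ is a genuine linear functional on all of $\mathcal{A}$ by Theorem~\ref{thm: invariant functionals on quantum group}. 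Since $p$ cuts $T$ down to a finite central support and the finite type condition on $H,K$ makes all the relevant coefficient spaces finite dimensional (Corollary~\ref{cor: finite dimensional intertwiner spaces}), only finitely many terms survive, so the operator lies in the domain of $(\id \otimes \varphi_e)$.

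\medskip

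Next I would carry out the explicit computation collapsing the sum. The plan is to expand $X = \sum E_{\xi,\eta} \otimes A_H(\xi\,|\,\eta)$ and likewise for $Y$, insert $T$, and use left invariance of $\varphi_e$ together with the modular automorphism $\kappa$ from Proposition~\ref{prop: modular automorphism on A} and the formula for $\varphi_e$ on $A_1(\cdot\,|\,\cdot)$ from Lemma~\ref{lem: varphi on A_1}. The coefficients $\upsilon_i = \del(\delta_i^2)\,d_i^{-2}\,\sigma_i^{-2}\,\delta_i^{-2}$ are engineered precisely so that, after applying $(\id \otimes \varphi_e)$ to each $i$-term and invoking the orthonormal basis \eqref{eq: onb for F_0} for $1_i \cdot \mathcal{F}_0$, the block sum telescopes into a single insertion of $\lambda_H(\upsilon_i)$; summing over $i$ yields $(\id \otimes \varphi_e)[X(\lambda_H(\upsilon)T \otimes 1)Y^*]$. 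This is the step where one must be careful that the $\sigma$-factors from the bimodule structure \eqref{eq: bimodule structure on F_n}, the $\delta$-factors from Theorem~\ref{thm: explicit description of gamma}, and the $d_i$-normalisations from the Markov trace all combine correctly; it is a direct but delicate calculation.

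\medskip

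Finally I would establish the intertwiner property, which is where I expect the main conceptual obstacle. The standard technique is to use right invariance: one wants $(\Phi_e(T) \otimes 1)Y = X(\Phi_e(T) \otimes 1)$, and this should follow from applying $(\id \otimes \id \otimes \varphi_e)$ to $(1 \otimes \Delta)$ of the integrand and using the left invariance $(\id \otimes \varphi_e)\Delta = \varphi_e(\cdot)1$ established in the proof of Theorem~\ref{thm: invariant functionals on quantum group}. Concretely, multiplying the defining expression by $Y$ on the right and pushing the corepresentation identities $Y_{12}Y_{13} = (\id \otimes \Delta)Y$ through, the invariance of $\varphi_e$ under the coproduct converts the $\mathcal{A}$-leg into a scalar, leaving exactly $X(\Phi_e(T) \otimes 1)$. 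Boundedness then follows because $\Phi_e(T)$ is an $M$-bimodular intertwiner between objects whose morphism space is finite dimensional (again Corollary~\ref{cor: finite dimensional intertwiner spaces}), so the a priori densely-defined operator is automatically bounded; $M$-bimodularity is inherited from the equivariance conditions \eqref{eq: equivariance of corepresentation} on $X$ and $Y$ combined with the bimodularity of $T$. The delicate point to get right is the precise placement of the unitary antipode $R$ and the modular element $\nu$ when converting between the left-invariance of $\varphi_e$ and the right-handed equivariance in the second equation of \eqref{eq: equivariance of corepresentation}, but the averaging philosophy is entirely standard for Tannaka-Krein-type reconstructions.
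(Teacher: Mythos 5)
Your three-movement plan coincides in outline with the paper's proof (which establishes, in order: boundedness and bimodularity, the intertwining property, and the identification with $\Phi_e(T)$), and your middle and final movements are in substance correct: the collapse to $\Phi_e(T)$ is done exactly via the modular automorphism $\kappa$ of Proposition \ref{prop: modular automorphism on A} together with the identity $\upsilon = \sum_{x \in \onb(\mathcal{F}_0)} x^* \delta^2 \mu^{-1}(x)\delta^{-2}$, and the intertwining follows from left invariance of $\varphi_e$ combined with $(\id \otimes \Delta)X = X_{(12)}X_{(13)}$; no unitary antipode $R$ or modular element $\nu$ is needed, so that worry is moot. The genuine gap is in your boundedness argument. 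You justify boundedness by citing Corollary \ref{cor: finite dimensional intertwiner spaces}, but that corollary concerns morphism spaces $\mor_{\mathcal{C}}(H,K)$ between \emph{objects of $\mathcal{C}$}. Here $H$ and $K$ are arbitrary finite type Hilbert-$M$-bimodules carrying $\alpha$-equivariant corepresentations, and at this stage it is not known that they are equivalent to objects of $\mathcal{C}$: that is precisely Proposition \ref{prop: functor is essentially surjective}, whose proof depends on this very lemma, so your argument is circular. Moreover the underlying claim, that a densely defined $M$-bimodular (intertwining) map between finite type bimodules is automatically bounded, is false: bimodularity only yields boundedness on each block $p \cdot H \cdot q$ for central projections $p,q \in M_0$, and the supremum of the block norms can be infinite (for $M = \ell^{\infty}(\mathbb{N})$, multiplication by $n$ on $\ell^2(\mathbb{N})$ is bimodular, densely defined and unbounded). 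Note also that while each $i$-term is a finite sum, the index set $I$ is infinite, so finiteness of the individual terms does not settle boundedness of the total.

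The paper closes this gap by an explicit estimate rather than an abstract finiteness argument: by linearity one reduces to positive $T$, by an operator-valued Cauchy--Schwarz inequality to the case $X = Y$, and, since $0 \leq T \leq c\,\lambda_H(p)$ with $p$ a finite central projection, to the single case $T = \lambda_H(1_i)$ with $i \in I_a$. Equivariance then gives
\[
X(\lambda_H(1_i) \otimes 1)X^* = [(\lambda_H \otimes \id)\alpha(1_i)]\,XX^*\,[(\lambda_H \otimes \id)\alpha(1_i)],
\]
after which Lemma \ref{lem: varphi on A_1} evaluates the summed expression exactly as $\del(e\delta^{-2}e^*)\,\del(1_i\delta^2)\,\lambda_H(z_a)$, which is manifestly bounded. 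You would need to substitute an argument of this kind (or some other genuine norm estimate) for your finite-dimensionality appeal; the rest of your proposal then goes through along the paper's lines.
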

\begin{proof}
    The proof consists of three parts
    \begin{enumerate}
        \item $(\id \otimes \varphi_e) \left[\sum_{y \in \onb(\mathcal{F}_0)}(\lambda_H(y) \otimes 1)X(T \otimes 1)Y^*(\lambda_K(y^*) \otimes 1) \right]$ is bounded and $M$-bimodular.
        \item $(\id \otimes \varphi_e) \left[\sum_{y \in \onb(\mathcal{F}_0)}(\lambda_H(y) \otimes 1)X(T \otimes 1)Y^*(\lambda_K(y^*) \otimes 1) \right]$ intertwines the corepresentations.
        \item $(\id \otimes \varphi_e) \left[\sum_{y \in \onb(\mathcal{F}_0)}(\lambda_H(y) \otimes 1)X(T \otimes 1)Y^*(\lambda_K(y^*) \otimes 1) \right]$ equals $\Phi_e(T)$.
    \end{enumerate}
    We prove these three parts as follows.
    \begin{enumerate}
        \item $M$-bimodularity is immediate by a simple calculation. We show boundedness. By linearity, it suffices to prove the claim for positive $T$, by an operator-valued version of the Cauchy-Schwarz inequality, it suffices to prove the claim when $X = Y$. Taking an appropriate multiple, we have that $T$ is bounded above by a scalar multiple of $\lambda_H(p)$ for some central projection $p \in M_0$, so by linearity, it suffices to assume $T = \lambda_H(1_i)$ for some $i \in I_a$, $a \in \mathcal{E}$. 

        Now, we calculate, using lemma \ref{lem: varphi on A_1}.
        \begin{align*}
            (\id &\otimes \varphi_e) \left[ \sum_{y \onb(\mathcal{F}_0)} (\lambda_H(y) \otimes 1) X (\lambda_H(1_i) \otimes 1)X^*(\lambda_H(y^*) \otimes 1) \right] \\
            =& \; (\id \otimes \varphi_e) \left[ \sum_{y \in \onb(\mathcal{F}_0)} (\lambda_H(y) \otimes 1) [(\lambda_H \otimes \id)\alpha(1_i)] XX^* [(\lambda_H \otimes \id)\alpha(1_i)] (\lambda_H(y^*) \otimes 1) \right] \\
            =& \; \sum_{y,t \in \onb(\mathcal{F}_0)} \lambda_H(yty^*) \varphi_e \left( \Aelt{1}{t}{1_i} \right) \\
            =& \; \del(e \delta^{-2} e^*) \del(1_i \delta^2) \sum_{y,t \in \onb(\mathcal{F}_0)} \lambda_H(y t y^*) \del(t^* z_a) \\
            =& \; \del(e \delta^{-2} e^*) \del(1_i \delta^2) \lambda_H(z_a)
        \end{align*}
        Clearly, this is bounded.

        \item Define for any $y,z \in M_0$ the operator
        \[
        S_{y,z} := (\id \otimes \varphi_e)[ (\lambda_H(y) \otimes 1)X(T \otimes 1)Y^*(\lambda_K(z^*) \otimes 1)].
        \]
        Using left invariance of $\varphi_e$, we calculate that for any $x \in M_0$, we have the following.
        \begin{align*}
            S_{y,z}& \otimes \Aelt{1}{y}{x} \Aelt{1}{z}{x}^* \\
            =& \; (1 \otimes \Aelt{1}{y}{x})(S_{y,z} \otimes 1)(1 \otimes \Aelt{1}{z}{x}^*) \\
            =& \; (1 \otimes \Aelt{1}{y}{x})[(\id \otimes \id \otimes \varphi_e)((\lambda_H(y) \otimes 1 \otimes 1)(\id \otimes \Delta)[X(T \otimes 1)Y^*](\lambda_K(z^*) \otimes 1 \otimes 1))](1 \otimes \Aelt{1}{z}{x}^*) \\
            =& \; (\id \otimes \id \otimes \varphi_e) \left[ (\lambda_H(y) \otimes \Aelt{1}{y}{x} \otimes 1)X_{(12)}X_{(13)}(T \otimes 1 \otimes 1)Y^*_{(13)}Y^*_{(12)}(\lambda_K(z^*) \otimes \Aelt{1}{z}{x}^* \otimes 1) \right]
        \end{align*}
        Summing the left hand side over $x,y,z \in \onb(\mathcal{F}_0)$ yields $T \otimes 1$. Summing the right hand side first over $y,z \in \onb(\mathcal{F}_0)$ yields
        \[
        X(S_{x,x} \otimes 1)Y^*
        \]
        by equivariance of the corepresentations $X,Y$. Then summing over $x \in \onb(\mathcal{F}_0)$ yields $X(T \otimes 1)Y^*$.

        \item We start by noting that  $\upsilon = \sum_{x \in \onb(\mathcal{F}_0)} x^* \delta^2 \mu^{-1}(x) \delta^{-2}$, which may be checked by direct computation. Using this, we calculate as follows.
        \begin{align*}
            X(\lambda_H(\upsilon) T \otimes 1)Y^* =& \; \sum_{x \in \onb(\mathcal{F}_0)}X(\lambda_H(x^*) T \lambda_K(\delta^2 \mu^{-1}(x) \delta^{-2}) \otimes 1)Y^* \\
            =& \; \sum_{x \in \onb(\mathcal{F}_0)}[(\lambda_H \otimes \id)\alpha(x^*)] X(T \otimes 1)Y^* [(\lambda_K \otimes \id)\alpha(\delta^2 \mu^{-1}(x) \delta^{-2})] \\
            =& \; \sum_{x,y,z \in \onb(\mathcal{F}_0)} (\lambda_H(y) \otimes \Aelt{1}{y}{x^*}) X(T \otimes 1)Y^* (\lambda(z) \otimes \Aelt{1}{z}{\delta^2 \mu^{-1}(x) \delta^{-2}})
        \end{align*}
        Now, applying $(\id \otimes \varphi_e)$, and using the modular automorphism $\kappa$ from proposition \ref{prop: modular automorphism on A}, we find the following.
        \begin{align*}
            (\id& \otimes \varphi_e)  \left[ \sum_{x,y,z \in \onb(\mathcal{F}_0)} \left( \lambda_H(y) \otimes \kappa \left( \Aelt{1}{z}{\delta^2 \mu^{-1}(x) \delta^{-2}} \right) \Aelt{1}{y}{x^*} \right) X(T \otimes 1)Y^*(\lambda_K(z) \otimes 1) \right] \\
            =& \; (\id \otimes \varphi_e)  \left[ \sum_{x,y,z \in \onb(\mathcal{F}_0)} (\lambda_H(y) \otimes \Aelt{2}{\mu(z) \otimes y}{x \otimes x^*}) X(T \otimes 1)Y^* (\lambda_K(z) \otimes 1) \right] \\
            =& \; (\id \otimes \varphi_e)  \left[ \sum_{y,z \in \onb(\mathcal{F}_0)} (\lambda_H(y) \otimes \Aelt{1}{\mu(z)y}{1}) X(T \otimes 1)Y^* (\lambda_K(z) \otimes 1) \right] \\
            =& \; (\id \otimes \varphi_e)  \left[ \sum_{y,z \in \onb(\mathcal{F}_0)} (\lambda_H(y) \otimes 1) X(T \otimes 1)Y^* (\del(z^*y^*)\lambda_K(z) \otimes 1) \right] \\
            =& \; (\id \otimes \varphi_e)  \left[ \sum_{y \in \onb(\mathcal{F}_0)} (\lambda_H(y) \otimes 1) X(T \otimes 1)Y^* (\lambda_K(y^*) \otimes 1) \right] 
        \end{align*}
        This proves the third claim.
    \end{enumerate}
    
\end{proof}

\begin{proposition} \label{prop: functor maps irreducibles to irreducibles}
    For any object $H,K$ in $\mathcal{C}$, the $M$-bimodular intertwiners of the equivariant corepresentations $U_H,U_K$ from proposition \ref{prop: functor from C to equivariant reps } are precisely given by $\mor_{\mathcal{C}}(H,K)$.
\end{proposition}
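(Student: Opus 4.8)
The statement asserts exactly that the faithful functor $\mathcal{C}\to\mathcal{C}(M\curvearrowleft\mathbb{G})$ of Proposition~\ref{prop: functor from C to equivariant reps } is full; faithfulness is automatic, as on morphisms the functor is the inclusion of $\mor_{\mathcal{C}}(H,K)$ into the space of intertwiners of $U_H$ and $U_K$, which is injective. One inclusion is already provided by Proposition~\ref{prop: functor from C to equivariant reps }: every $T\in\mor_{\mathcal{C}}(H,K)$ intertwines $U_H$ and $U_K$. So the content is the converse: every $M$-bimodular intertwiner $T\colon H\to K$ of $U_H$ and $U_K$ must lie in $\mor_{\mathcal{C}}(H,K)$. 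The plan is to fix such a $T$ and feed it into the averaging construction of Lemma~\ref{lem: integrating intertwiners}. Since the $0$-cell projections $\lambda_K(z_a),\lambda_H(z_c)$ are morphisms of $\mathcal{C}$ by Lemma~\ref{lem: projections on 0-cells are morphisms in category} (and hence intertwiners), I may first cut $T$ so that $H$ and $K$ are each supported on a single $0$-cell; the finiteness of support required to legitimately apply Lemma~\ref{lem: integrating intertwiners} and to interpret the ensuing sums is then supplied by Corollary~\ref{cor: finite dimensional intertwiner spaces} and the block-injectivity of Lemma~\ref{lem: homomorphism to finite dimensional C*-algebra}, reducing matters to finitely supported maps with $T\lambda_H(p)=T$ for some central $p\in M_0$.

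The engine is the operator $\Phi_e$ of Lemma~\ref{lem: integrating intertwiners}, applied with $X=U_K$ and $Y=U_H$, which gives a bounded intertwiner
\[
\Phi_e(T)=(\id\otimes\varphi_e)\bigl[U_K(\lambda_K(\upsilon)\,T\otimes 1)\,U_H^*\bigr].
\]
I would evaluate this in two complementary ways. The first evaluation uses that $T$ is an intertwiner. From $(T\otimes1)U_H=U_K(T\otimes1)$ and unitarity of $U_H$ one gets $(T\otimes1)U_H^*=U_K^*(T\otimes1)$, while equivariance of $U_K$ yields $U_K(\lambda_K(\upsilon)\otimes1)U_K^*=(\lambda_K\otimes\id)\alpha(\upsilon)$. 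Pulling $T\otimes1$ out through $\id\otimes\varphi_e$ then gives
\[
\Phi_e(T)=\lambda_K\!\bigl((\id\otimes\varphi_e)\alpha(\upsilon)\bigr)\,T.
\]
Computing $(\id\otimes\varphi_e)\alpha(\upsilon)=\sum_{y\in\onb(\mathcal{F}_0)} y\,\varphi_e\!\bigl(\Aelt{1}{y}{\upsilon}\bigr)$ by means of Lemma~\ref{lem: varphi on A_1}, and using $\sum_y \del(z_a y^*)\,y=z_a$, this multiplier equals $\del(e\delta^{-2}e^*)\sum_{a}\bigl(\sum_{i\in I_a}\del(\delta_i^{2})\bigr)z_a$, a strictly positive central element nonzero on every $0$-cell, hence invertible on the finite support of $K$. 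Its inverse again lies in $\End_{\mathcal{C}}(K)$ by Lemma~\ref{lem: projections on 0-cells are morphisms in category}, so $T=\lambda_K(\,\cdot\,)^{-1}\Phi_e(T)$. Thus it suffices to prove that $\Phi_e(T)\in\mor_{\mathcal{C}}(H,K)$.

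This is the heart of the matter, and it does not use that $T$ is an intertwiner. Expanding $U_K=\sum_{\xi',\eta'} E_{\xi',\eta'}\otimes\Aelt{K}{\xi'}{\eta'}$ and $U_H^*=\sum_{\xi,\eta} E_{\eta,\xi}\otimes\Aelt{H}{\xi}{\eta}^*$ over orthonormal bases of $H_0$ and $K_0$, one finds
\[
\Phi_e(T)=\sum_{\xi,\eta,\xi',\eta'}\bigl\langle \lambda_K(\upsilon)T\,\eta,\,\eta'\bigr\rangle\,E_{\xi',\xi}\;\varphi_e\!\Bigl(\Aelt{K}{\xi'}{\eta'}\Aelt{H}{\xi}{\eta}^{*}\Bigr).
\]
Transporting the matrix coefficients into $\mathcal{B}$ through the isomorphism $\Theta_e$ of Proposition~\ref{prop: A isomorphic to corner of B} and invoking the orthogonality relations \eqref{eq: GNS inner product for varphi on B} and \eqref{eq: skew GNS inner product for varphi on B} of Proposition~\ref{prop: varphi on B is positive faithful, modular data}, the value $\varphi_e$ of such a product is nonzero only between matching irreducible components and is expressed entirely through the operators $\Omega_H$ of Definition~\ref{def: Omega} and the standard conjugate-equation solutions. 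Using completeness of $\onb(H_0)$ and $\onb(K_0)$ to carry out the summation, the quadruple sum collapses to a finite composition of $\bpi$-partial isometries, of the maps $s_H,t_H$, of projections onto irreducible subobjects, and of powers of $\Omega$. Every ingredient is a morphism or morphism datum of $\mathcal{C}$, whence $\Phi_e(T)\in\mor_{\mathcal{C}}(H,K)$.

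Combining the two evaluations gives $T=\lambda_K\!\bigl((\id\otimes\varphi_e)\alpha(\upsilon)\bigr)^{-1}\Phi_e(T)\in\mor_{\mathcal{C}}(H,K)$, which together with the easy inclusion proves the equality. I expect the main obstacle to be the last paragraph: realizing the quantum-group average $\Phi_e(T)$ concretely as a categorical morphism through the Schur-type orthogonality of $\varphi_e$, since this requires carefully tracking how the product of matrix coefficients, after applying $\Theta_e$ and the modular data, decomposes over $\irr$ and recombines into $\bpi$-expressions. The bookkeeping of supports needed to apply Lemma~\ref{lem: integrating intertwiners} and to make sense of the infinite sums is a secondary but genuine technical point.
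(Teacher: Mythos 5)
Your overall strategy --- evaluate the averaging operator $\Phi_e$ of Lemma \ref{lem: integrating intertwiners} twice, once through the intertwining relation to get $\lambda_K(c)\,T$ and once categorically to see that the average lies in $\mor_{\mathcal{C}}(H,K)$ --- is exactly the mechanism behind the paper's proof. But there is a genuine gap at the first step: you feed the intertwiner $T$ itself into $\Phi_e$, while Lemma \ref{lem: integrating intertwiners} only defines $\Phi_e(T)$ under the hypothesis $T\lambda_H(p)=T$ for some central projection $p\in M_0$, i.e.\ for maps supported on finitely many matrix blocks. A bimodular intertwiner need not have this property: $\id_H$ intertwines $U_H$ with itself, and objects of $\mathcal{C}$ such as $L^2(M_a)$ can be supported on infinitely many blocks $i\in I_a$ (finite type bounds each $1_i\cdot H$, not the number of nonzero blocks). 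Your proposed reduction does not repair this: cutting by the $0$-cell projections $\lambda(z_a)$ still leaves infinitely many blocks inside one $0$-cell, and invoking Corollary \ref{cor: finite dimensional intertwiner spaces} or Lemma \ref{lem: homomorphism to finite dimensional C*-algebra} is circular, since those statements concern $\mor_{\mathcal{C}}$, whereas finite-dimensionality of the space of corepresentation intertwiners is part of what is being proved. The divergence is visible in your own formula: the multiplier $(\id\otimes\varphi_e)\alpha(\upsilon)=\del(e\delta^{-2}e^*)\sum_{a}\bigl(\sum_{i\in I_a}\del(\delta_i^2)\bigr)z_a$ has an infinite coefficient whenever some $I_a$ is infinite (already in the classical case $M=\ell^{\infty}(I)$, $\sigma=\delta=1$, the inner sum counts the elements of $I_a$), so the identity $\Phi_e(T)=\lambda_K(c)T$ you rely on is not available for a general intertwiner.

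The repair is to never apply $\Phi_e$ to the intertwiner itself, and this is in effect what the paper does: its computation is carried out on the operators $T_{\xi,\eta}$ of \eqref{eq: minimal bimodular map}, whose span is exactly the finitely supported bimodular maps. Given an intertwiner $T$, apply $\Phi_e$ to the truncations $T\lambda_H(p)$, $p\in M_0$ central; these are finitely supported bimodular (no longer intertwiners) and lie in that span, so the categorical evaluation gives $\Phi_e(T\lambda_H(p))\in\mor_{\mathcal{C}}(H,K)$, while your first evaluation, run on $T\lambda_H(p)$, gives $\Phi_e(T\lambda_H(p))=T\lambda_H(c_p)$ with $c_p=\del(e\delta^{-2}e^*)\sum_a\bigl(\sum_{i\in I_a,\,1_i\le p}\del(\delta_i^2)\bigr)z_a$ now a finite positive central element. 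Choosing $p$ to meet every $0$-cell carrying $H$ (there are only finitely many, by definition \ref{def: concrete unitary 2-category of M-bimodules}.6) makes $\lambda_H(c_p)$ invertible on $H$, with inverse in $\End_{\mathcal{C}}(H)$ by Lemma \ref{lem: projections on 0-cells are morphisms in category}, and the conclusion follows. Two further remarks: your second evaluation is only an outline, and it is precisely where the paper spends its effort (the Frobenius reciprocal $\widetilde{V}$ and categorical trace computation, which your route through $\Theta_e$ and the orthogonality relations \eqref{eq: GNS inner product for varphi on B}, \eqref{eq: skew GNS inner product for varphi on B} would still have to reproduce); and the paper performs that computation only for $H=K$, handling general $H,K$ by the $2\times 2$ matrix trick with $U_K\oplus U_H$, a shortcut you could also use rather than redoing the computation for pairs of objects.
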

\begin{proof}
    We first show the proposition when $H=K$.
    
    Take $\xi,\eta \in H_0$ arbitrarily, and consider
    \begin{equation}
    T_{\xi,\eta} := \sum_{x,y \in \onb(\mathcal{F}_0)} E_{x \cdot \xi \cdot y^*, x \cdot \eta \cdot y^*}. \label{eq: minimal bimodular map}
    \end{equation}
    Note that this is an $M$-bimodular operator in $B(H)$, and moreover, the linear span of such operators is SOT-dense in the space of all $M$-bimodular operators of $B(H)$. Let us calculate, using the notation from lemma \ref{lem: integrating intertwiners}.
    \begin{align}
        \Phi_e(T_{\xi,\eta}) =& \; (\id \otimes \varphi_e)[U_H(\lambda_H(\upsilon)T_{\xi,\eta} \otimes 1)U_H^*] \nonumber \\
        =& \; (\id \otimes \varphi_e) \left[ \sum_{z \in \onb(\mathcal{F}_0)} (\lambda_H(z) \otimes 1)U_H(T_{\xi,\eta} \otimes 1)U_H^*(\lambda_H(z^*) \otimes 1) \right] \\
        =& \; \sum_{\substack{x,y,z \in \onb(\mathcal{F}_0) \\ \zeta,\theta \in \onb(H_0)}} E_{\zeta,\theta} \varphi_e \left( \Aelt{H}{z^* \cdot \zeta}{x \cdot \xi \cdot y^*} \Aelt{H}{z^* \cdot \theta}{x \cdot \eta \cdot y^*}^* \right) \nonumber \\
        =& \; \sum_{\substack{x,y,z \in \onb(\mathcal{F}_0) \\ \zeta,\theta \in \onb(H_0)}} E_{\zeta,\theta} \varphi_e \left( \Aelt{H \otimes \overline{H}}{z^* \cdot \zeta \otimes (\id \otimes \overline{z^* \cdot \theta})t_H(\vect{1})}{ x \cdot \xi \cdot y^* \otimes (\overline{x \cdot \eta \cdot y^*} \otimes \id)s_H(\vect{1}) } \right) \nonumber \\
        =& \; \sum_{\substack{x,y,z \in \onb(\mathcal{F}_0) \\ \zeta,\theta \in \onb(H_0) \\ V \in \bpi(L^2(M) \otimes H \otimes \overline{H} \otimes L^2(M), L^2(M))}} E_{\zeta,\theta} \overline{(\del \circ V)(\vect{1} \otimes z^* \cdot \zeta \otimes (\id \otimes \overline{z^* \cdot \theta})t_H(\vect{1}) \otimes \vect{1})} \nonumber \\
        & \cdot (\del \circ V)(\vect{e} \otimes x \cdot \xi \cdot y^* \otimes (\overline{x \cdot \eta \cdot y^*} \otimes \id)s_H(\vect{1}) \otimes \vect{e^*} ) \label{eq: intertwiners come from category}
    \end{align}
    Now, note that
    \[
    \sum_{y \in \onb(\mathcal{F}_0)} ( x \cdot \xi \cdot y^* \otimes (\overline{x \cdot \eta \cdot y^*} \otimes \id)s_H(\vect{1})) = P_{H \mtimes \overline{H}} ( x \cdot \xi \otimes (\overline{x \cdot \eta} \otimes \id)s_H(\vect{1})),
    \]
    where $P_{H \mtimes \overline{H}}$ is the projection $H \otimes \overline{H} \to H \mtimes \overline{H}$ from definition \ref{def: relative tensor product}. Hence, in \eqref{eq: intertwiners come from category}, we need only consider finitely many $V \in \bpi(L^2(M) \otimes H \mtimes \overline{H} \otimes L^2(M),L^2(M))$. For any such $V$, we define the Frobenius reciprocal
    \[
    \widetilde{V} := ((\del \circ V) \otimes \id_{L^2(M) \otimes H}) \circ (\id_{L^2(M) \otimes H} \otimes \id_{\overline{H}} \otimes \mult^*(\vect{1}) \otimes \id_H) \circ (\id_{L^2(M) \otimes H} \otimes t_H(\vect{1})),
    \]
    which is an element of $\End_{\mathcal{C}}(L^2(M) \otimes H)$.
    Then one checks by direct calculation that for fixed $\zeta,\theta \in H_0$
    \[
    \langle (\tr_{\mathcal{C}} \otimes \id)\widetilde{V} (\zeta) , \theta \rangle = \sum_{z \in \onb(\mathcal{F}_0)} (\del \circ V)(\vect{1} \otimes z^* \cdot \zeta \otimes (\id \otimes \overline{z^* \cdot \theta})t_H(\vect{1}) \otimes \vect{1}),
    \]
    where $\tr_{\mathcal{C}}$ denotes the categorical trace.
    It follows that the expression in \eqref{eq: intertwiners come from category} equals
    \begin{align*}
        \sum_{\substack{x \in \onb(\mathcal{F}_0) \\ V \in \bpi(L^2(M) \otimes H \mtimes \overline{H} \otimes L^2(M),L^2(M))}} [(\tr_{\mathcal{C}} \otimes \id)\widetilde{V}]^* (\del \circ V)(\vect{e} \otimes x \cdot \xi \mtimes (\overline{x \cdot \eta} \otimes \id)s_H(\vect{1}) \otimes \vect{e^*}).
    \end{align*}
    Since now $[(\tr_{\mathcal{C}} \otimes \id)\widetilde{V}]^* \in \End_{\mathcal{C}}(H)$ for every $V \in \mor_{\mathcal{C}}(L^2(M) \otimes H \mtimes \overline{H} \otimes L^2(M),L^2(M))$, we may conclude.

    Now, for arbitrary objects $H,K$ in $\mathcal{C}$, let $T: H \to K$ intertwine the corepresentations. Then the matrix
    \[
    [T] := \begin{pmatrix}
        0 & T \\
        0 & 0
    \end{pmatrix}
    \]
    intertwines $U_K \oplus U_H$, and hence this matrix is an element of $\End_{\mathcal{C}}(K \oplus H, K \oplus H)$. Now, $T$ equals the composition
    \begin{figure}[h!]
        \centering
        \begin{tikzcd}
            H \arrow[r] & K \oplus H \arrow[r, "{[T]}"] & K \oplus H \arrow[r] & K
        \end{tikzcd}
    \end{figure}

    which finishes the proof.
\end{proof}

\begin{proposition} \label{prop: functor is essentially surjective}
    For any unitary equivariant corepresentation $U$ of $\mathbb{G}$ on some finite type Hilbert-$M$-bimodule $H$, there exists an object $K$ in $\mathcal{C}$, and an $M$-bimodular unitary $T: K \to H$ which intertwines $U$ and $U_K$, as defined in proposition \ref{prop: functor from C to equivariant reps }.
\end{proposition}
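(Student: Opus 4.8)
The plan is to realise $(H,U)$ as a locally finite orthogonal sum of copies of objects of $\mathcal{C}$ and then to recognise that sum as an object of $\mathcal{C}$ together with a unitary intertwiner. The only mechanism available for producing morphisms in the target category is the averaging map $\Phi_e$ of Lemma \ref{lem: integrating intertwiners}, so the whole argument is organised around it. First I would, for every irreducible $H'\in\irr$, take the corepresentation $U_{H'}$ from Proposition \ref{prop: functor from C to equivariant reps } and apply $\Phi_e$ to the $M$-bimodular maps $T\colon H'\to H$ (each supported on a central projection, since $H,H'$ are finite type). By Lemma \ref{lem: integrating intertwiners} each $\Phi_e(T)$ is a bounded $M$-bimodular intertwiner $U_{H'}\to U$. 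I would then let $H_{\mathcal C}\subseteq H$ be the closed linear span of all the ranges $\Phi_e(T)(H')$. This is a sub-corepresentation, and because the $U_{H'}$ are irreducible and pairwise non-isomorphic (Proposition \ref{prop: functor maps irreducibles to irreducibles}), $H_{\mathcal C}$ is an orthogonal sum of isotypic pieces $P_{H'}H$, each unitarily a finite multiple of $H'$, the multiplicity being $\dim\mor(U_{H'},U)<\infty$.

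The crucial step is the Peter--Weyl type completeness $H_{\mathcal C}=H$, and I would argue it by contradiction. A nonzero vector $\xi\in H_{\mathcal C}^{\perp}$ generates a nonzero sub-corepresentation orthogonal to every intertwiner coming from an object of $\mathcal{C}$; unravelling the formula for $\Phi_e$ in Lemma \ref{lem: integrating intertwiners}, this forces all pairings of the coefficients of $U$ against the coefficients $\Aelt{H'}{\cdot}{\cdot}$ of the various $U_{H'}$, taken through $\varphi_e$, to vanish. Since the $\Aelt{H'}{\cdot}{\cdot}$ span $\mathcal{A}$, the coefficients of $U$ at $\xi$ are orthogonal to all of $\mathcal{A}$ in the GNS space of $\varphi_e$, and faithfulness of $\varphi_e$ (Proposition \ref{prop: varphi on B is positive faithful, modular data} and Theorem \ref{thm: invariant functionals on quantum group}) then yields $\xi=0$.

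This is exactly the point where finite type is indispensable, and I expect it to be the main obstacle of the proof. The argument above only makes sense once one knows that the coefficients of $U$ genuinely live in the $\varphi_e$-completion of $\mathcal{A}$, rather than in some larger von Neumann completion where pairing with $\varphi_e$ is meaningless. I would establish this using that $\lambda_H(1_i)$ is a finite rank operator, which through the equivariance relations \eqref{eq: equivariance of corepresentation} and the explicit coaction $\alpha(x)=\sum_{y}y\otimes\Aelt{1}{y}{x}$ of Proposition \ref{prop: functor from C to equivariant reps } expresses $U(\xi\otimes 1)$, for $\xi\in 1_i\cdot H$, in terms of finitely many elements of $\mathcal{A}$; in other words finite type is what makes $U$ an honest algebraic $\mathcal{A}$-comodule on $H_0$. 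Getting this algebraicity statement cleanly, and thereby justifying the orthogonality computation, is the delicate part.

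Finally, with $H_{\mathcal C}=H$ established, $H$ decomposes as the orthogonal sum $\bigoplus_{H'\in\irr}P_{H'}H$ with $P_{H'}H\cong\mor(U_{H'},U)\otimes H'$. Finite type makes this decomposition locally finite, since only finitely many $H'$ can meet a fixed finite-dimensional corner $1_i\cdot H\cdot 1_j$; choosing orthonormal bases of the finite-dimensional multiplicity spaces assembles an object $K=\bigoplus_{H'}\mor(U_{H'},U)\otimes H'$ together with a unitary $M$-bimodular map $T\colon K\to H$ that by construction intertwines $U_K$ and $U$. The remaining verifications—that $K$ satisfies the axioms of Definition \ref{def: concrete unitary 2-category of M-bimodules} and hence is an object of $\mathcal{C}$ in the sense of Remark \ref{rem: alternative definition unitary 2-category}, and that $T$ respects the bimodule structure—are routine given the equivariance relations and the closure properties of $\mathcal{C}$.
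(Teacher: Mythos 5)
Your strategy is, at its core, the paper's own: everything reduces to showing that a finite type equivariant corepresentation admitting no nonzero intertwiners from the corepresentations $U_K$, $K$ an object of $\mathcal{C}$, must vanish, and this is proved by averaging bimodular maps through $\Phi_e$ from lemma \ref{lem: integrating intertwiners}, converting the resulting vanishing into $\varphi_e$-pairings against the coefficients $\Aelt{K}{\xi'}{\eta'}$, and invoking density of the span of these coefficients together with faithfulness of $\varphi_e$. The packaging differs: the paper never builds the isotypic decomposition, but instead uses Zorn's lemma to reduce the whole proposition to exhibiting a single nonzero intertwiner, which avoids your multiplicity bookkeeping and your final assembly of $K$ (where, incidentally, "routine" hides a real point: $\mathcal{C}$ is only assumed closed under \emph{finite} direct sums, a subtlety the paper's Zorn reduction also leaves implicit).

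However, the step you yourself single out as the main obstacle is where your plan, as written, would fail. You want to first prove that the coefficients of $U$ live in (the GNS completion of) $\mathcal{A}$, and you propose to extract this from equivariance plus finite type. That argument is circular: equivariance only gives $U(\lambda_H(1_i)\otimes 1)=[(\lambda_H\otimes\id)\alpha(1_i)]U$, i.e.\ it expresses coefficients of $U$ as elements of $\mathcal{A}$ multiplied by \emph{other coefficients of $U$}; it cannot by itself place those coefficients inside $\mathcal{A}$ (algebraicity of the coefficients is a consequence of the proposition, not an available input). The paper shows this detour is unnecessary: the numbers $\varphi_e \left( \Uelt{\xi}{\upsilon x\cdot\eta\cdot y^*}\Aelt{K}{\xi'}{x\cdot\eta'\cdot y^*}^* \right)$ are well defined because they are matrix coefficients of $\Phi_e(T_{\eta,\eta'})$, and membership in the domain of $(\id\otimes\varphi_e)$ is precisely part of the statement of lemma \ref{lem: integrating intertwiners}; faithfulness is then applied to the bounded element $\Uelt{\xi}{\upsilon\cdot\eta}$ of $L^{\infty}(\mathbb{G})$, paired against the dense subspace spanned by the $\Aelt{K}{\xi'}{\eta'}$. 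Two further points you compress: passing from the vanishing of the sums over $x,y\in\onb(\mathcal{F}_0)$ to the vanishing of the individual pairings $\varphi_e \left( \Uelt{\xi}{\upsilon\cdot\eta}\Aelt{K}{\xi'}{\eta'}^* \right)$ is not mere unravelling --- the paper achieves it by replacing $K$ with a suitable subobject of $L^2(M)\otimes K\otimes L^2(M)$ and letting central projections increase to $1$; and finite type does not mean finitely supported, so the maps $T\colon H'\to H$ you feed into $\Phi_e$ must first be cut down by finitely supported central projections (as with the paper's maps $T_{\xi,\eta}$) in order to satisfy the hypothesis $T\lambda(p)=T$ of lemma \ref{lem: integrating intertwiners}.
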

\begin{proof}
    By Zorn's lemma, it suffices to find some object $K$ in $\mathcal{C}$ and some nonzero intertwiner from $U$ to $U_K$. Suppose by contradiction that these don't exist. Denote $\Uelt{\xi}{\eta} := (\overline{\xi} \otimes \id) U (\eta \otimes 1)$ for $\xi,\eta \in H$. By applying lemma \ref{lem: integrating intertwiners} to the bimodular map $T_{\eta,\eta'}$ as in \eqref{eq: minimal bimodular map}, we get that for any object $K$ in $\mathcal{C}$, any $\xi,\eta \in H$, any $\xi',\eta' \in K_0$
    \[
    \sum_{x,y \in \onb(\mathcal{F}_0)} \varphi_e \left( \Uelt{\xi}{\upsilon x \cdot \eta \cdot y^*} \Aelt{K}{\xi'}{x \cdot \eta' \cdot y^*}^* \right) = 0.
    \]
    In particular, by passing to an appropriate subobject, we can apply this to $K' = L^2(M) \otimes K \otimes L^2(M)$, whence we find that for any $p,q,r,s \in M_0$
    \[
    \sum_{x,y \in \onb(\mathcal{F}_0)} \varphi_e \left( \Uelt{\xi}{\upsilon x \cdot \eta \cdot y^*} \left( \Aelt{1}{p}{xq} \Aelt{K}{\xi'}{\eta'} \Aelt{1}{r}{s\mu^{-1/2}(y^*)} \right)^* \right) = 0.
    \]
    Letting $p,q,r,s$ be central projections in $M_0$ which increase to the identity, we find
    \[
    \sum_{x,y \in \onb(\mathcal{F}_0)} \overline{\del(x) \del(\mu^{-1/2}(y^*))} \varphi_e \left( \Uelt{\xi}{\upsilon x \cdot \eta \cdot y^*} \Aelt{K}{\xi'}{\eta'}^* \right) = 0,
    \]
    and hence
    \[
    \varphi_e \left( \Uelt{\xi}{\upsilon \cdot \eta} \Aelt{K}{\xi'}{\eta'}^* \right) = 0.
    \]
    Since the elements $\Aelt{K}{\xi'}{\eta'}$ span a dense subspace of $L^{\infty}(\mathbb{G})$, and $\varphi_e$ is faithful, we have found that $\Uelt{\xi}{\upsilon \cdot \eta} = 0$. As this holds for every $\xi,\eta \in H$, it follows that $U = 0$, which is absurd.
\end{proof}

Together, propositions \ref{prop: functor from C to equivariant reps }, \ref{prop: functor maps irreducibles to irreducibles}, and \ref{prop: functor is essentially surjective} show an equivalence between the categories $\mathcal{C}$ and $\mathcal{C}(M \curvearrowleft \mathbb{G})$. This finally shows theorem \ref{thm: unique quantum group action for equivariant category}.

\section{Quantum automorphism groups of connected locally finite discrete structures} \label{sc: qaut of CLF discrete structures}

In the previous section \ref{sc: reconstruction theorem}, we constructed an algebraic quantum group $\mathbb{G} = (\mathcal{A},\Delta)$ for any fixed $M$, a direct sum of matrix algebras, from any concrete unitary $2$-category of finite type Hilbert-$M$-bimodules $\mathcal{C}$ under reasonable assumptions, with an action $\alpha: M_0 \to \mathcal{M}(M_0 \otimes \mathcal{A})$. Moreover, we showed that $\mathcal{C}$ is equivalent to the category of unitary $\alpha$-equivariant corepresentations of $\mathbb{G}$ on finite type Hilbert-$M$-bimodules. In \cite{RollierVaes2024}, a special case of this approach was already used to define the quantum automorphism group of connected locally finite graph. This section takes it further, by constructing a unitary $2$-category of finite type Hilbert-$M$-bimodules out of any connected locally finite discrete structure on $M$. We will make precise what we mean by this below in subsection \ref{sbsc: building categories from discrete structures}, but one important example given in subsection \ref{sbsc: example quantum cayley graph} is that of quantum Cayley graphs as introduced in \cite{Wasilewski2023}.

\subsection{Building unitary 2-categories from discrete structures} \label{sbsc: building categories from discrete structures}

Keep again a fixed von Neumann algebra $M = \ell^{\infty} \bigoplus_{i \in I} M_i$ for some matrix algebras $M_i$ indexed by a set $I$.

\begin{definition} \label{def: locally finite map}
    Recall from \ref{def: discrete quantum space} the definition of $\mathcal{F}_n$, the finitely supported vectors in $L^2(M^{n+1})$. Let $T: \mathcal{F}_m \to \mathcal{F}_n$ be an $M$-bimodular linear map. We say $T$ is locally finite if for every $0 \leq k \leq n$ and $0 \leq l \leq m$, we have that for every $i \in I$,
    \[
    (1^{\otimes k} \otimes 1_i \otimes 1^{\otimes (n-k)}) \circ T \text{ and } T \circ (1^{\otimes l} \otimes 1_i \otimes 1^{\otimes (m-l)})
    \]
    are finite rank operators.
\end{definition}

\begin{definition} \label{def: reciprocals and shrinkages}
    Let $T: \mathcal{F}_n \to \mathcal{F}_m$ be an $M$-bimodular locally finite map. We denote
    \begin{itemize}
        \item $T^{>} := (T \otimes \id) \circ (\id^{\otimes n} \otimes \vect{1} \otimes \id) \circ (\id^{\otimes (n-1)} \otimes \mult^*)$ when $n \geq 1$,
        \item $T_{>} := (\id \otimes T) \circ (\id \otimes \vect{1} \otimes \id^{\otimes n}) \circ (\mult^* \otimes \id^{\otimes (n-1)})$ when $n \geq 1$,
        \item $_{<}T :=  (\mult \otimes \id^{\otimes (m-1)}) \circ (\id \otimes \del \otimes \id^{\otimes m}) \circ (\id \otimes T)$ when $m \geq 1$,
        \item $^{<}T := (\id^{\otimes (m-1)} \otimes \mult) \circ (\id^{\otimes m} \otimes \del \otimes \id) \circ (T \otimes \id)$ when $m \geq 1$,
    \end{itemize}
    and we call these maps the reciprocals of $T$. It is easy to see that they are well-defined and locally finite. For any $1 \leq i \leq m$ and $1 \leq j \leq n$, we call the maps
    \[
    (\id^{\otimes i} \otimes \del \otimes \id^{\otimes (m-i)}) \circ T \text{ and } T \circ (\id^{\otimes j} \otimes \vect{1} \otimes \id^{\otimes (n-j)})
    \]
    shrinkages of $T$, and note that these are also well-defined and locally finite.    
\end{definition}

\begin{definition} \label{def: connected set of maps}
    Let $\mathcal{T}_{n,m}$ be a set of $M$-bimodular linear maps $T: \mathcal{F}_m \to \mathcal{F}_n$ for every $n,m \in \mathbb{N}$. We denote by $\overline{\mathcal{T}} = \bigcup_{n,m \geq 0} \overline{\mathcal{T}}_{n,m}$ the smallest set of $M$-bimodular linear maps $T: \mathcal{F}_m \to \mathcal{F}_n$ which contains $\mathcal{T} \cup \{ \mult \}$, and is closed under linear combinations, composition, adjoints, the relative tensor product $\mtimes$, reciprocals, and shrinkages. We say the set $\mathcal{T} = \bigcup_{n,m \geq 0} \mathcal{T}_{n,m}$ is connected if $\mathcal{F}_1$ is contained in the union of the ranges of all maps in $\overline{\mathcal{T}}_{1,1}$.
\end{definition}

We are now ready to prove theorem \ref{thm: CLF discrete structures give good categories}, which we repeat below.

\begin{theorem} \label{thm: CLF discrete structures give good categories}
    Let $\mathcal{T}_{n,m}$ be a set of $M$-bimodular linear maps from $\mathcal{F}_m$ to $\mathcal{F}_n$ for every $n,m \in \mathbb{N}$ which are all locally finite in the sense of definition \ref{def: locally finite map}, and such that $\mathcal{T} = \bigcup_{n,m \geq 0} \mathcal{T}_{n,m}$ is connected in the sense of definition \ref{def: connected set of maps}. Then there exists a smallest concrete unitary $2$-category of Hilbert-$M$-bimodules $\mathcal{C}$, as in definition \ref{def: concrete unitary 2-category of M-bimodules}, which covers $L^2(M^n)$ for every $n \in \mathbb{N}$ in the sense of definition \ref{def: Category covers bimodule}, and which covers every $T \in \mathcal{T}$, $\mult$, and $(\id \otimes \del \otimes \id)$ in the sense of definition \ref{def: covering morphisms}. In particular, the category $\mathcal{C}$ satisfies the conditions of theorem \ref{thm: unique quantum group action for equivariant category}. the resulting algebraic quantum group acts faithfully on $M$.
\end{theorem}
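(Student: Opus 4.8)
The plan is to construct $\mathcal{C}$ explicitly from the closure $\overline{\mathcal{T}}$ of definition \ref{def: connected set of maps}, and then to check the axioms of definition \ref{def: concrete unitary 2-category of M-bimodules} one at a time, using local finiteness to control the finite-type conditions and connectedness to pin down the $0$-cell structure and faithfulness. The first step is a preservation lemma: every map in $\overline{\mathcal{T}}$ is locally finite. By hypothesis and by proposition \ref{prop: multiplication has isometric adjoint}, $\mathcal{T} \cup \{\mult\}$ consists of locally finite maps, and local finiteness is evidently stable under linear combinations, composition, adjoints, reciprocals and shrinkages (definition \ref{def: reciprocals and shrinkages}). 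The only operation needing care is $\mtimes$; there I would use the explicit formula for the projection $P_{H \mtimes K}$ from proposition \ref{prop: formula for projection onto relative tensor product} together with the finite rank of $1_i$ to see that $1_i \cdot (S \mtimes T)$ and $(S \mtimes T) \cdot 1_j$ stay finite rank. An induction on the construction of $\overline{\mathcal{T}}$ then concludes.

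Next I would define the objects of $\mathcal{C}$ to be finite direct sums of ranges of finite-type projections in the von Neumann algebras $\mathcal{N}_n := (\overline{\mathcal{T}}_{n,n} \cap B(L^2(M^{n+1})))''$, together with their duals $\overline{H}$, and set $\mor_{\mathcal{C}}(H,K)$ (for $H,K$ subobjects of $L^2(M^{n+1})$ and $L^2(M^{m+1})$ with defining projections $P,Q$) to be the norm closure of $\{Q \circ T \circ P : T \in \overline{\mathcal{T}}_{m,n}\text{ bounded}\}$. Most axioms are then immediate: closure under $\mtimes$, under adjoints, and under finite direct sums is inherited from the corresponding closure properties of $\overline{\mathcal{T}}$; the unitors $\phi_L,\phi_R$ are present because they are assembled from $\mult$ and $\mtimes$ (cf. the remark after proposition \ref{prop: F_0 is monoidal unit}), and $\mult$, $(\id \otimes \del \otimes \id)$ and each $T \in \mathcal{T}$ are covered essentially by construction. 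That every object is finite type, hence (by corollary \ref{cor: finite dimensional intertwiner spaces}) that morphism spaces are finite dimensional, follows from local finiteness, since a finite-type projection $P$ has $1_i \cdot P$ of finite rank.

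The two genuinely substantial points are the existence of enough finite-type objects to cover each $L^2(M^{n+1})$ and the rigidity axiom. For the covering, I would mimic lemma \ref{lem: central projections in N}: local finiteness forces the minimal central projections of $\mathcal{N}_n$ to be finite type and to exhaust the identity, because no infinite family of equivalent subprojections can fit inside a fixed finite-dimensional corner $\vect{M_{i_0} \otimes \cdots \otimes M_{i_n}}$; this yields a covering of $L^2(M^{n+1})$ by finite-type objects. For rigidity, I would produce the solutions $(s_H,t_H)$ of the conjugate equations \eqref{eq: conjugate equations} by cutting down the reciprocal-type maps built from $\mult^*$ and insertions of $\vect{1}$ — exactly the maps $\overline{\mathcal{T}}$ is closed under — so that $s_H,t_H$ are morphisms of $\mathcal{C}$, and arrange standardness as in definition \ref{def: element gamma}. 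The partition $\mathcal{E}$ required by axiom $2$ is read off from the minimal central projections of $\mathcal{N}_1 = \End_{\mathcal{C}}(L^2(M))$; here connectedness is precisely what guarantees that each $L^2(M_a)$ is irreducible, since a splitting of some $L^2(M_a)$ would prevent $\mathcal{F}_1$ from lying in the union of ranges of maps in $\overline{\mathcal{T}}_{1,1}$.

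Finally, minimality and faithfulness. Any concrete unitary $2$-category covering $L^2(M^n)$ for all $n$ and covering $\mult$, $(\id \otimes \del \otimes \id)$ and every $T \in \mathcal{T}$ must, by the closure axioms of definition \ref{def: concrete unitary 2-category of M-bimodules} together with the presence of $\mult,\mult^*,\del$ and $\vect{1}$-insertions, contain every map of $\overline{\mathcal{T}}$ — note that reciprocals and shrinkages are assembled from exactly these covered maps (cf. proposition \ref{prop: frobenius reciprocity for finite type morphisms}) — and hence all objects and morphisms of $\mathcal{C}$; thus $\mathcal{C}$ is the smallest such category and satisfies the hypotheses of theorem \ref{thm: unique quantum group action for equivariant category}. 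Faithfulness of the resulting coaction $\alpha$ follows because connectedness forces every irreducible object to be a subobject of some relative tensor power of $L^2(M^2)$, so by proposition \ref{prop: functor from C to equivariant reps } each generator $\Aelt{H}{\xi}{\eta}$ of $\mathcal{A}$ is a matrix coefficient of a tensor power of $U_{L^2(M)}$, whence $\mathcal{A}$ is generated by the coefficients of $\alpha$. The main obstacle I anticipate is the rigidity step together with the verification that finite-type projections genuinely cover each $L^2(M^{n+1})$, as both require the local-finiteness bookkeeping to interact correctly with the relative tensor product and the reciprocal construction.
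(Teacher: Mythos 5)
Your skeleton matches the paper's actual proof quite closely: both constructions take the closure $\overline{\mathcal{T}}$, declare objects to be ranges of suitable projections, declare morphisms to be compressions of $\overline{\mathcal{T}}$, obtain rigidity from maps assembled out of $\mult^*$ and $\vect{1}$-insertions, and deduce faithfulness of $\alpha$ from the fact that everything is generated by subobjects of $L^2(M^2)$. However, there is a genuine gap, and it sits exactly where the paper does its hardest work: \emph{unboundedness}. Elements of $\overline{\mathcal{T}}$ are a priori only densely defined linear maps $\mathcal{F}_m \to \mathcal{F}_n$ (already the $\vect{1}$-insertions occurring in reciprocals are unbounded when $I$ is infinite, and so is a typical $T \in \mathcal{T}$, e.g.\ the adjacency operator of a locally finite graph of unbounded degree), while morphisms in a concrete unitary $2$-category must be bounded. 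The paper first defines the partition $\mathcal{E}$ from the scalars by which $\overline{\mathcal{T}}_{0,0}$ acts on the blocks $\vect{M_i}$, and then proves the key estimate that every cut-down $T \circ \lambda(z_a)$ and $T \circ \rho(z_b^{\op})$ \emph{is} bounded: one shows its norm equals $\norm{(T^{>^n})^*T^{>^n}}^{1/2}$ computed on $M_a$, and $(T^{>^n})^*T^{>^n}$ lies in $\overline{\mathcal{T}}_{0,0}$, hence acts by a single scalar on all of $M_a$ by the very definition of the equivalence classes. This boundedness is what makes range projections of cut-downs legitimate finite-type objects, makes the compressions $Q\,\overline{\mathcal{T}}\,P$ finite-dimensional $C^*$-algebras, and makes the compressions $QTP$ of the \emph{unbounded} generators $T \in \mathcal{T}$ automatically bounded morphisms, so that $\mathcal{C}$ really covers $\mathcal{T}$ in the sense of definition \ref{def: covering morphisms}. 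Your proposal never proves any boundedness; instead you build $\mathcal{N}_n$ from $\overline{\mathcal{T}}_{n,n} \cap B(L^2(M^{n+1}))$, i.e.\ you simply discard the unbounded elements. With that substitution the coverage of the maps $T \in \mathcal{T}$ is no longer ``essentially by construction'' (their compressions are not visibly in your morphism spaces), and your covering argument collapses, as follows.

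The claim that ``local finiteness forces the minimal central projections of $\mathcal{N}_n$ to be finite type'' is false for a general $M$-bimodular von Neumann subalgebra of $B(L^2(M^{n+1}))$: taking $\mathcal{T} = \emptyset$ (every map locally finite, connectedness violated), $\mathcal{N}_1$ is generated by maps like $\mult^*\mult$, and the central projection orthogonal to the diagonal is minimal central but has infinite-type range when $I$ is infinite. You cannot invoke lemma \ref{lem: central projections in N} here, because its proof presupposes a covering of $L^2(M^2)$ by finite-type projections in the algebra --- precisely what you are trying to produce; this is circular. In the paper the covering comes from \emph{connectedness}: the ranges of the maps in $\overline{\mathcal{T}}^{a-b}_{1,1}$ (which are bounded by the estimate above, and whose range projections lie in the same family) span $\mathcal{F}_1$ by definition \ref{def: connected set of maps}, which covers $L^2(M^2)$, and then proposition \ref{prop: covering bimodules closed under relative tensor product} handles all higher $L^2(M^{n+1})$. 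Your write-up instead spends connectedness on irreducibility of the $0$-cells $L^2(M_a)$, which is automatic once $\mathcal{E}$ is read off from minimal projections of the (abelian) algebra attached to $\overline{\mathcal{T}}_{0,0}$; so the one hypothesis that must carry the covering, and with it the whole theorem, is never actually used for it. To repair the proposal you would need to reinstate the two missing ingredients of the paper's proof: the boundedness of the $z_a$- and $z_b$-cut-downs via the scalar trace argument, and the fact that range projections of these cut-downs again belong to $\overline{\mathcal{T}}^{a-b}$.
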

\begin{proof}
    We denote as in definition \ref{def: connected set of maps} by $\overline{\mathcal{T}} = \bigcup_{n,m \geq 0} \overline{\mathcal{T}}_{n,m}$ the smallest set of $M$-bimodular linear maps $T: \mathcal{F}_m \to \mathcal{F}_n$ which contains $\mathcal{T} \cup \{ \mult \}$, and is closed under linear combinations, composition, adjoints, the relative tensor product $\mtimes$, reciprocals, and shrinkages. Recall that by $M$-bimodularity, $\overline{\mathcal{T}}_{0,0}$ acts by scalars on all matrix blocks $\vect{M_i} \subset \mathcal{F}_0$ for any $i \in I$. Denote by $T_i$ these scalars. We define an equivalence relation $\approx$ on $I$ given by
    \[
    i \approx j \text{ if and only if } T_i = T_j \text{ for every } T \in \overline{\mathcal{T}}_{0,0}.
    \]
    Then we let the set of equivalence classes of $\approx$ be indexed by a set $\mathcal{E}$, and we denote by $\{I_a | a \in \mathcal{E}\}$ the individual equivalence classes such that $I = \bigcup_{a \in \mathcal{E}} I_a$. We also denote $M_a := \ell^{\infty} \bigoplus_{i \in I_a} M_i$, and by $z_a$ the unit of $M_a$. We define the following for any $a,b \in \mathcal{E}$ and $n,m \in \mathbb{N}$.
    \begin{itemize}
        \item $\overline{\mathcal{T}}^{a-}_{n,m} := \{T \circ \lambda(z_a) | T \in \overline{\mathcal{T}}_{n,m} \}$

        \item $\overline{\mathcal{T}}^{-b}_{n,m} := \{T \circ \rho(z_b^{\op}) | T \in \overline{\mathcal{T}}_{n,m} \}$

        \item $\overline{\mathcal{T}}^{a-b}_{n,m} := \{T \circ \lambda(z_a) \circ \rho(z_b^{\op}) | T \in \overline{\mathcal{T}}_{n,m} \}$
    \end{itemize}
    Now, for any $x = x_0 \otimes \cdots \otimes x_n \in M_0^{n+1}$, we denote $\overline{x} := x_n^* \otimes \cdots \otimes x_0^*$, and for any $T: \mathcal{F}_n \to \mathcal{F}_m$, we denote by $\widetilde{T}: \mathcal{F}_m \to \mathcal{F}_n$ the morphism such that $\widetilde{T} (\vect{x}) = \overline{T^*(\vect{\overline{x}})}$. We will prove the following
    \begin{enumerate}
    
        \item For each $T \in \overline{\mathcal{T}}_{n,n}$, the maps
        \begin{align*}
            (\id \otimes \tr_n)[(\id \otimes (\mu^{-1})^{\otimes n}) \circ T] :=& \; \mult \circ (\id \otimes (\del \circ \mult) \otimes \id) \circ \cdots \circ (\id^{\otimes n} \otimes \del \otimes \id^{\otimes n}) \circ T^{>^n} \\
            (\tr_n \otimes \id)[(\mu^{\otimes n} \otimes \id) \circ T] :=& \; \mult \circ (\id \otimes (\del \circ \mult) \otimes \id) \circ \cdots \circ (\id^{\otimes n} \otimes \del \otimes \id^{\otimes n}) \circ T_{>^n}
        \end{align*}
        are elements of $\overline{\mathcal{T}}_{0,0}$.
        
        \item The elements of $\overline{\mathcal{T}}^{a-}_{n,m}$ and $\overline{\mathcal{T}}^{-b}_{n,m}$ define bounded $M$-bimodular linear operators from $\mathcal{F}_n$ to $\mathcal{F}_m$.
        
        \item For every $T \in \overline{\mathcal{T}}^{a-}_{n,m}$, the range projection $P$ of $T$ belongs to $\overline{\mathcal{T}}^{a-}_{n,n}$, and $P \overline{\mathcal{T}}^{a-}(n,n)P$ is a finite-dimensional $C^*$-algebra. A similar statement holds for $T \in \overline{\mathcal{T}}^{-b}_{n,m}$.
        
        \item We have that
        \[
        \overline{\mathcal{T}}^{a-b}_{n,m} \subset ( \overline{\mathcal{T}}^{a-}_{n,m} \cap \overline{\mathcal{T}}^{-b}_{n,m} )
        \]
        
        \item We have that $\left( \overline{\mathcal{T}}^{a-b}_{n,m} \right)^* = \overline{\mathcal{T}}^{a-b}_{m,n}$, as well as $\overline{\mathcal{T}}^{a-b}_{n,k} \circ \overline{\mathcal{T}}^{a-b}_{k,m} \subset \overline{\mathcal{T}}^{a-b}_{n,m}$, and
        \[
        \overline{\mathcal{T}}^{a-b}_{n,m} \otimes_{(M,\del)} \overline{\mathcal{T}}^{b-c}_{n',m'} \subset \overline{\mathcal{T}}^{a-c}_{n+n',m+m'}
        \]
        
        \item For any $T \in \overline{\mathcal{T}}^{a-b}_{n,m}$, we have $\Tilde{T} \in \overline{\mathcal{T}}^{b-a}_{m,n}$.
        
        \item Let $P \in \overline{\mathcal{T}}^{a-b}_{n,n}$ be a self-adjoint projection. Then let $R \in \overline{\mathcal{T}}^{b-a}_{n,n}$ be the range projection of $\widetilde{P}$, and define
        \begin{align*}
            s_P :=& \; (P \otimes_{(M,\del)} \widetilde{P})(\id^{\otimes n} \otimes \vect{1} \otimes \id^{\otimes n}) \cdots (\id \otimes \mult^*(\vect{1}) \otimes \id) \mult^* \\
            t_P :=& \; (R \otimes_{(M,\del)} P)(\id^{\otimes n} \otimes \vect{1} \otimes \id^{\otimes n}) \cdots (\id \otimes \mult^*(\vect{1}) \otimes \id) \mult^*
        \end{align*}
        we get that $(\mult (s_P^* \otimes \id) \otimes \id^{\otimes })(\id^{\otimes n} \otimes (\id \otimes t_P)\mult^*) = P$, $(\mult(t_P^* \otimes \id) \otimes \id^{\otimes n})(\id^{\otimes n} \otimes (\id \otimes s_P)\mult^*) = R$, i.e. $(s_P,t_P)$ satisfy the conjugate equations for $P,R$, and moreover $s_P s_P^* \leq P \otimes_{(M,\del)} R$, $t_P t_P^* \leq R \otimes_{(M,\del)} P$.
    \end{enumerate}
    The respective proofs of these statements are as follows.
    \begin{enumerate}
        \item We will show the statement for $(\id \otimes \tr_n)[(\id \otimes (\mu^{-1})^{\otimes n}) \circ T]$. The other is handled analogously. Take $T \in \overline{\mathcal{T}}_{n,n}$ arbitrarily, and consider $T_{(0)} := _{<^n}T \in \overline{\mathcal{T}}_{2n,0}$. Then inductively define $T_{(k+1)} := \mult \circ (\id \otimes \del \otimes \id) \circ (T_{(k)})_{>}^{>} \in \overline{\mathcal{T}}_{2(n-k)}$. Then it is clear that $T_{(n)} = (\id \otimes \tr_n)[(\id \otimes (\mu^{-1})^{\otimes n}) \circ T]$.
        
        \item Take $T \in \overline{\mathcal{T}}_{n,m}$ arbitrarily, and denote $T_a := T \circ \lambda(z_a)$ and $T^b := T \circ \rho(z_b)$. We will only show the claim for $T_a$, as the other is handled analogously. Take $\xi := \xi_0 \otimes \cdots \otimes \xi_n \in M_0^{n+1}$ and $\eta := \eta_0 \otimes \cdots \otimes \eta_m \in M_0^{m+1}$ simple tensors in $\mathcal{F}_n$ and $\mathcal{F}_m$ respectively. We note the following, where we will use a sort of Sweedler notation $\mult^*(\zeta) := \zeta^{(1)} \otimes \zeta^{(2)}$.
        \begin{align*}
            &\left\langle T^{>}(\vect{\xi_0 \otimes \cdots \otimes \xi_{n-1}}), \vect{\eta_0 \otimes \cdots \otimes \eta_m \otimes \mu(\xi_n)^*} \right\rangle \\
            =& \; \left\langle T \left(\vect{\xi_0 \otimes \cdots \otimes \xi_{n-1}^{(1)} \otimes 1} \right) \otimes \vect{\xi_{n-1}^{(2)}}, \vect{\eta_0 \otimes \cdots \otimes \eta_m \otimes \mu(\xi_n)^*} \right\rangle \\
            =& \; \del(\mu(\xi_n) \xi_{n-1}^{(2)}) \left\langle T \left(\vect{\xi_0 \otimes \cdots \otimes \xi^{(1)}_{n-1} \otimes 1} \right), \vect{\eta_0 \otimes \cdots \otimes \eta_m} \right\rangle \\
            =& \; \left\langle T(\vect{\xi_0 \otimes \cdots \otimes \xi_{n-1}\xi_n \otimes 1}), \vect{\eta_0 \otimes \cdots \otimes \eta_m} \right\rangle
        \end{align*}
        Inductively, this means
        \begin{align*}
            &\left\langle T^{>^n} \left( \vect{\xi_0^{(1)}} \right), \vect{\eta_0 \otimes \cdots \otimes \eta_m \mu(\xi_n)^* \otimes \mu(\xi_{n-1})^* \otimes \cdots \otimes \mu(\xi_1)^* \otimes (\mu(\xi_0^{(2)}))^*} \right\rangle \\
            =& \; \left\langle T^{>^{(n-1)}}(\vect{\xi_0 \otimes 1}), \vect{\eta_0 \otimes \cdots \otimes \eta_m \mu(\xi_{n})^* \otimes \cdots \otimes \mu(\xi_1)^*} \right\rangle \\
             \; \cdots & \text{ repeat previous step another } n-1 \text{ times } \cdots \\
            =& \; \left\langle T(\vect{\xi_0 \otimes \cdots \otimes \xi_{n-1} \otimes 1}), \vect{\eta_0 \otimes \cdots \otimes \eta_m \mu(\xi_n)^*} \right\rangle \\
            =& \; \left\langle T(\vect{\xi_0 \otimes \cdots \otimes \xi_{n-1} \otimes 1}), \rho(\mu^{1/2}(\xi_n)^{\op})^* \vect{\eta_0 \otimes \cdots \otimes \eta_m} \right\rangle \\
            =& \; \left\langle \rho(\mu^{1/2}(\xi_n)^{\op})T( \vect{\xi_0 \otimes \cdots \otimes \xi_{n-1} \otimes 1}), \vect{\eta_0 \otimes \cdots \otimes \eta_m} \right\rangle \\
            =& \; \left\langle T(\vect{\xi_0 \otimes \cdots \otimes \xi_n}), \vect{\eta_0 \otimes \cdots \otimes \eta_m} \right\rangle
        \end{align*}
        It then follows that $\norm{T_a} = \norm{T_a^{>^n}} = \norm{\left( T_a^{>^n} \right)^* T_a^{>^n}}^{1/2}$.
        However, $\left(T^{>^n}\right)^*T^{>^n} \in \overline{\mathcal{T}}(0,0)$, and hence it acts as a scalar on $M_a \subset \mathcal{F}_0$, from which it follows that $T_a$ is bounded.
    
        \item First we show that for any $i \in I_a$ the map
        \begin{align*}
            \overline{\mathcal{T}}^{a-}_{m,n} \to& B(1_i \cdot \mathcal{F}_n, 1_i \cdot \mathcal{F}_m) \\
            T \mapsto& (\lambda(1_i) \otimes \id^{\otimes m})T
        \end{align*}
        is injective. Suppose $(\lambda(1_i) \otimes \id^{\otimes m})T = 0$. Then, since $(\id \otimes \tr_n)[(\id \otimes (\mu^{-1})^{\otimes n})TT^*]$ is constant on $a$, it must be zero, from which it follows that 
        \begin{align*}
            0 =& \; (\id \otimes \tr_n)[(\id \otimes (\mu^{-1})^{\otimes n})TT^*] \\
            =& \; (^{<^m}T)(^{<^m}T)^*
        \end{align*}
        And hence $^{<^m}T = 0$, from which it follows that
        \begin{align*}
            T =& \; (^{<^m}T)^{>^m} \\
            =& \; 0^{>^m} \\
            =& \; 0
        \end{align*}
        Now, fix $T \in \overline{\mathcal{T}}^{a-}_{m,n}$, and $i \in I_a$. Let $P$ denote the range projection of $T$, and take $H_0$ to be the range of $(\lambda(1_i) \otimes \id^{\otimes m})T$. Since $T$ is locally finite, $H_0$ is a finite-dimensional Hilbert space. Then by the reasoning above, the map
        \begin{align*}
            T \overline{\mathcal{T}}_{n,n} T^* \to B(H_0): X \mapsto (\lambda(1_i) \otimes \id^{\otimes m})X
        \end{align*}
        is an injective $*$-homomorphism, from which it follows that  $T \overline{\mathcal{T}}_{n,n} T^*$ is a finite dimensional $C^*$-algebra. Therefore, $P$ must belong to this $C^*$-algebra, and in particular, $P \in \overline{\mathcal{T}}^{a-}_{m,m}$. By the same reasoning, 
        $P \overline{\mathcal{T}}_{m,m} P$ is a finite-dimensional $C^*$-algebra.

        The statement for $T \in \overline{\mathcal{T}}^{-b}_{m,n}$ is proven analogously.
        
        \item We will show $\overline{\mathcal{T}}^{a-b}_{m,n} \subset \overline{\mathcal{T}}^{a-}_{m,n}$. The claim $\overline{\mathcal{T}}^{a-b}_{m,n} \subset \overline{\mathcal{T}}^{-b}_{m,n}$ is handled analogously.

        Take $T \in \overline{\mathcal{T}}^{a-}_{m,n}$ arbitrarily. We will show
        \[ (\id^{\otimes m} \otimes \rho(z_b^{\op})) \circ T \in \overline{\mathcal{T}}^{a-}_{m,n} \]
        Pick $i \in I_a$ arbitrarily, and note that since $T$ is locally finite, we can find some finite set $\mathcal{E}_0$ such that for all $b' \in \mathcal{E} \backslash \mathcal{E}_0$, we have
        \[ (\lambda(1_i) \otimes 1^{\otimes (m-1)} \otimes \rho(1_{b'}^{\op}))T = 0 \]
        As $\mathcal{E}_0$ is finite, we can find a morphism $S \in \overline{\mathcal{T}}_{0,0}$ such that $S(\vect{1_j}) = \vect{1_j}$ for any $j \in I_b$, and $S(\vect{1_{j'}}) = 0$ for all $j' \in I_{b'}$ when $b' \in \mathcal{E}_0 \backslash \{b\}$. Now, note that
        \begin{align*}
            R := (\id^{\otimes m} \otimes S) \circ T =& \; \left( S \circ (_{<^m}T)\right)_{>^m} \in \overline{\mathcal{T}}^{a-}(m,n)
        \end{align*}
        But now, we claim $R = (\id^{\otimes m} \otimes \rho(z_b^{\op}))T$. To this end, first note that
        \[
        (\id^{\otimes m} \otimes \rho(z_b^{\op}))R = (\id^{\otimes m} \otimes \rho(z_b^{\op})S)T = (\id^{\otimes m} \otimes \rho(z_b^{\op}))T.
        \]
        On the other hand, for any $c \in \mathcal{E} \backslash \{b\}$, we can find an $Q \in \overline{\mathcal{T}}_{0,0}$ with $Q(\vect{1_{k}}) = \vect{1_{k}}$ for all $k \in I_{c}$ and $Q(\vect{1_j}) = 0$ for all $j \in I_b$. Then
        \begin{align*}
            (\id^{\otimes m} \otimes \rho(z_{c}^{\op}))R =& \; (\id^{\otimes m} \otimes \rho(z_{c}^{\op})) \circ (\id^{\otimes m} \otimes Q)R \\
            =& \; 0
        \end{align*}
        since $(\id^{\otimes m} \otimes Q)R = 0$. Indeed, we can fix $i \in I_a$ arbitrarily, and find that
        \begin{align*}
            (\lambda(1_i) \otimes \id^{\otimes m})(\id^{\otimes m} \otimes Q)R =& \; (\id^{\otimes m} \otimes QS)(\lambda(1_i) \otimes \id^{\otimes m})T \\
            =& \; \sum_{d \in \mathcal{E}_0} (\id^{\otimes m} \otimes (QS \circ \rho(z_d^{\op}))) (\lambda(1_i) \otimes \id^{\otimes m})T \\
            =& \; (\id^{\otimes m} \otimes (Q \circ \rho(z_b^{\op}))) (\lambda(1_i) \otimes \id^{\otimes m})T \\
            =& \; 0
        \end{align*}
        Then since $N \mapsto (\lambda(1_i) \otimes \id^{\otimes m})N$ is injective on $\overline{\mathcal{T}}^{a-}_{m,n}$, we get that $(\id^{\otimes m} \otimes Q)R = 0$. Hence, it follows that $(\id^{\otimes m} \otimes \rho(z_b^{\op}))T \in \overline{\mathcal{T}}^{a-}_{m,n}$.
        
        \item The first two claims are immediate. Using 4, we get that
        \begin{align*}
            \overline{\mathcal{T}}^{a-b}_{m,n} \otimes_{(M,\del)} \overline{\mathcal{T}}^{b-c}_{m',n'} \subset& \overline{\mathcal{T}}^{a-}_{m,n} \otimes_{(M,\del)} \overline{\mathcal{T}}^{-c}_{m',n'} \\
            =& \; \left( \overline{\mathcal{T}}_{m,n} \otimes_{(M,\del)} \overline{\mathcal{T}}_{m',n'} \right) \circ \left(\lambda(z_a) \otimes \id^{\otimes (n+n'-1)} \otimes \rho(z_c^{\op}) \right) \\
            \subset& \overline{\mathcal{T}}^{a-c}_{m+m',n+n'}
        \end{align*}
        which also proves the last claim.
        
        \item For starters, we will show that for any $T \in \overline{\mathcal{T}}_{m,n}$, $\widetilde{T} = {}_{<^m}T^{>^n}$. To this end, take simple tensors $\xi := \xi_0 \otimes \cdots \otimes \xi_n \in M_0^{\otimes (n+1)}$ and $\eta := \eta_0 \otimes \cdots \otimes \eta_m \in M_0^{\otimes (m+1)}$, then similarly to the   argument in $2$, we have the following.
        \begin{align*}
            \langle T_{>}(\vect{\xi_1 \otimes \cdots \otimes \xi_{n}}), \vect{\xi_0^* \otimes \eta_0 \otimes \cdots \otimes \eta_m} \rangle = \langle T(\vect{1 \otimes \xi_0\xi_1 \otimes \cdots \otimes \xi_n}), \vect{\eta} \rangle
        \end{align*}
        Combining this with what we already had in $2$, we find the following.
        \begin{align*}
            &\left\langle _{<^m}T^{>^n}(\vect{\overline{\eta}}), \vect{\overline{\xi}} \right\rangle \\
            &= \left\langle _{<^m}T \left( \vect{\eta_m^* \otimes \cdots \otimes \eta_0^*\mu^{-1}(\xi_0) \otimes \cdots \otimes \mu^{-1}(\xi_{n-1}) \otimes 1 } \right), \vect{\xi_n^*} \right\rangle \\
            &= \left\langle \vect{\eta_m^* \otimes \cdots \otimes \eta_0^* \mu^{-1}(\xi_0) \otimes \cdots \otimes \mu^{-1}(\xi_{n-1}) \otimes 1}, (T^*)_{>^m}(\vect{\xi_n^*}) \right\rangle \\
            &= \left\langle \vect{\eta_{m-1}^* \otimes \cdots \otimes \eta_0^* \mu^{-1}(\xi_0) \otimes \cdots \otimes \mu^{-1}(\xi_{n-1}) \otimes 1}, (T^*)_{>^{m-1}}(\vect{1 \otimes \eta_m\xi_n^*}) \right\rangle \\
             \; \cdots & \text{ repeat previous step another } m-1 \text{ times } \cdots \\
            &= \left\langle \vect{\eta_0^*\mu^{-1}(\xi_0) \otimes \mu^{-1}(\xi_1) \otimes \cdots \otimes \mu^{-1}(\xi_n) \otimes 1 }, T^*( \vect{ 1 \otimes \eta_1 \otimes \cdots \otimes \eta_m \xi_n^* })  \right\rangle \\
            &= \left\langle T \left( \vect{ (\mu^{-1})^{\otimes (n+1)}\xi} \right), \vect{\eta} \right\rangle
        \end{align*}
        Using this, we calculate as follows.
        \begin{align*}
            {}_{<^m} T^{>^n}(\xi) =& \; \sum_{\eta \in \onb(\mathcal{F}_n)} \left\langle {}_{<^m} T^{>^n}(\xi) , \eta \right\rangle \eta \\
            =& \; \sum_{\eta \in \onb(\mathcal{F}_n)} \left\langle T(\overline{\mu^{\otimes n+1} \eta}) , \overline{\xi} \right\rangle \eta \\
            =& \; \sum_{\eta \in \onb(\mathcal{F}_n)} \left\langle \overline{\mu^{\otimes n+1} \eta} , T^*(\overline{\xi}) \right\rangle \eta \\
            =& \; \sum_{\eta \in \onb(\mathcal{F}_n)} \left\langle \overline{T^*(\overline{\xi})} , \eta \right\rangle \eta \\
            =& \; \overline{T^*(\overline{\xi})}
        \end{align*}
        Hence, indeed ${}_{<^m} T^{>^n} = \widetilde{T}$.

        Now, for $T \in \overline{\mathcal{T}}^{a-b}_{m,n}$, write $T = (\lambda(z_a) \otimes \id^{\otimes (m-1)} \otimes \rho(z_b^{\op})) \circ R$ for some $R \in \overline{\mathcal{T}}_{m,n}$. Then
        \begin{align*}
            \widetilde{T} = \widetilde{R} \circ (\lambda(z_b) \otimes \id^{\otimes (m-1)} \otimes \rho(z_a^{\op})) \in \overline{\mathcal{T}}^{b-a}_{n,m}
        \end{align*}
        
        \item Since $\widetilde{P}$ is idempotent, i.e. identity on its range, and $R$ is its range projection, we get that $\widetilde{P}R = R$. Now note that 
        \[
        (\mult(t_P^* \otimes \id) \otimes \id^{\otimes n})(\id^{\otimes n} \otimes (\id \otimes s_P)\mult^*) = \widetilde{P}^2 R = R
        \]
        On the other hand,
        \[
        (\mult (s_P^* \otimes \id) \otimes \id^{\otimes })(\id^{\otimes n} \otimes (\id \otimes t_P)\mult^*) = P \widetilde{(\widetilde{P})^*} P = P^3 = P
        \]

        The other claims are immediate.
        
    \end{enumerate}
    Now, we are ready to define $\mathcal{C}$. As mentioned, we take $\mathcal{E}$ to be the set of $0$-cells of $\mathcal{C}$.  We take the objects of our category to be the ranges of projections in $\bigoplus_{a,b \in \mathcal{C}_0} \overline{\mathcal{T}}^{a-b}$, and morphisms from the range of $P$ to the range of $Q$ are given by $Q \overline{\mathcal{T}} P$. All remaining axioms are readily checked. Since the ranges of morphisms in $\overline{\mathcal{T}}^{a-b}_{1,1}$ span $\mathcal{F}_1$, we have that $L^2(M^2)$ is covered by $\mathcal{C}$. The map $(\id \otimes \del \otimes \id): M_0^{\otimes 3} \to M_0^{\otimes 2}$ is covered because $\overline{\mathcal{T}}$ is closed under shrinkages. all necessary axioms are now satisfied to do our quantum group reconstruction. We retrieve an algebraic quantum group $(\mathcal{A},\Delta)$, with all the structure introduced in section \ref{sc: reconstruction theorem}.

    Since the category $\mathcal{C}$ is generated by subobjects of $L^2(M^2)$, we get that the algebra $\mathcal{B}$ from the reconstruction (see subsection \ref{sbsc: *-algebra B obtained from category}) is generated by $\Gelt{1}{x \otimes y}{z \otimes t}$ for $x,y,z,t \in M_0$. Pushing everything through the isomorphism $\Theta_e^{-1}$ from proposition \ref{prop: A isomorphic to corner of B}, we find that $\mathcal{A}$ is generated by $\Aelt{1}{x}{y}$ for $x,y \in M_0$. It follows that the action $\alpha: M_0 \to \mathcal{M}(M_0 \otimes \mathcal{A})$ as defined in proposition \ref{prop: action of quantum group on M} is faithful.
\end{proof}

\subsection{Example: quantum Cayley graphs} \label{sbsc: example quantum cayley graph}

Introduced by Wasilewski in \cite{Wasilewski2023}, quantum Cayley graphs are quantum graphs associated to any pair $(\Gamma,P)$ consisting of a discrete quantum group $\Gamma$ and a generating projection $P \in c_{00}(\Gamma)$ in the sense of \cite[Definition 5.1]{Wasilewski2023}. For simplicity, we restrict our attention to central generating projections invariant under the unitary antipode, or equivalently, symmetric generating sets of irreducible corepresentations of $\widehat{\Gamma}$. We will show that the projection onto the edge space of a quantum Cayley graph is connected and locally finite in the sense of definitions \ref{def: connected set of maps}, \ref{def: locally finite map}. Hence, we can apply theorems \ref{thm: CLF discrete structures give good categories} and \ref{thm: unique quantum group action for equivariant category} to define their quantum automorphism groups. The reader will note that the construction in this subsection from definition \ref{def: qaut of quantum cayley graph} onwards applies to any connected locally finite quantum graph, and in particular, this generalises the results from \cite{RollierVaes2024}.

\begin{definition} \label{def: CLF quantum graph}
    Given any quantum graph $\Pi$ with $M$-bimodular `edge space projection' given by $P: \mathcal{F}_1 \to \mathcal{F}_1$, we say $\Pi$ is connected and locally finite if $P$ is locally finite in the sense of definition \ref{def: locally finite map}, and the singleton $\{P\}$ is connected in the sense of definition \ref{def: connected set of maps}.
\end{definition}

Let us recall theorem \ref{thm: quantum cayley graphs give quantum automorphism groups}, which we will prove in this subsection.

\begin{theorem} \label{thm: quantum cayley graphs give quantum automorphism groups}
    Let $\Pi$ be any connected locally finite quantum graph in the sense of definition \ref{def: CLF quantum graph} with adjacency matrix $\adj$. There is a universal quantum group acting on $M$ by $\alpha$ in such a way that
    \[
    \alpha \circ \adj = (\adj \otimes \id) \circ \alpha.
    \]
    In particular, one may take $\Pi$ to be the quantum Cayley graph associated by \cite[Definition 5.1]{Wasilewski2023} to any $\Gamma$, discrete quantum group, and $S \subset \irr(\widehat{\Gamma})$ a finite symmetric generating set of irreducible corepresentations of $\widehat{\Gamma}$.
\end{theorem}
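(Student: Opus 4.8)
The plan is to deduce the theorem from the machinery of Section~\ref{sc: qaut of CLF discrete structures} by feeding the singleton $\mathcal{T} = \{P\}$ into Theorem~\ref{thm: CLF discrete structures give good categories}, where $P : \mathcal{F}_1 \to \mathcal{F}_1$ denotes the edge space projection of $\Pi$. By Definition~\ref{def: CLF quantum graph}, the hypothesis that $\Pi$ is connected and locally finite says exactly that $P$ is locally finite (Definition~\ref{def: locally finite map}) and that $\{P\}$ is connected (Definition~\ref{def: connected set of maps}), so the hypotheses of Theorem~\ref{thm: CLF discrete structures give good categories} hold. That theorem produces the smallest concrete unitary $2$-category $\mathcal{C}$ of finite type Hilbert-$M$-bimodules covering $L^2(M^n)$ for all $n$, together with $P$, $\mult$, and $(\id \otimes \del \otimes \id)$; this category meets the conditions of Theorem~\ref{thm: unique quantum group action for equivariant category}, yielding the algebraic quantum group $\mathbb{G} = (\mathcal{A}, \Delta)$ together with its faithful $\del$-preserving coaction $\alpha$ (Proposition~\ref{prop: action of quantum group on M}).

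For the equivariance identity I would use that the quantum adjacency matrix $\adj$ is recovered from the edge projection $P$ by a fixed intrinsic expression in $P$, $\mult$, $\mult^*$, and $\del$ (the standard dictionary between edge spaces and adjacency matrices of quantum graphs). Since $P$ is a morphism of $\mathcal{C}$, it intertwines the corepresentation $U_{L^2(M^2)} \cong U_{L^2(M)} \mtimes U_{L^2(M)}$ supplied by Proposition~\ref{prop: functor from C to equivariant reps } (using that the reconstruction functor is monoidal), while $\mult$, $\mult^*$ and $\del$ intertwine the corresponding corepresentations as well — the first two because $\mathcal{C}$ covers $\mult$, the last because $\alpha$ preserves $\del$. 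Composing these intertwining relations along the dictionary expressing $\adj$ through $P$ then yields $\alpha \circ \adj = (\adj \otimes \id) \circ \alpha$.

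Universality I would obtain from the minimality clause of Theorem~\ref{thm: CLF discrete structures give good categories}. Given any quantum group $\mathbb{H}$ acting on $(M,\del)$ by $\beta$ with $\beta \circ \adj = (\adj \otimes \id)\circ\beta$, its category of equivariant corepresentations $\mathcal{C}(M \curvearrowleft \mathbb{H})$ (Definition~\ref{def: category of equivariant corepresentations}) is a concrete unitary $2$-category of finite type Hilbert-$M$-bimodules covering $L^2(M^n)$, $\mult$, and $(\id \otimes \del \otimes \id)$; and the hypothesis on $\beta$ is equivalent to invariance of the edge space, so $P$ is an intertwiner there, i.e.\ $\mathcal{C}(M \curvearrowleft \mathbb{H})$ also covers $P$. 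As $\mathcal{C}$ is the smallest such category, $\mathcal{C} \subseteq \mathcal{C}(M \curvearrowleft \mathbb{H})$, and the reconstruction of Theorem~\ref{thm: unique quantum group action for equivariant category}, applied functorially to this inclusion, produces a morphism of quantum groups exhibiting $\mathbb{H}$ as a quantum subgroup of $\mathbb{G}$; this is the asserted universality.

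It remains to check that the quantum Cayley graph of a pair $(\Gamma, S)$, with $S \subset \irr(\widehat{\Gamma})$ finite and symmetric, is connected and locally finite. Writing $M = \ell^\infty(\Gamma) = \ell^\infty\bigoplus_{\pi \in \irr(\widehat{\Gamma})} B(H_\pi)$ with its canonical delta-form, local finiteness follows from finiteness of $S$: each fusion product $\pi \otimes s$ with $s \in S$ decomposes into finitely many irreducibles, so every vertex has finite degree, which is exactly the statement that $(1_\pi \otimes 1)P$, $P(1_\pi \otimes 1)$ and their right-hand analogues are finite rank. For connectedness I would translate the hypothesis that $S$ generates $\widehat{\Gamma}$ into the categorical condition of Definition~\ref{def: connected set of maps}: iterated fusion with elements of $S$ reaches every irreducible, and each fusion step corresponds to a composition or relative tensor product of $P$ lying in $\overline{\{P\}}_{1,1}$, so that the ranges of maps in $\overline{\{P\}}_{1,1}$ exhaust $\mathcal{F}_1$. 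I expect this final translation to be the main obstacle: one must carefully match walks in the quantum Cayley graph (products of generators in the fusion ring) with the operations — composition, relative tensor product, reciprocals, and shrinkages — that generate $\overline{\{P\}}$, and verify that connectedness of the graph genuinely forces surjectivity onto all of $\mathcal{F}_1$ rather than merely onto the edge subspace.
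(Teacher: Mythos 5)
Your overall skeleton — feed $\mathcal{T}=\{P\}$ into Theorem \ref{thm: CLF discrete structures give good categories}, apply Theorem \ref{thm: unique quantum group action for equivariant category}, and derive $\alpha\circ\adj=(\adj\otimes\id)\circ\alpha$ from the fact that $P$ is a covered morphism together with the dictionary between $P$ and $\adj$ — is the same as the paper's (this is Definition \ref{def: qaut of quantum cayley graph} and Proposition \ref{prop: qaut preserves graph structure}; note only that $L^2(M^2)$ is not an object of $\mathcal{C}$ but merely covered by it, so one must argue via Lemma \ref{lem: calculation in A}.1 rather than via $U_{L^2(M^2)}$ from Proposition \ref{prop: functor from C to equivariant reps }). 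The genuine gap is in your universality argument. You assume that for an arbitrary locally compact quantum group action $\beta:M\curvearrowleft\mathbb{H}$, the category $\mathcal{C}(M\curvearrowleft\mathbb{H})$ is a concrete unitary $2$-category of finite type Hilbert-$M$-bimodules in the sense of Definition \ref{def: concrete unitary 2-category of M-bimodules} which covers $L^2(M^n)$, $\mult$ and $(\id\otimes\del\otimes\id)$. That requires, among other things, that $L^2(M^n)$ with the corepresentation $U^{\otimes n}$ decomposes into finite type equivariant subbimodules, that every object admits a conjugate solving the conjugate equations inside the category, and that the $0$-cell structure of axiom 2 holds. None of this is proved in the paper — the introduction explicitly defers such a statement to a future paper — and you additionally invoke an unproven functoriality of the reconstruction with respect to inclusions of categories. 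The paper's Proposition \ref{prop: qaut of cayley is universal} avoids all of this: it takes the unitary implementation $U$ of $\beta$, lets $\mathcal{T}_{n,m}$ be the $M$-bimodular intertwiners $U^{\otimes(m+1)}\to U^{\otimes(n+1)}$, notes that $P=(\id\otimes\mult)\circ(\id\otimes\adj\otimes\id)\circ(\mult^*\otimes\id)\in\mathcal{T}_{1,1}$, checks that $\mathcal{T}$ is closed under composition, adjoints, $\mtimes$, reciprocals and shrinkages, so that $\overline{\{P\}}\subseteq\mathcal{T}$ by minimality, and then defines the homomorphism directly on generators by $\pi\left(\Aelt{1}{x}{y}\right)=u_{x,y}$, the relations holding because every projection covered by $\mathcal{C}(\Pi)$ intertwines the relevant tensor power of $U$. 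This only needs closure properties of intertwiner sets, not a full unitary $2$-category structure on the equivariant corepresentations of $\mathbb{H}$.

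The second gap is the one you flagged yourself: you never prove that a quantum Cayley graph is connected in the sense of Definition \ref{def: connected set of maps}, so the ``in particular'' clause of the theorem remains unproven in your proposal. This step is substantive, not a routine translation of generation into category language. The paper handles it by setting $P_1=P$ and $P_n=(\id\otimes\del\otimes\id)\circ(P\mtimes P_{n-1})\circ(\id\otimes\vect{1}\otimes\id)\in\overline{\{P\}}_{1,1}$, computing that $P_n=\sum_{x\in\onb(\mathcal{F}_0)}\rho(x^{\op})\otimes\lambda\left(\adj^n\left(\mu^{1/2}(x)^*\right)\right)$, so that $(\del\otimes\id)P_n\vect{E^i_{k,k}\otimes E^j_{m,m}}=\adj^n(E^i_{k,k})E^j_{m,m}$; connectedness then reduces to finding, for each $i,j,k,m$, some $n$ with $E^j_{m,m}\adj^n(E^i_{k,k})E^j_{m,m}\neq 0$. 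Using the explicit formula \eqref{eq: explicit description of adjacency matrix} for $\adj^n$ in terms of partial isometries $V_{r,l}$ and positivity, the vanishing of these elements for all $n$ would force $\langle e_k\otimes r, V(e_m)\rangle=0$ for all $V:H_j\to H_i\otimes H_S^{\otimes n}$, contradicting that $S$ generates $\irr(\widehat{\Gamma})$. Your worry about ``surjectivity onto all of $\mathcal{F}_1$ rather than merely onto the edge subspace'' is resolved exactly by this passage from $P$ to the powers $\adj^n$: the ranges of the $P_n$, not of $P$ alone, exhaust $\mathcal{F}_1$.
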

This quantum group, which will be defined rigorously in definition \ref{def: qaut of quantum cayley graph}, will be called $\qaut(\Pi)$.

We start by recalling some notions from \cite{Wasilewski2023}.

Fix a discrete quantum group $\Gamma = (M,\Delta_{\Gamma})$ with invariant weights $\varphi_{\Gamma}$ and $\psi_{\Gamma}$, both normalised to be delta forms\footnote{This is possible for any discrete quantum group.}. Denote again $M = \ell^{\infty} \bigoplus_{i \in I} M_i$, where $I$ is this time a maximal set of pairwise nonequivalent irreducible corepresentations of $\widehat{\Gamma}$ on Hilbert spaces $H_i$ of dimension $d_i$. Then $(M_i)_{i \in I}$ are the matrix algebras $B(H_i)$, with $M_i \cong \mathbb{C}^{d_i \times d_i}$. We again denote $M_0 \subset M$ the finitely supported part of $M$. We let $\sigma \in \mathcal{M}(M_0)$ be the multiplier such that $\psi_{\gamma} = \tr_M( \cdot \sigma)$, where $\tr_M$ denotes again the Markov trace from definition \ref{def: delta-form}. We will again denote $\mu^t(x) = \sigma^{-t}x\sigma^t$ for any $x \in \mathcal{M}(M_0)$. We will also denote again by $E^i_{k,l}$ the $k,l$-entry of $M_i$, but throughout this section we will work in a basis such that $\sigma$ is diagonal.

Fix a finite symmetric generating set $S \subset I$ of irreducible corepresentations of $\widehat{\Gamma}$. We denote $p := \sum_{i \in S} 1_i \in M_0$. Through \cite[Definition 5.1]{Wasilewski2023}, we associate to the projection $p$ the quantum adjacency matrix
\begin{equation}
    \adj: M \to M: x \mapsto (\psi_{\Gamma}(p \cdot ) \otimes \id ) \circ \Delta(x). \label{eq: defining adjacency matrix}
\end{equation}
By \cite[Lemma 5.8]{Wasilewski2023}, we can describe $\adj$ more explicitly as
\begin{equation}
    \adj(E^i_{k,l}) = \sum_{s \in S}\sum_{j \in I} \frac{\dim_q(i) \dim_q(s)}{\dim_q(j)} \sum_{n=1}^{d_s} \sum_{m=1}^{m(j,i \otimes s)} V_{n,m} E^i_{k,l} V_{n,m}^* \label{eq: explicit description of adjacency matrix}
\end{equation}
where $\dim_q$ denotes the quantum dimension, $m(j,i \otimes s)$ is the multiplicity of $j$ in $i \otimes s$, $V_m$ are elements of a $\bpi(H_j, H_i \otimes H_s)$, and $V_{n,m}(v) := V_m^*(v \otimes e_n)$ where $e_n$ is the $n$-th vector of the basis of $H_s$. Finally, we also get the following $M$-bimodular `edge space projection' $\mathcal{F}_1 \to \mathcal{F}_1$.
\begin{equation}
    P := \sum_{x \in \onb(\mathcal{F}_0)} \rho(x^{\op}) \otimes \lambda \left( \adj \left( \mu^{1/2}(x)^* \right) \right) \label{eq: edge projection quantum cayley graph}
\end{equation}
This is the operator $\Psi^{KMS}(\adj)$ from \cite[Lemma 3.3]{Wasilewski2023} but with its legs flipped. By \cite[Proposition 3.7]{Wasilewski2023} it is a self-adjoint idempotent. We call the resulting quantum Cayley graph $\Pi$.

\begin{proposition}
    For any quantum Cayley graph $\Pi$, its edge space projection $P$ is locally finite as in definition \ref{def: locally finite map}, and $\{P\}$ is connected as in definition \ref{def: connected set of maps}, i.e. any quantum Cayley graph $\Pi$ is connected and locally finite in the sense of definition \ref{def: CLF quantum graph}.
\end{proposition}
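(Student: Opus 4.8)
The plan is to verify the two properties separately, deriving both from the single combinatorial fact that $S$ is finite, symmetric, and generating.

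\textbf{Local finiteness.} First I would record that the quantum adjacency matrix $\adj$, viewed as an $M$-bimodular map $\mathcal{F}_0 \to \mathcal{F}_0$, is locally finite in the relevant sense: by the explicit formula \eqref{eq: explicit description of adjacency matrix}, $1_j \adj(1_i)$ is nonzero only when $j$ is a subobject of $i \otimes s$ for some $s \in S$, and since $S$ is finite and each $i \otimes s$ decomposes into finitely many irreducibles, $\adj$ sends each block $M_i$ into finitely many blocks and, by symmetry of $S$, each block $M_j$ receives contributions from finitely many blocks. Then, writing $P$ through \eqref{eq: edge projection quantum cayley graph}, I would restrict each of the four legs in turn to a fixed block $1_i$ and track the support: restricting the $\lambda$-leg to $1_j$ forces, via $1_j \adj(\mu^{1/2}(x)^*)$, the basis vectors $x \in \onb(\mathcal{F}_0)$ contributing to the sum to lie in the finitely many blocks $i$ with $j \subset i \otimes s$, which in turn confines the $\rho$-leg to a finite-dimensional subspace; symmetry of $S$ gives the opposite restriction. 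This shows each of $(1_i\otimes 1)P$, $(1\otimes 1_i)P$, $P(1_i\otimes1)$, $P(1\otimes 1_i)$ is finite rank, i.e. $P$ is locally finite in the sense of definition \ref{def: locally finite map}.

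\textbf{Connectedness.} For connectedness I must exhibit, for every pair $i,j \in I$, a map in $\overline{\{P\}}_{1,1}$ whose range contains $\vect{M_i \otimes M_j}$. The idea is to build ``walk operators'' out of $P$ using only the closure operations available in $\overline{\{P\}}$ (composition, relative tensor product, reciprocals and shrinkages, from definitions \ref{def: connected set of maps} and \ref{def: reciprocals and shrinkages}). Concretely, I would form $P \mtimes P \in \overline{\{P\}}_{2,2}$ under the identification $\mathcal{F}_1 \mtimes \mathcal{F}_1 \cong \mathcal{F}_2$ of \eqref{eq: isomorphisms tensor powers of M}, so that its range records pairs of edges sharing a middle vertex; contracting that middle vertex by the shrinkage $(\id \otimes \del \otimes \id)$ on the output and inserting $\vect{1}$ in the middle input leg produces a map $W_2 \in \overline{\{P\}}_{1,1}$ whose range captures endpoint pairs joined by a path of length $2$. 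Iterating this gluing, I obtain $W_n \in \overline{\{P\}}_{1,1}$ for each $n$ whose range is the span of the blocks $\vect{M_i \otimes M_j}$ with $\mathrm{dist}(i,j) \le n$ in the Cayley graph.

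The last step translates graph distance into the fusion structure and invokes that $S$ generates. By \eqref{eq: explicit description of adjacency matrix}, $j$ lies at distance at most $n$ from $i$ precisely when $j$ is a subobject of $i \otimes s_1 \otimes \cdots \otimes s_n$ for some $s_1, \dots, s_n \in S$; since $S$ is symmetric and generating, $\bar{i} \otimes j$ is a subobject of a product of elements of $S$, and as $j \subset i \otimes \bar{i} \otimes j$ this means every $j$ is reachable from every $i$ in finitely many steps. Hence $\bigcup_n \mathrm{range}(W_n)$ contains every $\vect{M_i \otimes M_j}$ and so is dense in $\mathcal{F}_1$, giving connectedness. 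The main obstacle is exactly this second part: verifying rigorously that the walk operators $W_n$ genuinely lie in $\overline{\{P\}}_{1,1}$ — that the gluing of edges and the contraction of intermediate vertices are faithfully implemented by the permitted reciprocal and shrinkage operations — and that their ranges are precisely the claimed distance-$n$ blocks; by comparison the local finiteness and the group-theoretic reachability statement are routine.
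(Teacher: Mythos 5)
Your local finiteness argument is fine and is essentially the paper's: both use the explicit formula \eqref{eq: explicit description of adjacency matrix}, the finiteness and symmetry of $S$, and the fusion rules to see that $\adj$ moves each block into finitely many blocks, so that every leg-restriction of $P$ is finite rank. Your walk operators are also the right objects: your $W_n$ is exactly the paper's $P_n = (\id \otimes \del \otimes \id) \circ (P \mtimes P_{n-1}) \circ (\id \otimes \vect{1} \otimes \id)$, and the membership $W_n \in \overline{\{P\}}_{1,1}$ that you flag as the main obstacle is in fact the routine part (one relative tensor product followed by two shrinkages).

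The genuine gap is your claim that the range of $W_n$ is the span of the blocks $\vect{M_i \otimes M_j}$ at distance at most $n$. This is false, already at $n=1$: $W_1 = P$ is the edge-space projection, which on each block it touches is a \emph{proper} subprojection --- that is precisely what makes $\Pi$ a quantum graph rather than a complete one. In general, a nonzero $M$-bimodular operator on the block $\vect{M_i \otimes M_j}$ can have as its range any sub-bimodule: the bimodular endomorphisms of that block form the matrix algebra $M_i^{\op} \otimes M_j$ of dimension $(d_id_j)^2$, so ``nonzero on the block'' is very far from ``surjective onto the block''. Consequently your reduction of connectedness to block-level reachability (some $j \subset i \otimes s_1 \otimes \cdots \otimes s_n$) cannot close the argument: it only shows that $W_n$ does not vanish identically on the block, not that the block lies in the union of the ranges, which is what definition \ref{def: connected set of maps} demands. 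The paper works at a finer level: it computes $(\del \otimes \id) P_n \vect{E^i_{k,k} \otimes E^j_{m,m}} = \adj^n(E^i_{k,k})E^j_{m,m}$ and shows that for every individual pair of matrix units some $n$ makes this nonzero. That step needs more than reachability: by \eqref{eq: explicit description of adjacency matrix}, $\adj^n(E^i_{k,k})$ is a positive combination of terms $V_{r,l}E^i_{k,k}V_{r,l}^*$, so vanishing for all $n$ forces $E^j_{m,m}V_{r,l}E^i_{k,k} = 0$ for every intertwiner, i.e. $\langle e_k \otimes r, V(e_m)\rangle = 0$ for all $V: H_j \to H_i \otimes H_S^{\otimes n}$ and all $r$, contradicting that $S$ generates --- a statement about vectors, not merely about which blocks appear. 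Your proposal is missing both this positivity argument and, more fundamentally, any bridge from non-annihilation of enough vectors to the range-covering statement; as it stands, the second half of the proof does not go through.
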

\begin{proof}
    Clearly for any $i \in I$ we get that $P \circ (1_i \otimes 1)$ is finite rank. Since $\adj$ restricts to a map $M_0 \to M_0$ by $\eqref{eq: explicit description of adjacency matrix}$, we also get that $P \circ (1 \otimes 1_i)$ is finite rank for any $i \in I$. Hence, $P$ is indeed locally finite.

    Consider $P_1 = P$, and define inductively
    $P_n = (\id \otimes \del \otimes \id) \circ (P \mtimes P_{n-1}) \circ (\id \otimes \vect{1} \otimes \id)$. To show that $\{P\}$ is connected, it suffices to show that for any $y,z \in M_0$ there is some $n \in \mathbb{N}$ such that $P_n \vect{y \otimes z} \neq 0$. By linearity and $M$-bimodularity of $P_n$, it suffices to check this for $y = E^i_{k,k}$ and $z = E^j_{m,m}$ for some $i,j \in I$, $1 \leq n \leq d_i$ and $1 \leq m \leq d_j$. A simple calculation shows that
    \[
    P_n = \sum_{x \in \onb(\mathcal{F}_0)} \rho(x^{\op}) \otimes \lambda \left( \adj^n \left( \mu^{1/2}(x)^* \right) \right).
    \]
    Now, using the fact that $\sigma$ is diagonal, we get that
    \begin{align*}
        (\del \otimes \id) P_n \vect{E^i_{k,k} \otimes E^j_{p,p}} &= \sum_{x \in \onb(\mathcal{F}_0)} \del \left( E^i_{k,k} \mu^{-1/2}(x) \right) \vect{\adj^n \left( \mu^{1/2}(x)^* \right) E^j_{m,m}} \\
        &= \adj^n(E^i_{k,k}) E^j_{m,m}.
    \end{align*}
    So it suffices to find some $n \in \mathbb{N}$ such that $E^j_{m,m}\adj^n(E^i_{k,k}) E^j_{m,m} \neq 0$. Denote by $H_S := \bigoplus_{s \in S} H_s$ and by $\gamma$ the corepresentation of $\widehat{\Gamma}$ on $H_S$. By a successive application of the formula \eqref{eq: explicit description of adjacency matrix}, we find that $E^j_{m,m}\adj^n(E^i_{k,k}) E^j_{m,m}$ equals
    \[
    \frac{\dim_q(i) \dim_q(\gamma)^n}{\dim_q(j)} \sum_{r \in \onb(H_S^{\otimes n})} \sum_{l=1}^{m(j,i \otimes \gamma^{\otimes n})} E^j_{m,m} V_{r,l} E^i_{k,k} V_{r,l}^* E^j_{m,m}.
    \]
    Where $m(j,i \otimes \gamma^{\otimes n})$ denotes the multiplicity of $j$ in $i \otimes \gamma^{\otimes n}$, $V_l$ are elements of some $\bpi(H_j,H_i \otimes H_S^{\otimes n})$, and $V_{r,l}(v) = V_l^*(v \otimes r)$. Suppose now this is zero for every $n \in \mathbb{N}$. Then by positivity, we get that $E^j_{m,m}V_{r,l}E^i_{k,k} = 0$ for every $V_{r,l}$, meaning $\langle V_{r,l} (e_k), e_m \rangle = 0$ and hence $\langle e_k \otimes r, V(e_m) \rangle = 0$ for every $r \in H_S^{\otimes n}$ and every $V: H_j \to H_i \otimes H_S^{\otimes n}$. This contradicts the fact that $S$ is a generating set.
\end{proof}

\begin{definition} \label{def: qaut of quantum cayley graph}
    Let $\Pi$ be any connected locally finite quantum graph (e.g. a quantum Cayley graph), and let $\mathcal{C}(\Pi)$ be the smallest concrete unitary $2$-category of finite type Hilbert-$M$-bimodules which covers $L^2(M^2)$ as well as the maps $(\id \otimes \del \otimes \id), \mult$ and $P$, the edge space projection of $\Pi$. This exists by theorem \ref{thm: CLF discrete structures give good categories}. Then apply theorem \ref{thm: unique quantum group action for equivariant category} to the category $\mathcal{C}(\Pi)$ and find an algebraic quantum group $(\mathcal{A},\Delta)$, which we will call $\qaut(\Pi)$.
\end{definition}

To prove theorem \ref{thm: quantum cayley graphs give quantum automorphism groups}, we will show that $\qaut(\Pi)$ is the universal quantum group admitting an action on $M$ which preserves the quantum graph structure.

Recall from the preceding reconstruction that $\qaut(\Pi) = (\mathcal{A},\Delta)$ for the algebra $\mathcal{A}$ generated by $\Aelt{n}{x}{y}$, $x,y \in M_0^{\otimes n}$ under the relations
\begin{itemize}
    \item $\Aelt{n}{x}{y}\Aelt{m}{z}{t} = \Aelt{n+m}{x \otimes z}{y \otimes t}$,
    
    \item $\Aelt{1}{x}{y}^* = \Aelt{1}{\mu(x)^*}{y^*}$,

    \item $\Aelt{1}{1}{x} = \Aelt{1}{x^*}{1} = \del(x)1$ strictly,
    
    \item and for any $T: \mathcal{F}_n \to \mathcal{F}_m$ which is covered by $\mathcal{C}(\Pi)$ we have $\Aelt{m+1}{T(x)}{y} = \Aelt{n+1}{x}{T^*(y)}$ for any $x \in M_0^{n+1}$ and $y \in M_0^{m+1}$
\end{itemize}

\begin{proposition} \label{prop: qaut preserves graph structure}
    The action $\alpha$ of $\qaut(\Pi)$ on $M$ as defined in proposition \ref{prop: action of quantum group on M} satisfies
    \[
    \alpha \circ \adj = (\adj \otimes \id) \circ \alpha.
    \]
\end{proposition}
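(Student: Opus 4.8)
The plan is to turn this operator identity in $\mathcal{M}(M_0 \otimes \mathcal{A})$ into a single relation among the generators of $\mathcal{A}$, and then to obtain that relation from the fact that the edge projection $P$ is covered by $\mathcal{C}(\Pi)$. First I would expand both sides against $\onb(\mathcal{F}_0)$. Since $\alpha(z) = \sum_{y \in \onb(\mathcal{F}_0)} y \otimes \Aelt{1}{y}{z}$, and since $\Aelt{1}{\cdot}{\cdot}$ is antilinear in its first slot, expanding $\adj(y)$ in the basis gives $(\adj \otimes \id)\alpha(x) = \sum_{y} y \otimes \Aelt{1}{\adj^*(y)}{x}$, where $\adj^*$ is the Hilbert space adjoint of $\adj$ on $\mathcal{F}_0 = L^2(M)$ (which maps $M_0$ into $M_0$ again by local finiteness). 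As $\alpha(\adj(x)) = \sum_{y} y \otimes \Aelt{1}{y}{\adj(x)}$ and the first legs are linearly independent, the proposition is equivalent to showing $\Aelt{1}{y}{\adj(x)} = \Aelt{1}{\adj^*(y)}{x}$ for all $x,y \in M_0$; call this identity $(\ast)$.

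Next I would record how $\adj$ is read off from $P$. From the explicit form \eqref{eq: edge projection quantum cayley graph} one has $P\,\vect{a \otimes b} = \sum_{w} \vect{a \mu^{-1/2}(w) \otimes \adj(\mu^{1/2}(w)^*)b}$, so applying $(\del \otimes \id)$ and using the elementary identity $\sum_{w \in \onb(\mathcal{F}_0)} \del(a\,\mu^{-1/2}(w))\,\mu^{1/2}(w)^* = a$ (a direct computation in the basis \eqref{eq: onb for F_0}, where $\sigma$ is diagonal) together with linearity of $\adj$ yields
\[
(\del \otimes \id) \circ P \,\vect{a \otimes b} = \vect{\adj(a)b}.
\]
In particular $\adj(x) = (\del \otimes \id)P\,\vect{x \otimes 1}$, and taking adjoints with $P^* = P$ gives $\adj^*(y) = (\id \otimes \del)P\,\vect{1 \otimes y}$.

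The core of the argument is then a short computation in $\mathcal{A}$. The relations $\Aelt{1}{1}{c} = \del(c)1$ and $\Aelt{1}{d}{1} = \del(d^*)1$ of lemma \ref{lem: calculation in A}.4, combined with multiplicativity $\Aelt{2}{\vect{a \otimes b}}{\vect{c \otimes d}} = \Aelt{1}{a}{c}\Aelt{1}{b}{d}$, translate the partial traces into insertion of a unit in the complementary leg: for every $\zeta \in \mathcal{F}_1$ one gets $\Aelt{1}{y}{(\del \otimes \id)\zeta} = \Aelt{2}{\vect{1 \otimes y}}{\zeta}$ and $\Aelt{2}{\zeta}{\vect{x \otimes 1}} = \Aelt{1}{(\id \otimes \del)\zeta}{x}$. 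Applying the first with $\zeta = P\,\vect{x \otimes 1}$, then the intertwiner move $\Aelt{2}{P(\cdot)}{\cdot} = \Aelt{2}{\cdot}{P(\cdot)}$ (lemma \ref{lem: calculation in A}.1, valid because $P$ is covered by $\mathcal{C}(\Pi)$ and self-adjoint), and finally the second relation with $\zeta = P\,\vect{1 \otimes y}$, gives
\[
\Aelt{1}{y}{\adj(x)} = \Aelt{2}{\vect{1 \otimes y}}{P\,\vect{x \otimes 1}} = \Aelt{2}{P\,\vect{1 \otimes y}}{\vect{x \otimes 1}} = \Aelt{1}{\adj^*(y)}{x},
\]
which is exactly $(\ast)$.

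The main obstacle is bookkeeping rather than conceptual. The symbol $1 = \sum_{i \in I} 1_i$ occurring in $\vect{x \otimes 1}$ and $\vect{1 \otimes y}$ is not finitely supported, and $(\del \otimes \id)$ is not $M$-bimodular, hence is not itself a morphism of $\mathcal{C}(\Pi)$. Both points are handled as elsewhere in the paper: local finiteness of $P$ (definition \ref{def: locally finite map}) ensures that, once paired with the finitely supported $x,y$, each of these sums has only finitely many nonzero terms, so the expressions are well-defined multipliers interpreted strictly; and $(\del \otimes \id)$, $(\id \otimes \del)$ enter only through the two bona fide $\mathcal{A}$-relations above, never as standalone intertwiners. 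I would state these finiteness remarks precisely before running the displayed chain.
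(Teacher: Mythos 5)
Your proposal is correct and is essentially the paper's own argument: both proofs reduce the claim to the generator identity $\Aelt{1}{y}{\adj(x)} = \Aelt{1}{\adj^*(y)}{x}$, and both obtain it from the fact that the self-adjoint edge projection $P$ is covered by $\mathcal{C}(\Pi)$, via the intertwiner rule of lemma \ref{lem: calculation in A}.1 together with the unit-collapsing relations $\Aelt{1}{1}{c} = \del(c)1$, $\Aelt{1}{d}{1} = \del(d^*)1$ and the explicit form \eqref{eq: edge projection quantum cayley graph} of $P$. The only difference is bookkeeping: the paper inserts finitely supported cut-offs $z,t$ and lets them increase to $1$, whereas you work directly with the multiplier expressions $\vect{x \otimes 1}$, $\vect{1 \otimes y}$ and justify finiteness by local finiteness of $P$ — the same computation in a slightly different order.
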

\begin{proof}
    As $P$ is a morphism in $\mathcal{C}(\Pi)$, we find that for any $x,y,z,t \in M_0$ we have
    \[
    \sum_{r \in \onb(\mathcal{F}_0)} \Aelt{2}{z \mu^{-1/2}(r) \otimes \adj \left(\mu^{1/2}(r)^* \right)x}{y \otimes t} = \sum_{r \in \onb(\mathcal{F}_0)} \Aelt{2}{z \otimes x}{y \mu^{-1/2}(r) \otimes \adj \left(\mu^{1/2}(r)^* \right)t}.
    \]
Letting $z,t$ increase to $1$, we get that the right hand side becomes $\Aelt{1}{x}{\adj(y)}$, while the left hand side gives $\Aelt{1}{\adj^*(x)}{y}$.\footnote{Contrary to the convention in \cite{Wasilewski2023}, we are still working with the GNS, not the KMS, inner product, and $\adj^*$ is the adjoint with respect to this inner product.} It follows that the action $\alpha$ satisfies
\[
\alpha \circ \adj = (\adj \otimes \id) \circ \alpha.
\]
\end{proof}

\begin{proposition} \label{prop: qaut of cayley is universal}
    Let $\mathbb{G}$ be any locally compact quantum group acting by $\beta$ on $M$ in such a way that 
    \[
    \beta \circ \adj = (\adj \otimes \id) \circ \beta,
    \]
    then there exists a homomorphism of quantum groups $\pi: \mathcal{A} \to L^{\infty}(\mathbb{G})$ such that
    \[
    \beta  = (\id \otimes \pi) \circ \alpha.
    \]
\end{proposition}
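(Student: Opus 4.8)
The plan is to construct $\pi$ from the matrix coefficients of $\beta$. For $x,y\in M_0$ I set
\[
a(y,x):=(\del(y^*\,\cdot\,)\otimes\id)\beta(x)\in L^\infty(\mathbb G),
\]
the coefficient of $\vect y$ in the expansion of $\beta(x)$ against $\onb(\mathcal F_0)$, so that by design $\beta(x)=\sum_{y\in\onb(\mathcal F_0)}y\otimes a(y,x)$. I then define $\pi(\Aelt1yx):=a(y,x)$ and extend it multiplicatively, giving $\pi(\Aelt{n}{x_0\otimes\cdots\otimes x_n}{y_0\otimes\cdots\otimes y_n})=a(x_0,y_0)\cdots a(x_n,y_n)$ on the generators recalled above. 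The identity $\beta=(\id\otimes\pi)\circ\alpha$ then holds immediately, since $\alpha(x)=\sum_{y}y\otimes\Aelt1yx$; the real work is to show that $\pi$ is a well-defined $*$-homomorphism, i.e. that the $a(y,x)$ satisfy the defining relations of $\mathcal A$.

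The easy relations come from the structural properties of $\beta$. The $*$-relation $a(y,x)^*=a(\mu(y)^*,x^*)$ follows from $\beta$ being a $*$-homomorphism together with $\overline{\del(c)}=\del(c^*)$ and the modular identity $\del(cy)=\del(\mu(y)c)$. The scalar relations $\Aelt11x=\del(x)1$ and $\Aelt1{x^*}1=\del(x)1$ become, respectively, $(\del\otimes\id)\beta(x)=\del(x)1$ and $\beta(1)=1\otimes1$ — that is, exactly the $\del$-preservation (Definition \ref{def: action on discrete quantum space}) and unitality of the coaction.

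The substantive relations are the intertwining relations $\Aelt{m+1}{T(x)}y=\Aelt{n+1}x{T^*(y)}$ for $T$ covered by $\mathcal C(\Pi)$. Since $\beta$ preserves $\del$, it is implemented by unitary corepresentations $U^{(n)}$ of $\mathbb G$ on each $L^2(M^n)$, compatible with $\mtimes$ under $L^2(M^{n+m})\cong L^2(M^n)\mtimes L^2(M^m)$; translating through $\pi$, the relation above is precisely the assertion that $T$ intertwines $U^{(n+1)}$ and $U^{(m+1)}$. The class of morphisms intertwined by $\beta$ is, being a space of intertwiners, closed under linear combinations, composition, adjoints and $\mtimes$, and since reciprocals and shrinkages are built from $\mult,\mult^*,\del$ and $\vect1$, it is also closed under those. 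By Theorem \ref{thm: CLF discrete structures give good categories} the category $\mathcal C(\Pi)$ is generated from $\mult$, $(\id\otimes\del\otimes\id)$ and the edge projection $P$, so it suffices to intertwine these three: $\mult$ is intertwined by multiplicativity of $\beta$, $(\id\otimes\del\otimes\id)$ (and $\vect1$) by $\del$-preservation, and $P$ of \eqref{eq: edge projection quantum cayley graph} by the hypothesis $\beta\circ\adj=(\adj\otimes\id)\circ\beta$, running in reverse the manipulation relating $P$ and $\adj$ used in Proposition \ref{prop: qaut preserves graph structure}.

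It remains to see that $\pi$ respects the comultiplications and is nondegenerate. From $\Delta(\Aelt1yx)=\sum_{\zeta}\Aelt1y\zeta\otimes\Aelt1\zeta x$ one computes $(\pi\otimes\pi)\Delta(\Aelt1yx)=\sum_{\zeta}a(y,\zeta)\otimes a(\zeta,x)$, which equals $\Delta_{\mathbb G}(a(y,x))$ by the coaction identity $(\beta\otimes\id)\beta=(\id\otimes\Delta_{\mathbb G})\beta$; nondegeneracy of $\pi$ follows from that of $\beta$. I expect the main obstacle to be the third paragraph: setting up the corepresentations $U^{(n)}$ cleanly and verifying that intertwining is preserved under $\mtimes$, reciprocals and shrinkages, which requires the same orthonormal-basis and modular ($\mu$) bookkeeping that pervades Section \ref{sc: reconstruction theorem}, and carefully checking that intertwining $P$ is genuinely equivalent to the graph-preservation hypothesis.
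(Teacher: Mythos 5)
Your proposal is correct and takes essentially the same route as the paper's proof: there, $\pi$ is likewise defined on generators by the matrix coefficients $u_{x,y}$ of the (unitary implementation of the) action $\beta$ on $L^2(M)$ — which, by $\del$-preservation, coincide with your $a(y,x)$ — and the relations of $\mathcal{A}$ are verified by showing that the spaces $\mathcal{T}_{n,m}$ of $M$-bimodular intertwiners of tensor powers of this corepresentation contain $\mult$ and $P$ (the latter via $P = (\id \otimes \mult)\circ(\id \otimes \adj \otimes \id)\circ(\mult^* \otimes \id)$ and the hypothesis on $\adj$) and are closed under linear combinations, composition, adjoints, $\mtimes$, reciprocals and shrinkages, so that every morphism covered by $\mathcal{C}(\Pi)$ is intertwined. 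The paper leaves these closure checks as "an easy exercise," exactly the point you flag as the remaining bookkeeping, so your proposal matches it both in strategy and in level of detail.
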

\begin{proof}
    Consider $U \in B(L^2(M)) \otimes L^{\infty}(\mathbb{G})$, the unitary implementation of the action $\beta$. For reference on this, see \cite{Vaes01}. Denote $u_{x,y}$ the $\vect{x},\vect{y}$-entry of $U$. Denote by $\mathcal{T}_{n,m}$ the set of all $M$-bimodular intertwiners from $U^{\otimes (m+1)}$ to $U^{\otimes (n+1)}$, and by $\mathcal{T} := \bigcup_{n,m \in \mathbb{N}} \mathcal{T}_{n,m}$. Now, seeing as 
    \[
    P = (\id \otimes \mult) \circ (\id \otimes \adj \otimes \id) \circ (\mult^* \otimes \id)
    \]
    and $\adj$ intertwines $U$, and $\mult \in \mathcal{T}_{0,1}$, we get that $P \in \mathcal{T}_{1,1}$. It is an easy exercise to show that $\mathcal{T}$ is closed under linear combinations, composition, adjoints, the relative tensor product $\mtimes$, reciprocals and shrinkages (see definition \ref{def: reciprocals and shrinkages}). It follows that every projection $T \in B(L^2(M^n))$ which is covered by $\mathcal{C}(\Gamma,\Pi)$ is also an intertwiner of $U^{\otimes n}$, and hence by the universal property of $\mathcal{A}$, we find the required homomorphism $\pi$ with $\pi \left( \Aelt{1}{x}{y} \right) = u_{x,y}$.
\end{proof}
Together, propositions \ref{prop: qaut preserves graph structure} and \ref{prop: qaut of cayley is universal} prove theorem \ref{thm: quantum cayley graphs give quantum automorphism groups}.

\printbibliography[title=Bibliography]

@article{Woronowicz88,
    author = {{Stanis\l aw} Lech Woronowicz},
    title = {Tannaka-Krein duality for compact matrix pseudogroups. Twisted SU(N) groups},
    journal = {Inventiones Mathematicae},
    year = {1988},
    volume = {93},
    pages = {35--76}
}

@book{NeshveyevTuset2013,
    author = {Sergey Neshveyev and Lars Tuset},
    title = {Compact Quantum Groups and Their Representation Categories},
    publisher = {Société Mathématique de France},
    year = {2013}
}

@article{KustermansVaes99,
    author = {Johan Kustermans and Stefaan Vaes},
    title = {Locally compact quantum groups},
    journal = {Annales Scientifiques de l’École Normale Supérieure},
    year = {2000},
    volume = {33},
    number = 6,
    pages = {837-–934}
}

@article{VanDaele98,
    author = {Alphons {Van Daele}},
    title = {An algebraic framework for group duality},
    journal = {Advances in mathematics},
    year = {1998},
    volume = {140},
    pages = {323--366},
    note = {Academic press inc},
    number = 2
}

@article{KustermansVaes03,
    author = {Johan Kustermans and Stefaan Vaes},
    title = {Locally Compact Quantum Groups in the von Neumann Algebraic setting},
    journal = {Mathematica Scandinavica},
    year = {2003},
    volume = {92(1)},
    pages = {68--92}
}

@article{VanDaele94,
    author = {Alphons {Van Daele}},
    title = {Multiplier Hopf Algebras},
    journal = {Transactions of the American Mathematical Society},
    year = {1994},
    volume = {342},
    number = 2,
    pages = {917--932}
}

@online{DeCommer16,
    author = {Kenny {De Commer}},
    title = {Actions of compact quantum groups},
    date = {2016},
    langid = {english},
    eprinttype = {arxiv},
    eprint = {1604.00159}
}

@article{Vaes01,
    author = {Stefaan Vaes},
    title = {The Unitary Implementation of a Locally Compact Quantum Group Action},
    journal = {Journal of functional analysis},
    year = {2001},
    volume = {180},
    pages = {426--480}
}

@article{LongoRoberts97,
    author = {Roberto Longo and {John. E.} Roberts},
    title = {A Theory of Dimension},
    journal = {K-Theory},
    year = {1997},
    volume = {11},
    pages = {103--159}
}

@article{RollierVaes2024,
    author = {Lukas Rollier and Stefaan Vaes},
    title = {Quantum Automorphism Groups of Connected Locally Finite Graphs and Quantizations of Discrete Groups},
    journal = {International Mathematics Research Notices},
    year = {2024},
    volume = {2024},
    number = {3},
    pages = {2219--2297}
}

@article{DeCommerTimmermann2015,
    author = {Kenny {De Commer} and Thomas Timmermann},
    title = {Partial compact quantum groups},
    journal = {Journal of Algebra},
    year = {2015},
    volume = {438},
    pages = {283--324}
}

@article{Connes1980,
    author = {Alain Connes},
    title = {On the spatial theory of von Neumann algebras},
    journal = {Journal of Functional Analysis},
    year = {1980},
    volume = {35},
    number = 2,
    pages = {153--164}
}

@online{Wasilewski2023,
    author = {Mateusz Wasilewski},
    title = {On quantum Cayley graphs},
    date = {2023},
    langid = {english},
    eprinttype = {arxiv},
    eprint = {2306.15315}
}

@article{KustermansVanDaele97,
    author = {Johan Kustermans and Alphons {Van Daele}},
    title = {C*-algebraic quantum groups arising from algebraic quantum groups},
    journal = {International Journal of Mathematics},
    year = {1997},
    volume = {8},
    pages = {1067--1139}
}

@article{DuanSeveriniWinter13,
    author = {Runyao Duan and Simone Severini and Andreas Winter},
    title = {Zero-error communication via quantum channels, noncommutative graphs, and a quantum Lovász number},
    journal = {IEEE Transactions on Information Theory},
    year = {2013},
    pages = {1164--1174},
    volume = {59},
    number = 2
}

@article{Weaver2012,
    author = {Nik Weaver},
    title = {Quantum relations},
    journal = {Memoirs of the American Mathematical Society},
    year = 2012,
    volume = {215},
    pages = {81-140}
}

@article{MustoReutterVerdon,
    author = {Benjamin Musto and David Reutter and Dominic Verdon},
    title = {A compositional approach to quantum functions},
    journal = {Journal of Mathematical Physics},
    year = {2018},
    volume = {59},
    number = {8}
}

@online{MancinskaRoberson19,
  author       = {Laura Mančinska and David Roberson},
  title        = {Quantum isomorphism is equivalent to equality of homomorphism counts from planar graphs},
  date         = {2019},
  version      = 2,
  langid       = {english},
  langidopts   = {variant=american},
  eprinttype   = {arxiv},
  eprint       = {1910.06958}
}

@inproceedings{MancinskaRoberson2020published,
    author = {Laura Mančinska and David Roberson},
    title = {Quantum isomorphism is equivalent to equality of homomorphism counts from planar graphs},
    booktitle = {2020 IEEE 61st Annual Symposium on Foundations of Computer Science},
    series = {IEEE Computer Society},
    year = {2020},
    pages = {661--672},
    address = {Los Alamitos}
}

@article{LandstadVanDaele2008,
    author = {{Magnus B.} Landstad and Alphons {Van Daele}},
    title = {Groups with compact open subgroups and multiplier Hopf *-algebras},
    journal = {Expositiones Mathematicae},
    year = {2008},
    volume = {26},
    pages = {197--217}
}

\end{document}